\newcommand{\Z}{{\mathbf Z}}
\newcommand{\R}{{\mathbf R}}
\newcommand{\C}{{\mathbf C}}
\newcommand{\HH}{{\mathcal H}}
\newcommand{\BF}{{\cal B}_F^{sa}({\cal H}_0)}
\newcommand{\CF}{{ \cal C}_F^{sa}({\cal H}_0)}
\newcommand{\UF}{{ \cal U}_F({\cal H}_0)}
\DeclareMathOperator{\Ker	}{Ker}
\DeclareMathOperator{\Coker}{Coker}
\DeclareMathOperator{\trace}{trace}
\DeclareMathOperator{\sgn}{sgn}
\DeclareMathOperator{\specflow}{sf}
\title{The index of lattice Dirac operators and $K$-theory}
\author{Shoto Aoki \and Hidenori Fukaya         \and
  Mikio Furuta \and  Shinichiroh Matsuo \and Tetsuya Onogi\and Satoshi Yamaguchi
  %\and Mayuko Yamashita  
  %etc.
}
\institute{S. Aoki\at
Graduate School of Arts and Sciences, University of Tokyo
Komaba, Meguro-ku, Tokyo 153-8902, Japan,\\
Interdisciplinary Theoretical and Mathematical Sciences Program (iTHEMS), RIKEN, Wako 351-0198, Japan\\
\email{shotoaoki@g.ecc.u-tokyo.ac.jp}\\
\and
 H. Fukaya, T. Onogi, S. Yamaguchi\at
              Department of Physics, The University of Osaka, Osaka, Japan \\
              \email{hfukaya@het.phys.sci.osaka-u.ac.jp}\\
              \url{http://www-het.phys.sci.osaka-u.ac.jp/~hfukaya/}\\
              \email{onogi@phys.sci.osaka-u.ac.jp}\\
  	      \email{yamaguch@het.phys.sci.osaka-u.ac.jp}%  \\
%             \emph{Present address:} of F. Author  %  if needed
           \and
           M. Furuta \at
           Graduate School of Mathematical Sciences, The University of Tokyo, Tokyo, Japan \\
           \email{furuta@ms.u-tokyo.ac.jp}
           \and
           S. Matsuo \at
           Graduate School of Mathematics, Nagoya University, Nagoya, Japan\\
           \email{shinichiroh@math.nagoya-u.ac.jp}\\
           \url{https://www.math.nagoya-u.ac.jp/~shinichiroh/}
           %\and
           %M. Yamashita \at
           %Department of Mathematics, Kyoto University, Kyoto, Japan\\
           %\email{yamashita.mayuko.2n@kyoto-u.ac.jp}
}
\date{\flushright Preprint numbers: OH-HET-1236}
\begin{document}

\maketitle

\abstract{
We mathematically show an equality between the index of a Dirac operator on a flat continuum torus and the $\eta$ invariant of a lattice 
Dirac operator known as the Wilson Dirac operator with a negative mass when the lattice spacing is sufficiently small. Unlike the standard approach, our formulation using $K$-theory does not require modified chiral symmetry on the lattice. We prove that a one-parameter family of continuum massive Dirac operators and the corresponding Wilson Dirac operators belong to the same equivalence class of the $K^1$ group at a finite lattice spacing. Their indices, which are evaluated by the spectral flow or equivalently by the $\eta$ invariant at a finite mass, are proved to be equal.
}

%\tableofcontents

\section{Introduction}

Lattice gauge theory is a useful tool for numerical simulation of particle physics.
Being defined on a discrete lattice, it is, however,
mathematically challenging to describe topological 
properties of the target continuum gauge theory. 
In this work, we prove by $K$ theory \cite{MR0233870,MR0285033} 
that the index of the Dirac operators \cite{Atiyah:1968mp}
can be extracted from a lattice approximation, as known as the Wilson Dirac operator \cite{Wilson1977},
when the lattice spacing is sufficently small.

First, we describe the $K$ groups in such a way that one can combine 
infinite-dimensional Hilbert space in continuum and finite ones on a lattice.
Here our formulation premises the equivalence of the definitions by 
Karoubi \cite{Karoubi} and that by Atiyah-Hirzebruch \cite{Atiyah1961VectorBA}
with slight modifications explained in Appendix \ref{Relation Karoubi}.
The target massless continuum Dirac operators can be treated
as elements of the $K^0$ group, which are characterized by the index 
defined in the standard manner with the $\mathbf{Z}_2$ grading (chirality) operator.
For each continuum Dirac operator, we can consider a one-parameter deformation by the mass term
and treat it as an element of the  $K^1$ group 
defined on a finite interval, identifying  it as the mass parameter region.
By the suspension isomorphism, these $K^0$ and $K^1$ groups are isomorphic,
and the corresponding index and the spectral flow agree.
The key point in this work is to consider the $K^1$ group, rather than the $K^0$ group,
to describe the lattice Dirac operators 
since the $\mathbf{Z}_2$ grading structure is difficult to realize in lattice gauge theory.
%・Karoubiの定義とAtiyah-Hirzebruchの定義との同値性を前提とし、それに依拠する。
%・ただし、両者の定義を少しmodifyしたものを採用すると便利なのでmodifyした形を用いる。
%・そのmodificationについてはAppendix B1 (?)で説明する

Next we define a lattice approximation of a given data in continuum setup.
We introduce the generalized link variables, 
which corresponds to the parallel transport between two points in mathematics. 
These simultaneously determine a gauge connection field in the continuum theory,
and the lattice link variables at arbitrary lattice spacing.
%Note that the $\mathbf{Z}_2$ grading operator is not required at all
%in the definition of the $K^1$ group and
We define the Wilson Dirac operator with the generalized link variables
as an approximation of the continuum Dirac operaotor,
which qualifies to be a natural representative group element of the $K^1$ group.

Then the spectral flow of the continuum Dirac operator and
its lattice version are compared.
We construct a piece-wise polynomial map from 
the Hilbert bundle on a discretized lattice
to the continuum bundle, as well as its adjoint.
With these maps, we can give a gap in the spectrum 
of the lattice-continuum combined Dirac operator.
Our main theorem shows by contradiction that there exists a lattice spacing $a_1$
such that for any lattice spacing $a<a_1$ the $K^1$ element constructed on a lattice
is in the same equivalence class as the
original continuum element, and thus, shares the same spectral flow\footnote{
The equivalence between the index of the overlap Dirac operator
and the spectral flow of the Wilson Dirac operator was also known in physics.
It was found in a rather empirical way at the early stage \cite{Itoh:1987iy}
but later a mathematically rigorous equivalence was %to the overlap Dirac operator was 
established \cite{Adams:1998eg} (see also the related works \cite{Kikukawa:1998pd,Luscher:1998kn,Fujikawa:1998if,Suzuki:1998yz}).
But as far as we know, 
the mathematical relevance of the Wilson Dirac operator as the element of 
the $K$ group
has not been discussed. % until very recently.
}.
%It should be noted in the proof that
%we do not need to evaluate the geometrical index.

%Note that we do not try to establish the index theorem 
%or mathematical relation to the geometrical index on the lattice.
%We also remark here that the spectral flow 
%can be evaluated essentially by the finite rank subbundle
%even when the dimension of the original Hilbert bundle is infinite.

%With similar motivations, there were 
Here we refer the readers to two mathematical works with similar motivations to ours.
In \cite{Kubota:2020tpr}, the lattice approximation of analytic indices through
the higher index theory of almost flat vector bundles was established.
The Atiyah-Singer index theorem on lattices was directly formulated and proved in \cite{Yamashita:2020nkf}
using the algebraic index theorem of Nest and Tsygan.

One advantage of our method is that essentially 
the same approach can be applied to various systems.
It is straightforward to generalize our approach
to the Dirac operators having real or quaternion structures,
by considering $KO^i$ or $KSp^i$ groups with general degree $i$.
We can also apply our method to the case with a group acting
on the base manifold like orbifolds \cite{Imai:2022bke},
as well as the case with symmetries with anti-linear operation.
It is also interesting to consider the family version of this work.
See Appendix~\ref{app:generalizations} for more details.

Another interesting application is that to the systems with boundaries.
In the recent publications  \cite{Fukaya:2017tsq,Fukaya:2019qlf,Fukaya:2020tjk,Fukaya:2021sea}
by some of the authors and their collaborators
it was shown in continuum theory that the spectral flow of the massive
domain-wall fermion Dirac operator 
is related to the index defined on a manifold with boundary \cite{Atiyah:1975jf}
with the Atiyah-Patodi-Singer condition imposed.
We do not expect difficulty, either, in 
this generalization to the APS-type indices.
In fact, a positive result was already obtained in a perturbative analysis \cite{Fukaya:2019myi}.

As a final remark, let us summarize the developments 
in physics about the index of Dirac operators.
The difficulty was summarized by the Nielsen-Ninomiya theorem 
\cite{Nielsen:1980rz,Nielsen:1981hk,Nielsen:1981xu} which states 
that if one requires existence of 
the $\mathbf{Z}_2$ grading chirality operator which anticommutes with
a lattice Dirac operator, the Dirac operator
must have unphysical zero modes called doublers.
These doubler modes make the lattice Dirac operator no more elliptic.
%In fact, we will see that the Wilson Dirac operator 
%recovers the ellipticity, and an associated a priori estimate is valid,
%at the cost of the $\mathbf{Z}_2$ grading structure.
One can avoid the doublers and define an index \cite{Hasenfratz:1998ri,Neuberger:1997fp}
by constructing a Dirac operator satisfying 
a relation known as the Ginsparg-Wilson relation \cite{Ginsparg:1981bj},
which realizes a ``modified'' $\mathbf{Z}_2$ grading structure \cite{Luscher:1998pqa}. 
Until recently, mathematically rigorous formulation has, however, been limited to 
those expressed by the heat-kernel regularization \cite{Adams:1998eg},
where the index is evaluated by the geometrical index or winding number.
Moreover, the Ginsparg-Wilson relation is so far limited 
to be on an even-dimensional periodic square lattices, whose continuum limit is a flat torus.
It is, therefore, difficult to formulate it in the case with boundary, for example,
where  the Ginsparg-Wilson relation is known to be
broken by any kind of boundary conditions \cite{Luscher:2006df}.

%\textcolor{red}{[More mathematics references here?]} %%%% Reference???
%in the case with higher symmetries or family version.
%We address potentially interesting directions in the future
%which can be studied using our $K$-theoretic treatment.
%In this work, the followings are not addressed 
%but will be interesting subjects in the future.
%In fact, the potential for wide applications
%is one significant advantage of our $K$-theoretic treatment.
%One is 
%the case with higher symmetries.

\if0
In this work, the followings are not addressed 
but will be interesting problems in the future.
1) equivalence to the Atiyah-Hirzeburch formulation for general degrees like $KO^i$ with $i\geq 2$.
2) finite-dimensional expressions for general degrees,
3) Bott periodicity,
and 4) relation between the finite-dimensional formulation by Karoubi and the index of the overlap Dirac operator on the lattice.
%Our formulation may be extended to
%the case where the group operation keeps the whole structure invariant
%but we do not address such additional symmetries.
Application to the family version of the index with 
a compact Hausdorff space as the parameter space, 
may be also interesting but we do not address this, either\footnote{
But such a possibility of extensions should be an advantage of our simple approach in this work. 
}.
\fi

The rest of the paper is organized as follows.
In Sec.~\ref{sec:Ktheory}, we give a formulation of the $K^0$ and $K^1$ groups
such that the infinite rank Hilbert bundle and
the finite rank bundle are combined.
These are rather well known but
we take detailed steps to explain the $K^0$ and $K^1$ groups
to make the paper self-contained and easy to understand 
for non-mathematician readers.
We still stress that our approach for showing
the equivalence between the two $K$ group elements is not the standard one. 
Then we discuss possible generalization to the system with higher symmetries.
In Sec.~\ref{sec:latticeapproximation}, we explain how to construct
the lattice Wilson Dirac operator from the given continuum data.
We introduce the generalized link variables to 
relate the gauge field configurations in the two Dirac operators.
Then we prove our main theorem in Sec.~\ref{eq:main}.
In Sec.~\ref{eq:relationtoov}, we discuss relation between our work and 
the known results with the overlap Dirac operator, which is a lattice operator
satisfying the Ginsparg-Wilson relation.
In Sec.~\ref{sec:summary}, we give summary and discussion
on possible applications of this work.

\if0
We close this introduction section with a summary of notations.
We assume that every vector space, Hilbert space are real. 
The complex vector space can be described with a real vector space
with an automorphism $J$ satisfying $J^2=-{\rm id}$ and the action of the discrete group $\{ \pm {\rm id}, \pm J\}$.  
We assume that real vector bundles we consider are equipped with a Riemannian metric
and defined on a smooth closed manifold.
For operators $A,B$, the commutator and anti-commutator are defined\footnote{
The readers should not be confused with the parameter range notations like 
for $m_0>0$, $[-m_0,m_0]=\{ m \in \R : -m_0 \leq m \leq m_0 \}$ or $\{ -m_0, m_0 \} =\{ m \in \R : m=-m_0 \mbox{ or } m= m_0 \}$.
} as
$$
\{ A,B \} := AB +BA,\quad [A,B]:=AB-BA.
$$
The inner product in the vector space is denoted by $(*,*)$
and that for the section of the vector bundle is expressed by
$\langle *| * \rangle$. For an operator $A$ on the section of the vector bundle
the inner product of $u$ and $Au$ is denoted by $\langle u | A | u \rangle$.
We assume that the fiber metric is given with a smooth inner product as a function of 
the base space coordinates.
The symmetry of Clifford algebra will be discussed in detail later.
\fi

%%%%%%%%%%%%%%%%%%%%%%%%%%%%%%%%%%%%%%%%%%%%%%%
\section{Hilbert bundles and $K$ groups}
\label{sec:Ktheory}
%%%%%%%%%%%%%%%%%%%%%%%%%%%%%%%%%%%%%%%%%%%%%%%

In this work, we use a classification of Dirac operators acting on
Hilbert spaces in terms of $K$-theory.
To compare the continuum Dirac operator and its
lattice approximation, we need 
a formulation in such a way that 
the Hilbert spaces with infinite and finite dimensions can be combined 
\cite{MR0233870,MR0285033}.
In Sec.~\ref{sec:K^0} and \ref{sec:K^1}, we describe
the $K^0$ and $K^1$ groups necessary for our main theorem.
Equivalence to the starndard definitions of $K^0$ and $K^1$
is not shown here but it follows from standard arguments.
We discuss possible generalizations to
those with higher symmetries and different degrees in Sec.~\ref{sec:K-applications}.

We take rather detailed steps to define the $K^0$ and $K^1$ groups
in order to make the paper self-contained 
and easy to understand for the readers who are not familiar with $K$-theory.
%Although the expert mathematicians can skip these sections,
We emphasize that our approach to show
the equivalence between the two $K$ group elements
from different Hilbert spaces
is not the standard one
(see Definition~\ref{def of sim}).
%which may be useful for formulating the mod-two type indices.

%%%%%%%%%%%%%%%%%%%%%%%%%%%%%%%%%%%%%%%%%%%%%%%
\subsection{$K^0(X,A)$}
\label{sec:K^0}
%%%%%%%%%%%%%%%%%%%%%%%%%%%%%%%%%%%%%%%%%%%%%%%

First, we give a definition of the $K^0$ group.
%To compare a continuum Dirac operator and its
%lattice approximation, we need 
%a formulation in such a way that 
%the Hilbert spaces with infinite and finite dimensions can be combined. 
%First, % and next section, 
%we employ a definition of the $K^0$ group
%in such a way that 
%the infinite-dimensional Hilbert spaces in continuum
%and those on a lattice with finite dimensions can be combined. 

%\footnote{For experts, it may be obvious that our formulation is equivalent to
%the standard ones. In Appendix~ %%%%%%%% ref
%we give the proof for the  $K^0$ case only, 
%where we give an observation which was not found in the literature.}.

\subsubsection{Representative elements of $K^0(X,A)$}

Let $X$ be a compact Hausdorff space and $A$ be a closed subspace of $X$.
We denote the pair by $(X,A)$.

\begin{definition}\label{def of triple}
  %We define
  A triple $(\HH, h, \gamma)$ of the pair $(X,A)$  is a collection of the data
  $\HH,h$ and $\gamma$ which satisfies the following conditions.
\begin{enumerate}
\item
$\HH$ is a complex Hilbert bundle\footnote{The structure group is the group of unitary operators endowed with the operator norm.} 
over $X$. We denote the fiber at $x \in X$ by $\HH_x$.
\item
$h:\HH \to \HH$ is a family of bounded self-adjoint Fredholm operators $\{h_x:\HH_x \to \HH_x\}_{x \in X}$,
which is continuous with respect to the operator norm.
Namely, when it is expressed by a local trivialization in the neighborhood of $x$, 
$h_x$ is continuous in $x$ with respect to the operator norm.
\item
  $\gamma: \HH \to \HH$ is a family of unitary and self-adjoint operators $\{\gamma_x :\HH_x \to \HH_x\}_{x\in X}$
  which is continuous with respect to the operator norm.
  In particular, each $\gamma_x$ satisfies $\gamma_x^2={\rm id}$ and
  keeps the inner product of the vectors in $\HH_x$.
\item
  $h$ and $\gamma$ anticommute: $\{ \gamma, h\}=0$.
\item 
For $x \in A$, $\Ker h_x=\{0\}$.
\end{enumerate}
In our work, we need to consider unbounded operators, 
with which necessary prescription is given later in Sec.~\ref{sec:remarks}.
\end{definition}

\begin{definition}
%\textcolor{red}{ direct sumの定義を移した。}
  For two triples $(\HH, h,\gamma)$ and  $(\HH',h',\gamma')$ 
  of the pair $(X,A)$,
  we define their direct sum by 
  $$
  (\HH, h,\gamma) \oplus (\HH',h',\gamma')
  :=(\HH \oplus \HH', h \oplus h', \gamma \oplus \gamma').
  $$
\end{definition}

In this paper, we introduce a relation $\sim$ as follows.
\begin{definition}\label{def of sim}
For the pair $(X,A)$, two triples $(\HH,h,\gamma)$ and  $(\HH',h',\gamma')$
are in the relation
$$
(\HH,h,\gamma) \sim(\HH',h',\gamma'),
$$
when there exist another triple $(\hat{\HH},\hat{h},\hat{\gamma})$ for the same pair $(X,A)$
and a continuous one-parameter family of a bounded self-adjoint Fredholm operators
$$
\tilde{h}_t: \HH \oplus \HH' \oplus \hat{\HH} \to  \HH \oplus \HH' \oplus \hat{\HH}
$$
with the parameter space labeled by $0 \leq t \leq 1$,
%which anticommutes with $\gamma\oplus \gamma'\oplus\hat{\gamma}$,  %} 
which satisfy the following conditions.
\begin{enumerate}
\item
$(\hat{\HH},\hat{h},\hat{\gamma})$ is a triple of $(X,X)$. Namely,
for arbitrary $x \in X$, $\Ker \hat{h}_x=\{0\}$.
\item
$
\tilde{h}_0= -h \oplus h' \oplus \hat{h}.
$
\item 
$\tilde{h}_t$ anticommutes with $\gamma\oplus \gamma'\oplus\hat{\gamma}$.
\item
For any $(x,t) \in A \times [0,1]\cup X  \times \{1\}$,
$\Ker (\tilde{h}_t)_x =\{0\}$. This is equivalent to 
state that for the projection $p: X\times [0,1]\to X$ and $\tilde{h}=\{\tilde{h}_t\}$ in the range $t \in [0,1]$,
$( p^*(\HH\oplus \HH' \oplus \hat{\HH}), \tilde{h}, p^* (-\gamma \oplus \gamma' \oplus \hat{\gamma}))$
is a triple of the pair
$
(X\times [0,1],A\times [0,1] \cup X \times \{1\})
$
\footnote{The same (or essentially equivalent) condition can be stated as follows.
For the pair
$
(X\times [0,1],A\times [0,1] \cup X \times \{1\}),
$
there exists a triple whose restriction to
$(X \times \{0\}, A \times \{0\})$ is isomorphic to
$-h \oplus h' \oplus \hat{h}$. 
Since the proof of the equivalence will be rather technical, 
we do not employ this condition.}.
%\footnote{イメージとしては「$(-\alpha+0)+(\alpha'+0)=0$であれば$\alpha=\alpha'$」}.
\end{enumerate}
\end{definition}
In the following, when $\HH$ and $\gamma$ are obvious,
we use a simplified notation $h\sim h'$ instead of $(\HH,h,\gamma) \sim(\HH',h',\gamma')$.

\if0
直観的にいうなら、
\vskip3mm
{
\textbf 『ペア$(X,A)$のふたつの三つ組$h,h'$に対して関係式$h \sim h'$が成立するとは
ペア$(X,X)$に対する三つ組$\hat{h}$を適当にとると、ペア$(X,A)$に対する三つ組
$-h \oplus h' \oplus \hat{h}$を「ペア$(X,A)$に対する三つ組の条件を満たしながら」ペア$(X,X)$に対する三つ組に連続変形することができることである。』
}
\vskip3mm
\fi

\begin{lemma}
  \label{lem:isoK0}
  %\fix{Lemma 2.8を移した。}
  Let $(\HH,h,\gamma)$ and $(\HH',h',\gamma')$ be two isomorphic triples
  of $(X,A)$, in the sense that there exists an isomorphism
  $S:\HH\stackrel{\cong}{\to}\HH'$ of the Hilbert bundles, satisfying
  $h'=ShS^{-1}$ and  $\gamma'=S\gamma S^{-1}$. Then we have the relation
  $(\HH,h,\gamma)\sim (\HH',h',\gamma')$.
\end{lemma}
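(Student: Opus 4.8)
The plan is to imitate the proof of Lemma~\ref{reflection}, replacing the constant swap $\begin{pmatrix} 0 & {\rm id} \\ {\rm id} & 0\end{pmatrix}$ by an off-diagonal operator built from the isomorphism $S$. First I would take $(\hat{\HH},\hat{h},\hat{\gamma})=(0,0,{\rm id})$, which is trivially a triple of $(X,X)$, so that the whole deformation takes place on $\HH\oplus\HH'$. On each fibre introduce the self-adjoint operator
\[
V := \begin{pmatrix} 0 & S^{-1} \\ (S^{-1})^{*} & 0 \end{pmatrix}\colon \HH\oplus\HH' \to \HH\oplus\HH',
\]
which is invertible fibrewise because $S$, and hence $S^{-1}$ and $(S^{-1})^{*}$, is invertible (one should note at the outset that $S$ is a bounded bundle isomorphism with bounded inverse, automatic if, as the unitary structure group suggests, $S$ is fibrewise unitary, in which case $V=\begin{pmatrix} 0 & S^{*} \\ S & 0\end{pmatrix}$). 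Then I would set
\[
\tilde{h}_t := (1-t)\,(-h\oplus h') + t\,V, \qquad t\in[0,1],
\]
a norm-continuous family of bounded self-adjoint Fredholm operators with $\tilde{h}_0 = -h\oplus h'\oplus\hat{h}$, as required by Definition~\ref{def of sim}.

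The genuinely new point, compared with Lemma~\ref{reflection}, is the anticommutation of $V$ with the grading $-\gamma\oplus\gamma'$. This follows from the intertwining relation $\gamma'=S\gamma S^{-1}$: multiplying by $S^{-1}$ gives $S^{-1}\gamma'=\gamma S^{-1}$, and taking adjoints (using self-adjointness of $\gamma,\gamma'$) gives $(S^{-1})^{*}\gamma=\gamma'(S^{-1})^{*}$. These are exactly the two identities that make the off-diagonal blocks of $\{V,\,-\gamma\oplus\gamma'\}$ vanish. Together with $\{h,\gamma\}=\{h',\gamma'\}=0$, which makes $-h\oplus h'$ anticommute with $-\gamma\oplus\gamma'$, this shows $\tilde{h}_t$ anticommutes with $p^{*}(-\gamma\oplus\gamma')$ for every $t$.

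The kernel estimate then proceeds as in Lemma~\ref{reflection}. Using $h'=ShS^{-1}$, so that $S^{-1}h'=hS^{-1}$ (and its adjoint), one checks that the two summands $-h\oplus h'$ and $V$ anticommute, whence
\[
\tilde{h}_t^{2} = (1-t)^{2}\,(h^{2}\oplus (h')^{2}) + t^{2}\,V^{2},
\]
with $V^{2}=\mathrm{diag}\big(S^{-1}(S^{-1})^{*},\,(S^{-1})^{*}S^{-1}\big)$ positive and fibrewise invertible (the identity when $S$ is unitary). Hence $\tilde{h}_t^{2}$ is strictly positive for every $t>0$ and every $x\in X$, so $\Ker(\tilde{h}_t)_x=\{0\}$ there, while at $t=0$ the kernel of $\tilde{h}_0=-h\oplus h'$ is trivial for $x\in A$ because $\Ker h_x=\{0\}$ and $\Ker h'_x=S_x(\Ker h_x)$. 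These two cases cover $A\times[0,1]\cup X\times\{1\}$, so $\tilde{h}_t$ witnesses $(\HH,h,\gamma)\sim(\HH',h',\gamma')$.

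I expect the only delicate step to be the two anticommutation relations, since they are precisely what reduce the lemma to the reflexivity argument; the positivity bookkeeping and the Fredholm/continuity checks are routine and essentially identical to those already carried out in Lemma~\ref{reflection}.
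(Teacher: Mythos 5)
Your proof is correct and follows essentially the same route as the paper, which also deforms $-h\oplus h'$ linearly into the off-diagonal operator built from $S$ (the paper writes $S$ in the lower-left block, tacitly using that a Hilbert-bundle isomorphism is fibrewise unitary so that $S=(S^{-1})^{*}$; your choice of $(S^{-1})^{*}$ just makes self-adjointness automatic). The anticommutation and positivity checks you spell out are exactly the ones the paper leaves as "not difficult to show."
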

\begin{proof}

Let us take $(\hat{\HH}, \hat{h},\hat{\gamma})=(0,0,\rm{id})$\footnote{Here $\hat{\HH}=0$ means the vector space
consisting the zero vector only. In this case, ${\rm Ker} \hat{h}=\{0\}$ is trivially true.}.
Then it is enough to show that setting
$$
\tilde{h}_t:= (1-t)
\left(
    \begin{array}{cc}
        -h& 0 \\
       0   &  h
    \end{array}
  \right)
+t 
\left(
    \begin{array}{cc}
      0  & S^{-1} \\
       S   &   0
    \end{array}
  \right),
$$
$( p^*(\HH\oplus \HH ), \tilde{h}, p^* (-\gamma \oplus \gamma))$
is a triple with respect to
$
(X\times [0,1],A\times [0,1] \cup X \times \{1\})
$.
Since the three operators on $\HH \oplus \HH = \HH \otimes \C^2$
$$
\left(
    \begin{array}{cc}
        -h& 0 \\
       0   &  h
    \end{array}
\right),\quad
\left(
    \begin{array}{cc}
      0  & S^{-1} \\
       S   &   0
    \end{array}\right),\quad
\left(
    \begin{array}{cc}
      -\gamma  & 0 \\
       0   &   \gamma
    \end{array}\right)
$$
all anticommute, so do
$\tilde{h}$ and $p^* (-\gamma \oplus \gamma)$.

When $t=0$ and $x \in A$ the kernel of $\tilde{h}_{t,x}=-h \oplus h$ is trivial.
It is sufficient to show that for arbitrary $t>0$ and $x \in X$
the kernel of $\tilde{h}_{t,x}$ is trivial.
From the anticommutation relation of the two terms in $\tilde{h}_t$,
$$
\tilde{h}_t^2:= (1-t)^2
\left(
    \begin{array}{cc}
        -h& 0 \\
       0   &  h
    \end{array}
  \right)^2
+t ^2
\left(
    \begin{array}{cc}
      0  & S^{-1} \\
       S   &   0
    \end{array}
  \right)^2
=\{(1-t)^2 h^2 + t^2 \} \otimes \left(
    \begin{array}{cc}
      {\rm id} & 0 \\
       0   &   {\rm id}
    \end{array}
  \right)
$$
holds. This is always a positive self-adjoint operator for $t>0$
and thus the kernel is $\{0\}$.

\begin{comment}

 The proof goes in parallel to that of Lemma~\ref{reflection} but with
\[
 \tilde{h}_t := (1-t)
\left(
    \begin{array}{cc}
        -h& 0 \\
       0   &  h'
    \end{array}
  \right)
+t 
\left(
    \begin{array}{cc}
      0  &  S^{-1}\\
       S   &   0
    \end{array}
  \right).
\]
It is not difficult to show that $\tilde{h}_t^2$ is positive for $t>0$
its kernel is trivial.
\end{comment}
\end{proof}

\begin{lemma}
  Let $(\HH,h,\gamma)$ and $(\HH,h^\prime,\gamma)$ be triples of the pair $(X,A)$. If there exists a continuous one-parameter family of a boundded self-adjoint operator $h_t: \HH \to \HH$ such that $h_0=h'$, $h_1= h$, and $\{\gamma, h_t\}=0$, 
  then the two triples are isomorphic.
\end{lemma}

\begin{proof}
  Let us take $(\hat{\HH}, \hat{h},\hat{\gamma})=(0,0,\rm{id})$ and 
  \begin{align*}
    \tilde{h}_t = \begin{cases}
      \left(
    \begin{array}{cc}
        -h& 0 \\
       0   &  h_{2t}
    \end{array} 
  \right) & (0 \leq t \leq 1/2) \\
  (2-2t)\left(
    \begin{array}{cc}
        -h& 0 \\
       0   &  h
    \end{array} 
  \right)  + (2t-1) 
  \left(
    \begin{array}{cc}
      0  &  \rm{id}\\
       \rm{id}   &   0
    \end{array}
  \right)
  & (1/2 \leq t \leq 1)
    \end{cases}.
  \end{align*} 
This one-parameter family is continuous. It is not difficult to show that $\tilde{h}_t^2$ is positive for $t>1/2$;
  therefore, its kernel is trivial.
\end{proof}

\begin{proposition}
For the pair $(X,A)$, the relation $\sim$ is an equivalence relation.
\end{proposition}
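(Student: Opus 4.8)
The plan is to verify the three defining properties of an equivalence relation directly from Definition~\ref{def of sim}, the workhorse throughout being an explicit rotation homotopy that exploits the anticommutation $\{\gamma,h\}=0$. For reflexivity $h\sim h$, I would take the auxiliary triple $(\hat{\HH},\hat h,\hat\gamma)$ to be trivial (the zero bundle, or any fixed triple of $(X,X)$) and work on $\HH\oplus\HH$ with grading $-\gamma\oplus\gamma$. The candidate family is
\[
\tilde h_t=\begin{pmatrix}-\cos(\tfrac{\pi t}{2})\,h & \sin(\tfrac{\pi t}{2})\,{\rm id}\\ \sin(\tfrac{\pi t}{2})\,{\rm id} & \cos(\tfrac{\pi t}{2})\,h\end{pmatrix},
\]
which equals $-h\oplus h$ at $t=0$ and the invertible off-diagonal operator at $t=1$. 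A one-line computation using $\{\gamma,h\}=0$ shows $\tilde h_t$ is self-adjoint, anticommutes with $-\gamma\oplus\gamma$ for every $t$, and has $\tilde h_t^2$ diagonal with each entry $\cos^2(\tfrac{\pi t}{2})\,h^2+\sin^2(\tfrac{\pi t}{2})\,{\rm id}$; hence $\tilde h_t$ is invertible for $t\in(0,1]$ and, on $A$ where $h_x$ is already invertible, for all $t$. This is exactly the data Definition~\ref{def of sim} demands.

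For symmetry, suppose $h\sim h'$ is witnessed by $(\hat{\HH},\hat h,\hat\gamma)$ and a family $\tilde h_t$ on $\HH\oplus\HH'\oplus\hat{\HH}$ with grading $\Gamma=-\gamma\oplus\gamma'\oplus\hat\gamma$. I would negate \emph{both} the operator and the grading and then interchange the first two summands, setting $\tilde h'_t=S(-\tilde h_t)S^{-1}$ where $S$ swaps $\HH$ and $\HH'$. Negation leaves kernels unchanged and $\{-\tilde h_t,-\Gamma\}=\{\tilde h_t,\Gamma\}=0$, so this is again a family of triples; at $t=0$ it equals $-h'\oplus h\oplus(-\hat h)$ with grading $-\gamma'\oplus\gamma\oplus(-\hat\gamma)$. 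Since $(\hat{\HH},-\hat h,-\hat\gamma)$ is still a triple of $(X,X)$, this is precisely a witness for $h'\sim h$. The one point to respect is that the grading must be negated together with the operator: negating the operator alone would leave the grading signs inconsistent with the required form $-\gamma'\oplus\gamma\oplus\hat\gamma''$.

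Transitivity is the heart of the matter. Given witnesses $\tilde h_t$ for $h\sim h'$ (on $\HH\oplus\HH'\oplus\hat{\HH}_1$) and $\tilde g_t$ for $h'\sim h''$ (on $\HH'\oplus\HH''\oplus\hat{\HH}_2$), the idea is to combine them by direct sum, but the two copies of $h'$ enter with opposite signs and cannot simply be cancelled. I would insert an auxiliary invertible triple that can be "opened up": take the auxiliary data for $h\sim h''$ to include a spare $\HH'\oplus\HH'$ carrying the everywhere-invertible $k=\left(\begin{smallmatrix}0&{\rm id}\\ {\rm id}&0\end{smallmatrix}\right)$ (a triple of $(X,X)$ with grading $\gamma'\oplus(-\gamma')$) together with $\hat h_1\oplus\hat h_2$. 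The homotopy then has two stages. First, run the analogous rotation in reverse to deform $k$ into $h'\oplus(-h')$; this remains invertible on $A$, which is all that is required away from $t=1$, even though it genuinely fails on $X\setminus A$. After the canonical reordering of direct summands, the operator becomes $(-h\oplus h'\oplus\hat h_1)\oplus(-h'\oplus h''\oplus\hat h_2)=\tilde h_0\oplus\tilde g_0$. Second, apply $\tilde h_t\oplus\tilde g_t$, which ends at an everywhere-invertible operator at $t=1$. Concatenating the two stages, reparametrised over $[0,1]$ and matched at the junction, produces the required witness for $h\sim h''$.

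The step I expect to demand the most care is the bookkeeping in transitivity: checking that a single fixed grading, $\Gamma_3=-\gamma\oplus\gamma''\oplus(\hat\gamma_1\oplus\hat\gamma_2\oplus\gamma'\oplus(-\gamma'))$, anticommutes with the operator throughout both stages, that the two stages agree at the junction after the reordering identification, and that invertibility is verified precisely on $A\times[0,1]\cup X\times\{1\}$ and is needed only there. Everything else—self-adjointness, norm-continuity, and the Fredholm property—is preserved automatically under negation, unitary conjugation, direct sums, and concatenation, so these points require only a brief remark.
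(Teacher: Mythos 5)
Your proposal is correct and follows essentially the same route as the paper's Lemmas on symmetry, reflexivity, and transitivity: reflexivity via a homotopy from $-h\oplus h$ to the off-diagonal identity (you use a trigonometric rotation where the paper uses the linear interpolation, with the same anticommutation computation of $\tilde h_t^2$), symmetry by negating the operator, the grading, and the auxiliary triple, and transitivity by adjoining $\HH'\otimes\C^2$ carrying the invertible off-diagonal block, opening it up to $h'\oplus(-h')$, regrouping, and running the two witnesses in parallel. Your bookkeeping of the gradings and of where invertibility is actually required matches the paper's construction, and in the symmetry step you make explicit a summand swap that the paper leaves implicit.
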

\begin{proof}
  We show that the relation is symmetric, reflective and transitive
%  .
%  These will be shown
  in the next subsections by Lemmas~\ref{reflection}, \ref{symmetric}, and \ref{transitive}.
\end{proof}
With the above proposition, we give the definition 
\begin{definition}\label{def of K0}
  $K^0(X,A)$ is the whole set of the equivalence classes by $\sim$ for the triples of the pair $(X,A)$\footnote{
  Note that the triples themselves do not form a set since the cardinality of a set consisting of the triples
  is not bounded. But the equivalence
  classes form a set. One sees this  by checking for each equivalence class the existence of
  a representation of th form $(\HH,h,\gamma)$ such that $\HH_x$ is a separable Hilbert space,
  whose cardinality is bounded from above.}.
\end{definition}

\subsubsection{Equivalence relation $\sim$}

In this subsection, we show that the relation $\sim$
between two triples is an equivalence relation.

\begin{lemma}\label{reflection} (reflexivity)
  For a triple $(\HH,h,\gamma)$ of the pair $(X,A)$,
  the relation 
$$
(\HH,h,\gamma) \sim(\HH,h,\gamma)
$$
holds.
\end{lemma}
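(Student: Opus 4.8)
The plan is to produce an explicit witness for $(\HH,h,\gamma)\sim(\HH,h,\gamma)$ directly. For the auxiliary triple required in Definition~\ref{def of sim} I would take $(\hat\HH,\hat h,\hat\gamma)$ to be the zero triple (admissible as a triple of $(X,X)$, the kernel condition being vacuous on the rank-zero bundle); if one prefers a non-trivial summand, any fixed acyclic triple carried along as a constant family serves equally well. It then suffices to build, on $\HH\oplus\HH$ equipped with the grading $G:=-\gamma\oplus\gamma$, a continuous path of bounded self-adjoint Fredholm operators starting at $-h\oplus h$, anticommuting with $G$ throughout, and with trivial kernel at $t=1$ for all $x$ and over $A$ for every $t$.

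First I would introduce the ``swap'' involution
\[
\sigma:=\begin{pmatrix}0 & {\rm id}\\ {\rm id} & 0\end{pmatrix},
\]
which is well defined precisely because the two summands are the \emph{same} bundle $\HH$, and set
\[
\tilde h_t:=\cos\!\Big(\tfrac{\pi t}{2}\Big)\,(-h\oplus h)+\sin\!\Big(\tfrac{\pi t}{2}\Big)\,\sigma,\qquad 0\le t\le 1.
\]
This is self-adjoint, bounded, norm-continuous in $t$, and satisfies $\tilde h_0=-h\oplus h$, giving condition~2. For the grading compatibility I would note that $\sigma$ anticommutes with $-h\oplus h$ (immediate from the off-diagonal form) and with $G$, while $\{-h\oplus h,\,G\}=0$ is exactly the relation $\{h,\gamma\}=0$ of the triple; hence $\{\tilde h_t,G\}=0$ for all $t$, so the pulled-back datum is a genuine triple in the sense of Definition~\ref{def of triple}.

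The Fredholm and kernel requirements I would settle by squaring. Since $\sigma^2={\rm id}$ and $\sigma$ anticommutes with $-h\oplus h$, the cross term drops out and
\[
\tilde h_t^{\,2}=\cos^2\!\Big(\tfrac{\pi t}{2}\Big)\,(h^2\oplus h^2)+\sin^2\!\Big(\tfrac{\pi t}{2}\Big)\,{\rm id}.
\]
For $t\in(0,1]$ the right-hand side is bounded below by $\sin^2(\pi t/2)>0$, so $\tilde h_t$ is invertible at every point of $X$; this covers $X\times\{1\}$ and all of $A\times(0,1]$ simultaneously. At $t=0$ we have $\tilde h_0=-h\oplus h$, which is Fredholm because $h$ is, with kernel $\Ker h_x\oplus\Ker h_x=\{0\}$ over each $x\in A$ by the last condition in Definition~\ref{def of triple}. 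Collecting these facts verifies conditions~1--3 of Definition~\ref{def of sim} and establishes the relation.

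I do not expect a serious obstacle, since the content is just that $-h\oplus h$ is null-homotopic relative to the grading. The only points needing care are bookkeeping: confirming that the zero triple (equivalently, no auxiliary summand) is permissible, and checking that Fredholmness does not degenerate at the endpoint $t=0$ where the off-diagonal term vanishes --- both of which are read off transparently from the displayed formula for $\tilde h_t^{\,2}$.
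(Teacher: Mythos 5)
Your proof is correct and follows essentially the same route as the paper's: the auxiliary triple is taken to be zero, and the homotopy interpolates between $-h\oplus h$ and the swap $\sigma$, with the kernel condition read off from the square $\tilde h_t^2$ being strictly positive for $t>0$. The only difference is cosmetic — you use $\cos(\pi t/2)$, $\sin(\pi t/2)$ weights where the paper uses $1-t$, $t$.
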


\begin{proof}
  The proof goes in parallel to that of Lemma~\ref{lem:isoK0} but with $S= {\rm id}$.
\end{proof}

\begin{lemma}\label{symmetric} (symmetry)
For two triples $(\HH,h,\gamma)$ and  $(\HH',h',\gamma')$
of the pair $(X,A)$, the relation
$
(\HH,h,\gamma) \sim(\HH',h',\gamma')
$
implies 
$(\HH',h',\gamma')\sim
(\HH,h,\gamma).
$ 
\end{lemma}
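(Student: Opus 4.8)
The plan is to produce, from the data witnessing $(\HH,h,\gamma)\sim(\HH',h',\gamma')$, the analogous data witnessing the reversed relation by a single ``swap-and-negate'' operation. By Definition~\ref{def of sim}, the hypothesis supplies a triple $(\hat\HH,\hat h,\hat\gamma)$ of $(X,X)$ and a continuous family $\tilde h_t$ of bounded self-adjoint Fredholm operators on $\HH\oplus\HH'\oplus\hat\HH$, anticommuting with $\tilde\gamma:=-\gamma\oplus\gamma'\oplus\hat\gamma$, with $\tilde h_0=-h\oplus h'\oplus\hat h$ and $\Ker(\tilde h_t)_x=\{0\}$ for $(x,t)\in A\times[0,1]\cup X\times\{1\}$. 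I would obtain the witness for $(\HH',h',\gamma')\sim(\HH,h,\gamma)$ from this by exploiting that $-h'\oplus h$ is the negative of the factor-swap of $-h\oplus h'$.

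First I would fix the canonical unitary bundle isomorphism $\sigma:\HH\oplus\HH'\oplus\hat\HH\to\HH'\oplus\HH\oplus\hat\HH$ that interchanges the first two summands and is the identity on $\hat\HH$, and set $\tilde h'_t:=\sigma(-\tilde h_t)\sigma^{-1}$. As auxiliary triple of $(X,X)$ for the reversed relation I would take $(\hat\HH,-\hat h,-\hat\gamma)$; this is again a triple, since negating $\hat h$ and $\hat\gamma$ preserves self-adjointness, the relations $\hat\gamma^2=\mathrm{id}$ and $\{-\hat h,-\hat\gamma\}=\{\hat h,\hat\gamma\}=0$, and the invertibility $\Ker\hat h_x=\{0\}$. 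Evaluating at $t=0$ gives $\tilde h'_0=\sigma\bigl(h\oplus(-h')\oplus(-\hat h)\bigr)\sigma^{-1}=-h'\oplus h\oplus(-\hat h)$, which is exactly $-h'\oplus h\oplus\hat h'$ with $\hat h':=-\hat h$, so condition~2 of Definition~\ref{def of sim} holds.

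The remaining verifications are routine. Continuity, self-adjointness, and the Fredholm property of $\tilde h'_t$ follow because $\sigma$ is unitary and negation preserves all three. Since $\sigma$ is an isomorphism and $\Ker(-\tilde h_t)_x=\Ker(\tilde h_t)_x$, we get $\Ker(\tilde h'_t)_x=\sigma\bigl(\Ker(\tilde h_t)_x\bigr)=\{0\}$ on $A\times[0,1]\cup X\times\{1\}$, giving condition~3. The one point that needs care — and which I expect to be the only real obstacle — is the bookkeeping of the grading signs. Because $\tilde h_t$ anticommutes with $\tilde\gamma$, the operator $\tilde h'_t=\sigma(-\tilde h_t)\sigma^{-1}$ anticommutes with $\sigma\tilde\gamma\sigma^{-1}=\gamma'\oplus(-\gamma)\oplus\hat\gamma$, hence also with its negative $-\gamma'\oplus\gamma\oplus(-\hat\gamma)$. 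The grading demanded for the reversed relation is $-\gamma'\oplus\gamma\oplus\hat\gamma'$, so choosing $\hat\gamma'=-\hat\gamma$ (consistent with the auxiliary triple fixed above) makes the two coincide. Thus $\bigl(p^*(\HH'\oplus\HH\oplus\hat\HH),\ \tilde h',\ p^*(-\gamma'\oplus\gamma\oplus\hat\gamma')\bigr)$ is a triple of the pair $(X\times[0,1],\,A\times[0,1]\cup X\times\{1\})$, which is precisely the statement that $(\HH',h',\gamma')\sim(\HH,h,\gamma)$.
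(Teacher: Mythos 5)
Your proof is correct and is essentially the paper's own argument, which simply says to change the signs $\tilde h_t\to-\tilde h_t$ and $\hat h\to-\hat h$; you merely make explicit the factor-swap isomorphism $\sigma$ and the accompanying sign flip $\hat\gamma\to-\hat\gamma$ needed for the grading. Those extra bookkeeping details are genuinely required (since $\tilde h_t$ may mix the summands, the auxiliary grading must be negated along with $\hat h$), so your write-up is a faithful, slightly more careful version of the same proof.
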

\begin{proof}
Changing the sign of $\tilde{h}_t\to -\tilde{h}_t$ and $\hat{h}\to -\hat{h}$
gives the relation  $(\HH',h',\gamma')\sim (\HH,h,\gamma)$.
\end{proof}

\begin{lemma}\label{transitive} (transitivity)
  For three triples $(\HH_1,h_1,\gamma_1)$, $(\HH_2,h_2,\gamma_2)$, $(\HH_3,h_3,\gamma_3)$
  of the pair $(X,A)$,
when 
$
(\HH_1,h_1,\gamma_1) \sim(\HH_2,h_2,\gamma_2)
$
and $(\HH_2,h_2,\gamma_2)\sim
(\HH_3,h_3,\gamma_3) 
$,
the relation 
$(\HH_1,h_1,\gamma_1)\sim
(\HH_3,h_3,\gamma_3) 
$
also holds.
\end{lemma}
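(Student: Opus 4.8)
The plan is to reduce transitivity to the two deformations supplied by the hypotheses, using the reflexivity construction of Lemma~\ref{reflection} to cancel the common middle term $h_2$. By hypothesis there are an auxiliary triple $(\hat\HH_{12},\hat h_{12},\hat\gamma_{12})$ of $(X,X)$ and a family $\tilde h^{(12)}_t$ on $\HH_1\oplus\HH_2\oplus\hat\HH_{12}$ deforming $-h_1\oplus h_2\oplus\hat h_{12}$ into an operator invertible on $A\times[0,1]\cup X\times\{1\}$, anticommuting with $-\gamma_1\oplus\gamma_2\oplus\hat\gamma_{12}$; and likewise a triple $(\hat\HH_{23},\hat h_{23},\hat\gamma_{23})$ and a family $\tilde h^{(23)}_t$ on $\HH_2\oplus\HH_3\oplus\hat\HH_{23}$ deforming $-h_2\oplus h_3\oplus\hat h_{23}$ and anticommuting with $-\gamma_2\oplus\gamma_3\oplus\hat\gamma_{23}$. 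The obstruction to simply summing these is that the composite needs a single auxiliary operator with trivial kernel on all of $X$, yet the two surplus copies of $h_2$ that appear do not have this property.

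First I would fix the auxiliary triple for the composite relation by replacing the surplus $\pm h_2$ with an invertible swap. Take
$$\hat\HH_{13}:=\hat\HH_{12}\oplus\hat\HH_{23}\oplus\HH_2\oplus\HH_2,\quad \hat h_{13}:=\hat h_{12}\oplus\hat h_{23}\oplus J,\quad \hat\gamma_{13}:=\hat\gamma_{12}\oplus\hat\gamma_{23}\oplus\bigl(\gamma_2\oplus(-\gamma_2)\bigr),$$
where $J=\left(\begin{smallmatrix}0&\mathrm{id}\\ \mathrm{id}&0\end{smallmatrix}\right)$ is the swap on $\HH_2\oplus\HH_2$. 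Since $\hat h_{12},\hat h_{23}$ and $J$ are invertible on every fiber, and $J$ anticommutes with $\gamma_2\oplus(-\gamma_2)$ exactly as in Lemma~\ref{reflection}, the triple $(\hat\HH_{13},\hat h_{13},\hat\gamma_{13})$ is a genuine triple of $(X,X)$. The family I must produce then starts at $\tilde h^{(13)}_0=-h_1\oplus h_3\oplus\hat h_{13}$ and must reach a fiberwise invertible operator.

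The homotopy itself I would build in two concatenated stages and reparametrize to $[0,1]$. In the first stage, holding $-h_1\oplus h_3\oplus\hat h_{12}\oplus\hat h_{23}$ fixed, I run the reflexivity deformation of Lemma~\ref{reflection} in reverse on the $\HH_2\oplus\HH_2$ block, turning $J$ into $h_2\oplus(-h_2)$; by the computation there this block has square $\{(1-t)^2h_2^2+t^2\}\otimes\mathrm{id}$, so it is invertible on $A$ throughout and anticommutes with $\gamma_2\oplus(-\gamma_2)$. After reordering summands the operator at the seam is exactly $(-h_1\oplus h_2\oplus\hat h_{12})\oplus(-h_2\oplus h_3\oplus\hat h_{23})$. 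In the second stage I apply the two given families simultaneously, $\tilde h^{(12)}_t\oplus\tilde h^{(23)}_t$, reaching a fiberwise invertible operator at $t=1$. The concatenation is a continuous family of bounded self-adjoint Fredholm operators from $-h_1\oplus h_3\oplus\hat h_{13}$ to an invertible operator, anticommuting with $-\gamma_1\oplus\gamma_3\oplus\hat\gamma_{13}$, which is precisely the data witnessing $h_1\sim h_3$.

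The step requiring care is the kernel condition of Definition~\ref{def of sim}: triviality of $\Ker(\tilde h^{(13)}_t)_x$ on $A\times[0,1]\cup X\times\{1\}$. On $A$ this holds because $h_1,h_2,h_3$ have trivial kernel there, the reflexivity block is invertible on $A$ for every parameter, and the two given families are invertible on $A\times[0,1]$; at $t=1$ invertibility on all of $X$ comes from the endpoints of the given families. Off $A$ and for $t<1$ the composite may well have kernel, but Definition~\ref{def of sim} imposes no constraint there. The genuine obstacle is exactly this sign-and-grading bookkeeping — arranging the two copies of $h_2$ with opposite signs and opposite gradings so that they cancel through Lemma~\ref{reflection} while $\hat h_{13}$ remains invertible on all of $X$ (forcing $J$, not $h_2\oplus(-h_2)$, into $\hat h_{13}$). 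The remaining points — Fredholmness of each $\tilde h^{(13)}_t$, continuity at the seam, and the separability remark of Definition~\ref{def of K0} — are routine.
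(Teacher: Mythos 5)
Your construction is essentially identical to the paper's own proof: the same auxiliary triple $\hat{\HH}_{12}\oplus\hat{\HH}_{23}\oplus(\HH_2\otimes\C^2)$ with the swap operator on the last block, the same two-stage homotopy (first undoing the reflexivity deformation to produce $\pm h_2$ on the two copies of $\HH_2$, then running $\tilde h^{(12)}_t\oplus\tilde h^{(23)}_t$ in parallel after reordering summands), differing only in the immaterial ordering of the two $\HH_2$ copies and the corresponding sign convention on $\hat\gamma_{13}$. The kernel bookkeeping on $A\times[0,1]\cup X\times\{1\}$ is handled correctly, so the proof is sound.
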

\begin{proof}
  In Definition~\ref{def of sim}  we have introduced
  $(\hat{\HH}, \hat{h}, \hat{\gamma})$ and $\tilde{h}$
  to define the relation $(\HH,h,\gamma) \sim(\HH',h',\gamma')$.
  Let $(\hat{\HH}_{ij}, \hat{h}_{ij}, \hat{\gamma}_{ij})$
  and $\tilde{h}_{ij}$ be the corresponding data for
  the relation $(\HH_i,h_i,\gamma_i) \sim(\HH_j,h_j,\gamma_j)$
  where $(i,j)=(1,2)$ or $(2,3)$.
 We write $\tilde{h}_{ij}(t)$ for $\tilde{h}_{ij,t} (t\in [0,1])$.

Let us take another triple $(\hat{\HH}_{13}, \hat{h}_{13}, \hat{\gamma}_{13})$
of the pair $(X,A)$ given by
\begin{eqnarray*}
%\begin{equation*}
\hat{\HH}_{13} &:=&\hat{\HH}_{12} \oplus \hat{\HH}_{23} \oplus (\HH_2 \otimes \C^2)
\\
\hat{h}_{13} &:=& \hat{h}_{12} \oplus \hat{h}_{23} \oplus 
 \left(
    \begin{array}{cc}
      0 & {\rm id} \\
       {\rm id}   &   0
    \end{array}
  \right) \\
\hat{\gamma}_{13}
 &:=& \hat{\gamma}_{12} \oplus \hat{\gamma}_{23} \oplus 
 \left(
    \begin{array}{cc}
      -\gamma_2 & 0 \\
       0   &   \gamma_2
    \end{array}
  \right). \\
\end{eqnarray*}
Then $(\hat{\HH}_{13}, \hat{h}_{13}, \hat{\gamma}_{13})$ is actually a triple with respect
to $(X,X)$ since the kernel of $(\hat{h}_{13})_x$ is trivial for any $x \in X$.
%\end{equation*}
As is in the proof of Lemma~\ref{reflection},
let us introduce a one-parameter family $\tilde{h}_{13}(t)$ with $t\in [0,1]$
of the operators on $\HH_1 \oplus \HH_3 \oplus\hat{\HH}_{13}$ in the following way.
\begin{enumerate}
 \item For $t\in [0,1/2]$ put
\[
\hat{h}_{13}(t) := 2t\left[\hat{h}_{12} \oplus \hat{h}_{23} \oplus 
 \left(
    \begin{array}{cc}
      -h_2 & 0 \\
       0   &   h_2
    \end{array}
  \right)\right]+ (1-2t)\hat{h}_{13}.
\]
Then the operator 
$$
 \tilde{h}_{13}(t):=-h_1 \oplus h_3 \oplus \hat{h}_{13}(t)
$$
on  $\HH_1 \oplus \HH_3 \oplus \hat{\HH}_{13}$ is a triple
for the pair $(X,A)$ at any value of $t\in [0,1/2]$.

\item 
Note that $\hat{h}_{13}(1/2)$ can be
interpreted as an operator $\hat{h}_{12} \oplus \hat{h}_{23}
\oplus (-h_2 \oplus h_2)$ on $(\hat{\HH}_{12} \oplus \hat{\HH}_{23})
\oplus (\HH_2 \oplus \HH_2)$. Therefore, we can rewrite
\begin{align*}
 \tilde{h}_{13}(1/2) =-h_1 \oplus h_3 \oplus \hat{h}_{13}(1/2) &= -h_1 \oplus h_3 \oplus (\hat{h}_{12} \oplus \hat{h}_{23}) \oplus 
 (-h_2 \oplus   h_2 )
 \\
 &\cong (-h_1 \oplus h_2 \oplus \hat{h}_{12}) \oplus ( - h_2 \oplus h_3 \oplus \hat{h}_{23}).
%\qquad \mbox{\rm on} \qquad
%\HH \oplus \HH'' \oplus (\hat{\HH} \oplus \hat{\HH}') \oplus (\HH' \oplus \HH')
\end{align*}
The last form acts on $(\HH_1 \oplus \HH_2 \oplus \hat{\HH}_{12}) \oplus (\HH_2 \oplus \HH_3 \oplus \hat{\HH}_{23})$
and is equal to the operator $\tilde{h}_{12}(0) \oplus \tilde{h}_{23}(0)$.
Note from Lemma~\ref{lem:isoK0} that the isomorphism used here does not change the relation $\sim$.

\item
For $t\in [1/2,1]$ put 
\[
 \tilde{h}_{13}(t) := \tilde{h}_{12}(2t-1)\oplus \tilde{h}_{23}(2t-1),
\]
which acts on $(\HH_1 \oplus \HH_2 \oplus \hat{\HH}_{12}) \oplus (\HH_2 \oplus \HH_3 \oplus \hat{\HH}_{23})$.
Since $\tilde{h}_{12}(1) \oplus \tilde{h}_{23}(1)$ is a triple of the pair $(X,X)$, 
Lemma~\ref{transitive} follows. 
\end{enumerate}
\end{proof}

Generalizing Lemma~\ref{reflection},
we would like to add another lemma below. 
%which is similar to Lemma~\ref{reflection}.

\subsubsection{Sum, product and functoriality of $K^0$}

\subsubsection*{Pullback functor}
Let $X$ and $Y$ be compact Hausdorff spaces and $A$ and $B$ be 
closed subsets of $X$ and $Y$, respectively.
For a continuous map $f:X \to Y$ satisfying $f(A)\subset B$
we abbreviate it by $f:(X,A) \to (Y,B)$.

For a continuous map $f:(X,A) \to (Y,B)$, we have a pullback
from a triple with respect to $(Y,B)$ to the one for $(X,A)$.
Let $f^*$ denote the pullback for the Hilbert bundles and the maps on them.
It is not difficult to show that $f^*$ maintains the 
equivalence relation $\sim$ where each element of the triple for  $(Y,B)$
is pullbacked to that for $(X,A)$.
From this we obtain the map
$$
f^* : K^0(Y,B) \to K^0(X,A), \qquad [(\HH,h,\gamma)] \mapsto  [(f^*\HH, f^* h,f^*\gamma)].
$$
It is also obvious that for two continuous maps $f:(X,A) \to (Y,B)$ and 
$g:(Y,B) \to (Z,C)$ as well as their composite $gf:(X,A) \to (Z,C)$, 
$(gf)^*=f^* g^*$ holds by construction.

Besides we can show that the sum and product structures defined below
are also kept by the pullbacks.
The above observation is summarized as follows.
\begin{proposition}
$K^0$ is a contravariant functor from the
category with pairs $(X,A)$ as the objects 
and continuous maps between them as the morphisms
to the category with rings as the objects and 
homomorphisms as the morphisms.%\qed
\end{proposition}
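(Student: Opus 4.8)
The plan is to decompose the claim into three tasks: (i) equip the set $K^0(X,A)$ with a ring structure; (ii) show that $f^*$ descends to a well-defined ring homomorphism on equivalence classes; and (iii) verify the functorial identities $(gf)^*=f^*g^*$ and $\mathrm{id}_X^*=\mathrm{id}_{K^0(X,A)}$. Since the identities in (iii) already hold at the level of representatives (pullback of bundles and of the operators on them is strictly functorial, as noted before the statement), the real content lies in (i) and in the homomorphism property of (ii).

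For task (i) I would define addition by the fiberwise direct sum, $[(\HH,h,\gamma)]+[(\HH',h',\gamma')]:=[(\HH\oplus\HH',\,h\oplus h',\,\gamma\oplus\gamma')]$, and the product by the $\mathbf{Z}_2$-graded tensor product, represented on $\HH\otimes\HH'$ by the operator $h\otimes\mathrm{id}+\gamma\otimes h'$ with grading $\gamma\otimes\gamma'$. The crucial algebraic identity is $(h\otimes\mathrm{id}+\gamma\otimes h')^2=h^2\otimes\mathrm{id}+\mathrm{id}\otimes h'^2$, where the cross term vanishes because $h$ and $\gamma$ anticommute; this shows the product operator is again bounded, self-adjoint, Fredholm, anticommuting with $\gamma\otimes\gamma'$, and—using compactness of $A$ to get a uniform spectral gap—has trivial kernel over $A$ whenever either factor does, so the product indeed represents a class in $K^0(X,A)$. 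Well-definedness of both operations with respect to $\sim$ is then proved by taking a family $\tilde{h}_t$ witnessing $h\sim h'$ and forming its direct sum (respectively graded tensor product) with a fixed triple; the same squaring identity keeps the deformed family within the conditions of Definition~\ref{def of sim}, exactly as in the proof of Lemma~\ref{reflection}. The ring axioms—associativity, distributivity, commutativity—then hold at the level of classes via the canonical reassociation, distribution, and graded-flip isomorphisms of Hilbert bundles, promoted to the relation $\sim$ by Lemma~\ref{lem:isoK0}. The additive identity is the class of any triple of the pair $(X,X)$, and the additive inverse of $[(\HH,h,\gamma)]$ is $[(\HH,-h,-\gamma)]$; the vanishing of their sum is precisely the deformation of $-h\oplus h$ to an invertible operator exhibited in the proof of Lemma~\ref{reflection}.

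For task (ii) the key point is that pullback of Hilbert bundles commutes canonically with both fiberwise operations, $f^*(\HH\oplus\HH')\cong f^*\HH\oplus f^*\HH'$ and $f^*(\HH\otimes\HH')\cong f^*\HH\otimes f^*\HH'$, and that under these canonical isomorphisms the pulled-back operators $f^*(h\oplus h')$ and $f^*(h\otimes\mathrm{id}+\gamma\otimes h')$ agree with the direct sum and graded tensor product of $f^*h$ and $f^*h'$. Hence $f^*$ respects $+$ and $\cdot$ on representatives, and since it already respects $\sim$, it is a ring homomorphism on classes. In the absolute case $A=\emptyset$ the multiplicative unit is the pullback, along the constant map $X\to\mathrm{pt}$, of the generator of $K^0(\mathrm{pt})\cong\Z$ (the class of a self-adjoint Fredholm operator of index one), and any $f^*$ sends unit to unit by naturality of this construction; for $A\neq\emptyset$ the relative group is a (generally non-unital) ring and a module over the absolute one, all compatibly with $f^*$.

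The step I expect to be the main obstacle is the well-definedness of the product under $\sim$. One must check that graded-tensoring the \emph{full} deformation data of Definition~\ref{def of sim}—including the auxiliary triple $(\hat{\HH},\hat{h},\hat{\gamma})$ of $(X,X)$ and the requirement that the whole family be a triple of $(X\times[0,1],\,A\times[0,1]\cup X\times\{1\})$—with a fixed triple again produces valid deformation data: that the endpoint at $t=1$ lands in a triple of $(X,X)$, and that kernel-triviality persists over $A\times[0,1]\cup X\times\{1\}$. Tracking the grading signs correctly through the graded tensor product and confirming positivity of the relevant squared operator throughout the deformation is where the care is needed; once that is in hand, everything else reduces to the bookkeeping of canonical bundle isomorphisms together with Lemma~\ref{lem:isoK0}.
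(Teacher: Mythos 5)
Your proposal is correct and follows essentially the same route as the paper: the proposition there is stated as a summary of exactly the ingredients you list (strict functoriality of pullback on representatives, compatibility of $f^*$ with $\sim$, and the sum/product structures whose well-definedness, associativity, distributivity, commutativity via the graded flip and Lemma~\ref{lem:isoK0}, and identity element are established in the surrounding lemmas), including the key squaring identity $(h\otimes\mathrm{id}+\gamma\otimes h')^2=h^2\otimes\mathrm{id}+\mathrm{id}\otimes h'^2$ and the tensoring of the full deformation data $(\hat{\HH},\hat{h},\hat{\gamma})$, $\tilde{h}_t$ to get well-definedness of the product. The only superfluous ingredient is the appeal to compactness of $A$ for a uniform spectral gap — kernel-triviality of the product over $A$ is a pointwise consequence of the squaring identity and needs no uniformity.
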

%In the following we define the sum and product 
%such that the above proposition holds.

\subsubsection*{Sum}

%The equivalence class of this direct sum depends only on those of the original two triples.
\begin{definition}
We define the sum on $K^0(X,A)$ by
$$
[(\HH, h,\gamma)] + [(\HH',h',\gamma')]
:=[(\HH, h,\gamma) \oplus (\HH',h',\gamma')].
$$
\end{definition}

\begin{lemma}\label{inverse1}
\begin{enumerate}
\item
$[(\HH,h,\gamma)] +[(0,0,{\rm id})]=[(\HH,h,\gamma)] $.
\item
The sum $+$ satisfies the commutative and associative laws.
%$[(\HH, h,\gamma)] + [(\HH',h',\gamma')]=
%[(\HH', h', \gamma')] + [(\HH,h,\gamma)]$
\item
$[(\HH, h,\gamma)] + [(\HH, -h,-\gamma)] =[(0,0,{\rm id})]$.
\end{enumerate}
\end{lemma}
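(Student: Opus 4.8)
The plan is to reduce the statement to the two facts already established: Lemma~\ref{lem:isoK0} (isomorphic triples represent the same class) and the homotopy constructed in Lemma~\ref{reflection}. Parts (1) and (2) are purely formal consequences of the direct-sum structure, whereas part (3) --- the existence of additive inverses --- is where the real content lies, and it will turn out to be a repackaging of the reflexivity homotopy.

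For part (1), I would observe that $(\HH,h,\gamma)\oplus(0,0,{\rm id})=(\HH\oplus 0,h\oplus 0,\gamma\oplus{\rm id})$ and that the bundle isomorphism $S:\HH\oplus 0\stackrel{\cong}{\to}\HH$ discarding the trivial summand satisfies $h=S(h\oplus 0)S^{-1}$ and $\gamma=S(\gamma\oplus{\rm id})S^{-1}$; Lemma~\ref{lem:isoK0} then gives the equality of classes. For part (2), commutativity follows from the swap isomorphism $S:\HH\oplus\HH'\stackrel{\cong}{\to}\HH'\oplus\HH$, $(u,u')\mapsto(u',u)$, which intertwines $h\oplus h'$ with $h'\oplus h$ and $\gamma\oplus\gamma'$ with $\gamma'\oplus\gamma$, and associativity follows from the canonical reassociation isomorphism of iterated direct sums; in both cases Lemma~\ref{lem:isoK0} promotes the isomorphism of representatives to equality of classes, while well-definedness of $+$ on classes has already been recorded.

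For part (3) I would take $(\hat{\HH},\hat{h},\hat{\gamma})=(0,0,{\rm id})$ in Definition~\ref{def of sim}, so that the required path acts on $\HH\oplus\HH$ with prescribed initial datum $\tilde{h}_0=-(h\oplus(-h))=-h\oplus h$ and grading $-(\gamma\oplus(-\gamma))=-\gamma\oplus\gamma$. These are exactly the starting operator and grading appearing in Lemma~\ref{reflection}, so I would reuse its homotopy verbatim,
$$
\tilde{h}_t:=(1-t)\left(\begin{array}{cc}-h&0\\0&h\end{array}\right)+t\left(\begin{array}{cc}0&{\rm id}\\{\rm id}&0\end{array}\right),
$$
for which the computation $\tilde{h}_t^2=\{(1-t)^2h^2+t^2\}\otimes{\rm id}$ shows $\Ker(\tilde{h}_t)_x=\{0\}$ for every $x\in X$ and $t\in(0,1]$ (in particular on $A\times[0,1]\cup X\times\{1\}$), while $-\gamma\oplus\gamma$ anticommutes with both matrices and hence with $\tilde{h}_t$. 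This verifies the conditions defining $\sim$ and yields $(\HH\oplus\HH,h\oplus(-h),\gamma\oplus(-\gamma))\sim(0,0,{\rm id})$, that is, $[(\HH,h,\gamma)]+[(\HH,-h,-\gamma)]=[(0,0,{\rm id})]$.

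I do not expect a genuine obstacle, since the analytic input (positivity of $\tilde{h}_t^2$ for $t>0$ and the anticommutation with the grading) was already discharged in Lemma~\ref{reflection}. The one point demanding care is the sign bookkeeping: one must check that the initial operator $-(h\oplus(-h))$ and grading $-(\gamma\oplus(-\gamma))$ demanded by Definition~\ref{def of sim} literally coincide with the data $-h\oplus h$ and $-\gamma\oplus\gamma$ of Lemma~\ref{reflection}. This coincidence is precisely why the inverse of $[(\HH,h,\gamma)]$ is $[(\HH,-h,-\gamma)]$, and it is the only place where a sign error would be fatal.
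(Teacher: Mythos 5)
Your proposal is correct and follows essentially the same route as the paper: parts (1) and (2) are reduced to the isomorphism-invariance of classes (Lemma~\ref{lem:isoK0}, itself a generalization of reflexivity, which is what the paper invokes), and part (3) reuses the homotopy of Lemma~\ref{reflection} after checking that the initial data $-h\oplus h$ and $-\gamma\oplus\gamma$ required by Definition~\ref{def of sim} with $\hat{\HH}=0$ match it exactly. Your explicit sign bookkeeping in part (3) is the one point the paper leaves implicit, and you have it right.
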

\begin{proof}
The first two claims follow from the reflexivity of $\sim$.
In the proof of Lemma~\ref{reflection}, a triple with respect to $(X,X)$
is constructed from $(\HH, h,\gamma)\oplus (\HH, -h,-\gamma)$ for $(X,A)$
with a continuous deformation of the operator $h \oplus (-h)$.
The last claim follows from this.
\end{proof}

Now $K^0(X,A)$ forms an Abelian group with respect to the sum $+$.
We denote the identity element $[(0,0,{\rm id})]$ by $0$.
The inverse element of $[(\HH, h,\gamma)]$ is $[(\HH, -h,-\gamma)]$
%\footnote{
%When this construction is extended to those with Clifford symmetries in addition to $\gamma$,
%we flip the sign of all the Clifford generators.}.
From the definition of $\sim$, the Lemma below follows.
\begin{lemma}\label{zero}
For a triple $(\HH,h,\gamma)$ of the pair $(X,A)$,
the necessary and sufficient condition for $[(\HH,h,\gamma)]=0$ 
is that there exists a triple $(\hat{\HH},\hat{h},\hat{\gamma})$
with respect to $(X,X)$ such that the triple
$$(\HH \oplus \hat{\HH},h \oplus \hat{h}, \gamma \oplus\hat{\gamma})$$
for $(X,A)$ 
can be continuously deformed to a triple with respect to $(X,X)$
\footnote{This statement indicates that $\alpha=0$ when $\alpha+0=0$.}.
%\qed
\end{lemma}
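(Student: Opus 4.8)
The plan is to deduce both implications directly from Definition~\ref{def of sim} of the relation $\sim$, using the group structure of Lemma~\ref{inverse1}. The crucial preliminary observation is a sign bookkeeping device: since $K^0(X,A)$ is abelian with identity $0=[(0,0,\mathrm{id})]$, and since by Lemma~\ref{inverse1} the inverse of $[(\HH,h,\gamma)]$ is $[(\HH,-h,-\gamma)]$, the identity $0$ is its own inverse, so that
\[
[(\HH,h,\gamma)] = 0 \iff [(\HH,-h,-\gamma)] = 0 \iff (\HH,-h,-\gamma)\sim(0,0,\mathrm{id}).
\]
This reformulation is exactly what reconciles the definition of $\sim$ — which, tested against $(0,0,\mathrm{id})$, begins a deformation from $-h\oplus\hat{h}$ — with the present statement, which instead concerns $+h\oplus\hat{h}$.

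First I would unwind the right-hand condition $(\HH,-h,-\gamma)\sim(0,0,\mathrm{id})$ via Definition~\ref{def of sim} with $(\HH',h',\gamma')=(0,0,\mathrm{id})$. Since $\HH'=0$, the ambient bundle is $\HH\oplus\hat{\HH}$, and the required family $\tilde{h}_t$ must satisfy $\tilde{h}_0=-(-h)\oplus\hat{h}=h\oplus\hat{h}$ with grading $-(-\gamma)\oplus\hat{\gamma}=\gamma\oplus\hat{\gamma}$, while being a triple of the suspended pair $(X\times[0,1],\,A\times[0,1]\cup X\times\{1\})$. That last requirement says precisely that $(\HH\oplus\hat{\HH},\,h\oplus\hat{h},\,\gamma\oplus\hat{\gamma})$ can be continuously deformed, through triples of $(X,A)$, to a triple of $(X,X)$ — which is verbatim the condition in the lemma. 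Hence, after invoking the displayed equivalence, the two conditions literally coincide.

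For the necessity direction ($\Rightarrow$), I take $[(\HH,h,\gamma)]=0$, pass to $(\HH,-h,-\gamma)\sim(0,0,\mathrm{id})$, and simply read off the triple $(\hat{\HH},\hat{h},\hat{\gamma})$ of $(X,X)$ and the deformation supplied by the definition. For sufficiency ($\Leftarrow$) I reverse this: given a deformation of $h\oplus\hat{h}$ to a triple of $(X,X)$, I exhibit it as a witness to $(\HH,-h,-\gamma)\sim(0,0,\mathrm{id})$, taking the auxiliary triple in Definition~\ref{def of sim} to be the given $(\hat{\HH},\hat{h},\hat{\gamma})$ and the family to be the given one, whence $[(\HH,-h,-\gamma)]=0$ and therefore $[(\HH,h,\gamma)]=0$. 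Alternatively, following the footnote's viewpoint, the deformation shows $[(\HH\oplus\hat{\HH},\,h\oplus\hat{h},\,\gamma\oplus\hat{\gamma})]=0$; since any triple of $(X,X)$ represents $0$ (checked at once by the constant deformation $\tilde{h}_t=-\hat{h}$), the left side equals $[(\HH,h,\gamma)]+0=[(\HH,h,\gamma)]$, giving the same conclusion.

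The only genuine obstacle is the sign mismatch: Definition~\ref{def of sim} is built with $-h$ on the first summand, whereas the lemma is phrased with $+h$, so one must not match them verbatim. Routing this through the self-inverse property of $0$ (equivalently, replacing $(\HH,h,\gamma)$ by its inverse throughout) is the single point requiring care; the rest is a direct transcription of Definition~\ref{def of sim}, with no new analytic estimate or construction required.
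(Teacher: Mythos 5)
Your proof is correct and matches the paper's (unstated) argument: the lemma is asserted to follow directly from Definition~\ref{def of sim}, and your unwinding --- routing the sign mismatch through the inverse element $[(\HH,-h,-\gamma)]$ of Lemma~\ref{inverse1} so that the required family starts at $+h\oplus\hat h$ with grading $\gamma\oplus\hat\gamma$ --- is a valid way to fill in that step. The only remark worth adding is that the same sign flip falls out even more directly from the symmetry of $\sim$ (Lemma~\ref{symmetric}): $(\HH,h,\gamma)\sim(0,0,\mathrm{id})$ iff $(0,0,\mathrm{id})\sim(\HH,h,\gamma)$, and unwinding the latter places $\HH$ in the \emph{second} slot of Definition~\ref{def of sim}, which yields the lemma's condition verbatim without invoking the group structure.
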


\if0
In physics, $h$ and $h'$ correspond to two families of
Dirac operators or Hamiltonians which we would like to compare
when they are both characterized by the same parameter space set $(X,A)$.
A typical choice we consider is a one-parameter family described by
an interval or a one-dimensional disk $X=D^1$ 
with the coordinate taken as $x\in [0,1]$. In this case, $A$ is 
the two end points or the zero-dimensional sphere: $\partial D^1=S^0=\{0,1\}$.
We are interested in a question how much 
the gapped Dirac operators $h_{x=1}$ and
$h'_{x=1}$, both of which have a symmetry with $\gamma$,
are topologically different.
To this end, we should take $h_{x=0}=h'_{x=0}$ with, for example
trivial gauge field background.
$\hat{h} $ represents an additional family of systems which
are gapped everywhere.
$\tilde{h}_t$ for $t>0$ allows interaction among the states in 
$\HH, \HH'$ and $\hat{\HH}$
in addition to deformation of $h$ and $h'$ themselves.
When $h$ and $h'$ are in the same equivalence class
$\tilde{h}_t$ can be gapped in a whole region $X=D^1$ at some $t\in [0,1]$.
But if $h$ and $h'$ are in the different equivalence classes,
$\tilde{h}_t$ is gapless at any $t$, at least at some $x \in X$.
For example, when we identify $h_{x=1}$ and $h'_{x=1}$
as the Dirac operators in four dimensions with 
the $SU(2)$ connection, $K^0(D^1,S^0)=\Z_2$ describes
the Witten anomaly [Ref. Witten].
\fi

There is another expression for the inverse.
\begin{lemma}\label{inverse2}
$[(\HH, h,\gamma)]+[(\HH, h,-\gamma)]=0$ holds. Namely, 
$[(\HH, h,-\gamma)]$ is the inverse element of $[(\HH, h,\gamma)]$.
%\footnote{
%This expression is generalized to the case with Clifford symmetries
%in addition to $\gamma$ by flipping the sign of only one element of Clifford
%generators keeping the sign of $h$ and the other Clifford generators.
%The sign flip corresponds to the orientation flip
%of the finite-dimensional vector space $V$ which determines the Clifford algebra $Cl(V)$.}.
\end{lemma}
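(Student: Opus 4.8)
The plan is to reduce the claim to the inverse formula already established in Lemma~\ref{inverse1}. By definition the left-hand side is the class of the triple $(\HH \oplus \HH,\, h \oplus h,\, \gamma \oplus (-\gamma))$, while Lemma~\ref{inverse1} tells us that the inverse of $[(\HH,h,\gamma)]$ is $[(\HH,-h,-\gamma)]$. Hence it suffices to prove the single identity $[(\HH, h, -\gamma)] = [(\HH, -h, -\gamma)]$; that is, once the grading is taken to be $-\gamma$, flipping the sign of $h$ does not change the $K^0$-class.

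First I would observe that $(\HH, h, -\gamma)$ and $(\HH, -h, -\gamma)$ are \emph{isomorphic} triples in the sense of Lemma~\ref{lem:isoK0}, with the bundle isomorphism given by $S := \gamma$ itself. Indeed, $\gamma$ is a continuous family of unitary operators $\gamma_x : \HH_x \to \HH_x$, hence a genuine automorphism of the Hilbert bundle $\HH$, and Definition~\ref{def of triple} guarantees $\gamma^2 = \mathrm{id}$ and $\{h,\gamma\}=0$. Therefore
\[
\gamma\, h\, \gamma^{-1} = \gamma h \gamma = -\gamma^2 h = -h, \qquad \gamma\,(-\gamma)\,\gamma^{-1} = -\gamma,
\]
so conjugation by $S=\gamma$ carries $(\HH, h, -\gamma)$ onto $(\HH, -h, -\gamma)$. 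Before invoking Lemma~\ref{lem:isoK0} I would also check the (immediate) point that both are legitimate triples of $(X,A)$: $-\gamma$ is again unitary, self-adjoint and squares to the identity, $-h$ remains bounded self-adjoint Fredholm, the anticommutator $\{-h,-\gamma\}=\{h,\gamma\}=0$, and the kernel condition $\Ker h_x=\{0\}$ for $x\in A$ is untouched by the sign changes.

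Applying Lemma~\ref{lem:isoK0} then yields $(\HH, h, -\gamma)\sim(\HH, -h, -\gamma)$, hence $[(\HH, h, -\gamma)] = [(\HH, -h, -\gamma)]$, which by Lemma~\ref{inverse1} equals $-[(\HH,h,\gamma)]$. This gives $[(\HH,h,\gamma)] + [(\HH,h,-\gamma)] = 0$, as claimed.

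I do not expect a genuine obstacle here; the only thing to get right is the recognition that the grading operator $\gamma$ is simultaneously the unitary that reverses the sign of $h$ (this is exactly the content of $\gamma^2=\mathrm{id}$ together with $\{h,\gamma\}=0$), so that Lemma~\ref{lem:isoK0} applies verbatim. As an alternative that avoids Lemma~\ref{lem:isoK0} entirely, I could argue directly from Lemma~\ref{zero}: taking $\hat{\HH}=0$ and deforming the summed operator on $\HH\oplus\HH=\HH\otimes\C^2$ by
\[
\tilde h_t := (1-t)\,(h\oplus h) + t\begin{pmatrix} 0 & \gamma \\ \gamma & 0 \end{pmatrix},
\]
whose two terms anticommute both with each other and with the fixed grading $\gamma\oplus(-\gamma)$, one computes $\tilde h_t^2 = \big((1-t)^2 h^2 + t^2\big)\otimes\mathrm{id}$, which is positive for every $t>0$. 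Hence the kernel is trivial for all $t\in(0,1]$ and, at $t=0$, on $A$, exhibiting a deformation to a triple of $(X,X)$ and proving directly that the class vanishes.
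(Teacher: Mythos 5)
Your proposal is correct, but your primary argument takes a genuinely different route from the paper, while your ``alternative'' is in fact the paper's own proof. The paper proves the lemma directly via Lemma~\ref{zero}: it forms $(\HH\oplus\HH,\,h\oplus h,\,\gamma\oplus(-\gamma))$ and deforms along $\tilde h_t=(1-t)(h\oplus h)+t\left(\begin{smallmatrix}0&\gamma\\ \gamma&0\end{smallmatrix}\right)$, using exactly the anticommutation and positivity computation you give at the end. Your main argument instead reduces the claim to Lemma~\ref{inverse1}(3) by observing that conjugation by $S=\gamma$ (a unitary bundle automorphism, since $\gamma^2=\mathrm{id}$) carries $(\HH,h,-\gamma)$ to $(\HH,-h,-\gamma)$, so Lemma~\ref{lem:isoK0} gives $[(\HH,h,-\gamma)]=[(\HH,-h,-\gamma)]=-[(\HH,h,\gamma)]$; the algebra $\gamma h\gamma^{-1}=-h$ and $\gamma(-\gamma)\gamma^{-1}=-\gamma$ is verified correctly and the reduction is sound. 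What your route buys is conceptual economy: it isolates the single observation that the grading operator itself is the unitary intertwining $h$ with $-h$, and it reuses two lemmas already in place rather than constructing a new homotopy. What the paper's direct deformation buys is independence from Lemma~\ref{lem:isoK0} and a form that adapts more readily to the higher-degree generalizations sketched in Sec.~\ref{sec:K-applications}, where additional Clifford generators are present and conjugation by a single $\gamma$ no longer interacts so cleanly with the full symmetry data. Either way the statement is established; there is no gap.
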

\begin{proof}
We use Lemma~\ref{zero}. 
The operator in a triple 
$$(\HH, h,\gamma) \oplus (\HH, h,-\gamma)
=\left(\HH \oplus \HH, %\HH \otimes \C^2, 
\left(
    \begin{array}{cc}
      h & 0 \\
       0   &   h
    \end{array}
  \right),
\left(
    \begin{array}{cc}
     \gamma & 0 \\
       0   &   -\gamma
    \end{array}
  \right),
  \right)
$$
with respect to $(X,A)$ has the following
one-parameter continuous deformation with $0\leq t \leq 1$,
$$
\tilde{h}_t := (1-t)  
\left(
    \begin{array}{cc}
      h & 0 \\
       0   &   h
    \end{array}
  \right)
  +
t  
\left(
    \begin{array}{cc}
      0& \gamma \\
       \gamma   &  0
    \end{array}
  \right).
$$
Noting 
$$
\left(
    \begin{array}{cc}
      h & 0 \\
       0   &   h
    \end{array}
  \right), \quad
\left(
    \begin{array}{cc}
      0& \gamma \\
       \gamma   &  0
    \end{array}
  \right),\quad
\left(
    \begin{array}{cc}
      \gamma & 0 \\
       0   &   -\gamma
    \end{array}
  \right)
$$
  all anticommute, we can continuously deform
the triple $(\HH, h,\gamma)+(\HH, h,-\gamma)$
to that of $(X,X)$,
since the kernel of $\tilde{h}_1$ is trivial.
\end{proof}

%We remark here that the above deformation in the proof of 
%Lemma~\ref{inverse2} is also useful 
%in the proof of our main theorem.

\begin{remark}
From Lemma~\ref{inverse1} and  Lemma~\ref{inverse2},
$$
[(\HH, h,\gamma)]=[(\HH, -h,\gamma)] \in K^0(X,A)
$$
follows.
%\footnote{In generalization to $K^i(X,A)$ with nonzero degree $i$, 
%we need an additional sign depending on $(-1)^i$.}.
\end{remark}

\subsubsection*{Product}

\begin{lemma}

Let $A$ and $B$ closed subsets of compact Hausdorff space $X$.
For a triple $(\HH, h,\gamma)$ of the pair $(X,A)$
and $(\HH',h',\gamma')$ with respect to $(X,B)$, 
we define the product below as a triple with respect to $(X,A\cup B)$
\footnote{
%\fix{
The tensor product of two Hilbert bundles $\HH$ and $\HH^\prime $ is a Hilbert bundle whose 
fiber is $(\HH \otimes \HH^\prime)_x =\HH_x \otimes \HH^\prime_x$ for $x \in X$.
%}
The tensor product $\HH_x \otimes \HH_x'$ between the Hilbert spaces $\HH_x$ and $\HH'_x$
is defined by a completion of their algebraically defined tensor product:
the completion is done with the inner product
the algebraic tensor product naturally has.
The algebraic tensor product of the bounded operators on  $\HH_x$ and $\HH'_x$
is canonically extended to the completion. We simply denote it by $\otimes$ between the operators.}.
$$
( \HH \otimes \HH', (h \otimes {\rm id}) + (\gamma \otimes h'), \gamma \otimes \gamma').
$$
Moreover, its equivalence class in $K^0(X,A \cup B)$
depends only on the equivalence classes in $K^0(X,A)$ and $K^0(X,B)$
before taking the product.
\end{lemma}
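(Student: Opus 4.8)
The plan is to first verify that the stated data define a triple of $(X,A\cup B)$, and then to show that the product respects the relation $\sim$ separately in each variable, combining the two by transitivity. Write $H:=(h\otimes{\rm id})+(\gamma\otimes h')$ and $\Gamma:=\gamma\otimes\gamma'$. Self-adjointness and operator-norm continuity of $H$, and the relations $\Gamma^*=\Gamma$, $\Gamma^2={\rm id}$, are inherited from the corresponding properties of $h,\gamma,h',\gamma'$, and the anticommutation $\{\Gamma,H\}=0$ follows termwise from $\{\gamma,h\}=0$, $\{\gamma',h'\}=0$ and $\gamma^2={\rm id}$. The decisive computation is the square: since $\{h\otimes{\rm id},\,\gamma\otimes h'\}=(h\gamma+\gamma h)\otimes h'=0$, one obtains
\[
H^2=(h^2\otimes{\rm id})+({\rm id}\otimes h'^2),
\]
a sum of two commuting non-negative operators.

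From this formula everything about ellipticity follows fiberwise. At $x$ one has $\Ker H_x=\Ker(h_x^2\otimes{\rm id})\cap\Ker({\rm id}\otimes h_x'^2)=\Ker h_x\otimes\Ker h_x'$, which is finite dimensional and is $\{0\}$ whenever $x\in A$ (so $h_x$ is invertible) or $x\in B$ (so $h_x'$ is invertible); this yields condition~5 for $(X,A\cup B)$. For Fredholmness I would use $\sigma(H_x^2)=\sigma(h_x^2)+\sigma(h_x'^2)$: since $h_x,h_x'$ are self-adjoint Fredholm, each of $\sigma(h_x^2),\sigma(h_x'^2)$ lies in $\{0\}\cup[\delta^2,\infty)$ for a common $\delta>0$, so their sum has a spectral gap $(0,\delta^2)$ above $0$, whence $0$ is isolated in $\sigma(H_x^2)$ with finite-dimensional kernel and $H_x$ is Fredholm. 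I expect this to be the first real obstacle, because $h\otimes{\rm id}$ by itself is \emph{not} Fredholm; ellipticity is recovered only through the combined square.

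For well-definedness I would argue one variable at a time, reusing the homotopies from the proofs of Lemmas~\ref{reflection} and \ref{transitive}. Fix $(\HH',h',\gamma')$ over $(X,B)$ and suppose $(\HH,h,\gamma)\sim(\tilde\HH,\tilde h,\tilde\gamma)$ over $(X,A)$, realized by an auxiliary $(X,X)$-triple $(\hat\HH,\hat h,\hat\gamma)$ and a family $\{\tilde h_t\}$ that is a triple of $(X\times[0,1],\,A\times[0,1]\cup X\times\{1\})$. The key point is that multiplying this entire family by the pullback of $(\HH',h',\gamma')$ is legitimate by the part just proved, applied over the base $X\times[0,1]$: the product is a triple of $(X\times[0,1],\,(A\cup B)\times[0,1]\cup X\times\{1\})$, it is a triple of $(X,X)$ at $t=1$, and its value at $t=0$ splits as
\[
-\big[(\HH,h,\gamma)*(\HH',h',\gamma')\big]\ \oplus\ \big[(\tilde\HH,\tilde h,\tilde\gamma)*(\HH',h',\gamma')\big]\ \oplus\ \big[(\hat\HH,\hat h,\hat\gamma)*(\HH',h',\gamma')\big],
\]
the sign being exactly right because negating both $h$ and $\gamma$ negates the product operator and grading, and the last summand being a triple of $(X,X)$ by the first part. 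This is precisely the data of Definition~\ref{def of sim}, so the two products are $\sim$-equivalent.

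The second variable carries a genuine sign subtlety, which I expect to be the main bookkeeping obstacle. The defining family for $(\HH',h',\gamma')\sim(\tilde\HH',\tilde h',\tilde\gamma')$ begins at the factor $(-h',-\gamma')$, and tensoring $(\HH,h,\gamma)$ with it produces operator $(h\otimes{\rm id})-(\gamma\otimes h')$ and grading $-(\gamma\otimes\gamma')$, which is \emph{not} literally the negative of the product. I would reconcile this with two facts established earlier: conjugation by the unitary ${\rm id}\otimes\gamma'$ carries $(h\otimes{\rm id})+(\gamma\otimes h')$ to $(h\otimes{\rm id})-(\gamma\otimes h')$ while fixing the grading, so by Lemma~\ref{lem:isoK0} they represent the same class; and the identity $[(\HH,h,\gamma)]=[(\HH,-h,\gamma)]$ from the remark after Lemma~\ref{inverse2} absorbs the residual overall operator sign. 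Running the product family $h\otimes{\rm id}+\gamma\otimes\{\tilde h_t'\}$ then realizes a relation $S\sim\big[(\HH,h,\gamma)*(\tilde\HH',\tilde h',\tilde\gamma')\big]$ for a triple $S$ with $[S]=[(\HH,h,\gamma)*(\HH',h',\gamma')]$, and chaining the two one-variable results through transitivity (Lemma~\ref{transitive}) gives that the class of the product depends only on the classes of its factors.
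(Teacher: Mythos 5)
Your proof is correct and follows essentially the same route as the paper's: verify the triple conditions via the mutual anticommutation of $h\otimes{\rm id}$, $\gamma\otimes h'$, $\gamma\otimes\gamma'$ and the square formula $H^2=h^2\otimes{\rm id}+{\rm id}\otimes h'^2$, then establish well-definedness one factor at a time by tensoring the defining deformation family with the fixed factor. The paper's version is much terser --- it omits the Fredholm verification and the sign bookkeeping in the second variable, both of which you supply correctly (the conjugation by ${\rm id}\otimes\gamma'$ combined with $[(\HH,h,\gamma)]=[(\HH,-h,\gamma)]$ is exactly the right fix).
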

\begin{proof}
The above product is a triple of the pair $(X ,\emptyset)$,
since the three operators
 $$
 h \otimes {\rm id},\qquad \gamma \otimes h',\qquad \gamma \otimes \gamma'
 $$
 all anticommute. We can also show that
$$
 \{(h \otimes {\rm id}) + (\gamma \otimes h')\}^2= (h \otimes {\rm id})^2 + ({\rm id} \otimes h')^2 
$$
and the right-hand side is a sum of semi-positive self-adjoint operators.
The operator has a kernel only when both $h \otimes {\rm id}$ and ${\rm id} \otimes h'$ have kernels.
Therefore, the kernel is always trivial on $A \cup B$.
The uniqueness of the product is shown as follows.
%Here we just give a brief summary.
Fixing $(\HH', h', \gamma')$ we replace $(\HH,h,\gamma)$
by another in the same equivalence class.
For $\hat{h}$'s and $\tilde{h}$'s in definition of the relation $\sim$,
we can directly confirm the equivalence of the two products.
The same applies when we replace $(\HH', h', \gamma')$ by another element 
with $(\HH, h, \gamma)$ fixed.
\end{proof}
The above Lemma defines the product of the $K^0$ groups.
\begin{definition}
Let $A$ and $B$ be closed subsets of a compact Hausdorff space $X$.
The product
$$
K^0(X,A) \times K^0(X,B) \to K^0(X, A\cup B)
$$
is defined by
$$
[(\HH, h,\gamma)]\cdot [(\HH',h',\gamma')] =
[
( \HH \otimes \HH', (h \otimes {\rm id}) + (\gamma \otimes h'), \gamma \otimes \gamma')].
$$
\end{definition}

\begin{lemma}
\begin{enumerate}
\item
The product is associative.
\item
The product and sum are left- and right-distributive.
\item
The element $[(\C, 0, {\rm id})] \in K^0(X ,\emptyset)$ is the identity:
$[(\HH, h,\gamma)]\cdot [(\C, 0, {\rm id})] = [(\C, 0, {\rm id})]\cdot [(\HH, h,\gamma)]=[(\HH, h,\gamma)]$.
\end{enumerate}
\end{lemma}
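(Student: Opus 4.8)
The plan is to reduce all three assertions to a single observation: the operators appearing on the two sides of each identity literally coincide under the canonical isomorphisms of Hilbert-bundle tensor products, after which Lemma~\ref{lem:isoK0} (isomorphic triples represent the same $K^0$ class) finishes the argument. In particular none of the three parts requires building a genuine homotopy $\tilde h_t$; each is settled by an equality of operators followed by one appeal to Lemma~\ref{lem:isoK0}. Throughout I may pass freely between representatives because well-definedness of the product was already established in the preceding lemma.

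For associativity I would expand both bracketings of $(\HH_1,h_1,\gamma_1)\cdot(\HH_2,h_2,\gamma_2)\cdot(\HH_3,h_3,\gamma_3)$ as triples on $\HH_1\otimes\HH_2\otimes\HH_3$. Applying the defining formula twice, the product of the first two factors multiplied by the third yields the operator
\[
h_1\otimes{\rm id}\otimes{\rm id}+\gamma_1\otimes h_2\otimes{\rm id}+\gamma_1\otimes\gamma_2\otimes h_3,
\]
with grading $\gamma_1\otimes\gamma_2\otimes\gamma_3$, and expanding the other bracketing produces exactly the same three terms and the same grading. The point worth checking is that the mixed term $\gamma_1\otimes\gamma_2\otimes h_3$ is generated identically by both groupings; hence the two triples agree under the canonical associator, and Lemma~\ref{lem:isoK0} applies.

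Distributivity goes the same way through the canonical isomorphism $(\HH_1\oplus\HH_2)\otimes\HH'\cong(\HH_1\otimes\HH')\oplus(\HH_2\otimes\HH')$ and its right-hand analogue. Under this identification the operator $(h_1\oplus h_2)\otimes{\rm id}+(\gamma_1\oplus\gamma_2)\otimes h'$ of the product-of-the-sum decomposes as $(h_1\otimes{\rm id}+\gamma_1\otimes h')\oplus(h_2\otimes{\rm id}+\gamma_2\otimes h')$, which is precisely the direct sum of the two separate products, and likewise for the grading; since the $K^0$-sum is by definition the class of the direct sum of representatives, Lemma~\ref{lem:isoK0} gives left distributivity, and the symmetric computation gives the right one. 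For the identity I would use $\HH\otimes\C\cong\HH$ and $\C\otimes\HH\cong\HH$: substituting $(\C,0,{\rm id})$ into the product formula collapses $h\otimes{\rm id}+\gamma\otimes 0$ to $h\otimes{\rm id}$ and $\gamma\otimes{\rm id}$ to $\gamma$, so the product triple is isomorphic to $(\HH,h,\gamma)$, and Lemma~\ref{lem:isoK0} concludes. Note that $(\C,0,{\rm id})$ is a legitimate triple only for $(X,\emptyset)$, because $\Ker 0\neq\{0\}$, which is exactly why it lives in $K^0(X,\emptyset)$ and sends $(X,A)$-classes into $(X,A\cup\emptyset)=(X,A)$.

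The only real difficulty is bookkeeping rather than analysis, and it is the handling of the graded (Koszul-type) signs that one would normally worry about when matching graded tensor products. Here those signs are automatic: because the product operator is built with the explicit $\gamma$-insertion $\gamma\otimes h'$ rather than a bare $h\otimes h'$, the two associativity bracketings land on the \emph{identical} operator with no residual sign to cancel, so no sign-correcting homotopy is ever needed. Thus the anticipated obstacle, matching graded tensor products on the nose, dissolves precisely because of the $\gamma$-twisted form of the product, and the whole lemma reduces to three applications of Lemma~\ref{lem:isoK0}.
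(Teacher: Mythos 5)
Your proposal is correct and follows essentially the same route as the paper: the paper's proof likewise just expands both bracketings to the single operator $h\otimes{\rm id}\otimes{\rm id}+\gamma\otimes h'\otimes{\rm id}+\gamma\otimes\gamma'\otimes h''$ with grading $\gamma\otimes\gamma'\otimes\gamma''$, checks distributivity directly through the canonical direct-sum decomposition, and observes the identity from the definition. Your explicit appeal to Lemma~\ref{lem:isoK0} to absorb the canonical isomorphisms is a reasonable way of making precise what the paper leaves implicit.
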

\begin{proof}
For $\alpha=[(\HH,h,\gamma)] \in K^0(X,A)$, 
$\alpha'=[(\HH',h',\gamma')] \in K^0(X,A')$ and \\
$\alpha''[(\HH'',h'',\gamma'')] \in K^0(X,A'')$,
both of $(\alpha\cdot \alpha')\cdot \alpha''$ and $\alpha \cdot(\alpha' \cdot \alpha'')$ are given by
the same element
$$
[(\HH \otimes \HH' \otimes \HH', h \otimes {\rm id} \otimes {\rm id} +\gamma \otimes h' \otimes {\rm id}
+ \gamma \otimes \gamma' \otimes h'', \gamma \otimes \gamma' \otimes \gamma'')].
$$
The left- and right-distributivity can also be directly checked.
It is also obvious from definition that $[(\C,0, {\rm id})]$ is the identity.
\end{proof}

\begin{lemma}
The product $K^0(X,A) \times K^0(X,B) \to K^0(X,A\cup B)$ is commutative.
Namely, the two  maps 
$K^0(X,A) \times K^0(X,B) \to K^0(X,A\cup B)$
and $K^0(X,B) \times K^0(X,A) \to K^0(X,A\cup B)$ are identical\footnote{
In generalization in Sec.~\ref{sec:K-applications} to $K^i(X,A)$ and $K^j(X,B)$
with nonzero degrees, we need an additional sign depending on $(-1)^{ij}$.
This sign comes from the orientation flip of the finite-dimensional vector space
$V$ which determined the Clifford algebra $Cl(V)$.
}. 
\end{lemma}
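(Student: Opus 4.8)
The plan is to prove commutativity by exhibiting an explicit isomorphism of triples between the two products and then invoking Lemma~\ref{lem:isoK0}. Concretely, I must compare the triple $(\HH\otimes\HH',\,(h\otimes{\rm id})+(\gamma\otimes h'),\,\gamma\otimes\gamma')$ representing $[(\HH,h,\gamma)]\cdot[(\HH',h',\gamma')]$ with the triple $(\HH'\otimes\HH,\,(h'\otimes{\rm id})+(\gamma'\otimes h),\,\gamma'\otimes\gamma)$ representing the opposite product. The naive flip $T:\HH\otimes\HH'\to\HH'\otimes\HH$, $T(u\otimes v)=v\otimes u$, does \emph{not} intertwine these two operators, so the candidate isomorphism is the Koszul-corrected (graded) flip $\Sigma:=T\circ\Xi$, where $\Xi$ is the fibrewise sign operator
$$\Xi:=\tfrac12\left({\rm id}\otimes{\rm id}+\gamma\otimes{\rm id}+{\rm id}\otimes\gamma'-\gamma\otimes\gamma'\right).$$
On vectors homogeneous for $\gamma$ and $\gamma'$ this is exactly $\Sigma(u\otimes v)=(-1)^{|u||v|}v\otimes u$.

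First I would record the elementary properties of $\Sigma$. Since $\Xi$ is a self-adjoint involution (its eigenvalue is $-1$ precisely on the odd--odd subspace and $+1$ elsewhere) and $T$ is a unitary bundle isomorphism, $\Sigma$ is a continuous unitary isomorphism of Hilbert bundles. Because $\Xi$ is a polynomial in the mutually commuting operators $\gamma\otimes{\rm id}$, ${\rm id}\otimes\gamma'$ and $\gamma\otimes\gamma'$, it commutes with $\gamma\otimes\gamma'$, whence $\Sigma(\gamma\otimes\gamma')\Sigma^{-1}=T(\gamma\otimes\gamma')T^{-1}=\gamma'\otimes\gamma$; so the gradings match.

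The core of the argument --- and the step I expect to be the main obstacle --- is the intertwining of the operators,
$$\Sigma\big[(h\otimes{\rm id})+(\gamma\otimes h')\big]\Sigma^{-1}=(h'\otimes{\rm id})+(\gamma'\otimes h).$$
I would verify this by writing $A=\gamma\otimes{\rm id}$, $B={\rm id}\otimes\gamma'$, $C=AB=\gamma\otimes\gamma'$ and using only $\{\gamma,h\}=0$ and $\{\gamma',h'\}=0$. A short computation with these relations gives $\Xi\,(h\otimes{\rm id})\,\Xi=h\otimes\gamma'$ and $\Xi\,(\gamma\otimes h')\,\Xi={\rm id}\otimes h'$; applying $T$ then yields $\gamma'\otimes h$ and $h'\otimes{\rm id}$ respectively, which sum to the desired operator. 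The delicate point is that the two terms of the product are both built from the \emph{odd} operators $h,h'$: the $\gamma$ inserted in the second term is precisely the Koszul sign that the flip requires, and it is the cancellation of these signs that makes $\Sigma$ (rather than $T$) the intertwiner and, crucially, leaves \emph{no} residual sign in the $K^0\times K^0$ case.

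Finally, having produced a unitary bundle isomorphism $\Sigma$ with $\Sigma\,[(h\otimes{\rm id})+(\gamma\otimes h')]\,\Sigma^{-1}=(h'\otimes{\rm id})+(\gamma'\otimes h)$ and $\Sigma(\gamma\otimes\gamma')\Sigma^{-1}=\gamma'\otimes\gamma$, the two product triples are isomorphic in the sense of Lemma~\ref{lem:isoK0}, hence represent the same class in $K^0(X,A\cup B)$; this is exactly $[(\HH,h,\gamma)]\cdot[(\HH',h',\gamma')]=[(\HH',h',\gamma')]\cdot[(\HH,h,\gamma)]$. I would close by remarking that in the higher-degree generalization of Sec.~\ref{sec:K-applications} the extra Clifford generators carry odd degree, so flipping two of them across $\Sigma$ produces the sign $(-1)^{ij}$ anticipated in the footnote; for degree zero both factors are even and this sign is trivial.
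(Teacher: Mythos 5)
Your proof is correct and follows essentially the same route as the paper: both construct the Koszul-signed flip (your $\Sigma=T\circ\Xi$ is exactly the paper's $S$, which switches factors and multiplies by $(-1)^{ij}$ on $\HH^i\otimes{\HH'}^j$), verify that it intertwines the two product operators and the gradings, and conclude via Lemma~\ref{lem:isoK0}. The only difference is that you spell out the intertwining computation that the paper merely asserts.
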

\begin{proof}
Take $\alpha=[({\cal H},h,\gamma)] \in K^0(X,A)$ and
$\alpha'=[({\cal H}',h',\gamma')] \in K^0(X,B)$.
Let ${\cal H}={\cal H}^0 \oplus {\cal H}^1$ and
 ${\cal H}'={{\cal H}'}^0 \oplus {{\cal H}'}^1$ be
 the ${\bf Z}_2$-gradings associated with $\gamma$ and $\gamma'$ respectively.
We define an isomorphism
 $
 S:{\cal H} \otimes {\cal H}' \to {\cal H}' \otimes {\cal H}
 $
by switching the left and the right in the tensor product
 together with the multiplication by $(-1)^{ij}$ on ${\cal H}^i \otimes {{\cal H}'}^j$.
 Then we have
 $$
 S(h' \otimes {\rm id}) S^{-1}= \gamma \otimes h',\quad
S( \gamma' \otimes h)S^{-1}= h \otimes {\rm 1},\quad 
S(\gamma' \otimes \gamma)S^{-1}=\gamma\otimes \gamma'
$$
which and Lemma~\ref{lem:isoK0} imply $\alpha' \alpha =\alpha \alpha'$.
%A proof is given in \textcolor{red}{[Proof or reference?]}
%$\iota=(+\gamma \otimes \gamma' -\gamma\otimes {\rm id}-  {\rm id} \otimes \gamma'- {\rm id}\otimes{\rm id})/2$,
%$\iota^2=1$.
%Proposition~\ref{commutative proof} of our Appendix.
\end{proof}

%\subsubsection{外部積}
\begin{definition}
Consider two pairs of $(X,A)$ and $(Y,B)$ where
$X$ and $Y$ are compact Hausdorff spaces and 
$A$ and $B$ are their closed subsets $A \subset X$ and $B \subset Y$, respectively.
The projections
$$
p_X: X \times Y \to X, \qquad p_Y: X \times Y \to Y
$$
induce their pullbacks
$$
p_X^*: K^0(X,A) \to K^0( (X,A) \times (Y,B)),\qquad
p_Y^*: K^0(Y,B) \to K^0( (X,A) \times (Y,B)),
$$
by which we define the exterior product
$$
K^0(X,A) \times K^0(Y,B) \to K^0((X,A) \times (Y,B)).
$$
Specifically for $\omega_X \in K^0(X,A)$ and $\omega_Y \in K^0(Y,B)$,
$$
\omega_X \cdot \omega_Y = p_X^* \omega_X \cdot p_Y^* \omega_Y.
$$
Here
$(X,A) \times (Y,B):= (X \times Y, A \times Y \cup X \times B)$.
\end{definition}

\subsubsection{$K^0( \{{\rm pt}\}, \emptyset)$ group and the Fredholm index}

Let us consider the case where $X$ is a point $\{ {\rm pt}\}$ and $A$ is an empty set $\emptyset$.
When $(\HH,h,\gamma)$ is a triple of the pair $(\{ {\rm pt}\}, \emptyset)$,
$\HH$ is a single Hilbert space.
As is shown below, the elements of the $K^0( \{{\rm pt}\}, \emptyset)$
are classified by the Fredholm operator index,
which is topological and stable against  perturbation.

%In our formulation using the self-adjoint Fredholm operators,
%the following consideration about $K^0( \{ {\rm pt}\}, \emptyset)$
%establishes invariance of the Fredholm operator index under perturbation.

The next statement is well known as a starting point of the notion of the index.
\begin{theorem}(Fredholm index)\label{index}
The following %\fix{
abelian group isomorphism %} 
holds. 
$$K^0( \{ {\rm pt}\}, \emptyset)\cong \Z,\qquad
[(\HH, h,\gamma)] \mapsto \trace \gamma|_{\Ker h}.
$$
The generator corresponds to $[(\C, 0, {\rm id})]$.
\end{theorem}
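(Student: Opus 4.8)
The plan is to establish the stated map $\mathrm{ind}\colon[(\HH,h,\gamma)]\mapsto\trace\gamma|_{\Ker h}$ as a well-defined group isomorphism, in four steps. First I would record the structure at a point: since $X=\{\mathrm{pt}\}$ a triple is a single Hilbert space $\HH$ with a bounded self-adjoint Fredholm operator $h$ and an involutive grading $\gamma$ (so $\gamma^2=\mathrm{id}$) with $\{\gamma,h\}=0$. Because $h$ is Fredholm and self-adjoint, $\Ker h$ is finite dimensional, and $\{\gamma,h\}=0$ forces $\gamma(\Ker h)\subset\Ker h$; as $\gamma^2=\mathrm{id}$, the restriction $\gamma|_{\Ker h}$ is a finite-dimensional involution and $\trace\gamma|_{\Ker h}\in\Z$ is the signed count of its $\pm1$ eigenspaces. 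Writing $\HH=\HH^+\oplus\HH^-$ for the eigenspace decomposition of $\gamma$ and letting $D\colon\HH^+\to\HH^-$ be the corresponding block of the off-diagonal operator $h$ (a Fredholm operator), one has $\Ker h\cap\HH^+=\Ker D$ and $\Ker h\cap\HH^-=\Ker D^{*}$, whence $\trace\gamma|_{\Ker h}=\dim\Ker D-\dim\Ker D^{*}$ is the Fredholm index of $D$. Additivity of this number under direct sums is immediate, so $\mathrm{ind}$ is a homomorphism once it is well defined.

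Next I would verify invariance under the relation $\sim$ of Definition~\ref{def of sim}, which is the conceptual core. Suppose $(\HH,h,\gamma)\sim(\HH',h',\gamma')$, witnessed by an invertible $\hat h$ on $\hat\HH$ and a norm-continuous path $\tilde h_t$ of bounded self-adjoint Fredholm operators anticommuting with the fixed grading $\Gamma:=-\gamma\oplus\gamma'\oplus\hat\gamma$, with $\tilde h_0=-h\oplus h'\oplus\hat h$ and $\tilde h_1$ invertible (at a point $A=\emptyset$, so the only kernel condition is $\Ker\tilde h_1=\{0\}$). The function $t\mapsto\trace\Gamma|_{\Ker\tilde h_t}$ equals the Fredholm index of the off-diagonal block of $\tilde h_t$ relative to $\Gamma$; since that block is norm-continuous in $t$ and the Fredholm index is locally constant, this integer is constant on $[0,1]$. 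Evaluating at $t=1$ gives $0$, and at $t=0$ gives $-\trace\gamma|_{\Ker h}+\trace\gamma'|_{\Ker h'}$ (the $\hat h$-summand is invertible so contributes nothing, and the sign on the first factor comes from both $-h$ and $-\gamma$). Hence $\trace\gamma|_{\Ker h}=\trace\gamma'|_{\Ker h'}$, so $\mathrm{ind}$ is well defined.

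It then remains to prove bijectivity. For surjectivity, $[(\C,0,\mathrm{id})]\mapsto1$, so by additivity the image is a subgroup of $\Z$ containing $1$, i.e.\ all of $\Z$; since $n[(\C,0,\mathrm{id})]$ realizes each $n$, this also identifies $[(\C,0,\mathrm{id})]$ as a generator. For injectivity I would show $\trace\gamma|_{\Ker h}=0$ forces $[(\HH,h,\gamma)]=0$ via Lemma~\ref{zero}. With the block $D$ as above, vanishing index means $\dim\Ker D=\dim\Coker D$; choosing a finite-rank $P\colon\HH^+\to\HH^-$ restricting to an isomorphism $\Ker D\xrightarrow{\cong}\Coker D$ and to $0$ on $(\Ker D)^{\perp}$, the path $D_s:=D+sP$ is a norm-continuous family of Fredholm operators of index $0$ with $D_0=D$ and $D_s$ invertible for $s\neq0$. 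The corresponding self-adjoint $h_s$ with off-diagonal block $D_s$ anticommutes with $\gamma$ for all $s$ and is invertible at $s=1$, so it deforms $(\HH,h,\gamma)$ to a triple of $(X,X)$; by Lemma~\ref{zero} with $\hat\HH=0$ the class vanishes. Combining the four steps gives the isomorphism.

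The only genuinely delicate points, which I expect to be the main obstacle, are the two appeals to stability of the Fredholm index: that $t\mapsto\trace\Gamma|_{\Ker\tilde h_t}$ is constant despite possible jumps in $\dim\Ker\tilde h_t$ (it is the \emph{signed} count that is preserved), and that $D_s=D+sP$ stays Fredholm of index $0$ and becomes invertible for $s\neq0$. Both rest on the standard fact that the index is a norm-continuous, hence locally constant, integer-valued invariant on Fredholm operators; checking that the prescribed $P$ makes $D_s$ invertible is a short direct computation using $\mathrm{Im}\,D_s=\mathrm{Im}\,D\oplus\Coker D$ for $s\neq0$.
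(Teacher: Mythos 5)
Your proposal is correct and runs on the same engine as the paper's proof: both identify $\trace\gamma|_{\Ker h}$ with the Fredholm index of the off-diagonal block $h_{-+}\colon\Ker(\gamma-{\rm id})\to\Ker(\gamma+{\rm id})$, and both derive well-definedness under $\sim$ from local constancy of that index along the norm-continuous path $\tilde h_t$, evaluated at $t=0$ and $t=1$. (The paper packages the stability argument via the spectral subspace $E_{\check h^2<\Lambda}$ rather than quoting index continuity directly, but the content is identical. One nitpick: in your $t=0$ evaluation the minus sign on the first summand comes only from $-\gamma$, since $\Ker(-h)=\Ker h$; "both $-h$ and $-\gamma$" would cancel.)

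Where you genuinely diverge is the final step. The paper proves the stronger normal-form statement $[(\HH,h,\gamma)]=\nu\,[(\C,0,{\rm id})]$ by coupling $\HH$ to an external finite-dimensional model $(\C^{\nu_+}\oplus\C^{\nu_-},0,(+{\rm id})\oplus(-{\rm id}))$ through isometric embeddings $f_\pm$ into the chiral subspaces of $\Ker h$, and deforming $-h'\oplus h$ by the off-diagonal coupling $tf$; surjectivity and injectivity then fall out simultaneously. You instead split the task: surjectivity is immediate from the generator, and injectivity is handled by an \emph{internal} finite-rank perturbation $D_s=D+sP$ pairing $\Ker D$ with $\Coker D$ when the index vanishes, followed by an appeal to Lemma~\ref{zero} with $\hat\HH=0$. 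Your route is logically sufficient and slightly leaner (it only needs the balanced case $\nu=0$ of the kernel-killing deformation); the paper's route buys the explicit finite-rank representative of every class, which it reuses as the prototype for the main theorem (Theorem~\ref{goal}) and which survives better in the generalizations to $K^i$ with Clifford symmetries mentioned in its footnote. Both deformations manifestly preserve anticommutation with the grading, so both are legitimate paths of triples.
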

\begin{proof}
  Here we give an argument according to our formulation of the $K^0$ group.
  Since the constructions in the following points 3 and 4 are the prototype
  of our main theorem (Theorem~\ref{goal}), we give a rather detailed argument.
%  a summary of one formulation\footnote{
%    Another discussion using a more primitive functional analysis is available
%    as follows.     
% Note, however that its extension to the cases with symmetries with the Clifford algebra
%    is not straightforward.
%  }.
%  \textcolor{red}{Need  references???}

  Let $(\HH,h,\gamma)$ be a triple of the pair $( \{ {\rm pt}\}, \emptyset)$,
  where $\HH$ is a single Hilbert space. We show by the following (outlines of) four steps
  that for $\nu=\trace \gamma|_{\Ker h}$ we can express
  $[(\HH,h,\gamma)]=\nu [(\C, 0,{\rm id})]$. 

\begin{enumerate}
\item
  First we show that when
$(\HH, h,\gamma) \sim (\HH', h',\gamma')$, we have
  $\trace \gamma|_{\Ker h}=\trace \gamma'|_{\Ker h'}$
  by the following three steps.
\begin{enumerate}
\item For a triple $(\hat{\HH},\hat{h},\hat{\gamma})$ with respect to
  the pair $(X,X)$, we can claim that $\trace \hat{\gamma}|_{\Ker \hat{h}}=0$
  since $\Ker \hat{h}=\{0\}$\footnote{For any $\phi \in \Ker ({\rm id}\pm \hat{\gamma})$, 
  we have $\hat{h}\phi \in \Ker ({\rm id}\mp \hat{\gamma})$ and contribution from $\phi$ and $\hat{h}\phi$
  cancel in the trace.}.
\item We consider $\check{\HH}:=\HH \oplus \HH' \oplus \hat{\HH}$ and operators on it.
  We will see below that for arbitrary bounded self-adjoint Fredholm operator $\check{h}$ on $\check{\HH}$
  which anticommutes with $\check{\gamma}:=-\gamma \oplus \gamma' \oplus \hat{\gamma}$
   and arbitrary positive real number $\Lambda$ under which $\check{h}^2$ has no spectrum 
other than finite eigenvalues with finite degeneracies,
   $\trace \check{\gamma}|_{\Ker \check{h}}$ is %\fix{
homotopy invariant on a set of Fledholm operators.
%}

 \item From the above perturbative invariance, we can show that for the operator
   $-h \oplus h' \oplus \hat{h}$
   on $\check{\HH}$, the following trace is zero:
$$
\trace \check{\gamma}|_{\Ker(-h \oplus h' \oplus \hat{h})}
=-\trace \gamma|_{\Ker h} +\trace \gamma'|_{\Ker h'} +\trace \hat{\gamma}|_{\Ker \hat{h}}=0.
$$
\end{enumerate}

\item
  The perturbative invariance is shown as follows.
  When the domain of $h$ is limited to $\Ker(\gamma -{\rm id})$,
    the range is in $\Ker(\gamma +{\rm id})$.
    For this restriction:
    $h_{-+}:\Ker(\gamma -{\rm id}) \to \Ker(\gamma +{\rm id})$,
    we have
$$
 \trace \gamma|_{\Ker h} = \dim \Ker h_{-+} - \dim \Coker h_{-+}.
 $$
The left-hand side is the Fredholm index of $h_{-+}$. One can show that this quantity is invariant under perturbation of the Fredholm operator $h_{-+}$.

%\begin{enumerate}  
%(4)

\item 

%$(\HH,h,\gamma)$が一点集合$\{ {\rm pt} \}$の上の三つ組であるとする。
%すなわち、$\HH$は単一のHilbert 空間である。$N=\trace \gamma|_{\Ker h}$
%であるとき$[(\HH,h,\gamma)]=N [(\C, 1,1)]$を示す。 アウトラインのみ記す。

  Since $h$ is a Fredholm operator, $\Ker h$ is finite-dimensional.
  Let us denote the dimension of the $\gamma=\pm 1$ 
      subeigenspace in $\Ker h$ by   $\nu_\pm$.
  We consider another triple $(\HH', h', \gamma')$ 
  %which shows a similar behavior to $(\HH,h,\gamma)$ and is 
defined by
$$
(\HH', h', \gamma'):=( \C^{\nu_+} \oplus \C^{\nu_-}, 0,  (+{\rm id}) \oplus (-{\rm id})).
$$
  When $\nu=\trace \gamma|_{\Ker h}=\nu_+ -\nu_-$, we can show that
  $[(\HH', h', \gamma')] = \nu [(\C, 0, {\rm id})]$.

Then if we can show that $(\HH',h',\gamma') \sim (\HH,h,\gamma)$,
we obtain $[(\HH,h,\gamma)]=\nu [(\C,0, {\rm id})]$.
For this, it is sufficient to show that
the bounded self-adjoint Fredholm operator $-h \oplus 0$
on $\HH \oplus \HH'$ can be continuously deformed to
another operator whose kernel is $\{0\}$\footnote{
  There is a more intuitive discussion to show
  $(\HH',h',\gamma') \sim (\HH,h,\gamma)$.
  Let us denote the restriction of domain of
  $\gamma$ onto $\Ker h$ and $(\Ker h)^\perp$ by $\gamma^0$ and $\gamma^\perp$, respectively.
  Also let $h^\perp$ be the restriction of domain of $h$ onto
  $(\Ker h)^\perp$. In this case,
$
[(\HH, h,\gamma)] \sim [(\Ker h,0 , \gamma^0)] + [((\Ker h)^\perp, h^\perp, \gamma^\perp)]
$ holds.
As $\Ker (h)^\perp=\{0\}$, the second term is zero.
Since $(\Ker h,0 , \gamma^0)$ is isomorphic to $(\HH', h',\gamma')$,
the equivalence relation $(\HH',h',\gamma') \sim (\HH,h,\gamma)$ holds.
This argument is easier than what we mention in the main text but
its application to the general $K$ groups with nonzero degrees is limited.
}.

%●STEP5

\item
Since the dimension of the eigen subspace of the restriction
of $\Ker h$ onto 
$\gamma =\pm 1$ is $\nu_\pm$,
there exist isomorphisms $f_+$ and $f_-$ from
$\C^{\nu+}$ and $\C^{\nu_-}$ to the respective subspaces.
Let us choose $f_+$ and $f_-$ which keep the metric.
Their sum gives an injective map,
$$
f:=f_+ +f_-:  \HH' \to \HH,
$$
which preserves the metric and $f \gamma' =\gamma f$ holds.
For $0 \leq t \leq 1$, the following deformation is what we need
to show $(\HH',h',\gamma') \sim (\HH,h,\gamma)$:
$$
\left(
\begin{array}{cc}
-h' & 0 \\
0 &  h
\end{array}
\right)
+
t
\left(
\begin{array}{cc}
0 & f^* \\
f & 0
\end{array}
\right).
$$
\end{enumerate}
\end{proof}

The perturbative invariance is also explained as follows.
Let us denote the direct sum of the eigenspaces of $\check{h}^2$
whose eigenvalue are lower than $\Lambda$ by $E_{\check{h}^2 <\Lambda}$.
When $\check{h}^2$ has no spectrum below $\Lambda$
other than finite eigenvalues with finite degeneracies,
the following holds\footnote{This is a consequence
  of linear algebra on a finite-dimensional vector space :
  for a finite vector space $E$ with an inner product
  and an orthogonal transformation $\gamma_E \in {\mathrm End}\, E$
  which satisfies $\gamma_E^2={\rm id}$,
  when a symmetric transformation $h_E\in {\mathrm End}\, E$ anticommutes with $\gamma_E$,
  $\trace \gamma_E=\trace \gamma_E|_{\Ker h_E}$. Note that $h_E^2$ and $\gamma_E$ commute and
  the simultaneous orthogonalization is possible.
  Since $h_E$ is symmetric, the zero mode space of $h_E^2$ or $\Ker h_E^2$ is identical to
  $\Ker h_E$. For nonzero eigenspace with $\Lambda>0$ of $h_E^2$,
  the operation of $h_E$ gives an isomorphism between
  the $+1$ and $-1$ subeigenspaces of $\gamma_E$.
  Therefore, these nonzero eigenvalues do not contribute to the trace of $\gamma_E$.
}.
$$
\trace \check{\gamma}|_{E_{\check{h}^2 <\Lambda}} =\trace \check{\gamma}|_{\Ker \check{h}}.
$$

Suppose $\Lambda$ is not any eigenvalue of $\check{h}$.
Then the dimension of the subeigenspace of $\check{h}^2$
below $\Lambda$ is invariant against any infinitesimal perturbation
of $\check{h}$ with respect to the operator norm.
Moreover, the subeigenspace forms a finite rank
vector bundle on the parameter space of the perturbation.
The eigen decomposition with $\gamma$
leads to a direct-sum decomposition of the vector bundle.
Therefore, the left-hand side of the above equation is invariant
against the perturbation, and so does the right-hand side.
%\end{enumerate}

\subsubsection{Some remarks on the use of Hilbert bundles}
\label{sec:remarks}

Here we give some remarks on
our formulation with Hilbert bundles whose dimension is infinite.
\begin{remark} \label{finite dimensional}
\begin{enumerate}
\item
%\begin{remark}
  In a standard formulation of the K theory in the literature  \cite{MR1043170,segal1968equivariant},
  one takes a finite rank vector bundle to define $K^0(X,A)$ rather than an
  infinite-dimensional Hilbert bundle.
  The discussion in this section goes in parallel
  with such finite-dimensional expressions,
  replacing the bounded self-adjoint Fredholm operator by
  a continuous self-adjoint operator on the finite rank vector
  bundle.
  The propositions and discussions in this section are also valid.
  We can prove that the standard finite-dimensional definition of
  $K^0(X,A)$ is isomorphic to that in our infinite-dimensional definition.
%}}.
%\footnote{
%    The proof is given in Theorem~\ref{finite dim approximation2} in our Appendix.}.
%  See also Remark~\ref{finite dimensional}.
%\end{remark}

\item
  In particular, we can take a representative element using $\HH$ with a finite rank
  for any equivalence class.
  This indicates that $K^0(X,A)$ can be ``approximated'' by finite-dimensional
  vector bundles.
\item
  One advantage of introducing Hilbert bundles is
  that we can easily define the index of differential
  operators which anti-commute with $\gamma$.
  %The differential operator may be approximated by a lattice difference operator
  %when the finite-dimensional formulation is employed.
\item
  Another advantage of introducing Hilbert bundles is
  found in the extension to $K^i(X,A)$ with $i\neq 0$.
  In such a formulation, there exists a case that
  the group element cannot be represented by data with
  finite rank $\HH$\footnote{
    For example, the nontrivial elements of $K^1(S^1,\emptyset)$ cannot be 
     represented by any finite rank vector bundle. %\textcolor{red}{(Correct?)}.
%    there is no Dirac operator in lattice gauge theory
%    which anticommutes with $\gamma$ without
%    losing other more fundamental requirements such as
%    ellipticity. This is known as the no-go theorem by Nielsen and Ninomiya
  }. 
Therefore, it is useful to introduce the infinite-dimensional Hilbert space
  from the beginning. See Sec.~\ref{sec:K-applications} for the details.
\end{enumerate}
\end{remark}

So far, we have employed a bounded self-adjoint Fredholm operator
family denoted by $h$.
Here we extend the definition of the $K^0$ group
so that unbounded operators like differential operators are allowed.
Note that with this extension, the $K^0(X,A)$ is unchanged  \cite{Hohnhold2010FromMG} as explained below 
and each equivalence class still has a representative element with
  a finite rank $\HH$.

We first explain which topology we use for the space of operators.

Fix a Hilbert space ${\cal H}_0$ and we write
$\BF$ be the set of bounded self-adjoint Fredholm operators on ${\cal H}_0$ endowed with the norm topology.
Let $\CF$ be the set of closed self-adjoint Fredholm operators with dense domain, which are not necessary bounded.
We make use of the injective map $\rho: \CF \to \BF$  given by $\rho(h) :=h/(1+h^2)^{1/2}$, which is bounded and
its parity is odd such that $\rho(-h)=-\rho(h)$. 
The Riesz topology of $\CF$ is the weakest topology for $\rho$ to be continuous\footnote{The function $\rho$ could be any bounded continuous monotonically increasing function with odd parity such that $\rho(-h)=-\rho(h)$. 
It is in general nontrivial whether the sum of unbounded self-adjoint operators is ill-defined or not \cite{PEDERSEN1990428}.
With the map $\rho(h)$, we do not have this problem.
}.
We endow $\CF$ with this Riesz topology to formulate the continuity of operators\footnote{
Let $\UF$ be the set of unitary operators $u$ such that $u-1$ is Fredholm
endowed with the norm topology. 
If we use $\kappa:{\CF} \to {\UF}$ defined by the Cayley transform $\kappa(h)=(1+ i h)/(1- ih ) $, instead of $\rho$,
we could endow ${\CF}$ with the weakest topology for $\kappa$ to be continuous. 
This is the gap topology, which is strictly weaker than the Riesz topology.
As we explain in Appendix we could use $\UF$ to give a formulation of the $K^i$ groups. The two formulations, one using $\BF$ and the other using $\UF$ give the canonically isomorphic $K$ groups (under correct choice of Clifford symmetries).
 If we formulate the $K$ groups using $\UF$, it would be natural  to use the gap topology for $\CF$. We, however, chose a formulation using $\BF$ in this paper, and  it seems natural to choose the Riesz topology for $\CF$ here. 
}. 
%\textcolor{red}{One considers that the gap topology is more suitable for dealing with unbounded operators. However, since we don't use an unitary operator  and these topologies are equivalent, we employ the Riesz topology.}
\begin{proposition}
In the definition of $K^0(X,A)$ if we replace  ``bounded self-adjoint Fredholm operators'' with
 ``closed self-adjoint Fredholm operators with dense domain'',   then we obtain the same $K^0(X,A)$ group.
 Here we use the Riesz topology to formulate
 the continuity of families of closed self-adjoint Fredholm operators with dense domains.
\end{proposition}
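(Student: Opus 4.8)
The plan is to prove that the tautological inclusion of bounded triples into closed densely-defined ones induces an isomorphism of the two candidate groups, with the bounded transform $\rho(h)=h/(1+h^2)^{1/2}$ as the central device. Write $K^0_u(X,A)$ for the group built from $\CF$-valued families and keep $K^0(X,A)$ for the original $\BF$-valued one, and denote the two equivalence relations by $\sim_b$ and $\sim_u$. I would first record the algebraic compatibility of $\rho$ with triples. The symbol $g(\lambda)=\lambda/(1+\lambda^2)^{1/2}$ is an odd, strictly increasing homeomorphism of $\R$ onto $(-1,1)$ fixing $0$, so for any closed self-adjoint Fredholm $h$ the operator $\rho(h)=g(h)$ is bounded and self-adjoint with $\Ker\rho(h)=\Ker h$, and $0$ remains isolated of finite multiplicity; hence $\rho(h)$ is again Fredholm and $\rho$ preserves the kernel-triviality condition over $A$. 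Since $g$ is odd, $\{\gamma,h\}=0$ gives $\gamma\,\rho(h)=\rho(-h)\,\gamma=-\rho(h)\,\gamma$ by the spectral theorem, so $\rho$ carries anticommuting data to anticommuting data. By the very definition of the Riesz topology, $\rho$ is a homeomorphism onto its image, so a family $x\mapsto h_x$ is Riesz-continuous iff $x\mapsto\rho(h_x)$ is norm-continuous, and likewise for one-parameter families over $[0,1]$. Thus $\Phi\colon(\HH,h,\gamma)\mapsto(\HH,\rho(h),\gamma)$ sends $\CF$-triples to $\BF$-triples.

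Next I would set up a uniform comparison of homotopies. For any $h$ (bounded or not) anticommuting with $\gamma$, I connect $h$ to $\rho(h)$ through operators $\psi_s(h)$ with $\psi_s$ an odd, strictly increasing homeomorphism fixing $0$: for bounded $h$ take $\psi_s=(1-s)\,{\rm id}+s\,g$, which gives a norm-continuous path, while in general the image of $g$ and of $g\circ g$ lies in $(-1,1)$, so the affine path $(1-s)\rho(h)+s\,\rho(\rho(h))$ stays inside $\rho(\CF)$ and pulls back to a Riesz-continuous path from $h$ to $\rho(h)$. Each $\psi_s(h)$ is self-adjoint Fredholm with $\Ker\psi_s(h)=\Ker h$ and, being an odd function of $h$, anticommutes with $\gamma$. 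Splicing such a path with the swap homotopy used in the proof of Lemma~\ref{reflection} then yields $(\HH,h,\gamma)\sim(\HH,\rho(h),\gamma)$ in whichever of the two categories $h$ lives.

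With these tools, surjectivity of the inclusion $\iota\colon K^0(X,A)\to K^0_u(X,A)$ is immediate: every $\CF$-triple is $\sim_u$-equivalent to the $\BF$-triple $(\HH,\rho(h),\gamma)$, so each $K^0_u$-class has a bounded representative. For injectivity, suppose two $\BF$-triples are related by a $\CF$-witness $(\hat\HH,\hat h,\hat\gamma)$ over $(X,X)$ and a Riesz-path $\tilde h_t$ from $-h\oplus h'\oplus\hat h$ to an everywhere-invertible operator. I apply $\rho$ to the whole path, obtaining a norm-continuous bounded path $\rho(\tilde h_t)$ with the same kernels and anticommutation, running from $-\rho(h)\oplus\rho(h')\oplus\rho(\hat h)$ to an everywhere-invertible operator. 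I then prepend the bounded, norm-continuous deformation that fixes the third slot at $\rho(\hat h)$ and applies $\psi_s$ to the first two slots, carrying $-h\oplus h'\oplus\rho(\hat h)$ to $-\rho(h)\oplus\rho(h')\oplus\rho(\hat h)$; throughout, the kernel is trivial over $A$ since each summand is a triple of the appropriate pair. The concatenation, together with the bounded auxiliary $(\hat\HH,\rho(\hat h),\hat\gamma)$ of $(X,X)$, is a $\sim_b$-witness, so $\iota$ is injective. Hence $\iota$ is an isomorphism and the two definitions of $K^0(X,A)$ agree.

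The main obstacle is entirely topological: one must verify that the bounded transform $\rho$ is a homeomorphism from the Riesz topology to the norm topology and that it preserves Fredholmness along with the kernel. Once this standard bounded-transform fact is secured, every operator appearing in the argument is an odd function of a single self-adjoint operator, so the preservation of self-adjointness, of the relation $\{\gamma,\cdot\}=0$, and of the kernel conditions is automatic, and the remaining homotopy bookkeeping is formal.
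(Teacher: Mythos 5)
Your proof is correct and follows essentially the same route as the paper: both arguments hinge on the bounded transform $\rho(h)=h/(1+h^2)^{1/2}$, its compatibility with the triple structure (self-adjointness, Fredholmness, kernels, anticommutation with $\gamma$), and the homotopy from $\rho|_{\BF}$ to the identity of $\BF$. You additionally spell out the injectivity step — transporting an unbounded witnessing homotopy to a bounded one by applying $\rho$ and prepending the linear deformation on the first two summands — which the paper leaves implicit in its appeal to the homotopy equivalence $\BF \hookrightarrow \CF$.
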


\begin{proof}

From a standard argument of functional analysis,
the restriction map \\
$\rho|_{\BF}:\BF \to \BF$ is continuous and homotopic to the identity of $\BF$
linearly\footnote{It implies that the inclusion map $\BF \to \CF$ is a homotopy equivalent map for the Riesz topology (see \cite{lesch2004uniquenessspectralflowspaces}).}.
In particular for any triple $({\cal H},h,\gamma)$ with $h$ bounded, we have another triple
$({\cal H}, \rho(h), \gamma)$ with $\rho(h)$ bounded again. Note that
$[({\cal H},\rho(h),\gamma)]=[({\cal H},h,\gamma)]$ in the $K^0$ group.
Moreover, even when $h$ is not bounded, we always have a triple $({\cal H},\rho(h),\gamma)$ with $\rho(h)$ bounded, and
the class $[({\cal H},h,\gamma)]$ is a well-defined element of the $K^0$ group.
It implies that using the class $[({\cal H},h,\gamma)]$ we can extend the definition to unbounded operators $h$.

\end{proof}

\if0
The extension is summarized as follows.

%In functional analysis, we often need unbounded operators
%like differential operators.
%Below are the remarks in the extension to the case with unbounded operators.
\begin{remark}
\begin{enumerate}
%\item
%  In the definition of the triples, we can allow unbounded operators
%  with the following modifications.
\item
  The first modification is to allow a closed self-adjoint Fredholm operator
  with a dense domain.
  With this extension, we introduce a bounded monotonically increasing function
  $\rho(t)$ with odd parity such that $\rho(-t)=-\rho(t)$.
  (For example, we can take $\rho(t)=t/(1+t^2)^{1/2}$.)
  With this, we can define $\rho(h_x) : \HH_x \to \HH_x$ by functional analysis.
  Note that $\rho(h_x)$ anticommutes with $\gamma$.
\item
  The second modification is to assume continuity of $\rho(h_x)$
  with respect to the operator norm, rather than $h$ itself.
\item 
\end{enumerate}
\end{remark}
\fi

%It is, thus, useful to employ the complex Hilbert bundles
%on which unbounded operators act, when we consider indices
%of self-adjoint differential operators with Clifford algebra symmetry.

\subsection{$K^1(X,A)$}
\label{sec:K^1}

In this section, we formulate the $K^1(X,A)$ group.

\subsubsection{Definition of $K^1(X,A)$ and its properties}

We can define $K^1(X,A)$ by the same steps in the definition of
$K^0(X,A)$ shown in the previous section but forgetting about
existence of $\gamma$.
\begin{definition}
$K^1(X,A)$ is defined by
$$
K^1(X,A):=\{ (\HH,h)\} / \sim
$$
\begin{itemize}
\item
  $(\HH, h)$ is a data which satisfies the following conditions\footnote{Note that $\{ (\HH,h)\}$ is not a set but its equivalence class is a set.}.
\begin{enumerate}
\item
$\HH$ is a Hilbert bundle over $X$.
\item
  $h:\HH\to \HH$ is a family 
of bounded self-adjoint Fredholm operators $h_x:\HH_x \to \HH_x$ ($x \in X$), 
which is continuous with respect to the operator norm.
\item 
  $\Ker h_x=\{0\}$ for $x \in A$.
\end{enumerate}
\item
The equivalence relation $\sim$ is defined as follows.
The relation $
(\HH,h) \sim(\HH',h')
$
holds when there exist
$(\hat{\HH},\hat{h})$ and a continuous family of bounded
self-adjoint Fredholm operators
with the parameter $0 \leq t \leq 1$ given by
$$
\tilde{h}_t: \HH \oplus \HH' \oplus \hat{\HH} \to  \HH \oplus \HH' \oplus \hat{\HH}
$$
and satisfy
\begin{enumerate}
\item
For arbitrary $x \in X$, $\Ker \hat{h}_x=\{0\}$.
\item
$
\tilde{h}_0= -h \oplus h' \oplus \hat{h}
$
\item
For any $(x,t) \in A \times [0,1]\cup X  \times \{1\}$, 
$\Ker (\tilde{h}_t)_x =\{0\}$.
\end{enumerate}
\end{itemize}
\end{definition}
We can prove that $\sim$ is an equivalence class
in a similar manner to $K^0(X,A)$.

For a continuous map $f:(X,A) \to (Y,B)$, the pullback is defined as
$$
f^* : K^1(Y,B) \to K^1(X,A), \qquad [(\HH,h)] \mapsto  [(f^*\HH, f^* h)].
$$
This $f^*$ is compatible with the sum and product defined below.

The sum on $K^1(X,A)$ is defined in the same way as
for $K^0(X,A)$.
\begin{definition}[Sum]
The sum on $K^1(X,A)$ is defined by
$$
[(\HH,h)] +[(\HH',h')]:= [(\HH,h) \oplus (\HH',h')].
$$
\end{definition}
The following product is also well defined.
\begin{definition}[Product]
  For closed subsets $A$ and $B$ in a compact Hausdorff space $X$,
  the product
$$
K^0(X,A) \times K^1(X,B) \to K^1(X, A\cup B)
$$
is defined by
$$
[(\HH, h,\gamma)]\cdot [(\HH',h')] =
[
( \HH \otimes \HH', (h \otimes {\rm id}) + (\gamma \otimes h'))].
$$
\end{definition}
From this definition, the proposition below follows.
\begin{proposition}
\begin{enumerate}
\item
  $(X,A) \mapsto K^1(X,A)$ is a contravariant functor
  with respect to the pullback.
\item
  When $B \supset A$, the above product
  $$
K^0(X,A) \times K^1(X,B) \to K^1(X, B)
$$
indicates that $K^1(X,B)$ becomes a module
with the action of $K^0(X,A)$, which forms a commutative ring.
\end{enumerate}
\end{proposition}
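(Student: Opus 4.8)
The plan is to obtain both assertions by transcribing the constructions already made for $K^0(X,A)$, the only genuinely new work being the bookkeeping of the kernel conditions that govern membership in the relevant $K^1$ groups. For the contravariance in the first assertion, I would first check that $[(\HH,h)]\mapsto[(f^*\HH,f^*h)]$ is well defined on equivalence classes. If $(\HH,h)\sim(\HH',h')$ is witnessed over $(Y,B)$ by an auxiliary datum $(\hat\HH,\hat h)$ and a homotopy $\tilde h_t$, then $(f^*\hat\HH,f^*\hat h)$ and $f^*\tilde h_t$ furnish the corresponding data over $(X,A)$: operator-norm continuity is preserved under pullback, and the pointwise identity $(f^*\hat h)_x=\hat h_{f(x)}$ together with $f(A)\subset B$ turns the kernel conditions over $Y$ into those over $X$. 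In particular $\Ker(f^*\hat h)_x=\Ker\hat h_{f(x)}=\{0\}$ for every $x$, and the homotopy condition on $A\times[0,1]\cup X\times\{1\}$ is inherited from that on $B\times[0,1]\cup Y\times\{1\}$. Since $f^*$ commutes with $\oplus$ it is a group homomorphism, and $(gf)^*=f^*g^*$ and ${\rm id}^*={\rm id}$ hold because bundle pullback is a contravariant functor; this is the assertion.

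For the module structure I would first recall that $K^0(X,A)$ is a commutative ring by the lemmas of Sec.~\ref{sec:K^0}, and that the product lands in $K^1(X,A\cup B)=K^1(X,B)$ once $B\supset A$. The latter uses the identity
$$
\{(h\otimes{\rm id})+(\gamma\otimes h')\}^2=(h\otimes{\rm id})^2+({\rm id}\otimes h')^2,
$$
valid because $\{\gamma,h\}=0$, which exhibits the square as a sum of semi-positive operators whose common kernel at $x$ is $\Ker h_x\otimes\Ker h'_x$; this is trivial wherever $h_x$ or $h'_x$ is invertible, hence on $A\cup B=B$. The delicate step is well-definedness on classes. Fixing $(\HH',h')$ and replacing $(\HH,h,\gamma)$ by an equivalent triple $(\HH_2,h_2,\gamma_2)$, with auxiliary $(\hat\HH,\hat h,\hat\gamma)$ and a homotopy $\tilde h_t$ anticommuting with $\Gamma:=-\gamma\oplus\gamma_2\oplus\hat\gamma$, I would set $\tilde H_t:=\tilde h_t\otimes{\rm id}+\Gamma\otimes h'$ and take $(\hat\HH\otimes\HH',\,\hat h\otimes{\rm id}+\hat\gamma\otimes h')$ as the new auxiliary. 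The same squaring identity gives $\tilde H_t^2=\tilde h_t^2\otimes{\rm id}+({\rm id}\otimes h')^2$, so the new auxiliary has everywhere-trivial kernel because $\hat h$ does, while on $(A\cup B)\times[0,1]\cup X\times\{1\}$ the kernel of $\tilde H_t$ vanishes since $\tilde h_t$ is invertible on $A\times[0,1]\cup X\times\{1\}$ and $h'$ is invertible on $B$. The mirror-image argument, tensoring a $K^1$-homotopy with the fixed triple $(\HH,h,\gamma)$, settles replacement of the $K^1$ factor.

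It then remains to verify the module axioms, each of which reduces to an equality of explicit representatives exactly as for the $K^0$ ring. Writing $\alpha=[(\HH_1,h_1,\gamma_1)]$, $\beta=[(\HH_2,h_2,\gamma_2)]$ and $\mu=[(\HH',h')]$, associativity $(\alpha\beta)\cdot\mu=\alpha\cdot(\beta\cdot\mu)$ holds because both sides are represented by
$$
h_1\otimes{\rm id}\otimes{\rm id}+\gamma_1\otimes h_2\otimes{\rm id}+\gamma_1\otimes\gamma_2\otimes h'
$$
on $\HH_1\otimes\HH_2\otimes\HH'$; the two distributive laws follow from the distribution of $\otimes$ over $\oplus$; and the identity $[(\C,0,{\rm id})]$ of the $K^0$ product acts trivially through the canonical isomorphism $\C\otimes\HH'\cong\HH'$. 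The main obstacle is precisely the well-definedness argument of the previous paragraph, as it is the only place requiring genuinely new homotopies and auxiliary data rather than a matching of symbolic expressions; however, it is a direct transcription of the uniqueness argument for the $K^0$ product and introduces no new analytic input.
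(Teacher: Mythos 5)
Your proposal is correct in substance, and it supplies considerably more detail than the paper itself, which disposes of this proposition with the single sentence ``From this definition, the proposition below follows'' and leans on the verbatim analogy with the $K^0$ constructions (functoriality of pullback, the squaring identity $\{(h\otimes{\rm id})+(\gamma\otimes h')\}^2=(h\otimes{\rm id})^2+({\rm id}\otimes h')^2$, and the uniqueness argument for the $K^0$ product). Your treatment of contravariance and of well-definedness in the $K^0$ factor is exactly the intended argument and is carried out cleanly.

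One spot deserves a word of caution: the ``mirror-image argument'' for replacing the $K^1$ factor is not literally a mirror image. If $\tilde h'_t$ witnesses $(\HH',h')\sim(\HH'',h'')$ with $\tilde h'_0=-h'\oplus h''\oplus\hat h'$, then the naive homotopy $h\otimes{\rm id}+\gamma\otimes\tilde h'_t$ starts, on the first summand, at $h\otimes{\rm id}-\gamma\otimes h'$ rather than at the required $-(h\otimes{\rm id}+\gamma\otimes h')$; the sign of the $h\otimes{\rm id}$ term does not flip. The repair is one line: conjugate the whole family by the unitary $V=(\gamma\otimes{\rm id})\oplus{\rm id}\oplus{\rm id}$, which sends $h\otimes{\rm id}-\gamma\otimes h'$ to $-h\otimes{\rm id}-\gamma\otimes h'$ (using $\gamma h\gamma=-h$) while preserving all kernels, continuity, and the block structure at $t=0$; since $K^1$ data carry no grading, nothing else need be checked. (Alternatively one can invoke the analogue of the commutativity/isomorphism lemmas of Sec.~2.1 to reduce the second-factor case to the first.) With that emendation your argument is complete, and the rest — the identification $A\cup B=B$, the kernel computation $\Ker h_x\otimes\Ker h'_x$, and the verification of the module axioms on explicit representatives — matches what the paper's $K^0$ lemmas establish.
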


\begin{definition}[Exterior product]
  Consider two pairs of $(X,A)$ and $(Y,B)$ where
$X$ and $Y$ are compact Hausdorff spaces and 
  $A$ and $B$ are their closed subsets
  $A \subset X$ and $B \subset Y$, respectively.
The projections
$$
p_X: X \times Y \to X, \qquad p_Y: X \times Y \to Y
$$
induce their pullbacks
$$
p_X^*: K^0(X,A) \to K^0( (X,A) \times (Y,B)),\qquad
p_Y^*: K^1(Y,B) \to K^1( (X,A) \times (Y,B))
$$
by which the exterior product
$$
K^0(X,A) \times K^1(Y,B) \to K^1((X,A) \times (Y,B))
$$
is defined. 
Specifically for $\omega_X \in K^0(X,A)$ and $\omega^1_Y \in K^1(Y,B)$,
$$
\omega_X \cdot \omega^1_Y = p_X^* \omega_X \cdot p_Y^* \omega^1_Y.
$$
Here
$(X,A) \times (Y,B):= (X \times Y, A \times Y \cup X \times B)$.
\end{definition}

\subsubsection{The suspension isomorphism between $K^0$ and $K^1$}
\begin{definition}[Bott element]
We define the Bott element in $K^1(D^1,S^0)$ by
$$
\beta:=[(\underline{\C}, x\cdot )] \in K^1(D^1,S^0),
$$
where $\underline{\C}$ denotes a trivial line bundle over $D^1$
with the fiber $\C$, and the operator $x \cdot$ is
multiplication of $x$, which is the coordinate for
$D^1=\{x:  -1 \leq x \leq 1\}$.
\end{definition}
\begin{definition}[Suspension]
  For $\alpha \in K^0(X,A)$, the exterior product with the
  Bott element 
$\alpha \cdot \beta \in K^1((X,A) \times (D^1,S^0))$ is called the suspension.
\end{definition}
The explicit form of  the exterior product is given by
\footnote{Note that $h+ x \gamma$ in the right-hand side is
  an abbreviation. Precisely it is written as
$
 p_X^* h  + ({p_{D^1}}^* x)  p_X^* \gamma
$ making the pullback explicit.},
$$
[(\HH,h,\gamma)] \cdot \beta =[(p^*_X \HH, h + x \gamma)] \in K^1((X,A) \times (D^1,S^0)).
$$
%The theorem below is a part of the Bott periodicity theorem.
%（degree$0 \to 1$に対するもの）
The next theorem is a part of the Bott periodicity theorem. %\textcolor{red}{[Ref. Karoubi, Gomi Atiyah-Singer skew-adjoint...].}.
\begin{theorem}[Suspension isomorphism/Bott periodicity]\label{suspension}
  The following suspension map is an isomorphism
  with respect to the $K^0(X,A)$ module.
$$
K^0(X,A) \to K^1((X,A) \times (D^1,S^0)),\qquad  \alpha \to \alpha \cdot \beta
$$
Here $\beta$ is the Bott element.
\end{theorem}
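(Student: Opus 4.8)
The plan is to produce an explicit two-sided inverse of the suspension map and to verify that both composites are the identity. Since the suspension is \emph{by definition} multiplication by the fixed class $\beta$, the associativity and distributivity of the product established above already make $\alpha\mapsto\alpha\cdot\beta$ a homomorphism of $K^0(X,A)$-modules; the entire content is therefore bijectivity, so it suffices to exhibit an inverse. Before anything else I would record well-definedness of the suspension: for a triple $(\HH,h,\gamma)$ over $(X,A)$ the anticommutation $\{\gamma,h\}=0$ gives $(h+x\gamma)^2=h^2+x^2\,{\rm id}$, whose kernel is trivial on $A\times D^1$ (there $h^2>0$) and on $X\times S^0$ (there $x^2=1$); hence $h+x\gamma$ genuinely represents a class in $K^1((X,A)\times(D^1,S^0))$.

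For the inverse I would exploit the identification, already on display in Theorem~\ref{index}, between the spectral flow of a path of self-adjoint Fredholm operators and a $\Z_2$-graded index. Given a $K^1$-representative $(\HH,g)$ over $(X\times D^1,\,A\times D^1\cup X\times S^0)$, I attach to it the graded family built from the $s$-dependence of $g$, i.e.\ the "Dirac operator in the suspension direction"
$$
\left(\begin{smallmatrix} 0 & -\partial_s+g \\ \partial_s+g & 0\end{smallmatrix}\right),
\qquad
\gamma=\left(\begin{smallmatrix}1&0\\0&-1\end{smallmatrix}\right),
$$
acting on $L^2([-1,1],\HH_x)\otimes\C^2$ with suitable boundary conditions. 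This defines over $X$ a family of self-adjoint Fredholm operators anticommuting with $\gamma$, invertible wherever $g$ is invertible along the whole segment, in particular over $A$. Its $K^0$-class gives a candidate $\mu:K^1((X,A)\times(D^1,S^0))\to K^0(X,A)$, and I would check that $\mu$ respects the relation $\sim$ exactly as in Lemmas~\ref{symmetric}--\ref{transitive}, by carrying each deforming family $\tilde h_t$ and each auxiliary triple $\hat h$ through the construction.

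The cleanest route to the two composites is to prove that every class in $K^1((X,A)\times(D^1,S^0))$ admits an \emph{affine normal-form} representative $h+x\gamma$. Granting this, $\mu\circ(\text{susp})=\mathrm{id}$ is immediate by reading $(h,\gamma)$ off the normal form (up to the isomorphism-of-triples step supplied by Lemma~\ref{lem:isoK0}), and $(\text{susp})\circ\mu=\mathrm{id}$ follows because $\mu$ of an affine representative returns the original triple. The finite-rank reduction remarked on after Theorem~\ref{index} is the key tool: choosing $\Lambda>0$ below which $g^2$ has only finitely many eigenvalues and such that $g^2>\Lambda$ on the invertibility locus, the spectral subbundle $E_{g^2<\Lambda}$ is a finite-rank bundle over $X\times D^1$ on which the whole problem becomes finite-dimensional and is handled by linear algebra together with contractibility of the stabilized unitary group (Kuiper's theorem) on the complement.

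The hard part will be precisely this deformation to affine normal form; it is the genuine analytic core of Bott periodicity and is not formal. Two difficulties must be controlled: first, the unbounded $s$-derivative and its domain and boundary behaviour on $[-1,1]$ when setting up $\mu$ rigorously, for which I would stay within bounded operators using the Riesz topology and the bounded transform $\rho$ of Sec.~\ref{sec:remarks}; second, producing the linearizing homotopy \emph{in families} compatibly with the invertibility constraint on $A\times D^1\cup X\times S^0$, so that the homotopy never develops a kernel on the forbidden locus. I would isolate this as a separate lemma, prove it first over a point via the finite-rank truncation and the linear-algebra argument already used in Theorem~\ref{index}, and then globalize over $X$ by a partition-of-unity/clutching argument using the local triviality of the finite-rank spectral bundle.
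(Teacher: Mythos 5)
The first thing to say is that the paper does not actually prove Theorem~\ref{suspension} in the generality stated: it is quoted as ``a part of the Bott periodicity theorem,'' and the only case treated in detail is $(X,A)=(\{\mathrm{pt}\},\emptyset)$, where the isomorphism $K^1(D^1,S^0)\cong\Z$ is established directly in Theorem~\ref{K and spectral flow} by deforming an arbitrary family $\{h_x\}$ to a piecewise-affine normal form $\bigoplus_k \sgn_k (x-x_k)\,\mathrm{id}_{d_k}$ supported near finitely many crossing points and identifying the result with $\nu\beta$. Over a point your plan (finite-rank spectral truncation below a gap $\Lambda$, then linear algebra to reach an affine representative) is essentially the same argument, so there you are aligned with what the paper actually does.

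For the general $(X,A)$, however, your proposal has a genuine gap, and you have correctly located it yourself: the ``affine normal form in families'' lemma is not a technical preliminary but \emph{is} the theorem. Everything else in your outline (well-definedness of the suspension via $(h+x\gamma)^2=h^2+x^2$, the module-homomorphism property from associativity/distributivity, the definition of $\mu$) is routine, so the proposal as written reduces the statement to an unproven claim of equivalent strength. Two further points need care even granting that lemma. First, your assertion that ``$\mu$ of an affine representative returns the original triple'' is not a matter of reading off data: it is the model index computation that $\partial_s+s\cdot$ on the interval, with the boundary conditions you have left unspecified, has one-dimensional kernel and trivial cokernel (the harmonic-oscillator/Witten-index calculation); without fixing those boundary conditions the operator need not even be invertible where $g$ is invertible along the segment, which is what you use to see $\mu$ lands in triples of $(X,A)$. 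Second, the globalization by clutching plus Kuiper must respect the fact, noted in Remark~\ref{finite dimensional}, that classes in $K^1$ need not admit finite-rank representatives, so the infinite-rank complement of the spectral subbundle $E_{g^2<\Lambda}$ cannot simply be discarded; you must deform it to an invertible family compatibly with the constraint on $A\times D^1\cup X\times S^0$, and that step is exactly as hard as the normal-form lemma itself. I would either restrict the claim to the point case (which is all the main theorem of the paper uses) or supply the families normal-form lemma in full before relying on this argument.
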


%\textcolor{red}{
Our formulation of the $K$ group follows \cite{Karoubi} of which details are discussed in Appendix~\ref{Relation Karoubi}. This theorem can be proved in the same way as the finite-dimensional case.
%Our formalism of the $K$ group is equivalent to Karoubi's one \cite{Karoubi} (please see Appendix~\ref{Relation Karoubi}). Then, we can prove this theorem as well as the standard finite-dimensional case.
%}

We will give a detailed description for
the case with $(X,A)=(\{{\rm pt}\}, \emptyset)$ later\footnote{
  The full statement of Theorem~\ref{suspension} for $(X,A)$ will be necessary when
  we extend the discussion on ``comparison between
  the Fredholm index of a Dirac operator
  and its discrete approximation'' to a family version.
  %which is beyond the scope of this work.
}.

\subsubsection{$K^1(D^1,S^0)$ and spectral flow}

Consider $K^1(X,A)$ for the case where $X$ is $D^1=\{ x:-1\leq x \leq 1\}$, and
$A$ is $S^0=\{ -1,+1\}$\footnote{
  Since $D^1$ is contractible, the complex Hilbert bundle over $D^1$ is trivial.
  Note in our formulation that a finite rank Hilbert bundle is
  allowed, to which the Kuiper theorem \cite{MR0179792} guaranteeing
that the space of unitary operators on any 
infinite-dimensional Hilbert space is contractible,
does not apply. %%%reference
  We also note that it is not difficult to construct a global
  trivialization of the Hilbert bundle over $D^1$ from
  its local trivializations, without using the Kuiper theorem.
  When one trivialization is fixed, $h:\HH \to \HH$
  is nothing but a one-parameter family of the bounded self-adjoint Fredholm operators
  on a unique complex Hilbert space,
  which is continuous with respect to the operator norm. 
}.

From Theorem~\ref{suspension} and Theorem~\ref{index} we obtain
$$
K^1(D^1,S^0) \cong \Z 
$$
with  the Bott element $\beta$ as its generator.

Let us discuss its geometrical aspects.
In our formulation of $K^1(D^1,S^0)$,
it is sufficient to characterize it by the spectral flow
of a one parameter family of the self-adjoint Fredholm operators \cite{boossbavnbek2004unboundedfredholmoperatorsspectral}.
Recall that the spectral flow $\specflow (\HH,h) \in \Z$
is defined as follows.
%Here we summarize the outline only.

\begin{enumerate}
\item

  Since $h$ is a bounded self-adjoint Fredholm operator and $D^1$ is compact,
  there exists $\Lambda_0$ such that for any $x \in D^1$, $h_x$ has
  no spectrum in the range $\{ \lambda: -\Lambda_0 \leq \lambda \leq \Lambda_0 \}$
  except for finite eigenvalues with finite degeneracies.

\item 

  Note that $h_x$ has no zero mode at $x= \pm 1$.
  Let us introduce a finite number of points 
  $x_0=-1 < x_1 < \cdots x_n=+1$ in $D^1$ for which
  we assign the values $\lambda_1, \ldots,\lambda_n$ 
  in the range
  $\{ \lambda: -\Lambda_0 < \lambda < \Lambda_0 \}$ such that
\begin{enumerate}
\item
$\lambda_1=\lambda_n=0$.
\item
For any $x \in D^1 $ in the range $x_{k-1} \leq x \leq x_k$, 
$\lambda_k$ is not an eigenvalue of $h_x$.
\end{enumerate}

\item

  For the $k$-th set $(x_k,\lambda_k)$ for $0<k<n$,
  we assign $\sgn_k$ and $d_k$ as follows.
  For $\lambda_k \neq \lambda_{k+1}$, we set
  \[
  \sgn_k = \frac{\lambda_k - \lambda_{k+1}}{|\lambda_k - \lambda_{k+1}|},
  \]
  and $d_k$ by sum of dimensions of the eigenspace
  with the eigenvalues in the range between $\lambda_k$ and $\lambda_{k+1}$.
  When $\lambda_k=\lambda_{k+1}$, we assign that $\sgn_k=0$ and $d_k=0$.
\end{enumerate}

%\item 

\begin{definition}[Spectral flow]
  Let $\HH$ be a complex Hilbert bundle over $D^1$
  and $h$ be a continuous family of bounded self-adjoint Fredholm operators on $\HH$.
  The spectral flow of $h$ is defined by
  $$\specflow (\HH,h)=\sum_{0<k<n} \sgn_k d_k.$$
  we denote
  $
\specflow \alpha :=\specflow (\HH,h)
$
for $\alpha=[(\HH,h)] \in K^1(D^1,S^0)$.
\end{definition}

\begin{lemma}
  %$\sum_{1<k<n} \sgn_k d_k$
  The spectral flow
  is independent of the choice of
  the sets $\{ x_k\}$ and $\{ \lambda_k\}$.
\end{lemma}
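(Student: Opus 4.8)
The plan is to evaluate the sum $\sum_{0<k<n}\sgn_k d_k$ for one arbitrary admissible choice and show that it collapses to a number determined entirely by the endpoint operators $h_{-1}$ and $h_{+1}$, so that the dependence on the interior data $\{x_k\}$ and $\{\lambda_k\}$ disappears of its own accord. First I would keep $\Lambda_0$ of step~1 fixed, write $W:=\{\lambda:-\Lambda_0\le\lambda\le\Lambda_0\}$ for the spectral window, and isolate the only analytic input that is really needed. Because the essential spectra of $h_x$ stay uniformly outside $W$, the Riesz projection onto the part of $\mathrm{spec}(h_x)$ lying in $W$ is continuous in $x$ in the operator norm, and its rank is a nonnegative integer on the connected base $D^1$, hence a constant $M$. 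Consequently the $M$ eigenvalues inside $W$, listed with multiplicity in increasing order $\mu_1(x)\le\cdots\le\mu_M(x)$, are continuous functions of $x$ that never reach the essential spectrum.

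Next I would introduce, for any level $\mu$ that is not an eigenvalue of $h_x$, the counting integer $N_x(\mu):=\#\{\,j:\mu_j(x)<\mu\,\}$; this needs no lower reference point because the total number of branches is the finite constant $M$. The key observation is local constancy: if $\mu$ avoids $\mathrm{spec}(h_x)$ for every $x$ in a subinterval $I$, then $N_x(\mu)$ is constant on $I$, since a continuous branch $\mu_j$ cannot move from one side of $\mu$ to the other without taking the value $\mu$. In particular the admissibility condition on $\lambda_k$ guarantees that $c_k:=N_x(\lambda_k)$ is a well-defined integer, constant for $x\in[x_{k-1},x_k]$.

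The heart of the argument is then a one-line telescoping. At an interior node $x_k$ both $\lambda_k$ and $\lambda_{k+1}$ avoid $\mathrm{spec}(h_{x_k})$, and from the definitions of $\sgn_k$ and $d_k$ one checks, in each of the cases $\lambda_k>\lambda_{k+1}$, $\lambda_k<\lambda_{k+1}$ and $\lambda_k=\lambda_{k+1}$, that $\sgn_k d_k=N_{x_k}(\lambda_k)-N_{x_k}(\lambda_{k+1})=c_k-c_{k+1}$, where I use that $c_k=N_{x_k}(\lambda_k)$ is the interval-$k$ constant evaluated at its right end and $c_{k+1}=N_{x_k}(\lambda_{k+1})$ is the interval-$(k+1)$ constant at its left end. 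Summing over $0<k<n$ telescopes to $\specflow(\HH,h)=c_1-c_n$. Since $\lambda_1=\lambda_n=0$ and the first and last intervals contain $x_0=-1$ and $x_n=+1$ respectively, this reads $c_1=\#\{\,j:\mu_j(-1)<0\,\}$ and $c_n=\#\{\,j:\mu_j(+1)<0\,\}$, namely the numbers of negative eigenvalues of $h_{-1}$ and of $h_{+1}$ inside $W$. This quantity involves neither $\{x_k\}$ nor $\{\lambda_k\}$, which is precisely the claim.

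I expect the main obstacle to be the analytic input of the first paragraph rather than the combinatorics: one must make precise that the window eigenvalue count is constant and that the ordered eigenvalues are genuinely continuous on all of $D^1$, equivalently the norm-continuity and constant rank of the window projection, together with the uniform separation of the essential spectrum from $W$, which rests on the compactness of $D^1$ and the norm-continuity of $x\mapsto h_x$. Once this is secured the telescoping is forced. I would finally remark that the same projection argument also gives independence of the auxiliary cutoff $\Lambda_0$: the extra eigenvalues that enter when $\Lambda_0$ is enlarged occur in intervals bounded away from $0$, and the continuity of their spectral projections makes their contributions at $x=-1$ and $x=+1$ cancel in the difference $c_1-c_n$.
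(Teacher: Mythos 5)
The paper states this lemma without giving a proof, so there is nothing to compare your argument against; judged on its own terms, however, it has a genuine gap at its foundation. You assert that the rank $M$ of the spectral projection of $h_x$ onto the window $[-\Lambda_0,\Lambda_0]$ is constant in $x$, and hence that the counting function $N_x(\mu)$ is locally constant whenever $\mu$ avoids $\mathrm{spec}(h_x)$. Neither holds: the hypothesis only says that \emph{inside} the window the spectrum consists of finitely many eigenvalues of finite multiplicity, and these eigenvalues may enter or leave the window through the endpoints $\pm\Lambda_0$ as $x$ varies. A contour around the window need not avoid $\mathrm{spec}(h_x)$, so the Riesz projection need not be norm-continuous, and no smaller radius can be chosen uniformly in $x$ because the interior eigenvalues may sweep through every level as $x$ moves. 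When an eigenvalue slides into the window through $-\Lambda_0$, the count $N_x(\mu)$ jumps for every admissible $\mu$ even though no branch crosses $\mu$. Consequently your closed formula $\specflow = c_1-c_n$ (the difference of the numbers of negative window eigenvalues of $h_{-1}$ and of $h_{+1}$) is false. Concretely, take $h_x=\mu(x)P\oplus A$ with $P$ a rank-one projection, $A$ self-adjoint invertible with spectrum $\{\pm 2\}$, $\Lambda_0=1$, and $\mu$ continuous with $\mu(-1)=-2$ and $\mu(x)=-1/2$ for $x\geq 0$: zero is never an eigenvalue, so the admissible choice $n=1$, $\lambda_1=0$ gives $\specflow=0$, while your formula returns $0-1=-1$. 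The same defect makes the formula depend on $\Lambda_0$, contrary to your closing remark: for the Bott element $x\cdot$ on $\underline{\C}$ it returns $1$ when $\Lambda_0=2$ but $0$ when $\Lambda_0=1/2$, whereas the definition in the paper gives $1$ for either choice.

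The combinatorial core of your argument can be salvaged by replacing absolute counts with relative ones. For two levels $\lambda,\lambda'$ with $|\lambda|,|\lambda'|<\Lambda_0$, the number of eigenvalues of $h_x$ strictly between $\lambda'$ and $\lambda$ \emph{is} locally constant on any interval of $x$ on which neither level meets the spectrum: eigenvalues can only appear or disappear through $\pm\Lambda_0$, which lies outside $(\lambda',\lambda)$, so the finite-rank spectral projection onto that strip is norm-continuous there. Your pointwise identity $\sgn_k d_k=N_{x_k}(\lambda_k)-N_{x_k}(\lambda_{k+1})$ is correct, but it must be combined with a comparison of two admissible choices of $\{x_k\}$ and $\{\lambda_k\}$ through a common refinement --- invariance under inserting a subdivision point, and under moving a single level $\lambda_k$ within its admissible range, each checked using the relative count above --- rather than telescoped down to an endpoint formula, which does not exist.
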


\begin{remark}
  \label{remark:unbounded}
  When $h$ is an unbounded self-adjoint Fredholm operator
  such that $h/(h^2+1)^{1/2}$ is a continuous family with respect to the operator norm,
  the spectral flow is defined in the same way as above.
\end{remark}

For the Bott element $\beta$,
$$
\specflow \beta=1.
$$
For the operator $x\cdot $ in $\beta$, we can set
  $x_0=-1 <x_1=-1/2<x_2=1/2< x_3=+1$ and assign 
  $\lambda_1=0, \lambda_2=2/3, \lambda_3=0$. Then
  we have $\sgn_1=-1, N_1=0$ and $\sgn_2=+1, N_2=1$.

The next theorem is well known.
\begin{theorem}\label{K and spectral flow}
The isomorphism below holds.
$$K^1(D^1,S^0)\cong \Z,\qquad
[(\HH, h)] \mapsto \specflow (\HH, h).
$$
\end{theorem}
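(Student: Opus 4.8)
The plan is to reduce the statement to three facts, two of which are already in hand. Applying Theorem~\ref{suspension} with $(X,A)=(\{\mathrm{pt}\},\emptyset)$ together with Theorem~\ref{index}, we already know that $K^1(D^1,S^0)\cong\Z$ with the Bott element $\beta$ as generator, and it is recorded above that $\specflow\beta=1$. It therefore suffices to prove that $\specflow$ descends to a well-defined \emph{group homomorphism} $K^1(D^1,S^0)\to\Z$: any homomorphism $\Z\to\Z$ is multiplication by an integer, and here that integer is $\specflow\beta=1$, so under the identification $K^1(D^1,S^0)=\Z\beta$ the map is the identity of $\Z$ and hence an isomorphism. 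The entire content of the theorem is thus the well-definedness and additivity of $\specflow$ on equivalence classes.

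Additivity is the easy half. For two data $(\HH,h)$ and $(\HH',h')$ the spectrum of $h\oplus h'$ is the union with multiplicity of the two spectra, so the crossings of $0$ entering the definition of the spectral flow are the disjoint union of those for $h$ and for $h'$; hence $\specflow\big((\HH,h)\oplus(\HH',h')\big)=\specflow(\HH,h)+\specflow(\HH',h')$. Two auxiliary observations of the same flavour will be used below: $\specflow(\HH,-h)=-\specflow(\HH,h)$, since negating $h$ reverses the sign of every crossing; and if $\Ker h_x=\{0\}$ for \emph{all} $x\in D^1$ (that is, $(\HH,h)$ is a datum of the pair $(D^1,D^1)$), then $\specflow(\HH,h)=0$, because there is no crossing at all.

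The hard half, and the main obstacle, is $\sim$-invariance, which amounts to a homotopy invariance of the spectral flow relative to the endpoints $x=\pm1$. Suppose $(\HH,h)\sim(\HH',h')$ is witnessed by a datum $(\hat\HH,\hat h)$ over $(D^1,D^1)$ and a continuous family $\tilde h_t$ over $D^1\times[0,1]$ with $\tilde h_0=-h\oplus h'\oplus\hat h$ and $\Ker(\tilde h_t)_x=\{0\}$ on $S^0\times[0,1]\cup D^1\times\{1\}$. My plan is to view $\tilde h_t$ as a single self-adjoint Fredholm family over the compact rectangle $D^1\times[0,1]$ and to show that its spectral flow along the bottom edge $t=0$ equals that along the top edge $t=1$. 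Since the kernel stays trivial along the two vertical edges $S^0\times[0,1]$ and along the entire top edge, the locus of zero eigenvalues is a compact set meeting the boundary of the rectangle only in the interior of the bottom edge, so tracking the net signed crossings of $0$ around the boundary forces the bottom count to agree with the (vanishing) top count. Concretely I would make this rigorous by the finite-rank reduction advertised in the introduction: compactness of the rectangle and Fredholmness give a uniform $\Lambda_0>0$ with no essential spectrum in $[-\Lambda_0,\Lambda_0]$, so the spectral subspaces for the part of the spectrum in $(-\Lambda_0,\Lambda_0)$ assemble, via local cutoffs and compactness, into finite-rank vector bundles over the rectangle, on which the claim becomes the elementary homotopy invariance of the net eigenvalue crossing number for families of finite Hermitian matrices.

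Granting this, $\specflow(\tilde h_0)=\specflow(\tilde h_1)=0$, the last equality because $\tilde h_1$ has trivial kernel everywhere on $D^1$. Expanding $\tilde h_0=-h\oplus h'\oplus\hat h$ by additivity together with the two auxiliary observations gives $0=-\specflow(\HH,h)+\specflow(\HH',h')+0$, that is, $\specflow(\HH,h)=\specflow(\HH',h')$. Hence $\specflow$ is well defined on $K^1(D^1,S^0)$, and combined with additivity it is a group homomorphism sending $\beta\mapsto1$, which by the reduction in the first paragraph completes the proof that it is an isomorphism.
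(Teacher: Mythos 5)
Your proposal is correct, but it reaches the conclusion by a genuinely different route than the paper. Both arguments need the deformation invariance of the spectral flow, and here you are actually more explicit than the paper: you spell out how the relation $\sim$ (which involves stabilization by $\hat h$ and the three-fold direct sum) reduces, via additivity of $\specflow$ under $\oplus$, sign reversal under $h\mapsto -h$, and vanishing for gapped families, to a two-parameter homotopy invariance over the rectangle $D^1\times[0,1]$, whereas the paper compresses all of this into the one-line assertion that $\specflow$ is invariant under continuous deformation. The divergence is in the second half. The paper does \emph{not} argue abstractly from the suspension isomorphism; instead it proves directly that $[(\HH,h)]=\nu\beta$ by (i) deforming $h_x$ to $\rho_x(h_x)$ so that all zero crossings are localized near finitely many points $x_k$, (ii) writing down the explicit finite-rank model $\bigl(\bigoplus_k \underline{\C^{d_k}},\ \bigoplus_k \sgn_k\,(x-x_k)\,\mathrm{id}\bigr)$ representing $\nu\beta$, and (iii) coupling it to $\rho_x(h_x)$ by an off-diagonal isometry $f=\sum_k f_k$ supported near the $x_k$, exactly the $t\bigl(\begin{smallmatrix}0&f^*\\ f&0\end{smallmatrix}\bigr)$ trick that is later recycled as the prototype for Theorem~\ref{goal}. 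Your route instead takes $K^1(D^1,S^0)\cong\Z\beta$ as input from Theorem~\ref{suspension} and Theorem~\ref{index} and only needs $\specflow$ to be a well-defined homomorphism with $\specflow\beta=1$. This is shorter and perfectly valid given that the paper itself invokes those theorems immediately before the statement; but be aware that it proves strictly less: the paper's constructive steps are, in effect, the promised ``detailed description'' of the point case of the suspension isomorphism, so leaning on Theorem~\ref{suspension} here shifts the real content onto Bott periodicity rather than re-deriving it, and it also forgoes the explicit coupling construction whose pattern the proof of the main theorem depends on.
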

\begin{proof}
We give an argument according to our formulation of the $K^1$ group.
  It is sufficient to show that $[(\HH, h)]= \nu\beta$
  when
  $\specflow(\HH,h)=\nu$.
Here we summarize the outline of the proof.

\begin{enumerate}

\item  

  First, we confirm that $\specflow$ is well-defined as a map from
  $K^1(D^1,S^0)$.
  This is shown by the invariance of $\specflow h$ against a continuous deformation of $h$.
  It is not difficult to show that for an infinitesimal variation of $h$,
  the same set $\{ x_k\}$ and $\{ \lambda_k\}$ can be used to compute the
  spectral flow, which is invariant.

\item 
  Next, we deform $h$ in $(\HH,h)$ as follows.
  
  The data for computing the spectral flow is taken as
  $\Lambda_0$, $x_0=-1 < x_1 < \cdots x_n=+1$ and $\lambda_1, \ldots,\lambda_n$.
  We introduce a continuous function $\mu_1(x)$
  which sufficiently approximates the piece-wise constant function
  $\mu_0(x)= \lambda_k (x_{k-1} \leq  x < x_k)$.
  We also introduce a monotonically increasing continuous
  function which also continuously depends on $x$,
  $\rho_x: \R \to \R$ such that $\rho_x(\lambda)= \lambda$ for
  $|\lambda| \geq \Lambda$ and $\rho_x(\mu_1(x))=0$.
  
  We can continuously deform the family $\{ h_x \}$ to $\{ \rho_x(h_x)\}_x$.
For a sufficiently precise approximation of $\mu_0$ by $\mu_1$,
\begin{enumerate}
\item
  Any $x$ such that $\Ker \rho_x(h_x) \neq 0$
  is located at an sufficiently close point to one of $x_k$'s.
  
\item
In the vicinity of $x_k$, the eigenvalues of $\rho_x(h_x)$ change as follows.
For $\sgn_k=\pm 1$, $d_k$ eigenvalues change their sign from $\pm$ to $\mp$.
When $\sgn_k=0$ no eigenvalue goes to zero.
\end{enumerate}

\item 

  There is a representative element  $(\HH',h')$
  which has a similar $x$-dependence of the eigenspectrum to that of $(\HH,h)$:
$$
(\HH',h'):=\left( \bigoplus_{0 < k < n} \underline{\C^{d_k}}, \bigoplus_{0<k<n} \sgn_k  (x -x_k){\rm id}_{d_k \times d_k}\cdot \right). %\textcolor{red}{(Correct?)}.
  $$
  For $k$ with nonzero $\sgn_k=\pm 1$,
  the $d_k$ eigenvalues of $h'$ crosses zero at $x=x_k$
  from positive/negative to negative/positive.
When $\specflow(\HH,h)=\nu$, 
$[(\HH',h')]=\nu \beta$ holds.

\item 

  The last step is to show that $ (\HH',h')\sim(\HH, \{\rho_x(h_x)\}) $
  for the proof of $[(\HH, h)]= \nu \beta$.
  For this, we consider a deformation of
  the continuous family of the bounded self-adjoint Fredholm operators $\{ -h'_x\oplus  \rho_x(h_x) \}_{x \in D^1}$
  on $\HH' \oplus \HH$ to another operator family whose kernel is trivial everywhere in $D^1$.

  Note that nontrivial kernel appears only in the neighborhood of the discrete points
  $x_k \in D^1$. It is, thus, sufficient to deform the operators so that its kernel becomes trivial
  in the neighborhood of $x_k$. The direct sum of the near-zero eigenspaces of $\rho_x(h_x)$
  near $x_k$ which potentially develop the kernel has $d_k$ dimensions.
  Let $f_k$ be an isomorphism from $\C^{d_k}$ to the direct sum of the $d_k$ eigensections 
such that
  $f_k$ is continuous with respect to $x$ and keeps the metric.
  Note that $f_k$ is defined in the neighborhood of $x_k$ only
  but we can naturally extend it by zero with a continuous cut-off
  function to the whole range of $x\in D^1$.
  There is no overlap of the support of $f_k$'s at different $k$'s.
  We put the sum
$$
f:=\sum_{0< k < n} f_k :\HH' \to \HH.
$$
For $0 \leq t \leq 1$, we can define the operator family
$$
\left(
\begin{array}{cc}
-h'_x & 0 \\
0 &  \rho_x(h_x)
\end{array}
\right)
+
t
\left(
\begin{array}{cc}
0 & f^* \\
f & 0
\end{array}
\right),
$$
which achieves the  claim.%\qed
\end{enumerate}
\end{proof}

\subsection{$K$ and $KO$ groups with general degrees}
\label{sec:K-applications}

In the previous subsections, we have defined
the $K^0$ and $K^1$ groups, which are 
sufficient to give a proof of our main theorem.
Here we discuss possible generalizations
to Hilbert bundles with higher symmetries.

Note that the operator $\gamma$ in the triple $(\HH,h,\gamma)$ can be
regarded as a single Clifford generator which satisfies $\{\gamma, h\}=0$. 
In order to define $K^i(X,A)$ with a general degree $i$,
  one can use more general Clifford symmetry on $\HH$
which satisfies appropriate anti-commutativity with $h$,
and their appropriate equivalence relation $\sim$.

Next we consider the case where the Hilbert bundle is real.
In this case, the $KO^0$ and $KO^1$ groups can be
formulated exactly in the same way as in the previous 
subsections with the simple replacement of $\C$ by $\R$
in the definitions of the identity element or the Bott element.
We also denote $KO^{0,0}(X,A)=KO^0(X,A)$ and
$KO^{0,-1}(X,A)=KO^1(X,A)$.
%These will be useful for possible extensions
%to the mod-two type index in lattice gauge theory.

As the Bott periodicity becomes more nontrivial than
the complex $K$ groups, it is worth giving the explicit forms of 
the $KO$ groups with higher degrees.
Here we employ a formulation in \cite{MR0233870}.

Let us first formulate the $KO^{1,0}$ group, which turns out to be isomorphic to $KO^1=KO^{0,-1}$.
\begin{definition}
$$
KO^{1,0}(X,A) := \{(\HH^{\R},h, (\epsilon,e)) \} /\sim
$$
where
\begin{enumerate}
\item
$\HH^{\R}$ is a real Hilbert bundle over $X$.
\item
$h:\HH^{\R} \to \HH^{\R}$ is a family of bounded self-adjoint Fredholm operators parametrized by $x\in X$ 
which is continuous with respect to the operator norm.
\item $\epsilon,e :\HH^{\R} \to \HH^{\R}$ are continuous families of orthogonal operators which satisfy $\epsilon^2={\rm id},e^2=-{\rm id}$. $h, \epsilon,e$ all anticommute among each other.
\item For arbitrary $x \in A$, $\Ker h_x =\{0\}$.
\end{enumerate}
\end{definition}
The definition of the equivalence relation $\sim$ is given in 
the same way as that of the $K$ groups.
In particular, any deformation keeps the condition that $h$ anticommutes with both of $\epsilon$ and $e$.
%\textcolor{red}{(Furuta-san's star mark?)}

\begin{lemma}
\label{lem:KO1KO10}
The following natural isomorphism exists.
$$KO^{0,-1}(X,A) \cong KO^{1,0}(X,A).$$
\end{lemma}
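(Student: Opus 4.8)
The plan is to construct an explicit natural isomorphism between the two descriptions by exhibiting a correspondence of representative data that respects the equivalence relation $\sim$, the sum, and the pullback. The conceptual point is that $KO^{0,-1}(X,A)=KO^1(X,A)$ is defined by data $(\HH^{\R},h)$ with no extra Clifford symmetry, while $KO^{1,0}(X,A)$ carries the pair $(\epsilon,e)$ with $\epsilon^2={\rm id}$, $e^2=-{\rm id}$, and $\{h,\epsilon\}=\{h,e\}=\{\epsilon,e\}=0$. First I would observe that on the $KO^{1,0}$ side the operator $J:=\epsilon e$ satisfies $J^2=\epsilon e \epsilon e=-\epsilon^2 e^2={\rm id}$ and $J^*=e^*\epsilon^*=(-e)(\epsilon)=-e\epsilon=\epsilon e=J$, so $J$ is an orthogonal involution commuting with $h$ (since $h$ anticommutes with both $\epsilon$ and $e$). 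Moreover $e$ anticommutes with $\epsilon$ and satisfies $e^2=-{\rm id}$, so $e$ endows each fiber with a complex structure, and $\epsilon$ is antilinear for it. This is exactly the structure that lets one pass between real and complex pictures.

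The key steps, in order, are as follows. First I would use $e$ to regard $\HH^{\R}$ as a complex Hilbert bundle $\HH^{\C}$, with $e$ acting as multiplication by $i$; under this identification $h$ (which anticommutes with $e$) becomes a complex-\emph{antilinear} self-adjoint Fredholm operator, while $\epsilon$ (also anticommuting with $e$) is complex-antilinear as well. I would then decompose the remaining symmetry: the involution $J=\epsilon e$ splits $\HH^{\R}$ into its $\pm1$ eigenbundles $\HH^{\R}=\HH^+\oplus\HH^-$, and since $\{h,J\}$... more precisely $[h,J]=0$ and $[e,J]=0$ while $\epsilon$ swaps the two eigenbundles, I would show that the whole datum is determined by its restriction to one eigenbundle, on which the extra generators are absorbed. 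Carrying this reduction out realizes $(\HH^{\R},h,(\epsilon,e))$ as equivalent data of the bare form $(\HH',h')$ defining $KO^{0,-1}$, and conversely, given $(\HH',h')$ of $KO^{0,-1}$ type, I would build a $KO^{1,0}$ representative by setting $\HH^{\R}:=\HH'\otimes\R^2$ (or $\HH'\oplus\HH'$) and defining $\epsilon,e$ as fixed $2\times2$ matrices of orthogonal operators with the required Clifford relations while placing $h'$ appropriately so that it anticommutes with both.

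To finish, I would verify three compatibilities. I would check that the assignment sends $\sim$-equivalent data to $\sim$-equivalent data: a deformation $\tilde h_t$ on the $KO^{1,0}$ side preserving anticommutation with $\epsilon$ and $e$ restricts to an admissible deformation on the $KO^{0,-1}$ side, and the auxiliary triples over $(X,X)$ correspond, so the maps descend to the equivalence classes. I would then check additivity under $\oplus$ and naturality under pullback $f^*$, both of which are immediate because the construction is fiberwise and commutes with bundle operations. Finally I would confirm the two assignments are mutually inverse up to the equivalence relation, using the reflexivity-type deformations of Lemma~\ref{reflection} and the isomorphism-invariance of Lemma~\ref{lem:isoK0} to absorb the choices made (the identification of $\HH^{\R}$ with a fixed complexification and the doubling on the reverse map).

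The main obstacle I expect is the bookkeeping of the Clifford signs, that is, verifying that the concrete matrix representatives for $\epsilon$ and $e$ on the reverse map genuinely satisfy $\epsilon^2={\rm id}$, $e^2=-{\rm id}$, and pairwise anticommutation simultaneously with $\{h,\epsilon\}=\{h,e\}=0$, and that no orientation or sign ambiguity obstructs the two maps from composing to the identity on equivalence classes. This is the familiar subtlety in $KO$-theory where the period-$8$ Clifford algebra signs must be matched exactly; once the correct fixed generators are chosen, the deformation arguments inherited from the $K^0$ section apply verbatim and the isomorphism follows.
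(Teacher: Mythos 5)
Your construction matches the paper's proof: the forward map is exactly the paper's $\HH^{\R}_0\otimes\R^2$ with fixed $2\times 2$ Clifford matrices for $\epsilon$ and $e$, and the inverse is the restriction of $h$ to the $+1$ eigenbundle of $J=\epsilon e$. One small sign slip: $e$ \emph{anti}commutes with $J$ (indeed $eJ=\epsilon=-Je$), so both $\epsilon$ and $e$ swap the two eigenbundles — which is precisely why the restriction to $\Ker(\epsilon e-{\rm id})$ carries only $h$ and loses no information, as you conclude.
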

\begin{proof}
The isomorphism is given by the map
$$
KO^{0,-1}(X,A) \longrightarrow KO^{1,0}(X,A),
\quad
[(\HH^{\R}_0,h')] \mapsto [(\HH^{\R}_1, h, (\epsilon, e))]
$$
$$
\HH^{\R}_1:=\HH^{\R}_0 \otimes \R^2,\quad
h:=h' \otimes
\left(
\begin{array}{cc}
{\rm id} &  0\\
 0 & -{\rm id}
\end{array}
\right),\quad
\epsilon:=
\left(
\begin{array}{cc}
0 & {\rm id} \\
{\rm id} & 0
\end{array}
\right),\quad
e:=
\left(
\begin{array}{cc}
0 & -{\rm id} \\
{\rm id} & 0
\end{array}
\right),
$$
and its inverse,
$$
KO^{0,-1}(X,A) \longleftarrow KO^{1,0}(X,A),
\quad
[(\HH^{\R}_0,h')] \leftarrow [(\HH^{\R}_1, h, (\epsilon, e))]
$$
$$
\HH^{\R}_0:=\Ker(\epsilon e -{\rm id} :\HH^{\R}_1 \to \HH^{\R}_1),\quad
h:=h'|_{\HH^{\R}_0}. \quad
$$
%\qed
\end{proof}

\if0
\textcolor{red}{(Remove?)
Similarly, $KO^0(X,A)$ has another expression.
\begin{definition}
$$
KO^{1,1}(X,A) := \{(\HH^{\R},h, (\epsilon_0,\epsilon_1,e_1)) \} /\sim
$$
where
\begin{enumerate}
\item
$\HH^{\R}$ is a real Hilbert bundle over $X$.
\item
  $h:\HH^{\R} \to \HH^{\R}$ is a continuous family of bounded self-adjoint Fredholm operators with respect to the operator norm.
\item $\epsilon_0,\epsilon_1,e_1 :\HH^{\R} \to \HH^{\R}$ are continuous families of orthogonal operators which satisfy $\epsilon_0^2=\epsilon_1^2=1,e_1^2=-1$. $h, \epsilon_0,\epsilon_1,e_1$
  all anticommute among each other.
\item For arbitrary $x \in A$, $\Ker h_x =\{0\}$.
\end{enumerate}
\end{definition}
\begin{lemma}\label{KO11=KO0}
The following natural isomorphism exists.
$KO^{1,1}(X,A) \cong KO^{0}(X,A)$.
\end{lemma}
}
\fi

Now let us generalize the definition in the following way.
\begin{definition}[$KO^{p,q}(X,A)$ for general $p$ and $q$]

For $p\geq 0,q\geq -1$, we can define\footnote{
When $q=-1$, we do not have any $\epsilon_i$, {\it i.e.},
$KO^{p,-1}(X,A):=\{(\HH^{\R},h, (e_1,\ldots, e_p)) \} /\sim$.
}
$$
KO^{p,q}(X,A):=\{(\HH^{\R},h, (\epsilon_0,\epsilon_1,\ldots,\epsilon_q ,e_1,\ldots, e_p)) \} /\sim.
$$
%\textcolor{red}{(true?)}
\end{definition}

%which has 
\begin{lemma}
 There is an isomorphism 
$$
KO^{p,q}(X,A) \stackrel{\cong}{\to} KO^{p+1,q+1}(X,A).
$$
\end{lemma}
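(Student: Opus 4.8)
The plan is to imitate the construction of Lemma~\ref{lem:KO1KO10}, which is precisely the case $(p,q)=(0,-1)$, while carrying the surviving generators along for the ride. Write
$$
\tau_1=\left(\begin{array}{cc}0&1\\1&0\end{array}\right),\quad
\tau_2=\left(\begin{array}{cc}0&-1\\1&0\end{array}\right),\quad
\tau_3=\left(\begin{array}{cc}1&0\\0&-1\end{array}\right)
$$
for the three pairwise anticommuting real $2\times2$ matrices with $\tau_1^2=\tau_3^2={\rm id}$ and $\tau_2^2=-{\rm id}$. Given a representative $(\HH^{\R},h,(\epsilon_0,\ldots,\epsilon_q,e_1,\ldots,e_p))$ of $KO^{p,q}(X,A)$, I would define the forward map by tensoring with $\R^2$, sending $\HH^{\R}\mapsto\HH^{\R}\otimes\R^2$ and
$$
h\mapsto h\otimes\tau_3,\qquad
\epsilon_i\mapsto \epsilon_i\otimes\tau_3,\qquad
e_j\mapsto e_j\otimes\tau_3,
$$
and adjoining the two new generators $\epsilon_{q+1}:={\rm id}\otimes\tau_1$ and $e_{p+1}:={\rm id}\otimes\tau_2$.

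Checking that this is a representative of $KO^{p+1,q+1}(X,A)$ is a single line of Clifford bookkeeping: since the $\tau$'s anticommute pairwise and the old operators already anticommute among themselves, every pair among $h\otimes\tau_3$, $\epsilon_i\otimes\tau_3$, $e_j\otimes\tau_3$, ${\rm id}\otimes\tau_1$, ${\rm id}\otimes\tau_2$ anticommutes, and the squares come out to $\pm{\rm id}$ as required. Because $\tau_3$ is invertible we have $\Ker(h_x\otimes\tau_3)=\Ker h_x\otimes\R^2$, so the condition $\Ker h_x=\{0\}$ on $A$ is preserved. A homotopy $\tilde h_t$ and a direct sum in $KO^{p,q}$ pull through to $\tilde h_t\otimes\tau_3$ and a direct sum in $KO^{p+1,q+1}$ with all generators held fixed, so the map descends to equivalence classes and is a group homomorphism.

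For the inverse I would use the self-adjoint orthogonal involution $\omega:=\epsilon_{q+1}e_{p+1}$, which satisfies $\omega^2={\rm id}$ (because $\{\epsilon_{q+1},e_{p+1}\}=0$, $\epsilon_{q+1}^2={\rm id}$ and $e_{p+1}^2=-{\rm id}$), commutes with each of $h,\epsilon_0,\ldots,\epsilon_q,e_1,\ldots,e_p$ (each anticommutes with both factors of $\omega$), and anticommutes with $\epsilon_{q+1}$ and $e_{p+1}$. The family of spectral projections $\frac12({\rm id}+\omega_x)$ is continuous in the operator norm, so its image $\HH^{\R}_0:=\Ker(\omega-{\rm id})$ is a continuous real Hilbert subbundle, and the commuting operators restrict to it. I would set the inverse to send $[(\HH^{\R},h,(\ldots))]$ to the class of $(\HH^{\R}_0,\ h|_{\HH^{\R}_0},\ (\epsilon_0|_{\HH^{\R}_0},\ldots,\epsilon_q|_{\HH^{\R}_0},e_1|_{\HH^{\R}_0},\ldots,e_p|_{\HH^{\R}_0}))$. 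The kernel condition survives because $\epsilon_{q+1}$ anticommutes with both $h$ and $\omega$, hence restricts to an isomorphism from $\Ker h_x$ inside the $(+1)$-eigenspace of $\omega_x$ onto $\Ker h_x$ inside the $(-1)$-eigenspace; thus $\Ker h_x=\{0\}$ if and only if $\Ker(h|_{\HH^{\R}_0})_x=\{0\}$.

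It then remains to check that the two composites are the identity. The composite inverse-after-forward is literally the identity with these sign choices: $\omega=\epsilon_{q+1}e_{p+1}$ becomes ${\rm id}\otimes\tau_1\tau_2={\rm id}\otimes\tau_3$, its $(+1)$-eigenbundle is $\HH^{\R}\otimes(\tau_3=+1)$, and there $h\otimes\tau_3$, $\epsilon_i\otimes\tau_3$, $e_j\otimes\tau_3$ restrict back to $h,\epsilon_i,e_j$. I expect the main obstacle to be the other composite, forward-after-inverse, where one must produce a canonical isomorphism $\HH^{\R}\stackrel{\cong}{\to}\HH^{\R}_0\otimes\R^2$ realized by $v\mapsto v\otimes(1,0)$ and $\epsilon_{q+1}v\mapsto v\otimes(0,1)$ for $v\in\HH^{\R}_0$, and then verify that under it $h,\epsilon_i,e_j,\epsilon_{q+1},e_{p+1}$ go to exactly $h|_{\HH^{\R}_0}\otimes\tau_3$, $\epsilon_i|_{\HH^{\R}_0}\otimes\tau_3$, $e_j|_{\HH^{\R}_0}\otimes\tau_3$, ${\rm id}\otimes\tau_1$, ${\rm id}\otimes\tau_2$. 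This is the step where the sign conventions, the $A$-invariance, and the continuity of the eigenbundle splitting all have to line up; once the isomorphism is in hand, the $KO$-analogue of Lemma~\ref{lem:isoK0} identifies the composite with the identity class in $KO^{p+1,q+1}(X,A)$, so the forward map is the claimed isomorphism.
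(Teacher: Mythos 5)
Your construction is exactly the one the paper intends: it is the direct generalization of Lemma~\ref{lem:KO1KO10} (tensor with $\R^2$, old generators multiplied by $\mathrm{diag}(1,-1)$, new generators $\epsilon_{q+1}={\rm id}\otimes\tau_1$ and $e_{p+1}={\rm id}\otimes\tau_2$, inverse by restriction to the $+1$-eigenbundle of $\epsilon_{q+1}e_{p+1}$), which matches the explicit matrices the paper writes out in Appendix~\ref{app:generalizations}. The proposal is correct, and your verification of the two composites (including the conjugating isomorphism $v\oplus\epsilon_{q+1}v\mapsto v\otimes\R^2$ and the appeal to the $KO$ analogue of Lemma~\ref{lem:isoK0}) fills in details the paper leaves implicit.
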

With this isomorphism, we can define the $KO^i$ group\footnote{To be precise, the definition is given as a direct limit, $p,q\to \infty$ with $p-q=i$ fixed.}
 in a similar way to  Lemma~\ref{lem:KO1KO10}.
\begin{definition}
 $$
KO^i (X,Y) := KO^{p,q}(X,Y)\qquad (i=p-q).
$$
\end{definition}

The product for $KO^*$ is constructed in a similar way as that for
$K^0$  \cite{Karoubi}.
The well-definedness and the associativity of the  product are shown
straightforwardly.
The exterior product is also defined via pullbacks:
$$
KO^{i}(X,A) \times KO^{j}(Y,B) \to KO^{i+j}((X,A) \times (Y,B)).
$$
The commutativity of the product holds with sign $(-1)^{ij}$.

Note that the inverse element of 
$[(\HH^{\R},h, (\epsilon_0,\epsilon_1,\ldots,\epsilon_q ,e_1,\ldots, e_p))]$
is given by 
\begin{align*}
 [(\HH^{\R},-h, (-\epsilon_0,-\epsilon_1,-\epsilon_2,\ldots,-\epsilon_q ,-e_1,\ldots, -e_p))].
\end{align*}
%\textcolor{red}{
For $q\ge 0$, the inverse element is also described in a similar way to Lemma~\ref{inverse2} by
%}
\begin{align*}
[(\HH^{\R},h, (-\epsilon_0,\epsilon_1,\epsilon_2, \ldots,\epsilon_q ,e_1,\ldots, e_p))].
\end{align*}

\if0
\textcolor{red}{(to be replaced by general suspension)
We note that the above two expressions of the inverse leads to the relation
%%% Eq. 
\begin{align*}
 [(\HH^{\R},h, (\epsilon_0,\epsilon_1,\ldots,\epsilon_q ,e_1,\ldots, e_p))]\sim 
[(\HH^{\R},-h, (+\epsilon_0,-\epsilon_1,-\epsilon_2,\ldots,-\epsilon_q ,-e_1,\ldots, -e_p))].
\end{align*}
}
\fi

%\textcolor{red}{
The suspension isomorphism for  the $KO$ groups is given as follows \cite{Karoubi}.
\begin{definition}[Bott element]
We define the Bott element in $KO^{0,-1}(D^1,S^0)=KO^1(D^1,S^0)$ by
$$
\beta_{\R}:=[(\underline{\R}, x\cdot )] \in KO^{0,-1}(D^1,S^0)=KO^1(D^1,S^0),
$$
where $\underline{\R}$ denotes a trivial line bundle over $D^1$
with the fiber $\R$, and the operator $x \cdot$ is
multiplication of $x$, which is the coordinate for
$D^1=\{x:  -1 \leq x \leq 1\}$.
\end{definition}
\begin{definition}[Suspension]
% Suppose $p,q\ge 0$.
  For $\alpha \in KO^i(X,A)$, the exterior product with the
  Bott element 
$\alpha \cdot \beta_{\R} \in KO^{i+1}((X,A) \times (D^1,S^0))$ is called the suspension.
\end{definition}
The explicit form of the exterior product for $\alpha \in KO^{p,q}(X,A)$ with
$q\ge0$
is given by
%\footnote{Note that $h+ x \gamma$ in the right-hand side is
%  an abbreviation. Precisely it is written as
%$
% p_X^* h  + ({p_{D^1}}^* x)  p_X^* \gamma
%$ making the pullback explicit.},
\begin{align*}
 [(\HH^{\R},h,\epsilon_0,\cdots,\epsilon_q,e_1,\cdots, e_p)] \cdot \beta_{\R} 
=&[(p^*_X \HH^{\R}, h + x \epsilon_0, \epsilon_1,\cdots,\epsilon_q,e_1,\cdots, e_p)] \\
 &\in KO^{p,q-1}((X,A) \times (D^1,S^0)).
\end{align*}
%The theorem below is a part of the Bott periodicity theorem.
%（degree$0 \to 1$に対するもの）
The next theorem is a part of the Bott periodicity theorem. %\textcolor{red}{[Ref. Karoubi, Gomi Atiyah-Singer skew-adjoint...].}.
\begin{theorem}[Suspension isomorphism/Bott periodicity]\label{suspensionKO}
%Suppose $p,q\ge 0$.
  The following suspension map is an isomorphism
  with respect to the left $KO^{*}(X,A)$ module.
$$
KO^{i}(X,A) \to KO^{i+1}((X,A) \times (D^1,S^0)),\qquad  \alpha \to \alpha \cdot \beta_{\R}.
$$
Here $\beta_{\R}$ is the Bott element.
%Then it immediately follows that the map
%$$
%KO^i(X,A) \to KO^{i+1}((X,A) \times (D^1,S^0)) 
%$$
%is an isomorphism with respect to the left $KO^*(X,A)$ module. 
\end{theorem}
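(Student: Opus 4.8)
The plan is to deduce Theorem~\ref{suspensionKO} in the same spirit as the complex case (Theorem~\ref{suspension}), but with a self-contained real argument, since the suspension map here is realized concretely by appending a single Clifford generator. First I would normalize the bidegree $(p,q)$. Because the suspension consumes $\epsilon_0$ and sends $KO^{p,q}(X,A)\to KO^{p,q-1}((X,A)\times(D^1,S^0))$, that is $i=p-q\mapsto i+1$, and because of the reduction isomorphism $KO^{p,q}(X,A)\cong KO^{p+1,q+1}(X,A)$, I may assume $q\geq 1$ so that the explicit formula $[(\HH^{\R},h,\epsilon_0,\ldots,e_p)]\cdot\beta_{\R}=[(p_X^*\HH^{\R},h+x\epsilon_0,\epsilon_1,\ldots,e_p)]$ applies and lands in the allowed range $q-1\geq 0$. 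Naturality under pullbacks, $(f^*\alpha)\cdot\beta_{\R}=(f\times\mathrm{id})^*(\alpha\cdot\beta_{\R})$, together with compatibility with the $KO^*(X,A)$-module structure (exterior product with the fixed class $\beta_{\R}$ commutes with left multiplication up to the sign $(-1)^{ij}$ already noted for the product), then reduces the problem to constructing a two-sided inverse on representatives.

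The heart of the proof is the desuspension map $\psi: KO^{p,q-1}((X,A)\times(D^1,S^0))\to KO^{p,q}(X,A)$, the real, general-base analogue of the spectral-flow inverse from Theorem~\ref{K and spectral flow}. Given a representative $(\HH^{\R},H,(\epsilon_1,\ldots,\epsilon_q,e_1,\ldots,e_p))$ over $X\times D^1$ that is invertible on $A\times D^1\cup X\times S^0$, I would deform $H$, through families obeying the same anticommutation relations and the same boundary invertibility, into an affine normal form $H_0+x\,\epsilon_0$ in the $D^1$-coordinate $x$, where $H_0$ and the newly produced involution $\epsilon_0$ (with $\epsilon_0^2=\mathrm{id}$, $\{\epsilon_0,H_0\}=0$, and $\epsilon_0$ anticommuting with the remaining generators) do not depend on $x$; then $\psi$ records $[(\HH^{\R}|_{X\times\{0\}},H_0,(\epsilon_0,\epsilon_1,\ldots,e_p))]\in KO^{p,q}(X,A)$. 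Concretely, since $X\times D^1$ is compact there is a uniform gap $\Lambda_0$ outside a finite part of the spectrum; cutting off by $\rho_x$ exactly as in Theorem~\ref{K and spectral flow} localizes the near-kernel onto a finite-rank subbundle $E\to X\times D^1$ on which $H$ is a finite-rank self-adjoint family carrying all the Clifford generators, and on which the $x$-dependence can be linearized. The two composites are then the identity via explicit homotopies within $\sim$: upward, $h\mapsto h+x\epsilon_0$ is already in normal form, so $\psi$ returns the class of $h$; downward, the normalization homotopy itself witnesses $[H]=[H_0+x\epsilon_0]=\psi([H])\cdot\beta_{\R}$.

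The main obstacle I expect is the Clifford-equivariant linearization over a general base $X$ in the real setting. Unlike the point case of Theorem~\ref{K and spectral flow}, the near-kernel subbundle $E$ need not be trivial, and the deformation to affine form must be performed continuously over all of $X$, must preserve the whole tuple $(\epsilon_1,\ldots,\epsilon_q,e_1,\ldots,e_p)$ of anticommuting involutions, and must respect the boundary invertibility on $A\times D^1\cup X\times S^0$ throughout. Producing the extra generator $\epsilon_0$ is where the real representation theory of Clifford algebras enters: the existence of a fibrewise involution anticommuting with $H_0$ and the other generators is precisely an ABS-type extension-of-module condition, and its obstruction is what makes $KO^{p,q}$ genuinely $8$-periodic rather than $2$-periodic.

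I would handle this by first reducing $H$ over $E$ to a direct sum of standard crossing blocks, the real analogues of $\sgn_k(x-x_k)\,\mathrm{id}$ now valued in irreducible $Cl_{p,q}$-modules, using compactness of $X\times D^1$ together with a partition-of-unity and clutching argument, and only then invoking that each standard block is manifestly a suspension. Everything else, namely well-definedness on $\sim$-classes, compatibility with the module and exterior-product structures, and the two homotopies verifying $\psi\circ(\cdot\,\beta_{\R})=\mathrm{id}$ and $(\cdot\,\beta_{\R})\circ\psi=\mathrm{id}$, is a routine extension of the complex arguments already given for Theorems~\ref{index}, \ref{suspension}, and \ref{K and spectral flow}.
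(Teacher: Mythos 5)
The paper does not actually prove Theorem~\ref{suspensionKO}: it is quoted as a known part of Bott periodicity with a citation to Karoubi, just as the complex version (Theorem~\ref{suspension}) is only worked out in detail for the base $(\{\mathrm{pt}\},\emptyset)$ via the spectral flow in Theorem~\ref{K and spectral flow}. So there is no in-paper proof to compare against, and your proposal has to stand on its own.

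It does not. The decisive step --- deforming an arbitrary representative over $X\times D^1$ into the affine normal form $H_0+x\,\epsilon_0$ with $x$-independent $H_0$ and a newly produced involution $\epsilon_0$ anticommuting with $H_0$ and the remaining Clifford generators --- is essentially the entire content of the periodicity theorem, and your plan to obtain it by ``reducing $H$ over $E$ to a direct sum of standard crossing blocks \dots using compactness \dots together with a partition-of-unity and clutching argument'' is an assertion, not an argument. Local normal forms of this kind exist, but they cannot in general be glued into a global direct-sum decomposition: the gluing data is exactly where the nontrivial topology of the class lives, and a family that were globally a sum of standard blocks would already be manifestly a suspension, so you are assuming surjectivity in the course of proving it. The point-base linearization of Theorem~\ref{K and spectral flow} works only because over $D^1$ the near-kernel crossings are finitely many isolated points $x_k$ and all bundles involved are trivial; over a general compact $X$ the degeneracy locus in $X\times D^1$ is an arbitrary closed set and the near-spectral subbundle carries a nontrivial $Cl_{p,q}$-module structure, which is precisely what obstructs a naive block decomposition. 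Your own aside --- that producing $\epsilon_0$ is an ABS-type extension-of-module condition whose obstruction ``is what makes $KO^{p,q}$ genuinely $8$-periodic'' --- undercuts the construction: if such a fibrewise obstruction could be nonzero, your desuspension $\psi$ would not be defined on all classes, contradicting the surjectivity you are trying to establish. A correct proof has to invest genuine work at this point (Karoubi's Banach-category argument, an Atiyah--Bott-style linearization of clutching functions with its Fourier-analytic approximation estimates, Wood's theorem, or an index-theoretic inverse); none of that is supplied or reducible to the routine homotopies you list. The surrounding bookkeeping --- normalizing to $q\geq 1$ via $KO^{p,q}\cong KO^{p+1,q+1}$, naturality, compatibility with the module structure, and the easy composite $\psi\circ(\cdot\,\beta_{\R})=\mathrm{id}$ --- is fine, but it is not where the theorem lives.
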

%}

%In this work, we do not use the general $KO^{p,q}(X,A)$ groups, 
%but we would like to present some remarks for applications in the future.
%\begin{enumerate}
% \item 
% \end{enumerate}

The definitions of $K^i(X,A)$ or $KO^i(X,A)$ for $i\neq 0$ with Clifford algebra symmetry
  also allow unbounded operators. Note that each equivalence class 
  does NOT always have representative elements with a finite rank $\HH$ or $\HH^{\R}$. 

\if0
Here we summarize other possible applications.
We will be able to introduce the quaternion structure in a similar manner as the real case above.
We can consider the case with a group acting 
on the base manifold, as well as the case with symmetries with anti-linear operation.
It would also be interesting to consider the 
family version of this work.
\fi

\if0
\textcolor{red}{(Below are the footnotes from the old versions.)}
%The inverse element of $[(\HH, h,\gamma)]$ is $[(\HH, -h,-\gamma)]$\footnote{
\begin{itemize}
 \item[old footnote 5]When this construction is extended to those with Clifford symmetries in addition to $\gamma$,
we flip the sign of all the Clifford generators.
\item[old footnote 7]This expression is generalized to the case with Clifford symmetries
in addition to $\gamma$ by flipping the sign of only one element of Clifford
generators keeping the sign of $h$ and the other Clifford generators.
The sign flip corresponds to the orientation flip
of the finite-dimensional vector space $V$ which determines the Clifford algebra $Cl(V)$.
\item[old footnote 8]In generalization to $K^i(X,A)$ with nonzero degree $i$, 
we need an additional sign depending on $(-1)^i$. (generalization of $
[(\HH, h,\gamma)]=[(\HH, -h,\gamma)] \in K^0(X,A)
$)
\item[old footnote to Lemma 2.18]
In generalization in Sec.~\ref{sec:K-applications} to $K^i(X,A)$ and $K^j(X,B)$
with nonzero degrees, we need an additional sign depending on $(-1)^{ij}$.
This sign comes from the orientation flip of the finite-dimensional vector space
$V$ which determined the Clifford algebra $Cl(V)$. The order change of the product.
\end{itemize}
\fi

%\subsection{Clifford代数の対称性と$K^i(X,A)$}

%\subsection{Bott周期性}

%\subsection{Karoubiによる記述と$K^i((X,A) \times (D^1,S^0))$の要素の有限次元近似}

\section{Lattice approximation of Dirac operator and the main theorem}
\label{sec:latticeapproximation}

In this section, we present our main theorem in this work
where we compare two Dirac operators, one is
defined on a continuum torus, and the other is
a lattice approximation\footnote{
It is challenging to consider a lattice approximation of
curved manifolds since it is nontrivial to describe their curvature.
In this work, we focus on the square lattices, which
naturally approximate flat tori.
}.
We construct the two Dirac operators and treat
them together with the associated Hilbert bundles
as the elements of the $K^1$ group.
The main theorem states that they are in the same
equivalence class to represent the same $K^1$ group element
when the lattice spacing is sufficiently small.

%\textcolor{red}{Note that an integral affine manifold is the only one for which our lattice approximation is possible. The torus is a representative example, and the discussion will be limited to the torus.}

%We consider complex vector bundles 
%and a family of (Dirac) operators on them
%to represent the elements of
%the associated complex $K$ groups.

\subsection{Set up in continuum theory}

We consider an $n$-dimensional torus $T^n=\R^n/\Z^n$
and a complex vector bundle $E\to T^n$ equipped with a Hermitian fiber metric.
Therefore, the structure group $G$ is unitary.
This bundle $E$ can be twisted, or can have nontrivial connections,
and the projection to the base space is denoted by $\pi:E\to T^n$.
Note that $E$ may contain not only the color degrees of freedom,
which is taken in some representation of the structure group $G$,
but also those of spinors and flavors in physics.
Here we do not specify them unless they potentially cause a confusion.

We summarize the details of the setup in the continuum theory below.
\begin{itemize}
\item $T^n=\R^n/ \Z^n$. We set the size $L=1$ in every direction and assign a flat metric.

\item $E \to T^n$: a complex rank $r$ vector bundle with a smooth Hermitian fiber metric.
We denote its structure group by $G$.
  For smooth sections $u,v$ of $E$ and the inner product $(*,*)$ in the fiber space,
  we denote their inner product by 
$$
\langle u |v \rangle:= \int_{T^n} (u,v) d^n x, \quad
||v||_{L^2}:= {\langle v | v \rangle}^{1/2},
$$
where $d^n x$ denotes the volume element of $T^n$
with the standard flat metric.

\item $\gamma:E \to E$: $\Z/2$-grading self-adjoint orthogonal operator
satisfying $\gamma^2={\rm id}$.

\item $c_i:E \to E$ $(i=1,\ldots, n)$ : the self-adjoint orthogonal operators
with the condition $c_i^2={\rm id}$, which represent the Clifford algebra.
They satisfy $\{\gamma, c_i\}=0$ and $ \{c_i,c_j\}=0 \,(i \neq j)$.
  
\item A fixed connection on 
  $E$ which preserves the above structure. 
      The connection determines the covariant derivative that we denote 
      in the $i$-th direction by $\nabla_i^{\rm cont.}$.
      Its explicit form will be given later.

\item A complex Hilbert space $\HH$ defined by
  a completion of the smooth sections of $E$ by the norm $||*||_{L^2}$,
  $$\HH:= L^2(E).$$
  This is an infinite-dimensional complex Hilbert space\footnote{
  The elements of $\HH$ coincide with the equivalence classes of
  the $L^2$-finite functions where the functions are equivalent
  when they match almost everywhere.
  This coincidence can be understood as follows.
  Let us take an element in our Hilbert space $\HH$ defined by completion,
  which is defined by a Cauchy series $u_1,u_2,u_3 ,\ldots$ of smooth sections
  with respect to the $L^2$ norm.
  When we take a subseries of it, they converge almost everywhere.
  Namely, the set of non-convergent points has measure zero and
  the subseries converges to a measurable function with a finite $L^2$ norm.
  When we take two such subseries, they match almost everywhere.
  Conversely, for a $L^2$-finite measurable function, there exists
  a Cauchy series $u_1,u_2,u_3,\ldots$ as above.
  For another series $v_1,v_2,v_3,\ldots$, the two series represent the
  same element of the completion. Namely, the alternate series
  $u_1,v_1,u_2,v_2,u_3,v_3 \ldots$ is also a Cauchy series.
  }.

\item The $L_1^2$ norm defined as follows.
$$
 || v||_{L^2_1} :=\Big( ||v||^2_{L^2} +  \frac{1}{m_0^2}\sum_{i=1}^n \int_{T^n} | \nabla_{i}^{\rm cont.} v |^2 d^n x\Big) ^{1/2},
 $$
 where $m_0$ is an arbitrary real number,
 which is often taken as a typical scale of the physical system we focus on.
 Note that there are non-smooth elements in $\HH$ whose $L_1^2$ norm is not finite\footnote{
 To be precise, the following statement is known to be not always true.
 For a Cauchy series $u_1,u_2,u_3 ,\ldots$ of smooth sections in $\HH$,
 there exists a Cauchy series $v_1,v_2,v_3 ,\ldots$ with a finite $L^2_1$ norm
 such that the alternate series $u_1,v_1,u_2,v_2,u_3,v_3 \ldots$ is
 a Cauchy series with respect to the $L^2$ norm (When this statement is true,
 the series $u_1,u_2,u_3 ,\ldots$ converges to an element with a finite $L^2_1$ norm.).
 }.
Therefore, the $L_1^2$ norm is necessary 
in order to define the domain of the Dirac operator.
Here we have made a simplest choice  
from several different definitions\footnote{
  %Here we give a simple definition.
  The other definitions are given, for example,
  by local parallel transports with respect to the fixed connection,
  or by differentials of a flat trivial bundle into which $E$ is embedded.
  }.

\item $L^2_1(E)$: the completion of smooth sections of $E$ with respect to the $L_1^2$ norm
  $||*||_{L^2_1}$. There exists a natural injection map
$$L^2_1(E) \to \HH=L^2(E).$$
\end{itemize}

\begin{definition}
  The Dirac operator on $T^n$ is defined by
  $$
\gamma D_{T^n}: L^2_1(E) \to \HH, \qquad 
\gamma D_{T^n} v = \gamma \sum_i c_i \nabla_{i}^{\rm cont.} v,
$$
for $v\in L_1^2(E)$.
\end{definition}

Since $D_{T^n}$ is an unbounded operator, Remark \ref{remark:unbounded} applies in the following.
Note that $D_{T^n}$ anticommutes with a self-adjoint operator $\gamma : E\to E$
which satisfies $\gamma^2={\rm id}$.
Here we also introduce a self-adjoint mass term $m{\rm id}: E\to E$
with $m\in \R$.
In the following, we denote the operator by $m$ for simplicity.
To make a $K$-group element, we consider $\gamma(D_{T^n}+m)$, which is
self-adjoint\footnote{
Note that the $K$-groups can be defined with skew-adjoint operators \cite{key285033m},
but we employ a physicist-friendly notation where
the Dirac operator is skew-adjoint while the mass term is self-adjoint
and only $\gamma(D_{T^n}+m)$ is self-adjoint. 
}.
The standard Fredholm index of $D_{T^n}$ is given by
$$
{\rm index} :=\trace \gamma|_{\Ker(\gamma D_{T^n})} \in \Z.
$$
This index characterizes the element
$$
[(L^2(E), \gamma D_{T^n} ,\gamma)] \in K^0(\{ {\rm pt} \}, \emptyset) \cong \Z .
$$
In the same way as in our discussion in the previous section, the index is equal to
the spectral flow, characterizing
$$
[(p^*(L^2(E)), \gamma (D_{T^n} + m))]] \in K^1(D^1, S^0) \cong \Z,
$$
where the parameter $m$ denotes the coordinate for $D^1=[-M,M]\;(M >0)$
and $p^*$ is the pullback of the projection to the massless point, $p:[-M,M]\to \{m=0\}$.

\subsection{Wilson Dirac operator in lattice theory}

Here we summarize the setup on a lattice.
All the quantities below depend on the lattice spacing $a$.
But the $a$ dependence is suppressed for simplicity
except for those we need to manifestly distinguish from
the corresponding continuum quantities.

\paragraph{Data in lattice theory}

We construct a lattice data for each $N \in \Z_{>0}$
from those in continuum theory defined above.
\begin{itemize}
  
\item A flat square lattice $T_a^n=a\Z^n/ \Z^n \subset T^n$ with the lattice spacing $a=\frac{1}{N}$. Note that the lattice size $Na=1$ is fixed.

\item $E_a:= E|_{T^n_a}$.

\item $\gamma:E_a \to E_a$: the restriction of the continuum $\Z/2$-grading operator onto $E\to E_a$.
  %\textcolor{red}{
  Note that $\trace \gamma =0$ always holds on the lattice.
  %}

\item $c_i:E_a \to E_a$ $(i=1,\ldots n)$ : the restriction of the continuum $c_i$'s onto $E\to E_a$.

\item $U_i$ $(i=1,\ldots, n)$ defined by
  the parallel transport on $E$ from each lattice point
  to its nearest-neighbor site
  in the negative $i$-th direction. The explicit form of the parallel transport is presented in Sec.~\ref{sec:generallink}.
%  connection on $E$ 
%  The parallel transport is given by the generalized link variables whose definition is presented later.
  We assume that $U_i$ is compatible with the Clifford operator $c_j$ (for any $j=1,\ldots n$) and $\gamma$.
%  \textcolor{red}{(Local expression for link variables)}

      Differential operation is approximated by the forward difference
operator
$$\nabla_i:=\frac{U_i-{\rm id}}{a} :\Gamma(E_a) \to \Gamma(E_a),$$
or its conjugate,
$$
-\nabla^*_i=\frac{{\rm id}-U_i^{-1}}{a} :\Gamma(E_a) \to \Gamma(E_a),$$
which is the backward difference operator.

\item $\HH_a:=L^2(E_a)$ where the $L^2$ norm is defined by

$$
  ||v_a ||_{L^2} :=  \Big( %\frac{1}{\# T^n_a}
  a^n\sum_{ z \in T^n_a} |v_a(z)|^2 \Big)^{1/2},
$$
for $v_a \in \HH_a$.
\end{itemize}

%We take $E_a$ $\Z/2$-graded Clifford module bundle over $T^n_a$.
%Namely, it is a complex Hermitian vector bundle with
%the data $\gamma$, $c_i (i=1,\ldots,n)$ given below.

%$\gamma:E_a \to E_a$: $\Z/2$-grading self-adjoint orthogonal operator
%satisfying $\gamma^2={\rm id}$. 

%$c_i:E_a \to E_a$ $(i=1,\ldots n)$ denote the self-adjoint orthogonal operators
%with the condition $c_i^2={\rm id}$, which represents the Clifford algebra.
%They satisfy $\{\gamma, c_i\}=0, \{c_i,c_j\}=0 \,(i \neq j)$.

%$U_i :E_a \to E_a$ is a lift of $s_i$ to the bundle $E_a$ such that
%it preserves the Hermitian fiber metric and the 
%actions of $c_i$ and $\gamma$.

Using $\nabla_i$, we also define the $L_1^2$ norm on the lattice by
$$
||v_a||_{L^2_1} := \left[||v_a ||^2_{L^2} + %\frac{1}{\# T^n_a}
  \frac{a^n}{m_0^2}\sum_{ z \in T^n_a} \sum_{i=1}^n |(\nabla_i v_a)(z)|^2\right]^{1/2}.
$$
With a fixed lattice spacing, there is no essential difference between
the $L^2$-finite space and the $L^2_1$-finite space.
But in the continuum limit $a\to 0$,
the two converge to the different continuum counterparts.

%Note that $\nabla_i,\nabla_i^*$ commute with $\gamma$ and $c_j$'s.

%The forward difference operator  defined in the previous section,
%and the backward difference operator 
Note that $\nabla_i$ and $-\nabla^*_i$
are not skew-adjoint, while their continuum counterpart $\nabla^{\rm cont.}_i$ is.
In order to make a skew-adjoint discretization of $\nabla^{\rm cont.}_i$,
we take their ``average'' to define a naive skew-adjoint lattice Dirac operator,

$$
D_a:=\sum_i D_i,\quad D_i:= c_i\frac{\nabla_i - \nabla^*_i}{2}.
$$
%すなわち
%$D_i=c_i (\nabla_i -\nabla_i^*)/2$である。

As is mentioned in the introduction, the operator $D_a$ has doubler modes. %in the continuum limit.
To avoid their appearance, we introduce the Wilson term \cite{Wilson1977}
defined by
$$
W:= \sum_i W_i,\quad W_i:= - \frac{\nabla_i + \nabla^*_i}{2}
$$
%すなわち
%$W_i= -(\nabla_i +\nabla_i^*)/2$である。
For $W_i$, the following lemma holds.

\begin{lemma}
  \label{lemma:positivityW}
$$\nabla_i^*\nabla_i=\nabla_i\nabla_i^*=\frac{1}{a^2}({\rm id}-U_i-U_i^{-1}+{\rm id})
=-\frac{1}{a}(\nabla_i+ \nabla_i^*)$$
or we can write
$$
W_i=\frac{a}{2} \nabla_i^* \nabla_i,
$$
which is semi-positive and so does the Wilson term $W=\sum_i W_i$.
\end{lemma}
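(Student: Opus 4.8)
The plan is to reduce the whole statement to the unitarity of the link operator $U_i$ together with one short algebraic manipulation. First I would record that $U_i$ is unitary on $\HH_a=L^2(E_a)$: the underlying shift is a bijection of the periodic lattice $T^n_a$ and preserves the counting measure $a^n\sum_z$, while the parallel transport along it preserves the Hermitian fiber metric by assumption; their composition therefore preserves the lattice $L^2$ inner product, so $U_i^*=U_i^{-1}$. This is the only input that is not purely formal.

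Granting $U_i^*=U_i^{-1}$, the adjoint of $\nabla_i=(U_i-{\rm id})/a$ is
$$
\nabla_i^*=\frac{U_i^{*}-{\rm id}}{a}=\frac{U_i^{-1}-{\rm id}}{a},
$$
which is exactly the backward-difference operator $-\nabla_i^*=({\rm id}-U_i^{-1})/a$ introduced above, so the notation is consistent and $\nabla_i^*$ really is the $L^2$-adjoint of $\nabla_i$. I would then simply expand the two products. Using $U_iU_i^{-1}=U_i^{-1}U_i={\rm id}$, both orderings collapse to the same expression,
$$
\nabla_i^*\nabla_i=\nabla_i\nabla_i^*=\frac{1}{a^2}\bigl(2\,{\rm id}-U_i-U_i^{-1}\bigr),
$$
and comparing with $\nabla_i+\nabla_i^*=\frac{1}{a}(U_i+U_i^{-1}-2\,{\rm id})$ yields the claimed identity $\nabla_i^*\nabla_i=-\frac{1}{a}(\nabla_i+\nabla_i^*)$. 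Since $W_i=-\tfrac12(\nabla_i+\nabla_i^*)$ by definition, multiplying the last identity by $a/2$ gives $W_i=\frac{a}{2}\nabla_i^*\nabla_i$.

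Finally, semi-positivity is immediate from the adjoint structure: for any $v\in\HH_a$,
$$
\langle v | W_i | v \rangle=\frac{a}{2}\langle v | \nabla_i^*\nabla_i | v \rangle=\frac{a}{2}\,||\nabla_i v||^2_{L^2}\ge 0,
$$
because $a>0$. Summing over $i$ gives $\langle v | W | v \rangle\ge 0$, so $W=\sum_i W_i$ is semi-positive as well. I do not expect any genuine obstacle here: the entire content is the unitarity of $U_i$, and once that is in place every remaining step is a one-line computation. The only point deserving care is precisely the identification of the stated $\nabla_i^*$ with the $L^2$-adjoint of $\nabla_i$, which is what unitarity guarantees.
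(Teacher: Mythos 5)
Your proof is correct and is exactly the computation the paper leaves implicit: the lemma is stated there without proof, and the only genuine input is the unitarity $U_i^*=U_i^{-1}$ (shift plus metric-preserving parallel transport), from which the identities and the semi-positivity $\langle v|W_i|v\rangle=\tfrac{a}{2}\|\nabla_i v\|_{L^2}^2\ge 0$ follow by the one-line expansions you give. Nothing is missing.
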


\begin{definition}
The Wilson-Dirac operator \cite{Wilson1977} is defined by
$$
D_{W,a}:=D_a+ W.
$$
Note that $\gamma D_{W,a}$ is Hermitian but
the $\Z/2$ grading structure is lost: 
$\{D_{W,a},\gamma\}\neq 0$.

\end{definition}

For the modes with small momenta, the Wilson term 
is a good approximation of the Laplacian operator
with a coefficient proportional to the lattice spacing $a$,
whose effect vanishes in the $a\to 0$ limit.
However, it is essential to eliminate 
the doubler modes
with wave numbers near the cutoff $\sim 1/a$.

Mathematically, the Wilson term guarantees 
the ``ellipticity''. 
In fact, we will prove a theorem,
which corresponds to the G\aa rding's inequality 
in the next section.

Since it is difficult to maintain the $\Z/2$-grading operator $\gamma$ on the lattice,
the $K^1(D^1, S^0)$ group is more useful than $K^0(\{\rm pt\},\emptyset)$ in the following analysis.

%\section{A priori estimate for the Wilson-Dirac operator（Garding's inequality）}

%\subsection{A priori estimate for $D_{W,a}$}

\subsection{Generalized link variables}
\label{sec:generallink}

So far we have considered a general set of link variables given by $\{U_i\}$.
To identify $E_a$ as an approximation of the continuum vector bundle $E$ with the rank $r$,
we need to relate these link variables and connection in $E$.
In the physics literature, one often gives a connection
(vector potential) first,
and then the link variables are given by 
the parallel transport (or Wilson line in physics) 
between the two lattice points.
Here we take an opposite direction. 
We first introduce the link variables between two 
points on the continuum torus, which we call the generalized link variables,
and then we determine the connection from them.
With this approach, we can avoid details of the
local trivializations of $E$ and ambiguity in 
the paths to define the Wilson lines.

Let us fix a small constant $a_0>0$
and consider a thin stripe $W \subset T^n \times T^n$:
$W=\{(x,y)|\; x\in T^n, y\in T^n, |x-y|<a_0 \}$.
Let $\pi_1 :W\to T^n$ be the projection map onto the first 
component: $(x,y) \mapsto x$ and
$\pi_2 :W\to T^n$ be that onto the second component:
 $(x,y) \mapsto y$.

We define the general link variable by a smooth section $U$
of ${\rm Hom}(\pi^*_2 E,\pi_1^* E)$,
{\it i.e.},
%%% Eq general link variables
\[
 U(x,y) \in {\rm Hom}(E_y,E_x),
 \]
 which depends smoothly on $x$ and $y$.
We assume that $U(x,y)$ satisfies
 %\textcolor{red}{
   the conditions\footnote{The standard parallel transport or Wilson line 
given by a smooth connection on 
the shortest path between $x$ and $y$ satisfies the both conditions.
For our main theorem, the second condition $U(y,x)=U(x,y)^{-1}$ 
is not essential but simplify the analysis.}
$U(x,x)={\rm id}$ and 
$U(y,x)=U(x,y)^{-1}$.
%  } %%%
We also assume that
the action of link variables are compatible with the fiber metric.
%\textcolor{red}{so that $U(x,y)$ can be identified as an element of
%a unitary group we denote by $U(N_c)$ with number of colors $N_c$,
%in a $r$-dimensional representation $R_r({\rm U}(N_c))$(Is this mathematically O.K.?).}
%Note here that the condition $U(x,y)=U(y,x)^{-1}$ is not assumed.

%When $E$ has additional symmetries 
The generalized link variables $U(x,y)$ must commute with 
all the symmetry operations on $E$ such as the $\Z_2$ grading operator,
the Clifford generators, % which are introduced later,
 and so on.
Let us consider a symmetry group $G_{\rm sym.}$ and the operation of the element $\alpha\in G_{\rm sym.}$ 
The operation is represented by a set of operators acting on $E$, for example,
$O_\alpha=\{ O_\alpha(x) \in End(E_x)\}$ which smoothly depends on $x$
and preserves the fiber metric (and thus the operation is unitary),  
to which we require %\textcolor{red}{(what is $\pm 1$?)}
\[
 U(x,y)O_\alpha(y)=O_\alpha(x)U(x,y).
\]
%In order to guarantee the gauge symmetry, 
%$U(x,y)$ and $O_\alpha(x)$ transform as 
%\begin{align*}
% & U(x,y)\to g(x)U(x,y)g^{-1}(y),\;\; O_\alpha(x)\to g(x)O_\alpha(x)g^{-1}(x),
%%\\ & g(x) \in End(E_x), g(y) \in End(E_y).
%\end{align*}
%where $g(x)\in End(E_x)$ is a unitary operator which represents the action of 
%the structure group $G$, smoothly depending on $x$.

We can fix the connection of $E$ by identifying 
$U(x,y)$ as the generator of the parallel transport.
Note that $U(x,y)$ and the associated connection
are defined without any local trivializations.
However, it is useful to derive
the local expression of the connection one-form as follows.
%where $g(x)\in U(r)$ is a smooth function of $x\in T^n$.
%The local connection field can be obtained from $U(x,y)$
%taking $y\to x$ limit.

Choosing a local trivialization $\varphi :\Omega_\alpha \times \mathbf{C}^r \to \pi^{-1}(U_\alpha)$
on an open patch $x, y \in \Omega_\alpha \subset T^n$, 
the logarithm of $U(x,y)$ can be expanded as
%%% Eq.
\[
%\textcolor{red}{
\varphi^{-1}(x)U(x,y)\varphi(y)=\exp\left[-A_i\left(\frac{x+y}{2}\right)(x-y)^i +R(x-y)\right], 
%}
\]
where we identify the skew-Hermitian matrix $A_i(x)=\lim_{y\to x}A_i((x+y)/2):\R^n \to M_r(\bf{C})$ 
as the local connection gauge field 
%\textcolor{red}{
  and the residual\footnote{
    The residual $R(x-y)$ is a series
    $R(x-y)=\sum_{i,j}A^{(2)}_{ij}((x+y)/2)(x-y)^i(x-y)^j+\sum_{i,j,k}A_{ijk}^{(3)}((x+y)/2)(x-y)^i(x-y)^j(x-y)^k+\cdots$ where $A^{(l)}_{ij\cdots}:\R^n \to M_r(\bf{C})$ are
    skew-Hermitian matrices because $U(x,y)$ is unitary.
    The condition $U(y,x)=U(x,y)^{-1}$ implies that the sign flip
    $A^{(l)}_{ij\cdots} \to - A^{(l)}_{ij\cdots}$
    and the coordinate swap $x\leftrightarrow y$ are equivalent.
    Therefore, $A^{(l)}_{ij\cdots}$ with even $l$ must be zero.
    }
    is suppressed to $|R(x-y)|\leq const.|x-y|^3$.
%}
The covariant derivative is defined as $\varphi^{-1}(x)\nabla^{\rm cont.}_i\varphi(x) = \partial_{x_i} +A_i(x)$,
with which the connection is globally determined on $E$.

Changing the local trivialization from $\varphi(x)$ to $\varphi'(x)$ 
corresponds to the gauge transformation.
The gauge variable $g(x)=\varphi^{\prime -1}(x)\varphi(x)$ 
is in some representation of the structure group $G$.
The link variables, local operators, and the connection gauge field transform as
\begin{align*}
 \varphi^{-1}(x)U(x,y)\varphi(y)&\to g(x)\varphi^{-1}(x)U(x,y)\varphi(y)g^{-1}(y),\\\;\; 
\varphi^{-1}(x)O_\alpha(x)\varphi(x)&\to g(x)\varphi^{-1}(x)O_\alpha(x)\varphi(x)g^{-1}(x),\\
A_i(x)&\to g(x)[A_i(x)+\partial_{x_i}]g(x)^{-1}.
%%\\ & g(x) \in End(E_x), g(y) \in End(E_y).
\end{align*}

%One advantage of $U(x,y)$ instead of the connection field is
%that we do not have to specify the patches 
%where the connection field is defined and their transition functions are given.

%\textcolor{red}{
In the same way, the triangle Wilson loop $U_W(x,y,z)=U(x,y)U(y,z)U(z,x)$ is evaluated as
\begin{align*}
 \varphi^{-1}(x)U_W(x,y,z)\varphi(x) &= e^{-A_i\left(\frac{x+y}{2}\right)(x-y)^i+\cdots}
 e^{-A_i\left(\frac{y+z}{2}\right)(y-z)^i+\cdots}e^{-A_i\left(\frac{z+x}{2}\right)(z-x)^i+\cdots}
\\& = \exp\left[
-\frac{1}{2}\sum_{i,j}\partial_{x_j}A_i(x)\{(x^i-y^i)(y^j-x^j)\right.\\&
+(y^i-z^i)(y^j-x^j)
+(y^i-z^i)(z^j-x^j)+(z^i-x^i)(z^j-x^j)\}
\\ & +\frac{1}{2}[A_i(x),A_j(x)]\{(x^i-y^i)(y^j-z^j)+(x^i-y^i)(z^j-x^j)
\\ &\left.
+(y^i-z^i)(z^j-x^j)\} 
+\cdots \right]
\\& =
\exp\left[\frac{1}{2}F_{ij}(x)(x-y)^i(x-z)^j+R'(x,y,z)\right]
\end{align*}
where $|R'(x,y,z)|\leq const.(|x-y|^3+|y-z|^3+|z-x|^3)$.
Here $F_{ij}(x)$ is the curvature tensor at $x$.
%}

%\textcolor{red}{
It is also important to note that the continuum covariant derivative
of the link variable is suppressed to $O(|x-y|)$:
\begin{align*}
 \varphi^{-1}(x)\nabla^{\rm cont.}_i U(x,y)\varphi(y)&=
\lim_{\Delta x_i\to 0}\frac{U(x,x+\Delta x_i e_i)U(x+\Delta x_i e_i, y)-U(x,y)}{\Delta x_i}
\\&=\lim_{\Delta x_i\to 0}\frac{U_W(x,x+\Delta x_i e_i, y)-{\rm id}}{\Delta x_i}U(x,y)
\\&=-\frac{1}{2}F_{ij}(x)(x-y)^j+R''(x,y),
\end{align*}
where $|R''(x,y)|\leq const.|x-y|^3$. %\textcolor{red}{
Since $U(x,y) \in {\rm Hom}(E_y,E_x)=E_x \otimes E_y^\ast$, we can take the covariant derivative with respect to $x$ of $U(x,y)$ in the $i$-th direction in the same way as the section of $E$.
%}
%We call $F_{ij}$ the curvature.
%}

Therefore, it is useful to evaluate the following bound,
which does not depend on the choice of the trivializations:
\begin{lemma}
\label{lemma:curvaturebound}
%\textcolor{red}{
 There exists a positive constant $F$ such that the inequality
\[
  |{\rm id}-U_W(x,y,z)| \leq F|x-y||x-z|,
\]
is satisfied for any $\{(x,y,z)| (x,y),(y,z),(z,x)\in W\}$
and so is
\[
 |\nabla^{\rm cont.}_i U(x,y)| \leq F|x-y|,
\]
for any $(x,y) \in W$.
%}
\end{lemma}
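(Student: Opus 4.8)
The plan is to derive both inequalities from a single mechanism: both $U_W$ and $\nabla_i^{\rm cont.}U$ are smooth bundle-valued functions that vanish \emph{exactly} on the relevant diagonals, so Hadamard's lemma lets me factor out the desired powers of $|x-y|$ and $|x-z|$, after which compactness of the domain supplies a uniform constant. This route is cleaner than trying to feed the already-computed expansions into the estimate directly, since the stated residual bound $|R'|\le c(|x-y|^3+|y-z|^3+|z-x|^3)$ is too crude: its $|y-z|^3$ term alone does not yield the product $|x-y||x-z|$ when $|x-z|$ is small.

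First, for the triangle loop I would record the exact identities $U_W(x,x,z)={\rm id}$ and $U_W(x,y,x)={\rm id}$, which follow at once from $U(x,x)={\rm id}$ and $U(y,x)=U(x,y)^{-1}$. Hence the smooth ${\rm End}(\pi_1^*E)$-valued function $\Phi:={\rm id}-U_W$ vanishes on $\{x=y\}$ and on $\{x=z\}$. Working in a local trivialization over a chart, so that $\Phi$ becomes matrix-valued and the short segments joining $x$ to $y$ and $x$ to $z$ stay inside the chart (guaranteed once $a_0$ is small relative to the chart), I would apply Hadamard's lemma in the $y$-variable to write $\Phi=\sum_i(x^i-y^i)\,\Phi_i$ with smooth $\Phi_i$; since $\Phi(x,\cdot,x)\equiv 0$ forces $\Phi_i(x,y,x)=0$, a second application in the $z$-variable gives $\Phi=\sum_{i,j}(x^i-y^i)(x^j-z^j)\,G_{ij}$ with smooth $G_{ij}$. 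Consistency check: the leading coefficient of this factorization is exactly the curvature term $\tfrac12 F_{ij}(x)$ already computed above.

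Second, for the covariant derivative I would use the analogous exact vanishing $\nabla_i^{\rm cont.}U(x,y)\big|_{y=x}=0$: in a trivialization $\nabla_i^{\rm cont.}=\partial_{x^i}+A_i(x)$, and the expansion $\varphi^{-1}(x)U(x,y)\varphi(y)={\rm id}-A_k(\tfrac{x+y}{2})(x-y)^k+O(|x-y|^2)$ gives $\partial_{x^i}V\big|_{y=x}=-A_i(x)$, which cancels the connection term. Thus $\nabla_i^{\rm cont.}U$ is smooth and vanishes on $\{x=y\}$, and a single application of Hadamard's lemma yields $\nabla_i^{\rm cont.}U(x,y)=\sum_j(x^j-y^j)\,\Xi_{ij}(x,y)$ with smooth $\Xi_{ij}$.

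Finally, I would invoke compactness. The domain $\{(x,y,z):(x,y),(y,z),(z,x)\in\overline W\}$ is a closed subset of the compact $(T^n)^3$, hence compact, and likewise $\overline W\subset(T^n)^2$ is compact; covering each by finitely many trivializing charts and taking maxima bounds all $|G_{ij}|$ and $|\Xi_{ij}|$ by a single constant $C$. Combining this with $\sum_i|x^i-y^i|\le C'|x-y|$ (equivalence of norms on $\R^n$) then gives $|{\rm id}-U_W|\le F|x-y||x-z|$ and $|\nabla_i^{\rm cont.}U|\le F|x-y|$, with $F$ the larger of the two resulting constants. The main obstacle is bookkeeping rather than analysis: one must carry out Hadamard's lemma for sections of the ${\rm Hom}$ and ${\rm End}$ bundles over the torus through local trivializations, verify that the straight segments used in the integral representation remain inside the charts for $a_0$ small, and patch the local bounds into a global one. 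The two-step factorization extracting both $(x-y)$ and $(x-z)$ is the only place where the precise diagonal-vanishing must be exploited with care.
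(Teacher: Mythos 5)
Your proof is correct, but it takes a genuinely different route from the paper's. The paper obtains this lemma as a direct corollary of the explicit expansions computed just before it: $U_W(x,y,z)=\exp[\tfrac12 F_{ij}(x)(x-y)^i(x-z)^j+R']$ and $\nabla^{\rm cont.}_iU(x,y)=-\tfrac12F_{ij}(x)(x-y)^j+R''$, so the bound comes from the manifestly bilinear leading (curvature) term plus control of the residuals. You instead use only the exact algebraic identities $U_W(x,x,z)=U_W(x,y,x)={\rm id}$ and $\nabla^{\rm cont.}_iU(x,x)=0$, then Hadamard's lemma (twice for the loop, once for the derivative) and compactness; this avoids the Baker--Campbell--Hausdorff bookkeeping entirely, and your observation that the paper's stated residual bound $|R'|\le c(|x-y|^3+|y-z|^3+|z-x|^3)$ does not by itself yield the product $|x-y||x-z|$ (take $x=z$, $y\neq x$) is sharp and correct --- the paper's argument implicitly relies on the residual also vanishing on the two diagonals, which is true but not what the displayed estimate says. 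What the paper's route buys is the identification of the leading coefficient as the curvature $F_{ij}$, which is reused elsewhere; what yours buys is robustness and brevity. Two small caveats: the claim that $\Phi(x,\cdot,x)\equiv0$ forces $\Phi_i(x,y,x)=0$ holds for the integral-form Hadamard coefficients $\Phi_i(x,y,z)=\int_0^1(\partial_{y^i}\Phi)(x,x+t(y-x),z)\,dt$, not for an arbitrary smooth factorization, so you should fix that choice explicitly; and the uniformity of $F$ over the \emph{open} stripe $W$ requires either that $U$ extend smoothly to $\overline W$ or a slight shrinking of $a_0$ --- a point the paper's own argument glosses over as well.
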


\if0
\textcolor{red}{(Please check)
Here we have implicitly assumed that the gauge connection $A_i(x')$
in the vicinity of $x$ satisfying $|x-x'|<a_0$
is given with respect to the same smooth local trivialization of $E$.
It is, however, not needed to specify how the trivialization is given
on the whole torus $T^n$ or to fix the gauge, since
the mathematical evaluation of inequalities below
are all given in a gauge invariant way,
which is independent of trivializations\footnote{
  \textcolor{red}{(Please check)
  Another way to explicitly avoid the non-smooth behavior
  of the gauge connection is to embed the target vector bundle $E$
  into a larger finite rank trivial bundle $\underline{\tilde{E}}$ and
  to use a global trivialization on it.
  The original index is obtained with the pullback of the embedding.
  In either way, the following inequalities are valid.
  }
}.
}
\fi

Any link variables on a finite lattice with the lattice spacing $a\le a_0$
are simply determined by choosing $x$ to be one site $z$ and
$y$ to be its nearest neighbor $z+e_i a$ in the $i$-th direction:
\[
 U(z, z+e_i a).
 \]
We can identify the operator $U_i$ in the Wilson Dirac operator
as the {\it parallel transport} determined by $\{U(z, z+e_i a)\}$,
and $U_i^{-1}$ as that by $\{U(z-e_i a, z)^{-1}\}$.
Specifically the forward and backward covariant differences on $\phi \in L^2_1(E_a)$ 
are given by
\begin{align*}
\nabla_i \phi(z) &= \frac{U_i-{\rm id}}{a}\phi(z)=\frac{1}{a}\left[U(z,z+e_ia)\phi(z+e_ia)-\phi(z)\right],\\
-\nabla_i^* \phi(z)&= \frac{{\rm id}-U_i^{-1}}{a}\phi(z) = \frac{1}{a}\left[\phi(z)-U(z-e_ia,z)^{-1}\phi(z-e_i a)\right],
\end{align*}
respectively.
It is then not difficult to confirm that $D_{W,a}$ transforms covariantly:
\[
 \varphi^{-1}(z)D_{W,a}(z,z')\varphi(z') \to g(z)\varphi^{-1}(z)D_{W,a}(z,z')\varphi(z') g^{-1}(z') \;\;\; z,z' \in T^n_a,
\]
under the gauge transformation.

%\textcolor{red}{
The system may contain nontrivial global holonomy on $T^{n}$.
The global holonomy in the $i$-th direction, for example, is defined by
\begin{align*}
  H_i(x)= U(x+ N a e_i , x+ (N-1) a e_i  ) \cdots U(x+ 2ae_i, x+ ae_i) U(x+ ae_i,x) .
\end{align*}
Since $x+Na e_i$ is identified with $x$, the family $H_i= \{ H_i(x) \}_x $ defines a vector bundle homomorphism. Even if $E$ is a trivial bundle equipped with a flat connection, $H_i$ may still be nontrivial. 
%}

%\textcolor{red}{
So far we have not looked into the finer structure of
the vector bundle $E$ (and $E_a$).
In physics on a flat torus, 
$E$ is given by a tensor product bundle,
%%% Eq. tensor product bundle
\[
E = E_c\otimes S \otimes \underline{F},
\]
where $E_c$, which is generally twisted,
denotes the color degrees of freedom in some representation 
of the structure group $G$.
As a simple example, let us take $G=SU(N_c)$ and choose its
fundamental representation with $N_c$ dimensions.
The other two bundles $S$ and $\underline{F}$ are
those of spinor (with rank $r_S=2^{[n/2]}$) and
flavor ($r_F$) degrees of freedom, respectively.
On a flat torus, both bundles $S$ and $\underline{F}$ are trivial.
The fermion field at $x\in T^n$ with the total
$r=N_c r_S r_F$ degrees of freedom is often expressed by
$\psi^c_{sf}(x)$ in a local trivialization of $E$ around $x$
where $c,s,f$ are color, spinor and flavor indices, respectively.
%}%textcolor

%\textcolor{red}{
In this tensor product bundle case, the generalized link variables
are given by
\[
U(x,y) = U_c(x,y)\otimes {\rm id}_{S} \otimes {\rm id}_{\underline{F}},
\]
where
\[
U_c(x,y) \in {\rm Hom}((E_c)_y,(E_c)_x).
\]
With the local trivialization
$\varphi_c :\Omega_\alpha \times \mathbf{C}^{r_c} \to \pi_c^{-1}(\Omega_\alpha)$
on a patch $\Omega_\alpha \subset T^n$
with the projection $\pi_c : E_c\to T^n$,
$\varphi_c(x)^{-1}U_c(x,y)\varphi_c(y) \in M_{N_c}(\mathbf{C})$
corresponds to the standard link variables in physics.
In the same way, the gauge field in the same local trivialization is
\[
A_i(x) = A^c_i(x) \otimes {\rm id}_{S} \otimes {\rm id}_{\underline{F}},
\]
where $A^c_i(x)$ is a anti-Hermitian $N_c\times N_c$ matrix,
as well as the gauge transformation function is given by
\[
g(x) = g_c(x) \otimes {\rm id}_{S} \otimes {\rm id}_{\underline{F}},
\]
with $g_c(x) \in M_{N_c}(\mathbf{C})$.
Finally we express the Clifford generator by
\[
c_i = {\rm id}_c \otimes \gamma_i \otimes {\rm id}_{\underline{F}},
\]
where $\gamma_i$ is the standard $r_S\times r_S$ Dirac matrix,
satisfying $\gamma_i^2={\rm id}_S$.
%}%textcolor

%c_i

%We approximate $T^n$ by a periodic square lattice $T^n_a=(\Z/N\Z)^n$
%where we have $N \in \Z_{>0}$ lattice points with the lattice spacing $a=1/N$
%for each direction.

%As shown below, a naive definition of the lattice Dirac operator
%$D_a$ which has the same symmetry as $D_{T^n}$ with respect to $\gamma$
%can be defined.
%However, $D_a$ is not a good approximation of $D_{T^n}$ 
%because of the doubler contributions.
%Therefore, we introduce the so-called Wilson Dirac operator \cite{Wilson1977},
%%% Eq Wilson Dirac
%\[
%D_{W,a}:=D_a + W,
%\]
%whose second term breaks the chiral symmetry: 
%$\{D_{W,a},\gamma\}\neq 0$.

\subsection{Main theorem}

We fix a constant $M>0$ and put
$$
X:=[-M,M],\qquad A:= \partial X=\{ -M, M\}.
$$
Note that $X$ includes the massless point $m=0$
where the continuum Dirac operator may develop zero modes.

Taking $m \in X$ as a parameter, we consider 
the following two one-parameter families
$$
\gamma D^m:= \gamma (D_{T^n} +m), \qquad  
\gamma D_{W,a}^m:= \gamma (D_{W,a} +m),
$$
whose domains are 
$$
\gamma D^m : L^2_1(E) \to \HH,\qquad
\gamma D_{W,a}^m: L^2_1(E_a) \to \HH_a.
$$

\begin{lemma}
%Dirac作用素$D$の摂動$D^m $に対して次が成立する。
$\gamma D^m$ has the following properties.
\begin{itemize}
 \item $\gamma D^m$ is a Fredholm operator for any $ m\in X=[-M,M]$.

%・$ m \in X$ および任意の複素数$\lambda$に対して、$\HH \otimes {\mathbf \C$の上の作用を$D^m -\lambda$のkernel$の要素は$E \otimes $のなめらかな切断

\item $\Ker (\gamma D^m)=\{0\}$ for $m \in A=\{ - M, M\}$\footnote{
%\textcolor{red}{(Needs a help from mathematicians)
$\Ker (\gamma D^m)$ is defined as the set of all smooth solutions to the massive Dirac equation.
In general, the weak solutions of $\gamma D^m$ are smooth. 
Here the weak solution means a
measurable section $\phi$ of the Hilbert bundle
which satisfies $\langle (\gamma D^m)^* \beta |\phi\rangle =0$ for arbitrary smooth section $\beta$,
where $(\gamma D^m)^*$ is a formal adjoint of $\gamma D^m$.
%Since $(\gamma D^m)^*=\gamma D^m$ 
%\textcolor{red}{
  Since $\gamma D^m$ is elliptic
%}
in our set up, the set of all smooth solutions equals to  the set of all weak solutions.}.
\end{itemize}

\end{lemma}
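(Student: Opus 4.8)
The plan is to reduce both assertions to the single algebraic identity
$$
(\gamma D^m)^2 = D_{T^n}^* D_{T^n} + m^2,
$$
and then to invoke standard elliptic theory on the closed manifold $T^n$. To obtain the identity, first I would record that $\gamma$ anticommutes with $D_{T^n}$: since the connection preserves the structure we have $\gamma\nabla_i^{\mathrm{cont.}}=\nabla_i^{\mathrm{cont.}}\gamma$, while $\{\gamma,c_i\}=0$, so $\gamma D_{T^n}=\sum_i\gamma c_i\nabla_i^{\mathrm{cont.}}=-\sum_i c_i\nabla_i^{\mathrm{cont.}}\gamma=-D_{T^n}\gamma$. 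Writing $\gamma D^m=\gamma D_{T^n}+m\gamma$ and using $\gamma^2=\mathrm{id}$, $[\gamma,m]=0$, and the skew-adjointness $D_{T^n}^*=-D_{T^n}$, the cross terms cancel and $(\gamma D^m)^2=-D_{T^n}^2+m^2=D_{T^n}^* D_{T^n}+m^2$.

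The triviality of the kernel at $m=\pm M$ is then immediate. For smooth $\phi$ the identity gives
$$
\|\gamma D^m\phi\|_{L^2}^2=\|D_{T^n}\phi\|_{L^2}^2+m^2\|\phi\|_{L^2}^2\ge m^2\|\phi\|_{L^2}^2,
$$
and this a priori estimate extends to the whole domain $L^2_1(E)$ by density. Since $M>0$, any $\phi$ with $\gamma D^{\pm M}\phi=0$ must vanish; in fact the kernel is trivial for every $m\neq0$. That the kernel consists of smooth sections, so that this conclusion agrees with the characterization of $\Ker(\gamma D^m)$ via weak solutions in the footnote, follows from elliptic regularity, which I would invoke for the elliptic operator $\gamma D^m$.

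For the Fredholm property I would argue that $\gamma D^m$ is elliptic of first order: the principal symbol of $D_{T^n}$ at a nonzero covector $\xi$ is, up to the scalar $i$, the endomorphism $\sum_i c_i\xi_i$, whose square is $|\xi|^2\,\mathrm{id}$ by the Clifford relations $c_i^2=\mathrm{id}$ and $\{c_i,c_j\}=0$; hence it is invertible for $\xi\neq0$, and so is its product with the unitary $\gamma$. The mass term $m\gamma$ is of order zero and leaves the principal symbol unchanged. With ellipticity in hand, the G\aa rding estimate $\|\phi\|_{L^2_1}\le C(\|\gamma D^m\phi\|_{L^2}+\|\phi\|_{L^2})$, whose only subtle ingredient is the bounded zeroth-order curvature remainder supplied by the Weitzenb\"ock identity $D_{T^n}^* D_{T^n}=\sum_i(\nabla_i^{\mathrm{cont.}})^*\nabla_i^{\mathrm{cont.}}+\mathcal{R}$, together with the Rellich compactness of the inclusion $L^2_1(E)\hookrightarrow L^2(E)$, shows that the essentially self-adjoint operator $\gamma D^m$ has compact resolvent. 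Its spectrum is therefore discrete with finite multiplicities, which is exactly Fredholmness in the sense appropriate to the unbounded setting of Remark~\ref{remark:unbounded}. The genuine mathematical content is concentrated in this last step; the rest is bookkeeping with the Clifford relations, and I expect no real obstacle beyond carefully checking self-adjointness rather than mere symmetry, and confirming that the curvature remainder $\mathcal{R}$ is indeed bounded of order zero so that the estimate closes.
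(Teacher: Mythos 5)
Your proposal is correct and follows essentially the same route as the paper: the kernel statement rests on the identity $(\gamma D^m)^2=(\gamma D_{T^n})^2+m^2=D_{T^n}^*D_{T^n}+m^2$ obtained from $\{\gamma,D_{T^n}\}=0$, giving $\|\gamma D^m\phi\|_{L^2}^2=\|D_{T^n}\phi\|_{L^2}^2+m^2\|\phi\|_{L^2}^2$ and hence a trivial kernel for $m\neq 0$, exactly as in the paper. For the Fredholm property the paper simply appeals to "a general argument," and your spelled-out version (elliptic symbol via the Clifford relations, G\aa rding plus Rellich, compact resolvent) is a correct instantiation of that argument rather than a different approach.
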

\begin{proof}
For any $m \in \R$, $\gamma D^m$ is Fredholm from a general argument.
Since $\{ c_i, \gamma\}=0$, $ \gamma D_{T^n}$ and $\gamma$ anticommute
and $ (\gamma D^m)^2= (\gamma D_{T^n})^2 +(\gamma m)^2 = (\gamma D_{T^n})^2 + m^2$.
For any $\phi \in \Ker (\gamma D^m)$,
$$
0= \langle \phi | (\gamma D^m)^2 | \phi \rangle
=\langle \phi |  (\gamma D_{T^n})^2+ m^2 | \phi \rangle
=||\gamma D_{T^n} \phi ||_{L^2}^2 + m^2 ||\phi||_{L^2}^2.
$$
Noting that each term above is positive,
we conclude that $\phi=0$ for $m \in A$.
\end{proof}

\begin{theorem}\label{goal}
Let us define a lattice-continuum combined Dirac operator
$H_{\rm com}(m,t)$ which acts on $\HH_a \oplus L^2_1(E)$ by
$$
H_{\rm com}(m,t):=\left(
    \begin{array}{cc}
        -\gamma D_{W,a}^m & tf_a^*\\
       tf_a   &  \gamma D^m 
    \end{array}
  \right)
=
\left(
    \begin{array}{cc}
        -\gamma D_{W,a} & 0\\
       0  &  \gamma D_{T^n} 
    \end{array}
  \right)
+
m 
\left(
    \begin{array}{cc}
        -\gamma & 0\\
      0  &   \gamma
    \end{array}
  \right)
+t
\left(
    \begin{array}{cc}
        0 & f_a^*\\
       f_a   &  0
    \end{array}
  \right).
$$
There exist $a_1>0$ and a linear map $f_a: \HH_a \to \HH$ 
such that for arbitrary lattice spacing $a=1/N$ satisfying $0< a \le a_1$,
the followings hold.
\begin{itemize}
\item
For any $m\in X=[-M,M]$ the kernel of $H_{\rm com}(m,t=1)$ is trivial.

\item
For any $m\in A=\{-M,M\}$, the kernel of $H_{\rm com}(m,t)$ is trivial
at any $t\in [0,1]$.
\end{itemize}

\end{theorem}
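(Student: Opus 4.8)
The plan is to realize $f_a:\HH_a\to\HH$ as a piecewise-multilinear interpolation built from the generalized link variables $U(z,z+e_ia)$: a lattice field $\phi_a$ is extended to a continuum section by interpolating linearly between neighbouring sites while parallel-transporting fibres with the $U$'s. Because the link variables commute with $\gamma$ and the $c_i$ (the compatibility assumptions of Sec.~\ref{sec:generallink}), so does $f_a$, and its $L^2$-adjoint $f_a^*:\HH\to\HH_a$ is a local averaging operator. I would then argue by contradiction: if the statement failed, there would be a sequence $a_n=1/N_n\to0$, parameters $m_n\in X$ and $t_n\in[0,1]$ (with $t_n=1$ for the first bullet, and $m_n\in A$ for the second), together with unit-norm elements $(\phi_n,\psi_n)\in\HH_{a_n}\oplus L^2_1(E)$ lying in $\Ker H_{\mathrm{com}}(m_n,t_n)$, i.e. $\|\phi_n\|_{\HH_{a_n}}^2+\|\psi_n\|_{\HH}^2=1$.

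Everything hinges on two families of estimates that must be \emph{uniform} in $a$. First, a G\aa rding (ellipticity) inequality for the Wilson operator: there is $c>0$, independent of $a$, with $\|v_a\|_{L^2_1}^2\le c(\|\gamma D_{W,a}v_a\|_{L^2}^2+\|v_a\|_{L^2}^2)$ for all $v_a\in\HH_a$. Here the positivity $W_i=\tfrac{a}{2}\nabla_i^*\nabla_i$ of Lemma~\ref{lemma:positivityW} is essential, since it is exactly what lifts the doubler modes and restores the control of $\|\nabla_i v_a\|_{L^2}$ that the naive $D_a$ alone cannot supply. Second, I need $f_a$ to be uniformly bounded $L^2_1\to L^2_1$, approximately isometric, and to approximately intertwine the two Dirac operators: using the curvature bound $|\nabla_i^{\rm cont.}U(x,y)|\le F|x-y|$ of Lemma~\ref{lemma:curvaturebound}, one extracts one power of $a$ to obtain $\|(D_{T^n}f_a-f_aD_{W,a})\phi_a\|_{L^2}\le Ca\,\|\phi_a\|_{L^2_1}$ and $\big|\,\|f_a\phi_a\|_{L^2}^2-\|\phi_a\|_{L^2}^2\,\big|\le Ca\,\|\phi_a\|_{L^2_1}^2$, with $f_af_a^*\to{\rm id}$ on $L^2_1$-bounded families.

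Granting these, the kernel relations $\gamma D_{W,a}^{m_n}\phi_n=t_nf_{a_n}^*\psi_n$ and $\gamma D^{m_n}\psi_n=-t_nf_{a_n}\phi_n$ give uniform $L^2$ bounds on $\gamma D_{W,a}^{m_n}\phi_n$ and $\gamma D^{m_n}\psi_n$; combined with the Wilson G\aa rding inequality and the continuum elliptic estimate, they yield uniform $L^2_1$ bounds on $\phi_n$ and $\psi_n$. By Rellich--Kondrachov, after passing to a subsequence, $\psi_n\to\psi_\ast$ and $f_{a_n}\phi_n\to\phi_\ast$ strongly in $L^2$ (weakly in $L^2_1$), while $m_n\to m_\ast$ and $t_n\to t_\ast$. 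Passing to the limit in the two relations, using the intertwining to replace $f_{a_n}D_{W,a_n}$ by $D_{T^n}f_{a_n}$ and $f_{a_n}f_{a_n}^*$ by ${\rm id}$, yields $\gamma D^{m_\ast}\phi_\ast=t_\ast\psi_\ast$ and $\gamma D^{m_\ast}\psi_\ast=-t_\ast\phi_\ast$. Applying $\gamma D^{m_\ast}$ once more gives $(\gamma D^{m_\ast})^2\phi_\ast=-t_\ast^2\phi_\ast$, hence $\|\gamma D^{m_\ast}\phi_\ast\|_{L^2}^2=-t_\ast^2\|\phi_\ast\|_{L^2}^2\le0$, forcing $\phi_\ast=0$ (for the first bullet via $t_\ast=1$; for the second via the triviality of $\Ker(\gamma D^{m_\ast})$ for $m_\ast\in A$, the Lemma preceding this theorem), and then $\psi_\ast=0$. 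But the approximate isometry gives $\|\phi_\ast\|_{L^2}^2+\|\psi_\ast\|_{L^2}^2=\lim(\|\phi_n\|_{\HH_{a_n}}^2+\|\psi_n\|_{\HH}^2)=1$, a contradiction; so a suitable $a_1$ exists.

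The hard part will be the two uniform estimates, not the soft compactness step. The Wilson G\aa rding inequality has to be genuinely uniform in $a$, because the doublers are precisely where ellipticity could degenerate as $a\to0$, and the intertwining estimate must harvest the single power of $a$ from the gap between the exact covariant derivative and its forward/backward lattice differences, which is where the curvature bound of Lemma~\ref{lemma:curvaturebound} is used. Once these are in place, the choice of $a_1$ is just the negation of the failed contradiction hypothesis, and notably no evaluation of the geometric index is ever needed.
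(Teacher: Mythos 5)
Your overall architecture coincides with the paper's: the same interpolation/averaging pair $f_a,f_a^*$ built from the generalized link variables, the same proof by contradiction along a sequence $a_n\to 0$ of unit-norm kernel elements, the uniform a priori (G\aa rding) estimate for $\gamma D_{W,a}$ (Theorem~\ref{a priori estimate}), Rellich compactness, preservation of the norm in the limit, and a final positivity argument equivalent to the paper's $\|\gamma D_{T^n}v_\infty\|^2+\|\gamma D_{T^n}w_\infty\|^2+(t_\infty^2+m_\infty^2)=0$. The one place where your proposal has a genuine gap is the intertwining estimate $\|(D_{T^n}f_a-f_aD_{W,a})\phi_a\|_{L^2}\le Ca\,\|\phi_a\|_{L^2_1}$, which is false as stated. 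The curvature bound of Lemma~\ref{lemma:curvaturebound} controls only the gauge part of the defect; the free part contains the Wilson term $W=\tfrac{a}{2}\sum_i\nabla_i^*\nabla_i$ and the mismatch between the central difference $\tfrac12(\nabla_i-\nabla_i^*)$ and the one-sided derivative of the piecewise-linear interpolant, both of which are second-difference operators. On a near-doubler mode, e.g.\ $\phi_a(z)=(-1)^{z_1/a}$ in the free one-dimensional case, one has $\|\nabla_1\phi_a\|=\tfrac{2}{a}\|\phi_a\|$ and $\|W\phi_a\|=\tfrac{2}{a}\|\phi_a\|\approx m_0\|\phi_a\|_{L^2_1}$, so the intertwining defect is $O(1)\cdot\|\phi_a\|_{L^2_1}$, not $O(a)$. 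Since the a priori estimate gives only uniform $L^2_1$ (not $L^2_2$) control of the kernel elements $\phi_n$, you cannot apply your claimed estimate to them, and the step "replace $f_{a_n}D_{W,a_n}$ by $D_{T^n}f_{a_n}$" does not go through in the strong form you assume.

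The paper circumvents exactly this by never applying $D_{W,a}$ strongly to the unknown lattice sections. Proposition~\ref{property:DWweak} asserts convergence of $f_a(D_{W,a})^*f_a^*\psi\to(D_{T^n})^*\psi$ only for \emph{smooth} $\psi$ --- there the second differences of $f_a^*\psi$ are genuinely of order $a^2$ times second derivatives of $\psi$, so the Wilson term contributes $O(a)$ --- and the limit of the first kernel equation is taken in the weak formulation: one pairs $f_{a_i}\gamma D_{W,a_i}^{m_i}v_{a_i}$ with a smooth test section $\beta$, moves the operator onto $f_{a_i}^*\beta$ by adjointness, inserts $f_{a_i}^*f_{a_i}\approx{\rm id}$ via Proposition~\ref{property:f*f}, and then exploits the \emph{strong} $L^2$ convergence $f_{a_i}v_{a_i}\to v_\infty$ supplied by Rellich. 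If you replace your strong intertwining claim by this weak, tested-against-smooth-sections version, the remainder of your argument (including your separate treatment of the two bullets, which is a harmless variant of the paper's single observation that $(m_\infty,t_\infty)\neq(0,0)$ on the staple-shaped region) reproduces the paper's proof.
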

We illustrate in Fig.~\ref{fig:mt} 
the staple-shaped parameter region in the $m$-$t$ plane
where we will prove that $H_{\rm com}(m,t)$ defined above is invertible.

\begin{figure*}[tbh]
  \centering
  \includegraphics[scale=0.3]{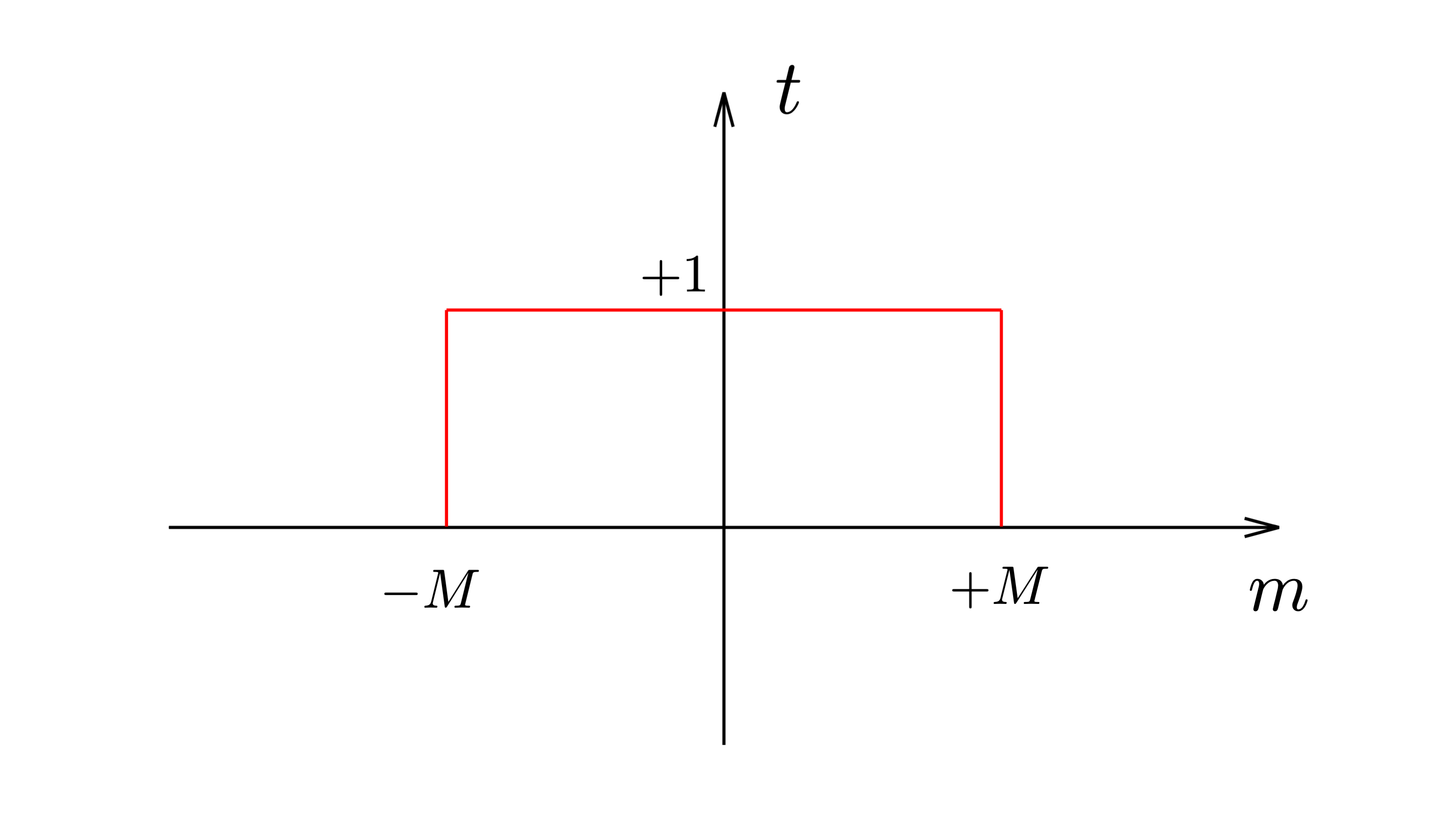}
  \caption{
 The stape-shaped parameter region in the $m$-$t$ plane
 where we prove that the lattice-continuum combined 
Dirac operator $H_{\rm com}(m,t)$ is invertible.
  }
  \label{fig:mt}
\end{figure*}

Then the theorem below follows
\footnote{Here, $\gamma D_{T^n}$ is an unbounded operator.
We can replace it by, for instance, $\gamma D_{T^n}/[(\gamma D_{T^n})^2+M_0^2]^{1/2}$ with some mass parameter $M_0$ 
which is  independent of $a$
%\textcolor{red}{(Correct?)}, 
to make a bounded operator to which the theorems directly apply.}.
\begin{theorem}(Index of lattice Dirac operator)\label{goalgoal}
For a sufficiently small lattice spacing $a=1/N$, the following equivalence holds.
\[
[(p^* \HH, \gamma (D_{T^n}+ m)] = [(p^* \HH_a,\gamma (D_{W,a} + m)]\in K^1(D^1,S^0),
\]
where $p^*$ is the pullback of the projection map $p: [-M,M]\to \{m=0\}$.
\end{theorem}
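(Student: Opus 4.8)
The plan is to deduce Theorem~\ref{goalgoal} directly from Theorem~\ref{goal} by recognizing the combined operator $H_{\rm com}(m,t)$ as exactly the homotopy demanded by the definition of the equivalence relation $\sim$ on $K^1(D^1,S^0)$. First I would set $\HH:=\HH_a$, $h:=\gamma D_{W,a}^m$, $\HH':=L^2(E)$, $h':=\gamma D^m$, and take the auxiliary datum to be trivial, $\hat{\HH}=0$. With these choices the operator at $t=0$ is
\[
H_{\rm com}(m,0)=\left(\begin{array}{cc} -\gamma D_{W,a}^m & 0 \\ 0 & \gamma D^m \end{array}\right)=-h\oplus h',
\]
which matches the required condition $\tilde{h}_0=-h\oplus h'\oplus\hat{h}$ with $\hat{h}$ absent. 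Since $\hat{\HH}=0$, the condition that $\hat{h}$ have trivial kernel over all of $X$ is vacuous, so the family $\{H_{\rm com}(m,t)\}_{(m,t)\in[-M,M]\times[0,1]}$ is a candidate for the family $(\tilde{h}_t)_m$ witnessing $(\HH_a,\gamma D_{W,a}^m)\sim(\HH,\gamma D^m)$.

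Next I would verify the remaining condition of $\sim$: that $\Ker\bigl(H_{\rm com}(m,t)\bigr)=\{0\}$ for every $(m,t)$ in $A\times[0,1]\cup X\times\{1\}$, where $X=[-M,M]$ and $A=\{-M,M\}$. This is precisely the content of the two bullet points of Theorem~\ref{goal}: the kernel is trivial for all $m\in X$ at $t=1$, and for all $m\in A$ at every $t\in[0,1]$ --- exactly the staple-shaped region of Fig.~\ref{fig:mt}. Hence, once $H_{\rm com}$ is known to be a continuous family of self-adjoint Fredholm operators over the parameter square, the family furnishes the homotopy establishing the relation $\sim$, and therefore the equality of classes $[(p^*\HH_a,\gamma D_{W,a}^m)]=[(p^*\HH,\gamma D^m)]$ in $K^1(D^1,S^0)$ follows. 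Because $\sim$ is symmetric (Lemma~\ref{symmetric}), the order of the two classes in the statement is immaterial.

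The one genuinely technical point is that $\gamma D^m$, and hence $H_{\rm com}$, is unbounded, so the continuity required by the $K^1$ framework must be read in the Riesz topology of Sec.~\ref{sec:remarks}: I would check that $t\mapsto\rho\bigl(H_{\rm com}(m,t)\bigr)$ with $\rho(h)=h/(1+h^2)^{1/2}$ is norm-continuous jointly in $(m,t)$. Here the off-diagonal coupling $t f_a$ and the mass block $m\,(-\gamma\oplus\gamma)$ are bounded perturbations depending linearly on $(m,t)$, while the unbounded diagonal part $-\gamma D_{W,a}\oplus\gamma D_{T^n}$ is fixed; since bounded self-adjoint perturbations act continuously in the Riesz topology, continuity holds and Remark~\ref{remark:unbounded} applies. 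Equivalently, as the footnote to Theorem~\ref{goalgoal} observes, one may first replace $\gamma D_{T^n}$ by the bounded operator $\gamma D_{T^n}/[(\gamma D_{T^n})^2+M_0^2]^{1/2}$, after which the argument falls under the bounded formulation verbatim. I expect no real obstacle in this deduction: the substantive work lies upstream in Theorem~\ref{goal}, where the map $f_a$ is constructed and the invertibility over the staple region is proved via the G\aa rding-type estimate; once that estimate is in hand, Theorem~\ref{goalgoal} is an immediate unwinding of the definition of $\sim$.
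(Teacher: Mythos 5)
Your proposal is correct and follows essentially the same route as the paper, which likewise treats Theorem~\ref{goalgoal} as an immediate unwinding of Definition~\ref{def of sim} once Theorem~\ref{goal} supplies the invertibility of $H_{\rm com}(m,t)$ on the staple-shaped region (with $\hat{\HH}=0$ and the unboundedness handled exactly as in the footnote via $\rho$ or the bounded replacement of $\gamma D_{T^n}$). The paper states this deduction without further detail ("from which Theorems~\ref{goalgoal} and \ref{goalgoalgoal} immediately follow"), and your write-up simply makes that step explicit.
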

Note here by the suspension isomorphism that
$$
K^1(D^1,S^0) \cong K^0(\{ * \}, \emptyset) \quad
[(p^* \HH, \gamma (D_{T^n}+ m)] \leftrightarrow [(\HH,\gamma D_{T^n}, \gamma)]
$$
holds. The original Fredholm index of the Dirac operator for the given continuum data
characterizes the right-hand side.
We denote it by ${\rm Ind}[\gamma D_{T^n}]$.

Note here that the spectral flow of the Hermitian massive Dirac operator can be expressed by
the variation of the $\eta$ invariant.
%\textcolor{red}{[reference?]}.
Therefore, we have
\[
 {\rm sf}(\HH_a,\gamma D^m_{W,a} ) = \frac{1}{2}\left[\eta(\gamma D^m_{W,a}(m=+M))-\eta(\gamma D^m_{W,a}(m=-M))\right].
\]
Also, we can show that $\eta(\gamma D^m_{W,a}(m))=0$ for any $m>0$ 
(see Appendix~\ref{app:gapWilson} ). 
%change the reference points $\pm M$ asymmetrically.
%Let us here take the larger reference point $+M\to +\infty$.
%Then we have
%\[
% \lim_{M\to +\infty}\eta(\gamma D^m_{W,a}(M)) = \lim_{M\to +\infty} {\rm Tr}\sgn(\gamma D^m_{W,a}(M)) =  {\rm Tr}\gamma =0.
%\]
Therefore we obtain a theorem below.
\begin{theorem}(Index and eta invariant of Wilson Dirac operator)\label{goalgoalgoal}
For a sufficiently small lattice spacing $a=1/N$, the following equality holds.
\[
{\rm Ind}[\gamma D_{T^n}] = -\frac{1}{2}\eta(\gamma D^m_{W,a}(m=-M)).
\]
%with any positive finite value of the mass $M$.
\end{theorem}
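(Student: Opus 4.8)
The plan is to assemble the three ingredients that the preceding discussion has already put in place: the equality of $K^1$ classes supplied by Theorem~\ref{goalgoal}, the identification of the continuum spectral flow with the Fredholm index through the suspension isomorphism, and the spectral-flow/$\eta$-invariant dictionary together with the vanishing of $\eta$ at positive mass. The genuine analytic content has been discharged earlier, so the remaining task is a careful bookkeeping assembly.

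First I would invoke Theorem~\ref{goalgoal}: for $a$ small enough the elements $[(p^*\HH,\gamma(D_{T^n}+m))]$ and $[(p^*\HH_a,\gamma(D_{W,a}+m))]$ coincide in $K^1(D^1,S^0)$. Since the spectral flow is a well-defined function on $K^1(D^1,S^0)$ by Theorem~\ref{K and spectral flow} (using Remark~\ref{remark:unbounded} to accommodate the unbounded operator $\gamma D_{T^n}$), equal classes have equal spectral flows:
\[
\specflow(p^*\HH,\gamma(D_{T^n}+m)) = \specflow(\HH_a,\gamma D^m_{W,a}).
\]

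Next I would identify the left-hand side with the index. The suspension isomorphism (Theorem~\ref{suspension}) carries the generator $[(\C,0,{\rm id})]$ of $K^0(\{{\rm pt}\},\emptyset)\cong\Z$ to the Bott element $\beta$, and $\specflow\beta=1$; hence it intertwines the index isomorphism of Theorem~\ref{index} with the spectral-flow isomorphism of Theorem~\ref{K and spectral flow}. Applied to $[(\HH,\gamma D_{T^n},\gamma)]$, whose index is $\trace\gamma|_{\Ker(\gamma D_{T^n})}={\rm Ind}[\gamma D_{T^n}]$, this yields $\specflow(p^*\HH,\gamma(D_{T^n}+m))={\rm Ind}[\gamma D_{T^n}]$.

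Finally I would convert the right-hand side into an $\eta$-invariant. Using the spectral-flow/$\eta$ dictionary stated above,
\[
\specflow(\HH_a,\gamma D^m_{W,a}) = \tfrac{1}{2}\bigl[\eta(\gamma D^m_{W,a}(M)) - \eta(\gamma D^m_{W,a}(-M))\bigr],
\]
and then substitute the vanishing $\eta(\gamma D^m_{W,a}(M))=0$ for $M>0$, proved in Appendix~\ref{app:gapWilson}. Combining the three displays gives ${\rm Ind}[\gamma D_{T^n}] = -\tfrac{1}{2}\eta(\gamma D^m_{W,a}(-M))$, as claimed. I do not expect a serious obstacle in this final step beyond matching the orientation and sign conventions carefully — in particular verifying that the endpoint at which $\eta$ vanishes is indeed the positive-mass one $m=+M$, so that the surviving contribution is exactly $-\tfrac{1}{2}\eta$ at $m=-M$, and that the endpoints have trivial kernel so no dimensional correction enters the spectral-flow/$\eta$ relation.
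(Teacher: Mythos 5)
Your proposal is correct and follows essentially the same route as the paper, which derives Theorem~\ref{goalgoalgoal} directly from Theorem~\ref{goalgoal} via the suspension isomorphism, the spectral-flow/$\eta$-invariant relation, and the vanishing of $\eta(\gamma D^m_{W,a}(m))$ for $m>0$ established in Appendix~\ref{app:gapWilson}. The only difference is that you spell out the bookkeeping (in particular that the suspension isomorphism intertwines the index map of Theorem~\ref{index} with the spectral-flow map of Theorem~\ref{K and spectral flow}) more explicitly than the paper does.
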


%Our final goal is Theorem~\ref{goalgoal}
%which shows that the one-parameter family
%$$
%\gamma (D_{W,a} + m )
%$$
%and that of $\gamma (D_{T^n} + m )$ give the same spectral flow
%for a sufficiently small but finite lattice spacing $a$.

Before going into the proof, 
let us discuss why $K^1(D^1, S^0)$ works
while $ K^0(\{ {\rm pt} \}, \emptyset)$ does not.
Suppose the original continuum operator $\gamma D_{T^n}$ has $\nu_+$ zero modes with the chirality $\gamma=1$
and $\nu_-$ zeros with $\gamma=-1$.
These $\nu_\pm$ zero modes are also the eigenstates of
$\gamma (D_{T^n} + m)$ with the eigenvalue $\pm m$.
Every other eigenvalue $\lambda_m$ whose absolute value is greater than $|m|$
appears in a pair with $-\lambda_m$,
since the eigenmode $\psi_{\lambda_m}$
satisfies
\begin{align*}
  \gamma (D_{T^n}+m )  \psi_{\lambda_m} &=\lambda_m \psi_{\lambda_m},\\
  \gamma (D_{T^n}+m ) (D_{T^n})  \psi_{\lambda_m} &= -
  \lambda_m (D_{T^n}) \psi_{\lambda_m},
\end{align*}
which follows from the anti-commutation relation
$\{\gamma (D_{T^n}+m), D_{T^n}\}=0$.
Since $[\gamma (D_{T^n}+m)]^2=(\gamma D_{T^n})^2+m^2$,
The eigenvalues are $\lambda_m=\pm \sqrt{\lambda_1^2+m^2}$.

The Dirac eigenvalue spectrum as a function of $m$ is
illustrated in Fig.~\ref{fig:SpecFlow}.
The upper panel shows the original Dirac eigenvalues in the continuum theory,
while the lower panel shows that of a lattice approximation of it,
where the chiral symmetry is broken and the Dirac operator does not
anticommutes with $\gamma$.
While it is difficult to define the massless point and
to identify the number of zero modes,
the spectral flow can still be identified by the
lines crossing zero between $m\in [-M,M]$.
$ K^0(\{ {\rm pt} \}, \emptyset)$ corresponds to counting
the points at $m=0$, while
$K^1(D^1, S^0)$ counts the lines between $-M$ and $+M$.
The latter is easier and stabler against variation of the
Dirac operators and violation of the chiral symmetry.

\begin{figure*}[tbh]
  \centering
  \includegraphics[scale=0.4]{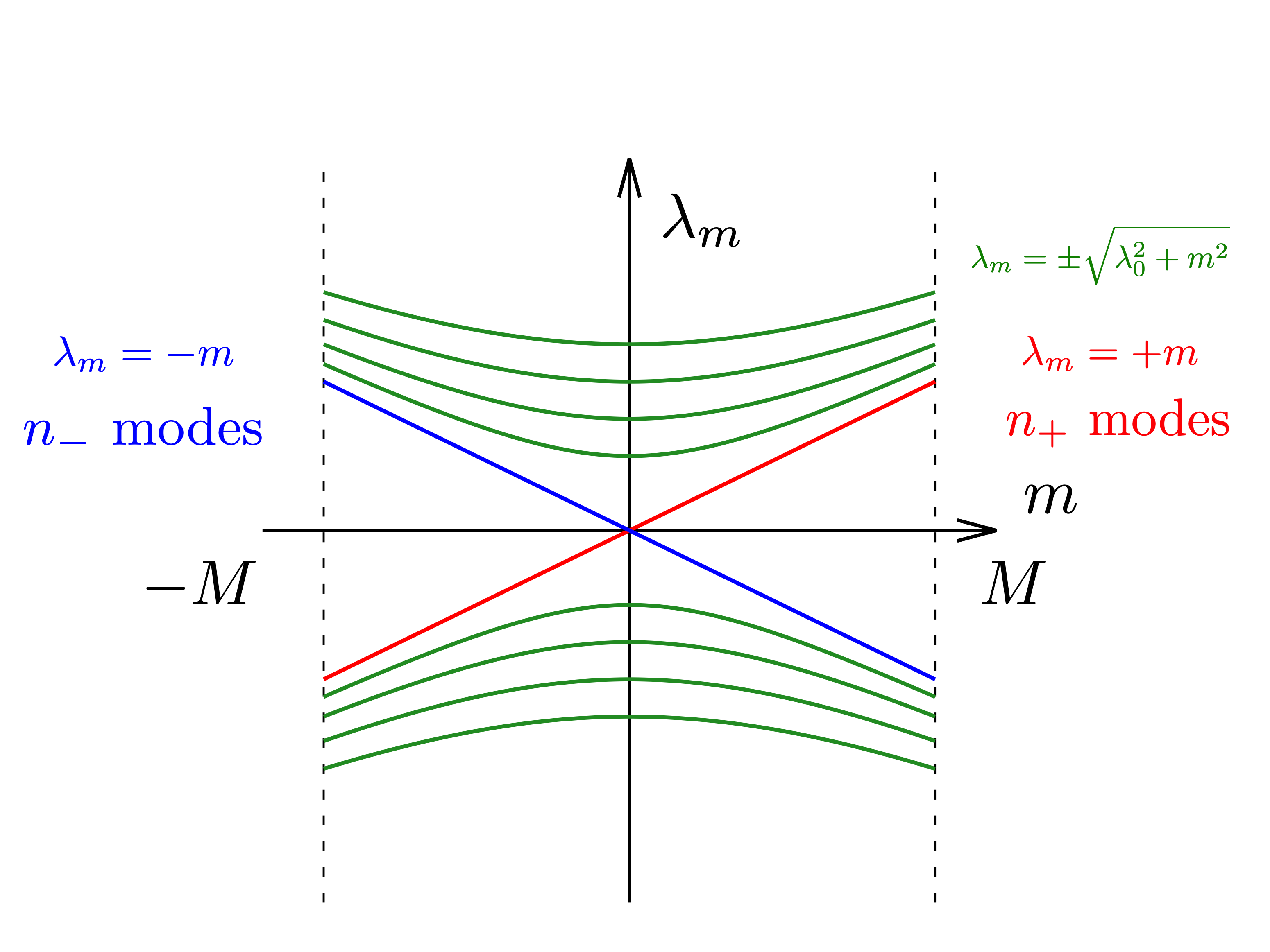}
  \includegraphics[scale=0.4]{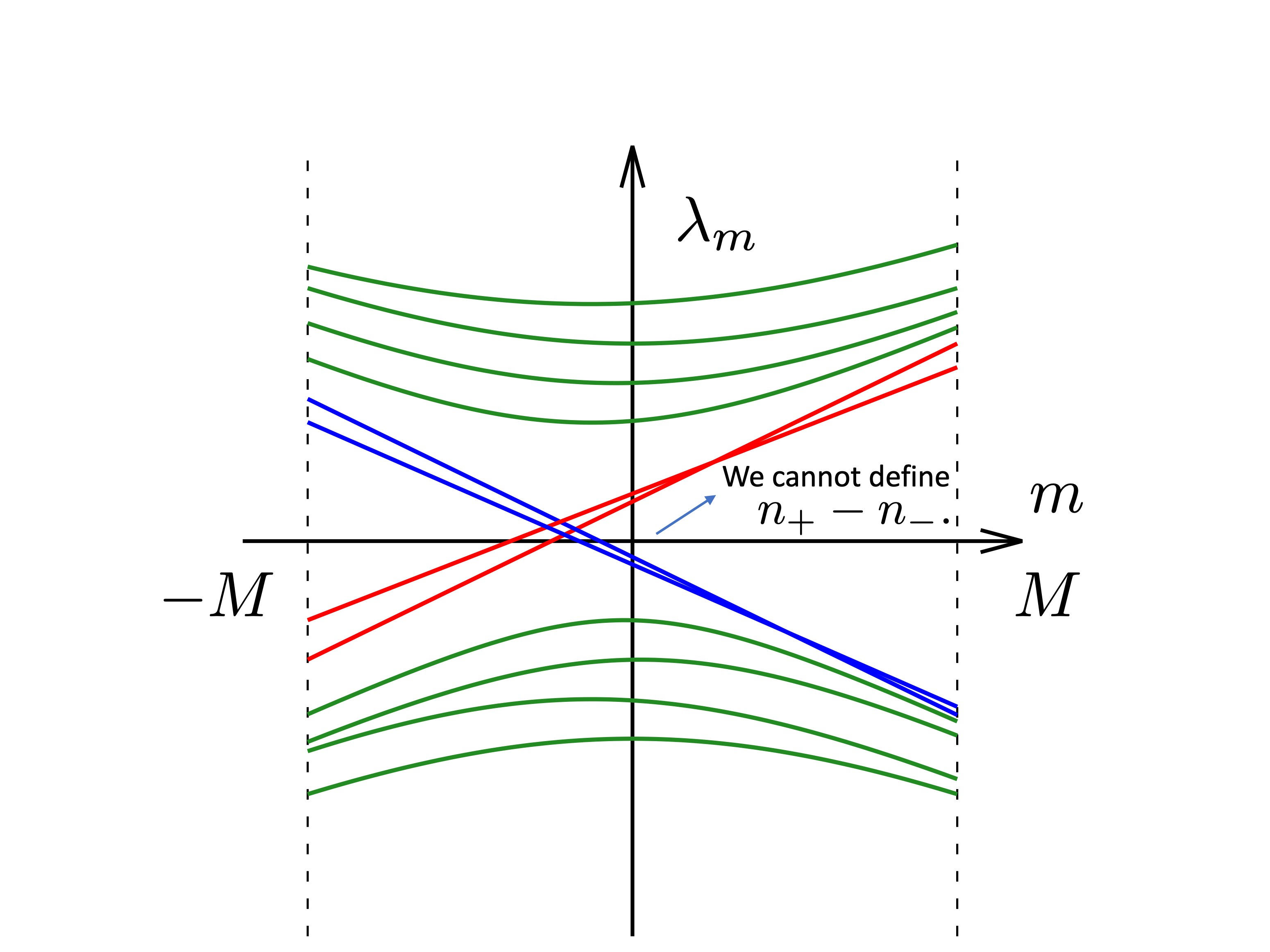}
    %\subcaption{Overall picture}
    %\label{fig:error}
  \caption{
    Dirac eigenvalue spectrum as a function of $m$.
    The upper panel illustrates the original one in the continuum theory,
    while the lower panel shows that of a lattice approximation
    where the chiral symmetry is broken.
    The spectral flow can still be identified  while
    it is difficult to count the zero eigenvalues at $m=0$.
  }
  \label{fig:SpecFlow}
\end{figure*}

\section{Proof of the main theorem}
\label{eq:main}

\subsection{Maps between continuum and lattice Hilbert spaces}
Let us construct a linear map between the infinite-dimensional Hilbert
bundle $\HH$ in continuum theory and the finite-dimensional 
Hilbert bundle $\HH_a$ on the lattice. 

We denote the coordinate on $T^n$ by $x=(x_1,\ldots x_n)$
with the standard flat metric
where each component is in the range $0\le x_i\le 1$
and $x_i=0$ and $x_i=1$ are identified.
On the lattice, we denote the lattice coordinate by $z=(z_1,\ldots z_n)$
where each component $z_i$ is an integer multiplied by 
$a$ less than $Na=1$.
%\textcolor{red}{
Let $E\to T^n$ be a complex rank-$r$ vector bundle equipped with a Hermitian metric.
%(should be real vector bundle?)}
We may set an open covering of $T^n$ by $\{U_\alpha\}$
on which we fix a local trivialization 
of the bundle but the properties below do not depend on
details of $\{U_\alpha\}$.
%as well as the gauge connection.
%We denote the transition function by $g_{\alpha\beta}(x)\in U(r)$ 
%when $x \in U_\alpha \cap U_\beta$.

Let us define a map $\rho_a: [0,1] \to \R$ by  
$\rho_a(t):= \frac{1}{a}\max \{ 0, 1- t/a, 1-(1-t)/a\}$ and consider 
its multi-dimensional product as a function of $x\in T^n$,
%$x=(x_1,\ldots,x_n)^T \in \R^n$,
$$
\rho_a(x):=\prod_{i=1}^n \rho_a(x_i).
$$
We will use this $\rho_a(x)$ as a cut-off function
and the following properties are essential for the proof of the main theorem.

Denoting the partial derivative with respect to $x_i$ 
by $\partial_i$, we have for the lattice site $z \in T_a^n$
$$
\partial_{i} \rho_a(x-z)
=
\left\{
\begin{array}{ll}
        + \frac{1}{a^2}\prod_{j \neq i} \rho_a(x_j-z_j) & \qquad (-a<x_{i}-z_i <0) \\
       -  \frac{1}{a^2}\prod_{j \neq i} \rho_a(x_j-z_j)    &\qquad (0<x_{i}-z_i <a) \\
       0   & \qquad (|x_i-z_i |>a)
    \end{array}.
\right.
$$

\begin{lemma}
  For arbitrary $x \in T^n$,
$$a^n\sum_{z\in T^n_a%, (x,z) \in W
} \rho_a(x-z) =1.$$
and for arbitrary $z \in T^n_a$,
$$
\int_{x \in T^n%, (x,y) \in W
} \rho_a(x-z) d^n x=1.
$$
\end{lemma}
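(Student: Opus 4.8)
The plan is to exploit the product structure $\rho_a(x-z)=\prod_{i=1}^n \rho_a(x_i-z_i)$ to reduce both identities to one dimension. Since the lattice factorizes as $T_a^n=(a\Z/\Z)^n$ and the measure as $d^n x=\prod_i dx_i$, the $n$-fold sum and the $n$-fold integral each split into a product of $n$ identical one-dimensional expressions. It therefore suffices to prove, on a single circle $\R/\Z$, that $a\sum_{w}\rho_a(s-w)=1$ for every $s$ and that $\int_0^1\rho_a(u)\,du=1$. I would state and prove these two scalar facts first and then assemble the $n$-dimensional claims by taking products.

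For the sum rule I would note that $\rho_a$ is a scaled tent function, $\rho_a(t)=\tfrac1a\max\{0,\,1-|t|/a\}$ with $|t|$ the distance to $0$ on the circle (the two nonzero branches of the $\max$ in the definition being the two sides of the tent, valid once $a=1/N\le 1/2$). Its support has half-width $a$, exactly the lattice spacing, so for a generic $s$ precisely the two nearest sites $z_k=ka$ and $z_{k+1}=(k+1)a$ that bracket $s$ contribute. Writing $s=ka+\sigma$ with $0\le\sigma<a$, the surviving values are $\rho_a(\sigma)=(a-\sigma)/a^2$ and $\rho_a(\sigma-a\bmod 1)=\sigma/a^2$, whose sum is $1/a$; multiplying by $a$ gives $1$. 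This is just the statement that the weights of piecewise-linear interpolation sum to one, i.e.\ that the tent functions form a \emph{partition of unity}. For the integral I would use translation invariance on the circle to set $z_i=0$ and compute $\int_0^1\rho_a(u)\,du$ directly: $\rho_a$ is supported on $[0,a)\cup(1-a,1]$ and is linear on each piece, so the integral is the area of a triangle of base $2a$ and height $1/a$, namely $\tfrac12\cdot 2a\cdot\tfrac1a=1$.

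There is no serious obstacle here; the only point demanding care is the periodic bookkeeping. Because the bump centered at a site near the edge of the fundamental domain wraps around the identification $x_i=0\sim x_i=1$, I must reduce each argument modulo $1$ and keep track of which linear branch of the $\max$ is active --- this is precisely why the second surviving term above appears as $\rho_a(\sigma-a\bmod 1)$ rather than $\rho_a(\sigma-a)$. The apparent boundary cases, where $s$ lands exactly on a lattice site and one expects a different count of contributing terms, are benign: there the neighbouring tents vanish and only the central value $1/a$ survives, in agreement with the generic computation by continuity. Assembling the one-dimensional identities into the product then finishes both displayed equalities.
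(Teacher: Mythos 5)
Your proof is correct, and it follows exactly the argument the paper intends: the paper states this lemma without proof, but its (earlier draft) remark reduces both identities to the one-dimensional factorization and the observation that the scaled tent functions form a partition of unity on the circle, which is precisely what you do. Your explicit handling of the periodic wrap-around and the implicit requirement $a\le 1/2$ (i.e.\ $N\ge 2$, under which the two branches of the $\max$ are the two sides of a single tent) is a welcome bit of care that the paper omits.
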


\begin{lemma}
\label{lem:B}
  For $B=\{0 \} \cup \{ \pm e_k \mid 1 \leq k \leq n\}$,
$$
a^n\sum_{e \in  B } \int_{x  \in T^n } \rho_a(x) \rho_a(x-ae )d^n x  =1.
$$
\end{lemma}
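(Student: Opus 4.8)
The plan is to obtain the identity directly from the preceding lemma by pairing the partition of unity with $\rho_a$ itself. Both facts I need are recorded there: $a^n\sum_{z\in T^n_a}\rho_a(x-z)=1$ for every $x$, and $\int_{T^n}\rho_a(x-z)\,d^nx=1$ for every lattice point $z$. Multiplying the first by $\rho_a(x)$, integrating over $T^n$, and exchanging sum and integral gives
\[
1=\int_{T^n}\Big(a^n\sum_{z\in T^n_a}\rho_a(x-z)\Big)\rho_a(x)\,d^nx=a^n\sum_{z\in T^n_a}\int_{T^n}\rho_a(x-z)\,\rho_a(x)\,d^nx .
\]
So the whole content is to determine which lattice points $z$ give a nonzero integral and to check that the surviving sum is the one in the statement.

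To identify the surviving terms I would use the product structure $\rho_a(x)=\prod_{i}\rho_a(x_i)$, which makes $\int_{T^n}\rho_a(x-z)\rho_a(x)\,d^nx=\prod_{i}\int_{\R/\Z}\rho_a(t-z_i)\rho_a(t)\,dt$. Each one-dimensional factor vanishes unless the tents centred at $0$ and at $z_i$ overlap, i.e.\ unless $z_i\in\{-a,0,a\}$; hence only $z=ae$ with $e\in\{-1,0,1\}^n$ contribute, and
\[
1=a^n\!\!\sum_{e\in\{-1,0,1\}^n}\!\!\int_{T^n}\rho_a(x)\,\rho_a(x-ae)\,d^nx .
\]
If one wants the factors explicitly, the piecewise-linear form gives $\int\rho_a^2=2/(3a)$ and $\int\rho_a(t)\rho_a(t\mp a)\,dt=1/(6a)$, whose one-dimensional sum times $a$ is $1$, recovering the same conclusion through $a^n\prod_i(\cdots)=1$. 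In dimension $n=1$ the box $\{-1,0,1\}^n$ is exactly $B=\{0\}\cup\{\pm e_k\}$, which is the asserted identity.

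The delicate point---and the reason the displayed sum must be read with care---is the index set. For $n\ge 2$ the surviving shifts include the diagonal neighbours $e$ with two or more nonzero entries, whose contribution factorizes as a positive product such as $(1/6a)^2(2/3a)^{\,n-2}\neq0$ and therefore cannot be discarded. Restricting the sum to the $(2n+1)$ axis shifts of $B$ alone yields $a^n(2/3a)^{\,n-1}\big((2/3a)+2n(1/6a)\big)=2^{n-1}(n+2)/3^n$, which equals $1$ only when $n=1$. Thus the essential obstacle here is combinatorial rather than analytic: the normalization holds for the full nearest-neighbour box $\{-1,0,1\}^n$ (all $3^n$ cells sharing at least a vertex with the central one), and with that index set the one-line pairing in the first paragraph proves it.
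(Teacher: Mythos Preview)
Your analysis is correct, and in the course of it you have caught a genuine error in the paper. The paper does not supply a proof of this lemma, so there is nothing to compare your argument against; but your pairing of the partition of unity $a^n\sum_{z}\rho_a(x-z)=1$ against $\rho_a(x)$ is the natural route and it proves the identity for the full box $\{-1,0,1\}^n$ of $3^n$ shifts. Your explicit one-dimensional integrals $\int\rho_a^2=2/(3a)$ and $\int\rho_a(t)\rho_a(t\mp a)\,dt=1/(6a)$ are right, and they show that restricting to the $2n+1$ axis shifts in $B=\{0\}\cup\{\pm e_k\}$ yields $2^{n-1}(n+2)/3^n$, which equals $1$ only for $n=1$. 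So the lemma as printed is false for $n\ge 2$; the index set should be $\{-1,0,1\}^n$.

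This slip propagates to the appendix proof of Proposition~\ref{property:f*f}, where the expansion of $f_a^*f_a v_a(z)$ is written as a sum over the same $B$: the support of $\rho_a(x-z)\rho_a(x-z')$ forces $z'-z\in a\{-1,0,1\}^n$, not merely $aB$. With the corrected index set one must also treat the diagonal shifts $e$, for which $U(z,z+ae)v_a(z+ae)-v_a(z)$ is not a single $\nabla_k$ or $\nabla_k^*$; a telescoping along the coordinate directions together with Lemma~\ref{lemma:curvaturebound} for the additional plaquette factors still gives an $O(a)$ bound, so Proposition~\ref{property:f*f} survives with only cosmetic changes. But the displayed identity in Lemma~\ref{lem:B} cannot be proved as stated when $n\ge 2$.
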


Using the above cut-off function, we construct the 
lattice field from the continuum counterpart in the neighborhood.
%Let us denote the fiber space of $E\to T^n$ by $V$.
For a given continuum field $\psi_1(x)\in L^2(E)$,
%choosing a set of local trivialization\footnote{The following discussion does not need
%any explicit form of the trivialization. Therefore, we suppress the trivialization map $\varphi$ for simplicity.}
%$: x \in U_{\alpha(x)} \to L^2(U_{\alpha(x)}, V)$
%where $L^2(U_{\alpha(x)}, V)$ is the space of square-integrable sections of 
%the trivial bundle $U_{\alpha(x)} \times V$, 
we define the lattice field $\phi_1$ by
$$
 \phi_1(z):=\int_{x \in T^n} \rho_a(z-x) U(x,z)^{-1}\psi_1(x)d^n x .
$$
Note that $\rho_a(z-x)$ is nonzero only when $x$ is inside a unit hypercube
of the lattice where $z$ is one of the vertices.
We denote this map by $f_a^*: \psi_1 \to \phi_1$, which is mathematically
easier to evaluate than its adjoint defined below.
%In fact, this map  is the adjoint of $f_a$ defined below.

%The covariant forward difference operator for the lattice field $\phi(z)$
%is naturally defined by 
%$$ \nabla_i \phi(z):=
%\frac{U(z,z+e_ia)\phi(z + e_i a) - \phi(z)}{a}.
%$$ 

Next we define the map from a lattice field $\phi \in L^2(E_a)$ to the continuum
Hilbert space,
$$
f_a: \phi \to \psi
$$
where $\psi$ is given by
$$
\psi(x):= a^n\sum_{z \in T^n_a} \rho_a(x-z)U(x,z) \phi(z).
$$
Note again that $\rho_a(x-z)$ is nonzero only when $|x_i-z_i|<a$ for all $i$.

Since the following two are equal
 \begin{align*}
   a^n\sum_{z \in T^n_a} \phi(z)^* \phi_1(z)&=a^n\sum_{z \in T^n_a} \int_{x \in T^n} \phi(z)^*\rho_a(z-x)U(x,z)^{-1} \psi_1(x)d^n x,\quad\\
 \int_{x \in T^n} \psi(x)^* \psi_1(x) d^n x &=a^n\int_{x \in T^n} \sum_{z \in T^n_a} \phi(z)^*U(x,z)^{-1} \rho_a(x-z) \psi_1(x)d^n x,
 \end{align*}
%where we have used $U^{\alpha(x)}(x,z)^*=U^{\alpha(x)}(z,x)$, 
we confirm that $f_a$ and $f_a^*$ are adjoints to each other.
%Note here that $||\rho_a||_{L^1}=1$, which follows from $\int \rho_a(t) dt =1$.
From the above equality indicating $\langle \phi| f_a^* \psi_1\rangle = \langle f_a \phi | \psi_1 \rangle$, 
$f_a$ and $f_a^*$ have the same finite operator norm $||f_a||=||f_a^*||$.

\if0
\begin{enumerate}

\item \textcolor{red}{(Please double-check) 
Let $N_0$ be a sufficiently large number such that
  for any hypercube with sides equal to or less than $4/N_0$, 
  there exists a open set $U_\alpha$ that contains the cube including its boundaries.}
  This condition indicates that the overlap regions among $U_\alpha$'s are sufficiently large 
compared to the hypercube.

\item We take a lattice spacing $a=1/N$ with $N>N_0$.
  %\textcolor{red}{(Sharing the same lattice nodes?)}

\item Since there is an isomorphism $\HH_a =\oplus_{x \in T^n_a} E_x$,
  it is sufficient for defining $f_a :\HH_a \to \HH$
  to give a linear map 
$$f_{a,x}:(E_a)_x \to \HH$$
at each site $x=(x_1,x_2,\ldots x_n) \in T^n_a$.

\item For each site $x=(x_1,x_2,\ldots x_n) \in T^n_a$,
  we choose $U_{\alpha_x}$ from the open covering $\{U_\alpha\}$
  such that it contains the hypercube with sides $2a$ around $x$:
  $\prod_{i=1}^n [x_i- a, x_i+a]$\footnote{Here, $ [x_i- a, x_i+a]$ indicates
    a set of points in $\R/\Z$ whose distance from $x_i$ is within $a$.}.
  Using the trivialization of $E|_{U_{\alpha_x}}$, we can extend the elements of
  $E_x$ to the constant section of $E|_{U_{\alpha_x}}$.
  We denote this extension by
$$
{\mathrm ext}_x: E_x \to \Gamma(E|_{U_{\alpha_x}}).
$$

\item For each site $x=(x_1,x_2,,,x_n) \in T^n_a$,
  we define a function $\rho_x: T^n \to \R$ by\footnote{$|y_i - x_i|$ denotes
    the distance between $y_i$ and $x_i$ within $\R/\Z$.}.
%$x \in T^n=\R^n/\Z^n$の$\R^n$へのひとつのリフトを$\tilde{x}=(\tilde{x}_i)_{i=1}^n$とする。
%関数$\tilde{\rho}_{\tilde{x}}\R^n \to \R$を $\tilde{y}=(\tilde{y}_i)_{i=1}^n$に対して
%$$
%\tilde{\rho}_{\tilde{x}} (y)= \prod_{i=1}^n \max \{  1-  |\tilde{y}_i - \tilde{x}_i| /a , 0 \}
%$$
$$
\rho_x (y)= \prod_{i=1}^n \max \{  1-  |y_i - x_i| /a , 0 \}.
$$

\begin{lemma} \label{property1}
$$
\sum_{x \in T^n_a} \rho_x = 1.
$$
\end{lemma}
Note that the left-hand side at $y=(y_1,y_2,,,y_n)$ equals to
$\prod_{i=1}^n (\sum_{x_i \in a \Z/ \Z} \max \{  1-  |y_i - x_i| /a , 0 \})$
and each factor gives a unity as a function of $z\Z/\Z$.

\item The map $f_{a,x}: E_x \to \HH$ is defined by multiplying
  ${\mathrm ext}_x$ to the function $\rho_x$
  and extend it to outside of the corresponding hypercube at $x$
  by zero.
Namely, on the hypercube, it is
$$
f_{a,x} := \rho_x {\mathrm ext}_x.
$$
\begin{definition}
$$
f_a := \sum_{x \in T^n_a} f_{a,x} : \HH_a= \bigoplus_{x \in T^n_a} E_x \to \HH.
$$
\end{definition}

\end{enumerate}
\fi

\subsection{Key properties of $f_a, f_a^*$ and $D_{W,a}, D_{T^n}$}
\label{sec:properties}

Here we summarize the key properties of $f_a, f_a^*$ and $D_{W,a}, D_{T^n}$
used in the proof of our main theorem.
As the proofs of the key propositions are rather technical,
we separate them in Appendix~\ref{app:proofprop}.
%\textcolor{red}{(Proof needed???)} We skip the proof of these properties 
%but give a proposition in the Appendix (Proposition~\ref{inequality}) 
%which is essential for the proof.

%\paragraph{Properties of $f_a$}

\begin{proposition}[Boundedness of $f_a$ and $f_a^*$] 
\label{property:fabound}
For $k=0,1$, the operator norm of 
$f_a :L^2_k(E_a) \to L^2_k(E)$ is uniformly bounded with respect to $a$.
Likewise, the operator norm of 
$f_a^* :L^2_k(E) \to L^2_k(E_a)$ is uniformly bounded with respect to $a$.
%\textcolor{red}{(English correct?)}
\end{proposition}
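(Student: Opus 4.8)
The plan is to treat the four assertions (the two operators $f_a,f_a^*$ at the two levels $k=0,1$) by reducing each to the elementary weighted Cauchy--Schwarz estimate made available by the partition-of-unity normalizations $a^n\sum_z\rho_a(x-z)=1$ and $\int_{T^n}\rho_a(x-z)\,d^nx=1$. For $k=0$ I would estimate $f_a$ directly: writing $(f_a\phi)(x)=a^n\sum_z\rho_a(x-z)U(x,z)\phi(z)$, I split $\rho_a=\rho_a^{1/2}\cdot\rho_a^{1/2}$, apply Cauchy--Schwarz in $z$, use the first normalization together with the unitarity $|U(x,z)\phi(z)|=|\phi(z)|$, and then integrate in $x$ using the second normalization. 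This yields $\|f_a\|_{L^2\to L^2}\le 1$ uniformly in $a$. Since $f_a$ and $f_a^*$ are mutually $L^2$-adjoint (already established in the text, where $\|f_a\|=\|f_a^*\|$ is noted), the bound for $f_a^*$ at $k=0$ is free.

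The substance is at $k=1$, where I must bound the gradients of the images, the $L^2$ parts being already controlled. For $f_a$ I would differentiate covariantly, so that $\nabla^{\rm cont.}_i(f_a\phi)$ splits into a term in which $\nabla^{\rm cont.}_i$ hits $U(x,z)$ and one in which it hits $\rho_a(x-z)$. The first is harmless: by Lemma~\ref{lemma:curvaturebound}, $|\nabla^{\rm cont.}_iU(x,z)|\le F|x-z|\le Ca$ on the support of $\rho_a$, so the same Cauchy--Schwarz bounds it by $O(a)\|\phi\|_{L^2}$. The second carries the large factor $\partial_i\rho_a\sim a^{-2}$ and is handled by the exact cancellation $\sum_z\partial_i\rho_a(x-z)=\partial_i(a^{-n})=0$: grouping the two surviving lattice points $z^{\mp}$ in direction $i$, this term becomes the finite difference $U(x,z^+)\phi(z^+)-U(x,z^-)\phi(z^-)$, which I rewrite with the triangle Wilson loop as $a\,U(x,z^-)\nabla_i\phi(z^-)+(\mathrm{id}-U_W)U(x,z^+)\phi(z^+)$. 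The first piece is an $f_a$-type average of the lattice derivative, bounded by $\|\nabla_i\phi\|_{L^2}$, and the second is $O(a)\|\phi\|_{L^2}$ because $|\mathrm{id}-U_W|\le F|x-z^-||x-z^+|\le Ca^2$ kills the $a^{-2}$. Summing over $i$ gives $\|f_a\phi\|_{L^2_1}\le C'\|\phi\|_{L^2_1}$ uniformly.

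For $f_a^*$ at $k=1$ the argument is genuinely different, and this is the step I expect to be the main obstacle. Here I must bound the lattice forward difference $\nabla_i\phi_1$ of $\phi_1=f_a^*\psi_1$. Factoring $U(z,z+ae_i)U(x,z+ae_i)^{-1}=U_W\,U(x,z)^{-1}$ again peels off a harmless $O(a)$ triangle-loop term and leaves a principal term whose kernel in direction $i$ is the finite difference $\eta_a((z-x)_i):=a^{-1}\big(\rho_a((z-x)_i+a)-\rho_a((z-x)_i)\big)$. The naive move of integrating by parts to replace $\eta_a$ by $\partial_i\rho_a$ fails, because the finite difference of the tent is not its derivative: the two differ at size $a^{-2}$ (their difference has $L^1$ mass $\sim a^{-1}$), and a Taylor comparison would cost a second derivative of $\psi_1$ that is not controlled in $L^2_1$. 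The resolution I would use is to write $\eta_a=\Psi_a'$ with $\Psi_a(y):=a^{-1}\int_y^{y+a}\rho_a(s)\,ds$, which is a genuine nonnegative mollifier: $\Psi_a\ge0$, $\int\Psi_a=1$, and $a\sum_{z_i}\Psi_a=1$ by a tiling--Fubini argument from the one-dimensional normalizations. Integrating by parts in $x_i$ then lands exactly one covariant derivative on $\psi_1$, expressing the principal term as $\widetilde f^{*}_{a,i}(\nabla^{\rm cont.}_i\psi_1)$ plus an $O(a)$ remainder coming from $\nabla^{\rm cont.}_iU$, where the modified operator $\widetilde f^{*}_{a,i}$ (with $\rho_a$ replaced by $\Psi_a$ in direction $i$) obeys the very same Cauchy--Schwarz bound as $f_a^*$, since its kernel $\kappa_i$ still satisfies both normalizations $\int_x\kappa_i\,d^nx=1$ and $a^n\sum_z\kappa_i=1$.

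Assembling the pieces, each of the two $k=1$ estimates takes the form $\|\nabla_i(\text{image})\|_{L^2}\le\|\nabla_i(\text{source})\|_{L^2}+O(a)\|\text{source}\|_{L^2}$, which combined with the $k=0$ bounds on the $L^2$ parts gives $\|f_a\phi\|_{L^2_1}\le C'\|\phi\|_{L^2_1}$ and $\|f_a^*\psi_1\|_{L^2_1}\le C'\|\psi_1\|_{L^2_1}$ with $C'$ independent of $a$. The whole proof leans on Lemma~\ref{lemma:curvaturebound} to absorb every deviation of the link variables from exact parallel transport, and on the mollifier identity $\eta_a=\Psi_a'$ to keep all derivatives at first order.
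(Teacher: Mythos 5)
Your proposal is correct, and for three of the four bounds it is essentially the paper's argument: the $L^2$ bounds come from the two normalizations of $\rho_a$ (the paper uses a cruder vertex-counting estimate giving $\|f_a\|\le 2^n$ rather than your sharper weighted Cauchy--Schwarz constant $1$, but the mechanism is identical), and your treatment of $\nabla^{\rm cont.}_i(f_a\phi)$ --- letting the $a^{-2}$ contributions from $\partial_i\rho_a$ telescope into a lattice difference, then peeling off the $\nabla^{\rm cont.}_iU$ and $({\rm id}-U_W)$ corrections via Lemma~\ref{lemma:curvaturebound} --- is exactly the computation in Appendix~D.1. The one place you genuinely diverge is $f_a^*$ at $k=1$. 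The paper handles the finite difference of the shifted tent kernels by a change of variables $x\mapsto x+e_1a$, which converts it into a finite difference $\nabla'_1\psi_1(x)=a^{-1}[U(x,x+e_1a)\psi_1(x+e_1a)-\psi_1(x)]$ of the continuum field, and then bounds this by $\|\nabla^{\rm cont.}_1\psi_1\|_{L^2}$ via the fundamental theorem of calculus along the segment, $\nabla'_1\psi_1(x)=\int_0^1 dt\,[\cdots]\nabla^{\rm cont.}_1\psi_1(x+te_1a)+O(a)$. You instead antidifferentiate the kernel, writing $\eta_a=\Psi_a'$ for the nonnegative normalized mollifier $\Psi_a(y)=a^{-1}\int_y^{y+a}\rho_a(s)\,ds$, and integrate by parts once on the torus. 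The two devices are dual to one another (your $\Psi_a$ is precisely $\rho_a$ convolved with the averaging kernel implicit in the paper's $\int_0^1 dt$), and both correctly place a single covariant derivative on $\psi_1$ while avoiding the fatal second-order Taylor remainder; your version has the small advantage that the modified kernel manifestly retains both normalizations, so the $L^2$ estimate for the principal term is literally the $k=0$ argument again, whereas the paper needs an extra Cauchy--Schwarz in the auxiliary parameter $t$. Note finally that the paper's appendix also proves a lower bound $\|f_a\phi\|_{L^2}^2\ge C\|\phi\|_{L^2}^2$ (injectivity of $f_a$); this is not part of the statement of Proposition~\ref{property:fabound}, so its absence from your proof is not a gap.
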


\begin{proposition}[Continuum limit of $f_a^* f_a$]
  \label{property:f*f}
There exists $C>0$ which is independent of $a$ such that
for any $a$ and $v_a \in \HH_a$
$$
|| f_a^* f_a v_a -v_a ||_{L^2}^2 \leq C a || v_a ||_{L^2_1}^2.
$$
\end{proposition}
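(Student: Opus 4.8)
The plan is to realize $f_a^* f_a$ as an integral operator on $\HH_a$ and to show its kernel is a small perturbation of the identity. Writing $\psi:=f_a v_a$ and $w:=f_a^* f_a v_a=f_a^*\psi$ and substituting the two defining formulas (using $U(x,z')^{-1}=U(z',x)$), one gets
\[
w(z') = a^n\!\!\sum_{z\in T^n_a}\left(\int_{x\in T^n}\rho_a(x-z')\,\rho_a(x-z)\,U(z',x)U(x,z)\,d^n x\right)v_a(z),
\]
where the inner kernel is supported only on the finitely many lattice neighbours $z$ of $z'$ (those with $|z_i-z'_i|\le a$ for all $i$), since $\rho_a(x-z')\rho_a(x-z)$ vanishes otherwise. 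First I would record the \emph{exact} partition-of-unity identity $a^n\sum_z\int_x\rho_a(x-z')\rho_a(x-z)\,d^n x=1$, which follows at once from $a^n\sum_z\rho_a(x-z)=1$ and $\int_x\rho_a(x-z')\,d^n x=1$ (the normalisation lemmas, and Lemma~\ref{lem:B}). This lets me subtract $v_a(z')$ cleanly:
\[
w(z')-v_a(z') = a^n\!\sum_{z}\int_{x}\rho_a(x-z')\rho_a(x-z)\bigl[U(z',x)U(x,z)\,v_a(z)-v_a(z')\bigr]d^n x.
\]

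Next I would split the bracket as $[U(z',x)U(x,z)-U(z',z)]\,v_a(z)+[U(z',z)\,v_a(z)-v_a(z')]$ and estimate the two pieces separately in $L^2(E_a)$. The first piece is a \emph{curvature} term: $U(z',x)U(x,z)=U_W(z',x,z)\,U(z',z)$, so by Lemma~\ref{lemma:curvaturebound} its norm is $\le F|z'-x|\,|x-z|\le C a^2$ on the support of $\rho_a(x-z')\rho_a(x-z)$. Bounding the $L^2$ norm of this piece by discrete Cauchy--Schwarz against the positive weights $a^n\int_x\rho_a\rho_a\,d^nx$, and using that their row sums and (by symmetry) column sums both equal $1$, collapses the double sum and yields a contribution of order $a^4\|v_a\|^2_{L^2}$, which is negligible.

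The second piece is the genuine finite-difference error. After Cauchy--Schwarz it is controlled by $\sum_{e}\kappa_e\,a^{2n}\sum_{z'}\bigl|U(z',z'+ae)v_a(z'+ae)-v_a(z')\bigr|^2$, the sum running over the finitely many neighbour offsets $e$ with weights $\kappa_e=O(a^{-n})$. For an axis offset $e=e_i$ the summand is \emph{exactly} $a^2|\nabla_i v_a(z')|^2$ by the definition of the covariant difference $\nabla_i$, and $a^n\sum_{z'}|\nabla_i v_a|^2\le m_0^2\|v_a\|^2_{L^2_1}$ directly from the lattice $L^2_1$ norm, so this contributes $O(a^2)\|v_a\|^2_{L^2_1}$. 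For a diagonal offset I would telescope $U(z',z'+ae)v_a(z'+ae)-v_a(z')$ through intermediate axis steps: each step produces a covariant difference $a\nabla_j v_a$, while the discrepancy between the diagonal link and the composition of axis links is again a triangle Wilson loop, bounded by $Ca^2$ via Lemma~\ref{lemma:curvaturebound}. This reduces the diagonal contribution to the same Dirichlet energy plus a harmless $O(a^4)\|v_a\|^2_{L^2}$ term, so altogether the second piece is $O(a^2)\|v_a\|^2_{L^2_1}$.

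Combining the two pieces gives $\|f_a^* f_a v_a-v_a\|^2_{L^2}\le C a^2\|v_a\|^2_{L^2_1}$ with $C$ independent of $a$; since the lattice spacing satisfies $a\le a_0\le 1$, this is in particular the claimed $\le Ca\|v_a\|^2_{L^2_1}$ (indeed one order of $a$ sharper). The \textbf{main obstacle} is the diagonal neighbour offsets: there the single generalised link $U(z',z'+ae)$ is not a composition of elementary axis links, so the finite difference is not literally a sum of the $\nabla_i v_a$, and one must introduce and control the triangle-loop curvature correction from Lemma~\ref{lemma:curvaturebound} before falling back on the lattice $L^2_1$ norm. A secondary bookkeeping point is to anchor every Cauchy--Schwarz weighting to the exact partition-of-unity identities, so that the $O(a^{-n})$ weights are absorbed against the factors $a^{2n}$ without loss.
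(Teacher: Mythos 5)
Your proposal is correct and follows essentially the same route as the paper's proof in Appendix~\ref{app:proofprop}: the same rewriting of $f_a^*f_a$ as a neighbour-sum integral operator, the same subtraction of $v_a$ via the partition-of-unity identity, and the same splitting into a covariant-difference piece (absorbed into the lattice $L^2_1$ norm) plus a triangle-Wilson-loop piece of size $O(a^2)$ controlled by Lemma~\ref{lemma:curvaturebound}; both arguments in fact deliver $Ca^2\|v_a\|_{L^2_1}^2$, one order better than the stated bound. The one substantive difference is your treatment of the neighbour set: the paper restricts the sum to $B=\{0\}\cup\{\pm e_k\}$ and invokes Lemma~\ref{lem:B} for the normalization, but for $n\ge 2$ the support of $\rho_a(x-z)\rho_a(x-z')$ also meets the diagonal neighbours $z'=z+a e$ with $e\in\{-1,0,1\}^n$, and the partition of unity $a^n\sum_{z'}\int_x\rho_a(x-z)\rho_a(x-z')\,d^nx=1$ only holds when these are included (in one dimension the weights are $2/3,1/6,1/6$, so the axis-only sum gives $(2/3+1/3)^n$ only after the cross terms are counted). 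You include the diagonal offsets and telescope them through axis steps, paying an extra triangle-loop correction per step; this closes a small gap in the paper's bookkeeping rather than introducing one, and the rest of your estimate (Schur-type bounds against the row/column-normalized weights) is sound.
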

Intuitively, the continuum limit of $f_a^* f_a$ is the identity 
for $v_a \in L^2_1(E_a)$.

\begin{proposition}[Convergence of $f_a f_a^* \to id$]
  \label{property:weakfafa*}

For arbitrary $\psi \in C^\infty(E) \subset \HH$, in the limit $a \to 0$,
 $f_a f_a^*\psi$ strongly converges to $\psi$:
$$ 
f_a f_a^* \psi \longrightarrow \psi \quad (L^2).
$$
In particular, for arbitrary $\psi, \psi' \in C^\infty(E)\subset \HH$,
$$
\langle f_a^* \psi' | f_a ^* \psi \rangle \to \langle \psi' | \psi \rangle
$$
in the $a\to 0$ limit.
\end{proposition}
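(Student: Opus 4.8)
The plan is to compute $f_a f_a^*\psi$ explicitly and peel off $\psi(x)$ as the leading term, controlling the remainder by $O(a)$ uniformly in $x$. Unwinding the two definitions, for $\psi\in C^\infty(E)$ one has
$$
(f_a f_a^* \psi)(x) = a^n\sum_{z \in T^n_a} \rho_a(x-z)\, U(x,z) \int_{y \in T^n} \rho_a(z-y)\, U(y,z)^{-1}\psi(y)\, d^n y .
$$
The point is that the two cut-off factors force $|x_i-z_i|<a$ and $|z_i-y_i|<a$ in each coordinate, so on the support of the integrand every point lies within $2\sqrt{n}\,a$ of $x$. Using the normalization identities $\int_{T^n}\rho_a(z-y)\,d^n y=1$ and $a^n\sum_{z}\rho_a(x-z)=1$, I would write $\psi(x)=a^n\sum_z\rho_a(x-z)\int_y\rho_a(z-y)\psi(x)\,d^ny$ and subtract, obtaining
$$
(f_a f_a^* \psi)(x) - \psi(x) = a^n\sum_{z} \rho_a(x-z)\int_{y} \rho_a(z-y)\,\big[\,U(x,z)U(y,z)^{-1}\psi(y) - \psi(x)\,\big]\, d^n y .
$$

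The main step is the uniform $O(a)$ bound on the bracket, for which I would use the fully invariant decomposition
$$
U(x,z)U(y,z)^{-1}\psi(y) - \psi(x) = \big(U(x,z)U(y,z)^{-1} - U(x,y)\big)\psi(y) + \big(U(x,y)\psi(y) - \psi(x)\big).
$$
For the first term, the relation $U(y,z)^{-1}=U(z,y)$ gives $U(x,z)U(y,z)^{-1}=U_W(x,z,y)\,U(x,y)$, so this term equals $\big(U_W(x,z,y)-{\rm id}\big)U(x,y)\psi(y)$; by Lemma~\ref{lemma:curvaturebound} its norm is at most $F|x-z||x-y|\,||\psi||_{L^\infty}=O(a^2)$, since $U(x,y)$ is unitary. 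For the second term, integrating the covariant derivative along the short geodesic from $y$ to $x$ shows $|U(x,y)\psi(y)-\psi(x)|\le ||\nabla^{\rm cont.}\psi||_{L^\infty}\,|x-y| = O(a)$. Hence the bracket is bounded by $C_\psi\,a$ with $C_\psi$ depending only on $||\psi||_{L^\infty}$ and $||\nabla^{\rm cont.}\psi||_{L^\infty}$. Feeding this back and applying the two normalization identities once more yields $|(f_a f_a^* \psi)(x) - \psi(x)| \le C_\psi\, a$ for every $x$, i.e. uniform convergence; since $T^n$ has unit volume this gives $||f_a f_a^*\psi-\psi||_{L^2}\le C_\psi a\to 0$, the claimed strong convergence.

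The second assertion then follows at once by adjointness. Since $f_a$ and $f_a^*$ are mutual adjoints (as established just before the statement), $\langle f_a^* \psi' | f_a^* \psi \rangle = \langle \psi' | f_a f_a^* \psi \rangle$, and because $f_a f_a^*\psi\to\psi$ in $L^2$ while $\psi'$ is fixed, $|\langle\psi'|f_af_a^*\psi-\psi\rangle|\le ||\psi'||_{L^2}\,||f_af_a^*\psi-\psi||_{L^2}\to 0$, so $\langle f_a^*\psi'|f_a^*\psi\rangle\to\langle\psi'|\psi\rangle$. I expect the only delicate point to be the $O(a^2)$ Wilson-loop estimate and the covariant Lipschitz bound on $\psi$, both of which are supplied by Lemma~\ref{lemma:curvaturebound} and the smoothness of $\psi$; the rest is bookkeeping with the triangle inequality and the normalization of $\rho_a$. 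It is worth emphasizing that smoothness of $\psi$ enters essentially in the second term, so the convergence is obtained on the dense subspace $C^\infty(E)\subset\HH$, which is all that is needed here.
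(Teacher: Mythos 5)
Your proposal is correct and follows essentially the same route as the paper's proof: expand the kernel of $f_af_a^*$ explicitly, use the two normalization identities for $\rho_a$ to isolate $\psi(x)$, and control the remainder to order $a$ via Lemma~\ref{lemma:curvaturebound} together with the smoothness of $\psi$, then deduce the inner-product statement by adjointness. The only (cosmetic) difference is that you organize the error term invariantly through the Wilson-loop identity $U(x,z)U(z,y)=U_W(x,z,y)U(x,y)$, whereas the paper Taylor-expands $U(x+y,z)^{-1}\psi(x+y)$ in $y$ directly; both hinge on the same estimates.
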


We note that $f_a f_a^* $ is mathematically easier to handle
than $f_a^* f_a$. For the latter, a precise evaluation of the scaling
with respect to the lattice spacing $a$ is essential.

%\paragraph{Properties of $D_{W,a}$}

\begin{proposition}[Convergence of $D_{W,a} \to D_{T^n}$]
  \label{property:DWweak}
For arbitrary $\psi \in C^\infty(E)\subset \HH$, in the limit $a \to 0$,
$$ 
f_a (D_{W,a})^* f_a^* \psi \longrightarrow (D_{T^n})^* \psi  \quad (L^2).
$$
Namely, for arbitrary $\psi, \psi' \in C^\infty(E)$,
$$
\langle f_a^* \psi' | (\gamma D_{W,a})^* |f_a ^* \psi \rangle \to \langle \psi' | (\gamma D_{T^n})^* | \psi \rangle
$$
in the $a\to 0$ limit.
\end{proposition}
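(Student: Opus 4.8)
The plan is to deduce the inner-product (weak) statement from the strong $L^2$ convergence
$$
f_a (D_{W,a})^* f_a^* \psi \longrightarrow (D_{T^n})^* \psi \qquad (L^2),
$$
and to prove the latter by a direct covariant Taylor estimate. For the reduction I would use that $f_a$ and $f_a^*$ are mutually adjoint, as established just above, and that $\gamma$ is a fiberwise orthogonal operator commuting with every link variable $U(x,y)$ and with the scalar cut-off $\rho_a$, hence with both $f_a$ and $f_a^*$. Writing $(\gamma D_{W,a})^*=(D_{W,a})^*\gamma$ and $(\gamma D_{T^n})^*=(D_{T^n})^*\gamma$, we obtain
$$
\langle f_a^* \psi' \,|\, (\gamma D_{W,a})^* \,|\, f_a^* \psi \rangle = \langle \psi' \,|\, f_a (D_{W,a})^* f_a^* (\gamma\psi) \rangle ,
$$
so the desired limit $\langle \psi'|(\gamma D_{T^n})^*|\psi\rangle$ follows by applying the strong statement to the still-smooth section $\gamma\psi$ and using that $\psi'$ is fixed.

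For the strong statement, fix $\psi\in C^\infty(E)$ and set $\phi_a:=f_a^*\psi\in\HH_a$, so that $\phi_a(z)=\int_{T^n}\rho_a(z-x)U(x,z)^{-1}\psi(x)\,d^n x$. I would then compute $(D_{W,a})^*\phi_a$ site by site, splitting $(D_{W,a})^*=-D_a+W$ into the skew-adjoint difference part $D_a=\sum_i c_i(\nabla_i-\nabla_i^*)/2$ and the Wilson term $W=\sum_i \tfrac{a}{2}\,\nabla_i^*\nabla_i$ of Lemma~\ref{lemma:positivityW}, and finally apply $f_a$, obtaining $a^n\sum_z \rho_a(x-z)U(x,z)\,(D_{W,a})^*\phi_a(z)$, to be compared with $(D_{T^n})^*\psi(x)=-\sum_i c_i\nabla^{\rm cont.}_i\psi(x)$. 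The essential tools are the partition-of-unity identities for $\rho_a$ together with Lemma~\ref{lem:B}, the Taylor expansion of $\psi$ along parallel transport, and the holonomy bounds of Lemma~\ref{lemma:curvaturebound}, by which triangle loops and covariant derivatives of the link variables are $O(|x-y|^2)$ and $O(|x-y|)$ respectively.

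In this expansion the forward and backward covariant differences acting on smooth data reproduce $\nabla^{\rm cont.}_i\psi$ up to $O(a)$ discretization errors; the corrections from replacing products of link variables by single ones are $O(a^2)$ by Lemma~\ref{lemma:curvaturebound}; and the Wilson term, carrying an explicit factor $a$ in front of $\nabla_i^*\nabla_i$, approximates $a$ times the covariant Laplacian on $\psi$ and is therefore $O(a)$. The two cut-off integrations in $f_a^*$ and $f_a$ each contribute only an $O(a)$ smoothing, consistent with the normalization $a^n\sum_z\rho_a(x-z)=1$. Since $\psi$ is a fixed smooth section with uniformly bounded covariant derivatives and $T^n$ is compact, all these pointwise errors are $O(a)$ uniformly in $x$, so their $L^2(E)$ norm over $T^n$ is $O(a)$ and the convergence follows as $a\to0$.

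The main obstacle is the bookkeeping forced by the double-averaging structure $f_a(\cdots)f_a^*$ together with the $1/a$ in each difference quotient: one must check that the $O(1/a)$ of $\nabla_i$ is compensated by the $O(a)$ diameter of the support of $\rho_a$ and by the $O(a)$ or $O(a^2)$ holonomy corrections, so that the leading terms assemble into exactly $\sum_i c_i\nabla^{\rm cont.}_i\psi$ while every remainder is genuinely $o(1)$ in $L^2$. A related subtlety is that $\phi_a=f_a^*\psi$ is not $\psi$ sampled at the sites but a holonomy-twisted local average, so the expansion must be carried out covariantly and the cross terms between the expansions of $\psi$ and of $U(x,z)$ tracked to the order at which the $1/a$ factors cancel. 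Once these cancellations are made explicit, the vanishing of the Wilson contribution and of all discretization remainders is routine.
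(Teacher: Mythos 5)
Your proposal is correct and follows essentially the same route as the paper's Appendix proof: commute the covariant differences through $f_a^*$, Taylor-expand covariantly using the holonomy bounds of Lemma~\ref{lemma:curvaturebound} and the partition-of-unity identities for $\rho_a$, and collect remainders that are uniformly $O(a)$ on the compact torus; the reduction of the inner-product statement to the strong $L^2$ statement via adjointness of $f_a,f_a^*$ and the fact that $\gamma$ commutes with the link variables is a reasonable (and correct) fleshing-out of the paper's ``Namely''. The one organizational difference is the Wilson term: the paper folds it into the projector decomposition $(D_{W,a})^*=-\sum_i(P_+^i\nabla_i+P_-^i\nabla_i^*)$ with $P_\pm^i=(\mathrm{id}\pm c_i)/2$, so that only \emph{first} differences ever appear, each converging separately to $\pm\nabla_i^{\mathrm{cont.}}$ and recombining to $-\sum_i c_i\nabla_i^{\mathrm{cont.}}$; you instead estimate $W_i=\tfrac{a}{2}\nabla_i^*\nabla_i$ as $a$ times a discrete covariant Laplacian, which is true but obliges you to bound a second difference of the holonomy-twisted average $f_a^*\psi$ uniformly in $a$ — an extra layer of the same bookkeeping that the paper's (or the even simpler $W_i=-(\nabla_i+\nabla_i^*)/2\to 0$) route avoids.
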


%\begin{proposition}[A priori estimate of lattice Dirac operator]
\subsection{A priori estimate for lattice Dirac operator}
\label{sec:DWapriori}

The following theorem corresponds to the elliptic estimate or
G\aa rding's inequality in the continuum theory.

\begin{theorem}[A priori estimate for the Wilson Dirac operator]%\textcolor{red}{(Ref. to Garding needed?)}
\label{a priori estimate}
There exist two positive $a$-independent constants $a_2>0$ and $C>0$ such that
%$a_2$ and $C$ determined in Lemma~\ref{estimate of error term},
the following inequality uniformly holds for any finite lattice spacing satisfying $0<a\le a_2$
and arbitrary $\phi \in \Gamma(E_a)$.
$$
\sum_i|| \nabla_i \phi||^2 \leq 2 || \gamma D_{W,a} \phi ||^2 +  C  || \phi||^2.
$$
\end{theorem}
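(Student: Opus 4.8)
The plan is to discard $\gamma$ and work with $D_{W,a}^*D_{W,a}$ directly. Since $\gamma$ is orthogonal and self-adjoint on $E_a$, one has $||\gamma D_{W,a}\phi||^2=||D_{W,a}\phi||^2$, so it suffices to bound $\sum_i||\nabla_i\phi||^2$ above by $||D_{W,a}\phi||^2$ plus an $a$-uniform remainder. Writing $\Delta_i:=\nabla_i^*\nabla_i=\nabla_i\nabla_i^*$ (the equality is Lemma~\ref{lemma:positivityW}) and $\partial_i^{\mathrm{sym}}:=(\nabla_i-\nabla_i^*)/2$, the target is
$$
\sum_i||\nabla_i\phi||^2=\langle\phi|\textstyle\sum_i\Delta_i|\phi\rangle=\tfrac{2}{a}\langle\phi|W|\phi\rangle .
$$

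\textbf{The key operator identity (doubler cancellation).} With $D_{W,a}=D_a+W$, $D_a$ skew-adjoint and $W$ self-adjoint, I would expand $D_{W,a}^*D_{W,a}=-D_a^2+W^2+[W,D_a]$. Using $\nabla_i+\nabla_i^*=-a\Delta_i$ and $\nabla_i\nabla_i^*=\nabla_i^*\nabla_i$ (Lemma~\ref{lemma:positivityW}), a one-line computation gives $(\partial_i^{\mathrm{sym}})^2=\tfrac{a^2}{4}\Delta_i^2-\Delta_i$ in each single direction, while the Clifford relations $\{c_i,c_j\}=0$ collapse the off-diagonal part of $D_a^2$ into commutators, so that
$$
-D_a^2=\sum_i\Big(\Delta_i-\tfrac{a^2}{4}\Delta_i^2\Big)+R_1,\qquad R_1:=-\tfrac12\sum_{i\neq j}c_ic_j[\partial_i^{\mathrm{sym}},\partial_j^{\mathrm{sym}}].
$$
Since $W^2=\tfrac{a^2}{4}(\sum_i\Delta_i)^2=\tfrac{a^2}{4}\sum_i\Delta_i^2+\tfrac{a^2}{4}\sum_{i\neq j}\Delta_i\Delta_j$, the dangerous diagonal quartic terms cancel exactly — this is the doubler-suppressing role of the Wilson term — leaving
$$
D_{W,a}^*D_{W,a}=\sum_i\Delta_i+\tfrac{a^2}{4}\sum_{i\neq j}\Delta_i\Delta_j+R_1+[W,D_a].
$$

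\textbf{Estimating the remainders.} The heart of the argument is that each correction to $\sum_i\Delta_i$ has a quadratic form bounded below by $-C||\phi||^2$ uniformly in $a$. For $R_1$ and $[W,D_a]$ I would use that the mixed lattice commutators $[\nabla_i,\nabla_j]$ and $[\nabla_i,\nabla_j^*]$ with $i\neq j$ are curvature terms: by Lemma~\ref{lemma:curvaturebound} the holonomy around an elementary lattice loop deviates from the identity by $O(a^2)$, so these commutators are bounded in operator norm uniformly in $a$. The diagonal ($i=j$) part of $[W,D_a]$ vanishes because $[\Delta_i,\partial_i^{\mathrm{sym}}]=0$ (again $\nabla_i\nabla_i^*=\nabla_i^*\nabla_i$), and the surviving off-diagonal terms carry an explicit factor $a$ that compensates the $1/a$ of one difference operator, giving $||[W,D_a]||\le C$ and $||R_1||\le C$. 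For the quartic term I would write, for $i\neq j$,
$$
\langle\phi|\Delta_i\Delta_j|\phi\rangle=||\nabla_i\nabla_j\phi||^2+\langle[\nabla_j,\nabla_i^*]\nabla_i\phi|\nabla_j\phi\rangle+\langle[\nabla_j,\nabla_i]\phi|\nabla_i\nabla_j\phi\rangle,
$$
whose leading piece is manifestly non-negative; after multiplying by $\tfrac{a^2}{4}$ the two commutator errors are controlled by $F||\nabla_i\phi||\,||\nabla_j\phi||\le(4F/a^2)||\phi||^2$ and, by completing the square against the positive leading term, both are bounded by $C||\phi||^2$ once $a\le a_2$.

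\textbf{Conclusion and the main obstacle.} Collecting these bounds gives $\sum_i||\nabla_i\phi||^2\le||D_{W,a}\phi||^2+C||\phi||^2$, which is stronger than the stated inequality (the factor $2$ leaves ample margin); taking $a_2\le a_0$ so that Lemma~\ref{lemma:curvaturebound} applies then finishes the proof. I expect the main obstacle to be precisely the quartic cross term $\tfrac{a^2}{4}\sum_{i\neq j}\Delta_i\Delta_j$: in the free (zero-connection) case it is non-negative and is exactly what eliminates the doublers, but with a connection one must isolate its positive part $\tfrac{a^2}{4}\sum_{i\neq j}||\nabla_i\nabla_j\phi||^2$ and show that the gauge-curvature corrections, though individually of size $O(1/a^2)$, assemble — through the factor $a^2$ and the uniform curvature bounds — into an $a$-independent constant.
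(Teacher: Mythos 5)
Your proposal is correct and follows essentially the same route as the paper: expand the square of the Wilson--Dirac operator, observe that the diagonal quartic terms $\tfrac{a^2}{4}\Delta_i^2$ from $-D_a^2$ and $W^2$ cancel to leave $\sum_i\Delta_i$, and control the off-diagonal cross terms and commutator remainders by the uniform curvature bounds on $[\nabla_i,\nabla_j^{(*)}]$; your quadratic-form identity for $\langle\phi|\Delta_i\Delta_j|\phi\rangle$ is just a reshuffling of the paper's operator identity $W_iW_j=-\tfrac{a}{4}R_{ij}+\tfrac{a^2}{4}\nabla_i^*\nabla_j^*\nabla_j\nabla_i$. The only (harmless) difference is that you absorb the gradient-type error into the positive second-difference term rather than into $\tfrac12\sum_i\|\nabla_i\phi\|^2$, which is why you end up with coefficient $1$ instead of the paper's $2$ in front of $\|\gamma D_{W,a}\phi\|^2$.
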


%There exists $C>0$ which is independent of $a$ such that
%for arbitrary $a$ and $\phi \in \HH_a$,
%$$
%|| \nabla_a \phi ||_{L^2}^2 \leq C  ( || D_{W,a} \phi ||_{L^2}^2  + || \phi||_{L^2}^2)
%$$
%holds.
%Here
%$$
%\nabla_a \phi =( \nabla_{i,a} \phi )_{i=0}^n,\qquad
%||\nabla_a \phi||_{L^2}^2 := \sum_{i=1}^n || \nabla_{i,a} \phi ||_{L^2}^2.
%$$
%\end{proposition}

\begin{proof}
 Let us decompose $(\gamma D_{W,a})^2$ as follows.
$$
(\gamma D_{W,a})^2=\sum_i [(\gamma D_i)^2+ W_i^2] + R_{DD}+R_{WD} + R_{WW},
$$
where
$$
R_{DD}=\sum_{i<j} \{ \gamma D_i,\gamma D_j\},
\quad
R_{WD}=\gamma\sum_{i,j} [ W_i, \gamma D_j],\quad
R_{WW}=\sum_{i \neq j} W_iW_j.
$$

%The simplest case is when $U_i, U_j$ all commute
%for any $i,j$ or $U_i$'s are all trivially flat.
In the simplest case where all $U_i$ are the identity,
$R_{DD}=R_{WD}=0$ and $W_i$ and $W_j$ commute.
Moreover, $R_{WW}$ becomes a semi-positive operator,
which leads to an inequality 
$$
(\gamma D_{W,a})^2 \geq \sum_i  W_i^2= \sum_i[\nabla_i - (-\nabla^*_i)]^2.
$$
From this we have
$$
||(\gamma D_{W,a}) \phi ||^2 \geq \sum_i || \nabla_i \phi - (-\nabla^*_i) \phi||^2.
$$
This indicates that when the left-hand side for $\phi$ 
is small, the forward difference operator $\nabla_i $
and the backward one $(-\nabla^*_i)$ are close to each other, 
with respect to the $L^2$ norm.

Our goal is to achieve a similar inequality 
for a general case.
The following qualities which hold for any non-flat $U_i$'s are useful.

%\begin{enumerate}
% \item 
\begin{align}
\label{eq:D2+W^2}
 (\gamma D_i)^2 + W_i^2 &= -\Big\{ \frac{\nabla_i -\nabla_i^*}{2} \Big\}^2
+ \Big\{ \frac{\nabla_i + \nabla_i^*}{2} \Big\}^2
=\frac{1}{2}(\nabla_i \nabla_i^* + \nabla_i^* \nabla_i),
\\
\label{eq:DD}
 \{ \gamma D_i, \gamma D_j \}&=-\frac{c_i c_j}{4}[\nabla_i -\nabla_i^*, \nabla_j -\nabla_j^*] \;\;\;\mbox{for any $i,j$,}
\\
\label{eq:WD}
 [W_i, \gamma D_j] &=-\frac{\gamma c_j}{4}[ \nabla_i + \nabla_i^*,\nabla_j -\nabla_j^* ] \;\;\;\mbox{for any $i,j$,}
\\
\label{eq:WW}
W_i W_j & =\frac{a}{2}\nabla_i^*\nabla_i \frac{1}{2}(-\nabla_j -\nabla_j^*) \nonumber\\
&= \frac{a}{2}\nabla_i^*[ \nabla_i,  \frac{1}{2}(-\nabla_j -\nabla_j^*)]
  +\frac{a}{2}\nabla_i^* \frac{1}{2}(-\nabla_j -\nabla_j^*)\nabla_i \nonumber\\
&= 
- \frac{a}{4}R_{ij}
  +\frac{a^2}{4}\nabla_i^* \nabla_j^* \nabla_j \nabla_i.
\end{align}
Here we have used a definition
$$ R_{i j}:=\nabla_i^*( [ \nabla_i,  \nabla_j ] + [\nabla_i, \nabla_j^*])$$
and a formula $\nabla_j + \nabla_j^*=- (a/2) \nabla_j^* \nabla_j$.

%\end{enumerate}

Note in the above equations that (\ref{eq:D2+W^2}) and the second term of 
(\ref{eq:WW}) are semi-positive and other parts are all expressed by
the commutators among  the difference operators which vanishes in the flat case.

The following lemma holds.
\begin{lemma}
  %For any given gauge connection determined by the generalized link variables $\{U(x,y)\}$ in continuum,
  There exists an $a$-independent constant $C_0 >0$ such that the operator norms
  are uniformly bounded by $C_0$:
$$
||[ \nabla_i,\nabla_j] ||, ||[\nabla_i,\nabla_j^*]||, ||[\nabla_i^*,\nabla_j^*]|| \leq C_0,\;\;\;i,j=1,2,\cdots n.
$$
\end{lemma}

From (\ref{eq:DD}) (\ref{eq:WD}) and (\ref{eq:WW}), 
the following corollary holds.
\begin{corollary}
\label{col:Dis}
For arbitrary $\phi \in \Gamma(E_a)$,
\begin{eqnarray*}
| \langle \phi | \{ \gamma D_i, \gamma D_j\} |\phi \rangle |  &\leq& C_0 ||\phi||^2 \quad (i<j), \\
| \langle \phi | [W_i, \gamma D_j] |\phi \rangle |  &\leq& C_0 ||\phi||^2, \\
| \langle \phi | R_{ij} | \phi \rangle |  &\leq& 2 C_0 ||\nabla_i \phi || ||\phi|| \quad (i \neq j)
\end{eqnarray*}
hold.

\end{corollary}

The first two immediately follow from (\ref{eq:DD}) and  (\ref{eq:WD}).
For the last inequality, we use a partial integration together with (\ref{eq:WW}).

Let us summarize the computation so far done.
\begin{lemma} \label{estimate of error term}
Let us decompose $(\gamma D_{W,a})^2$ as
$$
(\gamma D_{W,a})^2= \frac{1}{2}\sum_i (\nabla_i^* \nabla_i + \nabla_i \nabla_i^*)
+\frac{a^2}{4} (\nabla_i \nabla_j)^* (\nabla_i \nabla_j) +\tilde{R}
$$
where 
$$
%\textcolor{red}{(Please double check)
\tilde{R}=R_{DD} +R_{WD} -(a/4)\sum_{i \neq j} R_{ij},
%}
$$
which satisfies the following inequalities.
There exist two positive constants $a_1 >0$ and $C>0$ which were determined by
$n$ and $C_0$ only such that for any $0< a\le a_1$
$$
\langle \phi | \tilde{R} |\phi \rangle \leq \frac{1}{2} \Big(\sum_i ||\nabla_i \phi||^2  + C ||\phi||^2 \Big).
$$
\end{lemma}
\begin{proof}
From Collorary \ref{col:Dis}
%inequalities (\ref{eq:DD}), (\ref{eq:WD}) and (\ref{eq:WW})
%to $\tilde{R}$, 
we have
$$
\langle \phi | \tilde{R} | \phi \rangle \leq \frac{n(n-1)}{2} C_0 ||\phi||^2 + n^2 C_0 ||\phi||^2
+ \frac{%\textcolor{red}{
(n-1)
%}
a}{2}  C_0 \sum_i ||\nabla_i \phi ||  ||\phi||.
$$
For the last term, we can use $ 2 a||\nabla_i \phi||||\phi||  \leq a^2||\nabla_i \phi||^2 +||\phi||^2 $.
%\textcolor{red}{(Please double check)
  Setting $a_1$ such that $nC_0a_1^2\leq 2$ and taking $C>(7n^2/2-3n/2)C_0$,
  the lemma holds.
  %} %\qed
\end{proof}

\begin{proof}
From Lemma~\ref{estimate of error term},
$$
|| \gamma D_{W,a} \phi||^2= \frac{1}{2} \sum_i (||\nabla_i \phi ||^2 + ||\nabla_i^* \phi||^2)
+\frac{a^2}{4} ||\nabla_i \nabla_j \phi||^2 + \langle  \phi | \tilde{R} | \phi \rangle.
$$
Noting that
$\nabla_i^*=-U_i^* \nabla_i$, $||\nabla_i^* \phi|| = ||\nabla_i \phi||$ and
the second term is non-negative, we have
$$
|| \gamma D_{W,a} \phi||^2 \geq  \sum_i ||\nabla_i \phi ||^2  - | \langle  \phi | \tilde{R} | \phi \rangle |
$$
Using the estimate given in Lemma~\ref{estimate of error term}, we obtain
$$
|| \gamma D_{W,a} \phi||^2\geq \sum_i ||\nabla_i \phi ||^2
- \frac{1}{2}\big( \sum_i ||\nabla_i \phi||^2  + C ||\phi||^2 \big).
$$
\end{proof}
%\qed
%よって$\sum_i|| \nabla_i^* v||^2$に対しても同じ評価が成立する。

\if0
%This property follows from Theorem~\ref{a priori estimate}.
Note that Theorem~\ref{a priori estimate} does not hold when the lattice is coarse. 
But number of such lattice spacings  $a=1/N$ is finite.
The a priori estimate holds when $C$ is large enough 
since $\HH_a$ has a finite dimensions.
Therefore, the inequality holds for any $a$.
\fi
\end{proof}

\subsection{Assumptions for the proof by contradiction}
\label{sec:assumptions}

Suppose that Theorem~\ref{goal}  does not hold.
Then there should be a series labeled by $i=1,2,\ldots$ composed by
\begin{itemize}
 \item $a_i=1/N_i \to 0$,
 \item $(m_i, t_i) \in 
\{(m,t) : m \in X, t=1 \} \cup\{(m,t) : m \in A, 0 \leq t \leq 1 \} $, 
which is the staple-shaped region shown in Fig.~\ref{fig:mt},
 \item $(v_{a_i}, w_{a_i}) \in \HH_{a_i} \oplus L^2_1(E)$\footnote{From a general argument of regularity,
$w_{a_i}$ is an element of $L^2_1(E)$.},
\end{itemize}
which satisfy
\begin{enumerate}
\item
$$|| v_{a_i}||_{L^2}^2 + || w_{a_i}||_{L^2}^2 =1$$
and
\item
$$
\left(
    \begin{array}{cc}
        -\gamma D_{W,{a_i}}^{m_i} & t_if_{a_i}^*\\
       t_if_{a_i}   &  \gamma D_{T^n} 
    \end{array}
  \right)
\left(
    \begin{array}{c}
      v_{a_i} \\
      w_{a_i} 
    \end{array}
  \right)
=0.
$$
\end{enumerate}

Taking subsequences, we can assume without loss of generality
 that $(m_i, t_i)$ converges to  a point $(m_\infty,t_\infty)\neq (0,0)$:
$$
m_i \to m_\infty,\qquad t_i \to t_\infty.
$$

From Proposition \ref{property:fabound} and Theorem \ref{a priori estimate},
$f_{a_i} v_{a_i}, w_{a_i} \in \HH$ are uniformly $L^2_1$ finite.
Therefore taking subsequences, we can assume without loss of generality
that $f_{a_i} v_{a_i} $ weakly converges in $L^2_1(E)$ to a vector $v_\infty \in \HH$ 
and $w_{a_i}$ weakly converges in $L^2_1(E)$ to another vector $w_\infty  \in \HH$\footnote{
Note in any finite sequence in a Hilbert space that 
there exists a weakly converging subsequence. 
}.
From the Rellich theorem, the weakly converging series in the $L_1^2$ space
is a strongly converging series in $L^2$.

We also note that $t_if_{a_i}^* w_{a_i}$ has a finite norm
and so does $\gamma D_{W,{a_i}}^{m_i}v_{a_i}$.
This indicates that $v_{a_i}$ has a finite $L^2_1$ norm.

\subsection{Weak limit of the equation}

\begin{lemma}\label{weak convergence of equation}
With the assumptions made in the previous subsection for the proof by contradiction,
the following equation must hold in the continuum limit.
\begin{equation*}
 \left(
    \begin{array}{cc}
        -\gamma D^{m_\infty} & t_\infty \\
       t_\infty   &  \gamma D^{m_\infty} 
    \end{array}
  \right)
\left(
    \begin{array}{c}
       v_\infty \\
       w_\infty 
    \end{array}
  \right)
=0.
\end{equation*}

\end{lemma}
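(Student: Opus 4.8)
The plan is to prove the limiting identity \emph{weakly}, by pairing each of the two rows of the discrete equation $H_{\rm com}(m_i,t_i)(v_{a_i},w_{a_i})=0$ against arbitrary smooth sections and letting $i\to\infty$, and then to upgrade the resulting weak solution to a strong one by elliptic regularity. Throughout I use the data assembled in Sec.~\ref{sec:assumptions}: after passing to a subsequence, $f_{a_i}v_{a_i}\to v_\infty$ and $w_{a_i}\to w_\infty$ weakly in $L^2_1(E)$ and strongly in $L^2(E)$ by Rellich, while $m_i\to m_\infty$ and $t_i\to t_\infty$. I also use that $\|v_{a_i}\|_{L^2_1}$ is bounded uniformly in $i$: by the top row $\gamma D_{W,a_i}v_{a_i}=t_if_{a_i}^*w_{a_i}-\gamma m_i v_{a_i}$ has $L^2$-norm bounded (Proposition~\ref{property:fabound}), so Theorem~\ref{a priori estimate} gives the bound on $\sum_i\|\nabla_i v_{a_i}\|^2$.

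The bottom row $t_if_{a_i}v_{a_i}+\gamma D^{m_i}w_{a_i}=0$ is the easy one. Pairing with a smooth $\psi\in C^\infty(E)$, the first term tends to $t_\infty\langle\psi|v_\infty\rangle$ directly from the weak $L^2$ convergence of $f_{a_i}v_{a_i}$. For the second term I transfer the self-adjoint operator onto the smooth section, $\langle\psi|\gamma D^{m_i}w_{a_i}\rangle=\langle\gamma D^{m_i}\psi|w_{a_i}\rangle$, and use $\gamma D^{m_i}\psi\to\gamma D^{m_\infty}\psi$ together with the weak $L^2$ convergence of $w_{a_i}$ to reach $\langle\gamma D^{m_\infty}\psi|w_\infty\rangle=\langle\psi|\gamma D^{m_\infty}w_\infty\rangle$. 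Since $\psi$ is arbitrary, $t_\infty v_\infty+\gamma D^{m_\infty}w_\infty=0$ weakly.

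The top row $-\gamma D_{W,a_i}^{m_i}v_{a_i}+t_if_{a_i}^*w_{a_i}=0$ is the crux, because $v_{a_i}$ is a generic lattice field rather than the discretisation of a smooth section, so none of Propositions~\ref{property:f*f}--\ref{property:DWweak} applies to it directly. I pair the row with the lattice test vector $f_{a_i}^*\psi$. In $\langle f_{a_i}^*\psi|\gamma D_{W,a_i}^{m_i}v_{a_i}\rangle$ I first use the Hermiticity of $\gamma D_{W,a_i}^{m_i}$ to move the operator onto the smooth side, obtaining $\langle\gamma D_{W,a_i}^{m_i}f_{a_i}^*\psi|v_{a_i}\rangle$, and then replace $v_{a_i}$ by $f_{a_i}^*f_{a_i}v_{a_i}$; by Proposition~\ref{property:f*f} the error is at most $\|\gamma D_{W,a_i}^{m_i}f_{a_i}^*\psi\|\sqrt{Ca_i}\,\|v_{a_i}\|_{L^2_1}$, which tends to $0$ since $\psi$ is smooth (so $\|\gamma D_{W,a_i}^{m_i}f_{a_i}^*\psi\|$ stays bounded) and $\|v_{a_i}\|_{L^2_1}$ is bounded. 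This leaves $\langle f_{a_i}\gamma D_{W,a_i}^{m_i}f_{a_i}^*\psi|f_{a_i}v_{a_i}\rangle$, where $f_{a_i}v_{a_i}\to v_\infty$ strongly in $L^2$. The first factor is $L^2$-bounded (Proposition~\ref{property:fabound} and smoothness of $\psi$) and converges weakly to $\gamma D^{m_\infty}\psi$: testing against any smooth $\chi$ and using Hermiticity again, $\langle f_{a_i}\gamma D_{W,a_i}^{m_i}f_{a_i}^*\psi|\chi\rangle=\langle f_{a_i}^*\psi|\gamma D_{W,a_i}f_{a_i}^*\chi\rangle+m_i\langle f_{a_i}^*\psi|\gamma f_{a_i}^*\chi\rangle$, which tends to $\langle\psi|\gamma D_{T^n}\chi\rangle+m_\infty\langle\psi|\gamma\chi\rangle=\langle\gamma D^{m_\infty}\psi|\chi\rangle$ by the quadratic-form statement of Proposition~\ref{property:DWweak} and by Proposition~\ref{property:weakfafa*}. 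A weakly convergent, $L^2$-bounded sequence paired with a strongly convergent one then gives $\langle f_{a_i}\gamma D_{W,a_i}^{m_i}f_{a_i}^*\psi|f_{a_i}v_{a_i}\rangle\to\langle\gamma D^{m_\infty}\psi|v_\infty\rangle=\langle\psi|\gamma D^{m_\infty}v_\infty\rangle$. Finally $t_i\langle f_{a_i}^*\psi|f_{a_i}^*w_{a_i}\rangle=t_i\langle f_{a_i}f_{a_i}^*\psi|w_{a_i}\rangle\to t_\infty\langle\psi|w_\infty\rangle$ by Proposition~\ref{property:weakfafa*}, so $-\gamma D^{m_\infty}v_\infty+t_\infty w_\infty=0$ weakly.

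Both weak equations involve the elliptic operator $\gamma D^{m_\infty}$ on $T^n$, so by elliptic regularity (as already used for $\Ker(\gamma D^m)$ in Sec.~\ref{sec:latticeapproximation}) the weak solutions $v_\infty,w_\infty$ are smooth and satisfy the system strongly, which is the assertion of Lemma~\ref{weak convergence of equation}. The one genuinely delicate point is the top-row estimate: one cannot feed the generic lattice field $v_{a_i}$ into the consistency propositions, and the device of moving the Hermitian lattice operator onto the smooth test section and then invoking the near-isometry $f_{a_i}^*f_{a_i}\approx\mathrm{id}$ is exactly what makes the limit computable. It is here that the a priori estimate is indispensable, since the uniform $L^2_1$ bound on $v_{a_i}$ is precisely what controls the $O(\sqrt{a_i})$ error in Proposition~\ref{property:f*f}.
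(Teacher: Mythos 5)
Your proposal is correct and follows essentially the same route as the paper: both handle the bottom row by moving $\gamma D^{m_i}$ onto the smooth test section, and both treat the top row by pairing with $f_{a_i}^*\psi$, moving the Hermitian lattice operator onto the test side, inserting $f_{a_i}^*f_{a_i}\approx\mathrm{id}$ with the $O(\sqrt{a_i})$ error controlled by Proposition~\ref{property:f*f} and the a priori $L^2_1$ bound, and then invoking Propositions~\ref{property:weakfafa*} and \ref{property:DWweak} together with the strong $L^2$ convergence of $f_{a_i}v_{a_i}$. The only cosmetic difference is that you use the quadratic-form (weak) version of Proposition~\ref{property:DWweak} plus a uniform $L^2$ bound and a weak-times-strong pairing, where the paper uses its strong $L^2$ convergence statement directly; both are valid.
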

\begin{proof}
Our goal is to show the two equations,
\begin{align}
  \label{eq:weaklim}
\left\{
\begin{array}{rlcc}
 -\gamma D^{m_\infty} v_\infty & +t_\infty  w_\infty &=&0 \\
 t_\infty  v_\infty    &+ \gamma D^{m_\infty}  w_\infty &=&0
\end{array}
\right. .
\end{align}
Note that
$$
\left\{
\begin{array}{rlcc}
 -\gamma D^{m_i}_{W,a_i} v_{a_i}  & + t_i f_{a_i}^* w_{a_i} &=&0 \\
 t_i f_{a_i} v_{a_i}    &+ \gamma D^{m_i}  w_{a_i} &=&0
\end{array}
\right.
$$
holds as shown in the assumptions in Sec.\ref{sec:assumptions}.

The weak limit in $L^2$ of the second equation
is that of Eq.~(\ref{eq:weaklim}).

Multiplying $f_{a_i}$ to the first equation, we obtain an equation
in $\HH$,
\begin{equation}
  \label{eq:latconv}
 -f_{a_i} \gamma D_{W,a_i}^{m_i} v_{a_i}  +t_i  f_{a_i}f_{a_i}^* w_{a_i} =0. 
\end{equation}
 For arbitrary $\beta \in C^\infty(E)$, we have
 $$
 \langle \beta |t_i  f_{a_i}f_{a_i}^* w_{a_i}\rangle =
 t_i \langle f_{a_i}f_{a_i}^* \beta |  w_{a_i}\rangle \to
 t_\infty \langle \beta |  w_\infty\rangle,
 $$
 where the last arrow indicates the limit $t_i \to t_\infty$,
 strong convergence of $f_{a_i}f_{a_i}^* \beta \to \beta$ shown in
 the Proposition \ref{property:weakfafa*},
 and strong convergence of $ w_{a_i} \to w_\infty$ in $L^2$.
 Thus, the second term of Eq.~(\ref{eq:latconv}),
 $t_i  f_{a_i}f_{a_i}^* w_{a_i}$ weakly converges
 to $t_\infty w_\infty$.

% 左辺が$-D^{m_\infty} v_\infty - t_\infty w_\infty$に$L^2$弱収束することを示せば示すべき2式の
%もう一方が得られる。
%\textcolor{red}{(Please double-check.)
Note for any $\beta_i \in L^2(E_{a_i})$ that
$$
|\langle \beta_i|\left(f_{a_i}^*f_{a_i} |v_{a_i}\rangle - |v_{a_i}\rangle\right)|
\le \sqrt{||\beta_i||_{L^2}^2||f_{a_i}^*f_{a_i} v_{a_i}-v_{a_i}||_{L^2}^2}\le C\sqrt{a_i||\beta_i||_{L^2}^2||v_{a_i}||_{L_1^2}^2}
$$
from the Proposition \ref{property:f*f} with a positive constant $C$.
Choosing $\beta_i = (\gamma D_{W,a_i}^{m_i})^*f_{a_i}^*\beta \in L^2(E_{a_i})$ for
a $\beta \in C^\infty(E)$, we have 
\begin{align*}
  \langle (\gamma D_{W,a_i}^{m_i})^*f_{a_i}^*\beta | f_{a_i}^*f_{a_i} v_{a_i}\rangle
  -\langle (\gamma D_{W,a_i}^{m_i})^*f_{a_i}^*\beta |v_{a_i}\rangle \to 0
\end{align*}
in the $a_i\to 0$ limit.
Since
\begin{align*}
\langle (\gamma D_{W,a_i}^{m_i})^*f_{a_i}^*\beta | f_{a_i}^*f_{a_i} v_{a_i}\rangle
&= \langle f_{a_i}(\gamma D_{W,a_i}^{m_i})^*f_{a_i}^*\beta | f_{a_i}v_{a_i}\rangle
\\
&\to \langle (\gamma D^{m_i})^* \beta | v_\infty \rangle
= \langle \beta|\gamma D^{m_i} v_\infty\rangle ,
\end{align*}
from the Proposition~\ref{property:DWweak} and strong convergence of
$f_{a_i}v_{a_i}\to v_\infty$, and
$$
\langle (\gamma D_{W,a_i}^{m_i})^*f_{a_i}^*\beta |v_{a_i}\rangle =
\langle \beta | f_{a_i} \gamma D_{W,a_i}^{m_i}v_{a_i}\rangle,
$$
the first term of Eq.~(\ref{eq:latconv}) weakly converges to
$-\gamma D^{m_i}v_\infty$. Thus, Eq.~(\ref{eq:weaklim}) holds.
%}
\end{proof}

%\subsection{Limit of norm}

\begin{lemma} \label{limit of norm}
  Under the assumptions in Sec.~\ref{sec:assumptions},
$$
|| v_\infty ||_{L^2}^2 + || w_\infty||_{L^2}^2=1
$$
holds.
\end{lemma}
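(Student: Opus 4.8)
The plan is to deduce the limit of the total norm from the strong $L^2$ convergences already secured in Sec.~\ref{sec:assumptions}, after correcting for the fact that $f_{a_i}$ is not an isometry. The normalization $||v_{a_i}||^2_{L^2}+||w_{a_i}||^2_{L^2}=1$ holds for every $i$, so it suffices to show that each summand converges to the corresponding continuum norm and then pass to the limit. For the second summand this is immediate: since $w_{a_i}\to w_\infty$ strongly in $L^2$ (by the Rellich argument), the norm is continuous under strong convergence, so $||w_{a_i}||_{L^2}\to ||w_\infty||_{L^2}$.

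The first summand is the delicate one, because $v_{a_i}$ lives on the lattice while $v_\infty$ is a continuum field, and only $f_{a_i}v_{a_i}$ is known to converge strongly to $v_\infty$. Strong convergence gives $||f_{a_i}v_{a_i}||_{L^2}\to ||v_\infty||_{L^2}$, so the task reduces to comparing $||f_{a_i}v_{a_i}||_{L^2}$ with $||v_{a_i}||_{L^2}$. I would write
\begin{equation*}
||f_{a_i}v_{a_i}||^2_{L^2}-||v_{a_i}||^2_{L^2}
=\langle v_{a_i}\,|\,(f_{a_i}^*f_{a_i}-{\rm id})\,v_{a_i}\rangle,
\end{equation*}
and bound the right-hand side by Cauchy--Schwarz together with Proposition~\ref{property:f*f}:
\begin{equation*}
|\langle v_{a_i}\,|\,(f_{a_i}^*f_{a_i}-{\rm id})\,v_{a_i}\rangle|
\le ||v_{a_i}||_{L^2}\,||(f_{a_i}^*f_{a_i}-{\rm id})\,v_{a_i}||_{L^2}
\le ||v_{a_i}||_{L^2}\,\sqrt{C\,a_i}\,||v_{a_i}||_{L^2_1}.
\end{equation*}

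Here I invoke two facts established in Sec.~\ref{sec:assumptions}: that $||v_{a_i}||_{L^2}\le 1$ by normalization, and that $||v_{a_i}||_{L^2_1}$ is uniformly bounded in $i$ (this is precisely where the a priori estimate of Theorem~\ref{a priori estimate} and the boundedness of $f_{a_i}^*$ from Proposition~\ref{property:fabound} were used to control $v_{a_i}$). With both factors bounded, the $\sqrt{C\,a_i}\to 0$ prefactor forces $||f_{a_i}v_{a_i}||^2_{L^2}-||v_{a_i}||^2_{L^2}\to 0$, whence $||v_{a_i}||^2_{L^2}\to ||v_\infty||^2_{L^2}$. Combining the two limits and passing to the limit in the normalization identity then yields $||v_\infty||^2_{L^2}+||w_\infty||^2_{L^2}=1$.

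The main obstacle is exactly the asymmetry of the setup: the naive hope that strong convergence of $f_{a_i}v_{a_i}$ already pins down $||v_{a_i}||$ fails because $f_{a_i}$ is not isometric, so one cannot pass from $||f_{a_i}v_{a_i}||$ to $||v_{a_i}||$ for free. This gap is closed only because Proposition~\ref{property:f*f} makes $f_{a_i}^*f_{a_i}$ asymptotically the identity with an explicit $O(a_i)$ rate, and that rate is precisely what is annihilated by the uniform $L^2_1$ bound on $v_{a_i}$. Everything else reduces to continuity of the norm under strong convergence.
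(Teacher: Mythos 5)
Your proposal is correct and follows essentially the same route as the paper: continuity of the norm under the strong $L^2$ convergence of $w_{a_i}$ and $f_{a_i}v_{a_i}$, plus Proposition~\ref{property:f*f} together with the uniform $L^2_1$ bound to show $\bigl|\,||f_{a_i}v_{a_i}||^2_{L^2}-||v_{a_i}||^2_{L^2}\bigr|\to 0$. The only difference is that you spell out the intermediate identity $||f_{a_i}v_{a_i}||^2_{L^2}-||v_{a_i}||^2_{L^2}=\langle v_{a_i}\,|\,(f_{a_i}^*f_{a_i}-{\rm id})v_{a_i}\rangle$ and the Cauchy--Schwarz step, which the paper leaves implicit.
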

\begin{proof}
  Under the assumptions in Sec.~\ref{sec:assumptions},
$$|| v_{a_i}||_{L^2}^2 + || w_{a_i}||_{L^2}^2 =1.$$
  From the strong convergence of
  $w_{a_i} \to w_\infty$ in $L^2$, we have$|| w_{a_i} ||^2_{L^2} \to ||w_\infty ||_{L^2}^2$.
  Since $f_{a_i} v_{a_i}$ is uniformly finite in $L^2_1$ and from the Proposition~\ref{property:f*f}
  we observe
$$
|  ||f_{a_i} v_{a_i} ||_{L^2}^2  - || v_{a_i}||_{L^2}^2  \to 0
$$
and from the strong convergence of $f_{a_i}v_{a_i} \to v_\infty$ in $L^2$,
$$
||f_{a_i} v_{a_i} ||_{L^2}^2 \to || v_\infty ||^2
$$
holds.
\end{proof}

%\subsection{Last step of the proof}

The proof is achieved by contradiction.

From Lemma~\ref{weak convergence of equation}
the vector $(v_\infty, w_\infty)^T$ in $L^2_1(E) \otimes \C^2$
is an element of the  kernel of the operator
$$
\left(
    \begin{array}{cc}
        -\gamma D^{m_\infty} & -t_\infty \\
       t_\infty   &  \gamma D^{m_\infty} 
    \end{array}
  \right)=
\left(
    \begin{array}{cc}
        -\gamma D_{T^n} & 0 \\
      0  & \gamma D_{T^n}
    \end{array}
  \right)
  +
   \left(
    \begin{array}{cc}
       -m_\infty \gamma  & 0 \\
      0  & m_\infty \gamma
    \end{array}
  \right)
  +
 \left(
    \begin{array}{cc}
       0  & - t_\infty \\
       t_\infty  & 0
    \end{array}
  \right),
$$
whose three terms all anticommute among one another. 

From a general argument that solutions of a smooth elliptic operator are smooth,
$(v_\infty, w_\infty)^T \in C^\infty(E) \otimes \C^2$.
On the solution, the above operator squared gives zero, too.
Since
$$
\left(
    \begin{array}{cc}
        -\gamma D^{m_\infty} & -t_\infty \\
       t_\infty   &  \gamma D^{m_\infty} 
    \end{array}
  \right) ^2=
\gamma D_{T^n}^2 \otimes
\left(
    \begin{array}{cc}
        {\rm id} & 0 \\
      0  & {\rm id}
    \end{array}
  \right)
  +
  m_\infty^2 \otimes
   \left(
    \begin{array}{cc}
       {\rm id}  & 0 \\
      0  & {\rm id}
    \end{array}
  \right)
  + t_\infty^2 \otimes
 \left(
    \begin{array}{cc}
       {\rm id}  & 0  \\
      0 & {\rm id}
    \end{array}
  \right)
$$
we have
\begin{eqnarray*}
0&=& (v_\infty,w_\infty) \left(
    \begin{array}{cc}
        -\gamma D^{m_\infty} & -t_\infty \\
       t_\infty   &  \gamma D^{m_\infty} 
    \end{array}
  \right) ^2 
  \left(
    \begin{array}{c}
       v_\infty \\
      w_\infty
    \end{array}
  \right)
\\ 
 &=&|| \gamma D_{T^n} v_\infty ||_{L^2}^2 +|| \gamma D_{T^n} w_\infty ||_{L^2}^2 + (m_\infty^2 + t_\infty^2) (||v_\infty||_{L^2}^2+||w_\infty||_{L^2}^2)\\
&=& || \gamma D_{T^n} v_\infty ||_{L^2}^2 +|| \gamma D_{T^n} w_\infty ||_{L^2}^2 + (t_\infty^2 + m_\infty^2),
\end{eqnarray*}
where we have used Lemma~\ref{limit of norm}.
This requires that $t_\infty=0$ and $m_\infty=0$, which contradicts the assumption
$(m_\infty, t_\infty)\neq (0,0)$, and proves Theorem~\ref{goal}, 
from which Theorems~\ref{goalgoal} and \ref{goalgoalgoal}  immediately follow.

\section{Relation to the index of the overlap Dirac operator}
\label{eq:relationtoov}

In lattice gauge theory, it has been known in even dimensions that the index
can be formulated by the overlap Dirac operator \cite{Neuberger:1997fp}
in the same manner as the Fredholm index.
In this section, we compare our new formulation with the
overlap Dirac index, and discuss the consistency.

The overlap Dirac operator is defined by
\[
 D_{\rm ov} := \frac{1}{a}\left[{\rm id}+\gamma {\rm sgn}(H_W(-M))\right],
\]
where $H_W(m)$ is the Hermitian massive Wilson Dirac operator
$H_W(m)= \gamma(D_W+m)$ and we have set its mass to a negatively
large value $-M=-1/a$.

From the definition, it is easy to see that 
$D_{\rm ov}$ satisfies the Ginsparg-Wilson relation \cite{Ginsparg:1981bj},
and a ``modified'' chiral symmetry \cite{Luscher:1998pqa} 
with the relation
\[
 \gamma D_{\rm ov} +  D_{\rm ov}\gamma ({\rm id}-aD_{\rm ov}) = 0,
\]
which corresponds to the anticommutation relation of the continuum Dirac operator.
%\textcolor{red}{
%See Appendix~\ref{app:GWrel} for a relation to 
%Karoubi's formulation of the $K$ groups.
%(correct?)}

The index is defined by  a modified $\Z_2$ grading operator
\[
 \Gamma := \gamma({\rm id}-aD_{\rm ov}/2) = \frac{\gamma}{2}-\frac{1}{2}{\rm sgn}(H_W(-M)).
\]
Since $\Gamma$ anticommutes with $H_{\rm ov}=\gamma D_{\rm ov}$,
every non-zero eigenmode $\phi_i$ of $H_{\rm ov}$ 
satisfies $\langle \phi_i| \Gamma \phi_i \rangle=0$,
including the case $\Gamma \phi_i =0$ (when $D_{\rm ov}\phi_i = (2/a)\phi_i $), 
which eliminates the doubler contributions.
Then the index
\[
 {\rm Tr}\Gamma = {\rm Tr}\Gamma|_{{\rm Ker}D_{\rm ov}},
\]
converges to the standard Fredholm index in the continuum theory \cite{Hasenfratz:1998ri,Neuberger:1997fp}, 
where the factor $({\rm id}-aD_{\rm ov}/2)$ plays a similar role to the
heat-kernel regulator of the trace in the conventional analysis.
It is interesting to note that $D_{\rm ov}$ is defined by
a unitary operator $u=\gamma {\rm sgn}(H_W(-M))$, which
may be useful to consider this index as an analog of
Karoubi's formulation of the $K$ group \cite{Karoubi}.
See Appendix~\ref{Karoubi}.

Noting that
$
{\rm Tr}\gamma=0
$
holds in the finite lattice Hilbert space,
the index can be expressed by
\[
  {\rm Tr}\Gamma = -\frac{1}{2}{\rm Tr} \;{\rm sgn}[H_W(-M)] = -\frac{1}{2}\eta(H_W(-M)),
  \]
  where the right-hand side is nothing but that of Theorem \ref{goalgoalgoal},
which equals to the spectral flow of
  the Hermitian Wilson Dirac operator $H_W(m)$ from $m=-M$ to $m=M$.
%which is nothing but the $\eta$ invariant of the negatively massive
%Wilson Dirac operator $H_W= \gamma(D_W-M)$.
%For sufficiently small $a$, the positively massive Dirac operator
%$H_W= \gamma(D_W+M)$ is known to give a trivial index of $\Gamma$
%and the $\eta$ invariant is still zero.
Namely, our formulation of the lattice Dirac operator index in this work is
consistent with the overlap Dirac index.

This equivalence between the overlap Dirac index
and the spectral flow of the massive Wilson Dirac operator
has been known in physics
\cite{Adams:1998eg,Kikukawa:1998pd,Luscher:1998kn,Fujikawa:1998if,Suzuki:1998yz}
but its mathematical relevance was not rigorously discussed.
In this work, we have given a mathematical role to
the massive Wilson Dirac operator and its Hilbert space on the lattice 
as an element of the $K^1$ group. 
In the $K^1$ group, unlike the standard definition in the $K^0$ group,
the $\Z_2$ grading operator 
or chiral symmetry play no role but 
the Dirac operator spectrum is still able to 
describe the index through the 
suspension isomorphism to the $K^0$ group.

\section{Summary and discussion}
\label{sec:summary}

In this work, we have shown that the massive Wilson Dirac operator on a lattice
can be treated as a mathematically well-defined object.
Taking its mass term as the base space of the Hilbert bundle,
we have proved that the one parameter family of the continuum
Dirac operator on a flat torus $T^n$
and the lattice approximation 
belong to the same equivalence
class as an element of the $K^1$ group, 
when the lattice spacing is sufficiently small.
This has lead to the equality between the index of 
the original continuum massless Dirac operator and
the $\eta$ invariant of the Wilson Dirac operator
with a negative mass.

Our formulation does not require the  $\mathbf{Z}_2$ grading %Ginsparg-Wilson relation
or the chirality operator to evaluate the index,
since the $K^1$ group does not require the chiral symmetry from the beginning.
For the proof, we have directly compared the continuum
and lattice Dirac operators without passing through the geometric index.
We expect that this physicist-friendly expression using the $K^1$ group
has wider applications
% such as to the case with boundary and 
%mod-two version of the index, for example,
than the standard Fredholm-like formulation of the index.
Here we summarize again our outlook in the two possible directions.

The first possible application is 
the case with higher symmetries.
It will be straightforward to apply our formulation
to the Hilbert space having real or quaternion structures.
The corresponding $KO$ or $KSp$ groups will be defined 
in a similar way as discussed in Sec.~\ref{sec:K-applications}.
%$In particular, $KO^d$ with a general degree $d$ will be
%formulated in a straightforward manner.
It will also be interesting to consider 
the case with a group acting on the base manifold,
the case with symmetries with anti-linear operation, and
the family version of the index.

The second application is 
the case with boundary.
In continuum theory some of the authors and their collaborators
have shown in
the previous works \cite{Fukaya:2017tsq,Fukaya:2019qlf,Fukaya:2020tjk,Fukaya:2021sea}
that the spectral flow of a Dirac operator with a position-dependent mass term
can describe the index of the massless Dirac operator
on a manifold with boundary.
The generalization is nontrivial both physically and mathematically 
in that the boundary of the target manifold is capped to form a closed manifold,
and the domain-wall of the mass term is put instead of the original boundary.
The domain-wall is given by the sign function $\epsilon$ so that
the extended part has a positive mass, while the negative mass is assigned
for the original manifold region.
%the original manifold region is negatively massive. 
The spectral flow from a trivially massive Dirac operator (with a positive mass $+M$)
to that for the domain-wall fermion
was proven to be equal to the original index
with the APS boundary condition.
In particular, in even dimensions, the spectral flow can be
given by difference of the $\eta$ invariant of
the massive Dirac operator at $m=\pm M$:
\[
{\rm Ind}(\gamma D)|_{\rm APS} = -\left[\frac{1}{2}\eta(\gamma (D+M \epsilon) )
-\frac{1}{2}\eta(\gamma (D+M))\right],
\]
where $D$ is the original massless Dirac operator.

We call it a ``physicist-friendly'' formulation
of the index since it does not require any unphysical boundary condition 
nor exact chiral symmetry,
in contrast to the original Fredholm index
defined with a unphysical APS boundary condition to maintain the chiral symmetry.
It will be interesting to study
the lattice version of the extension to the
case with boundary. 
We refer the readers to \cite{Fukaya:2019myi} in which 
a promising result was already obtained in a perturbative analysis.

\begin{acknowledgements}
We thank Mayuko~Yamashita for her significant contribution at the early stage of this work.
We thank Sinya~Aoki, Yoshio~Kikukawa, and Yosuke~Kubota for useful discussions.
This work was partly supported by JSPS KAKENHI 
Grant Numbers JP21K03222, JP21K03574, JP22H01219, JP23K03387, JP23K22490, JP23KJ1459.
\end{acknowledgements}

\appendix

%%%%%%%%%%%%%%%%%%%%%%%%%%%%%%%%%%%%%%%%%%%%%%%%%%%%%%%%%%%%%%%%%%%%%%%%%%%%%%%%%%%%%%%%%%%%%%%%
\section{Generalizations: $KO$, equivariant, and family versions}
\label{app:generalizations}
%%%%%%%%%%%%%%%%%%%%%%%%%%%%%%%%%%%%%%%%%%%%%%%%%%%%%%%%%%%%%%%%%%%%%%%%%%%%%%%%%%%%%%%%%%%%%%%

We explain how to generalize our argument to various settings.

To formulate our main theorem Theorem 3.7 we fixed our setup in Sections 3.1, 3.2 and 3.3.
The setup amounts to the following data $(E,\gamma,  \{c_i\}_{1 \leq i \leq n}, U)$, which we will call
``$K^0({\rm pt})$ version'' because Theorem 3.7 is an equality in $K^1(D^1,S^0) \cong K^0({\rm pt})$.

\paragraph{\bf Data for $K^0({\rm pt})$ version}
\begin{itemize}
\item[1]
$E$ is a smooth complex vector bundle over $T^n={\bf R}^n/{\bf Z}^n$ equipped with inner product.
\item[2]
$\gamma$ is a self-adjoint operator on $E$ satisfying $\gamma^2=1$.
\item[3]
$ \{c_k\}_{1 \leq k \leq n}$ are
self-adjoint operators on $E$  satisfying $\{ \gamma,c_k \}=0$, 
$c_k^2=1$ and $\{ c_k, c_l \}=0 (k \neq l)$.
\item[4]
$U$ is a smooth section of ${\rm Hom}(\pi_2^* E, \pi_1^* E)$ on an open neighborhood 
$W$ of the diagonal subset of $T^n \times T^n$  satisfying $U(x,x)={\rm id}$ and $U(y,x)=U(x,y)^{-1}$
if $(x,y),(y,x) \in W$. We assume that $U(x,y): E_y \to E_x$ preserves the inner products.
\end{itemize}
From this data, for sufficiently small $a=1/N$ $(N \in {\bf N}$), we have the associated link variables $\{ U_i\}_{i=1,\ldots, n}$ on the lattice $T^n_a=a{\bf Z}^n/{\bf Z}^n \subset T^n$, and the associated
continuum covariant derivatives  $\{\nabla_i^{\rm cont}\}_{1 \leq i \leq n}$ at the same time.

We can generalize or  modify the setting to various directions. For instance, we have:

\paragraph{\bf Data for $KO^{p,q}({\rm pt})$ version}  ($p,q \geq 0$)
\begin{itemize}
\item[1]
$E$ is a smooth real vector bundle over $T^n={\bf R}^n/{\bf Z}^n$ equipped with inner product.
\item[2]
$\{ \epsilon'_j\}_{0 \leq j \leq p}$ are self-adjoint operators on $E$ and
$\{ e'_i \}_{1 \leq i \leq q}$ are skew-adjoint operators on $E$ satisfying
${\epsilon'_j}^2=1$ and ${e'_j}^2=-1$. We assume that they all anticommute each other.
\item[3]
$ \{c_k\}_{1 \leq k \leq n}$ are
self-adjoint operators on $E$  satisfying $\{ c,c_k \}=0$ for $c \in \{ \epsilon'_j\}_{0 \leq j \leq p}
\cup \{ e'_i \}_{1 \leq i \leq q}$, 
$c_k^2=1$ and $\{ c_k, c_l \}=0\;(k \neq l)$.
\item[4]
$U$ is a smooth section of ${\rm Hom}(\pi_2^* E, \pi_1^* E)$ on an open neighborhood 
$W$ of the diagonal subset of $T^n \times T^n$  satisfying $U(x,x)={\rm id}$ and $U(y,x)=U(x,y)^{-1}$
if $(x,y),(y,x) \in W$. We assume that $U(x,y): E_y \to E_x$ preserves the inner products.
\end{itemize}
We put $\gamma:= \epsilon'_0$ and 
$$ 
e_j:=\gamma \epsilon'_j  \, (1\leq j \leq p), \quad \epsilon_i:=\gamma e'_i \, (1 \leq i \leq q).
$$
Note that $\gamma^2=1$, $e_j^2=-1$, $\epsilon_i^2=1$ and they all anticommute each other
\footnote{ The  conversion from  $\epsilon'_j, e'_i$ to $e_j, \epsilon_i$ looks artificial and technical,
which reflects a notational difference between mathematics and physics.
%. The reason why we need  this technical conversion is explained as follows.
In this paper we use skew-adjoint Dirac operators following the standard convention of lattice gauge theory while we use self-adjoint Fredholm operators to formulate the $K$ groups following the standard convention
in mathematics. %This discrepancy is the reason. 
If we used self-adjoint Dirac operator  $\sum c'_k \nabla_k$ constructed from mutually anticommuting skew-adjoint operators $\{ c'_k \}$ satisfying ${c'_k}^2=-1$, then we could directly use our formulation of the $KO$ groups in terms of the data  $({\cal H}^{\bf R}, h, \epsilon_i, e_j)$. Or  if  we used the formulation of the $KO$ group by the data $({\cal H}^{\bf R}, s, \epsilon'_j, e'_i)$ with a skew adjoint Fredholm operator $s$, 
then we could directly use our skew-adjoint Dirac operator $D_{T^n}$ 
and the Wilson Dirac operator $D_{W,q}$ without  $\gamma$ factor.
}.
We write ${\cal H}^{\rm R}$ for the $L^2$-sections of $E$, and
${\cal H}_a^{\rm R}$ for the sections of $E_a:=E|_{T_a^n}$.
We define $D_{T^n}$ and $D_{W,a}$ by the same formula in the $K^0({\rm pt})$ version. 
Then  both $\gamma D_{T^n}$ and $\gamma D_{W,a}$ anticommute with each of $\{ \epsilon_i\}_{1 \leq i \leq q}$ and 
$\{ e_j \}_{1 \leq j \leq p}$.
Then our whole arguments go through for this real case with the extra Clifford action of $\{ \epsilon_i\}_{1 \leq i \leq q}$ and 
$\{ e_j \}_{1 \leq j \leq p}$, and we obtain:

\begin{theorem} {\rm ($KO^{p,q}({\rm pt})$ version for $p,q \geq 0$)} \label{theorem for p,q}

For a sufficiently small lattice spacing $a=1/N$, we have the equality
$$
[(p^*{\cal H}^{\rm R}), \gamma (D_{T^n}+m), \epsilon_1,\ldots,\epsilon_q, e_1,\ldots, e_p ] =
[(p^*{\cal H}_a^{\rm R}), \gamma (D_{W,a}+m),\epsilon_1,\ldots,\epsilon_q, e_1,\ldots, e_p ]  
$$
in the group $KO^{p,q-1}(D^1,S^0)\cong KO^{p,q}({\rm pt})$.

\end{theorem}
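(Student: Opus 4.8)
The plan is to reduce Theorem~\ref{theorem for p,q} to the real, Clifford-equivariant analogue of Theorem~\ref{goal}, and then to observe that the analytic core of the proof in Section~\ref{eq:main} is insensitive to the added structure. Concretely, by the $KO$ suspension isomorphism (Theorem~\ref{suspensionKO}) the term $m\gamma=m\epsilon_0$ plays the role of the Bott/suspension coordinate, so proving the displayed equality in $KO^{p,q-1}(D^1,S^0)$ amounts to exhibiting, in the sense of the $KO$-version of Definition~\ref{def of sim}, a continuous homotopy of self-adjoint Fredholm operators on $\mathcal{H}_a^{\mathbf R}\oplus\mathcal{H}^{\mathbf R}$ that interpolates between $-\gamma D^m_{W,a}\oplus\gamma D^m$ and an operator with everywhere-trivial kernel, stays invertible on the staple region $A\times[0,1]\cup X\times\{1\}$, and \emph{anticommutes with all the Clifford generators throughout}. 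The homotopy is precisely $H_{\rm com}(m,t)$ from Theorem~\ref{goal}, now read over the real Hilbert bundles, with the remaining generators $\{\epsilon_i\}_{1\le i\le q}$ and $\{e_j\}_{1\le j\le p}$ acting on the doubled space with opposite signs on the two summands, i.e. as $-\epsilon_i\oplus\epsilon_i$ and $-e_j\oplus e_j$, matching the sign flip already built into the grading $-\gamma\oplus\gamma$.

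The one genuinely new verification is that $H_{\rm com}(m,t)$ anticommutes with these generators for every $t$. The diagonal Dirac and mass terms cause no trouble: since $\{\gamma,c_k\}=0$ and each $c_k$ commutes with $\epsilon_i$ and $e_j$ (a direct consequence of the Clifford relations of the $KO^{p,q}$ data together with $\gamma=\epsilon_0$), both $\gamma D_{T^n}$ and $\gamma D_{W,a}$ anticommute with every $\epsilon_i$ and $e_j$, and so does the mass term $m(-\gamma\oplus\gamma)$. The decisive point concerns the off-diagonal coupling built from $f_a$ and $f_a^*$: because $f_a$ is assembled from the cut-off function $\rho_a$ and the link variables $U(x,y)$, and the latter are \emph{assumed to commute with every symmetry operator on $E$}, the map $f_a$ (and hence $f_a^*$) commutes with each $\epsilon_i$ and $e_j$. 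A short computation then shows that this coupling anticommutes with the opposite-sign generators $-\epsilon_i\oplus\epsilon_i$ and $-e_j\oplus e_j$, exactly as the grading $-\gamma\oplus\gamma$ does in the complex case. Hence $H_{\rm com}(m,t)$ is a legitimate family in the $KO^{p,q-1}$ category for all $t\in[0,1]$.

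It then remains to prove invertibility of $H_{\rm com}(m,t)$ on the staple region for sufficiently small $a$, and here I would invoke the existing argument verbatim. The boundedness and convergence statements for $f_a,f_a^*$ (Propositions~\ref{property:fabound}--\ref{property:DWweak}) and the a priori estimate (Theorem~\ref{a priori estimate}) are norm inequalities that hold identically over the real Hilbert spaces, the real structure affecting none of the constants, so the proof-by-contradiction of Section~\ref{sec:assumptions} carries over unchanged: a putative sequence of kernel elements yields a weak limit satisfying the continuum equation (Lemma~\ref{weak convergence of equation}) of unit norm (Lemma~\ref{limit of norm}), and squaring the limiting operator produces $\|\gamma D_{T^n}v_\infty\|^2+\|\gamma D_{T^n}w_\infty\|^2+(m_\infty^2+t_\infty^2)=0$, forcing $(m_\infty,t_\infty)=(0,0)$ and contradicting the normalization. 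The extra generators do not enter this scalar identity, since the square of the limiting combined operator is still $\gamma D_{T^n}^2+m_\infty^2+t_\infty^2$ times the identity.

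I expect the only real obstacle to be bookkeeping rather than analysis: one must fix the sign conventions for how the Clifford generators act on the doubled Hilbert bundle so that the suspension generator $\gamma=\epsilon_0$ and the remaining generators are treated consistently, and confirm that the skew-adjoint generators $e_j$ (with $e_j^2=-1$) behave no differently from the $\epsilon_i$ in the anticommutation check. Once the equivariance of $f_a$ is granted, no new estimate is required, and the conclusion follows from the $KO$-analogue of the implication Theorem~\ref{goal} $\Rightarrow$ Theorem~\ref{goalgoal}.
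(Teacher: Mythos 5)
Your proposal is correct and follows essentially the same route as the paper: Appendix~A converts the raw data to the generators $\epsilon_i=\gamma e_i'$, $e_j=\gamma\epsilon_j'$, notes that $\gamma D_{T^n}$ and $\gamma D_{W,a}$ anticommute with all of them, and then declares that the whole argument of Theorem~\ref{goal} goes through with this extra Clifford action. Your additional explicit checks (that $f_a$, $f_a^*$ commute with the generators via the equivariance assumption on $U(x,y)$, so the off-diagonal coupling anticommutes with $-\epsilon_i\oplus\epsilon_i$ and $-e_j\oplus e_j$, and that the contradiction argument is purely scalar) are exactly the points the paper leaves implicit in the phrase ``our whole arguments go through.''
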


In the above statement we used the suspension isomorphism.

Using the construction of Lemma 2.36, we have a simple bijective correspondence between  the data for the $KO^{p,q}({\rm pt})$ version and that for the $KO^{p+1,q+1}({\rm pt})$ version in the following way.
If $(E, \{\epsilon'_j \}_{0 \leq j \leq p}, \{e'_i\}_{1\leq i \leq q}, \{c_k\}_{1 \leq k \leq n},U)$ is a data
for the $KO^{p,q}({\rm pt})$ version, then the next  $(\hat{E}, \{{\hat{\epsilon}}'_j \}_{0 \leq j \leq p+1}, \{{\hat{e}}'_i\}_{1\leq i \leq q+1}, \{\hat{c}_k\}_{1 \leq k \leq n},U)$ is a data
for the $KO^{p+1,q+1}({\rm pt})$ version.
$$
\hat{E}:=E \otimes {\bf R}^2,\,
{\hat{\epsilon}}'_{p+1} :=  {\rm id}_E \otimes 
\begin{pmatrix}
0 &  1 \\
 1 & 0 \\
\end{pmatrix}
,\,
{\hat{e}}'_{q+1}:=:{\rm id}_E \otimes 
\begin{pmatrix}
0 & - 1 \\
 1 & 0 \\
\end{pmatrix},\,
$$
$$
{\hat{\epsilon}}'_j := \epsilon'_j \otimes 
\begin{pmatrix}
 1 & 0 \\
0 & - 1 \\
\end{pmatrix},\,
{\hat{e}}'_i:= e'_i \otimes 
\begin{pmatrix}
 1 & 0 \\
0 & - 1 \\
\end{pmatrix},\,
\hat{c}_k:= c_k \otimes 
\begin{pmatrix}
 1 & 0 \\
0 & - 1 \\
\end{pmatrix}.
$$

The converse correspondence is given by taking the invariant part of the action of $\epsilon'_{p+1} e'_{q+1}$.
It is straightforward to see that 
the two mutually corresponding data give two mutually corresponding  equalities in Theorem~\ref{theorem for p,q}:
one in  $KO^{p,q}({\rm pt})$ and the other in $KO^{p+1,q+1}({\rm id})$.

We remark three consequences of this  simple algebraic correspondences.
\begin{remark}
\begin{enumerate}
\item
For any integer $n$ we can regard the ``$KO^{p,q}({\rm pt})$ versions''  for $p,q \geq 0$ satisfying $p-q=n$
as a single  ``$KO^n(\rm pt)$ version''.
\item
The $KO^{p,-1}({\rm pt})$ version, which is without any $\epsilon'_j$,   can be treated  via the $KO^{p+1,0}({\rm pt})$ version.
\item
The $K^{1}({\rm pt})$ version, whose data is $(E,  \{ c_k\}_{1 \leq k \leq n})$ without any symmetries $\epsilon'_j, e'_i$,  is treated as the complex version of the $KO^{0,-1}({\rm pt})$ version.
\end{enumerate}
\end{remark}

Using quaternionic vector bundle and quaternionic Hilbert bundle, we have the $KSp^{p,q}$ groups or the $KSp^n$ groups for $n=p-q$. 
``The $KSp$ version'' and the equality of indices are formulated and given just by replacing real vector bundle with quaternionic vector bundle, and real Hilbert bundle with quaternionic Hilbert bundle.
Since there exists a simple algebraic isomorphism $KSp^n\cong KO^{n+4}$, the formulation and statement could be reduced to
the $KO$ versions.

As generalizations of these, when an isometric involution on $T^n$ is given, we could also use two variants : Atiyah's $KR$ theory and Dupont's $KQ$ theory \cite{MR0206940,MR0254839}.

Similarly we have an equivariant version and family version for the data and the equality of indices.
We can combine  various versions to produce more variants.
Since the statements of the theorem about the equality are quite parallel
to the other versions, we just give what kind of data are necessary to formulate the equivariant version and the family version.

Let $G$ be  a finite group acting on the flat torus $T^n$.  Suppose  $G$ preserves the lattices $T^n_a$, as  sets, for infinitely many $a=1/N$. For example, the group generated by the rotation around a coordinate axis by angle $\pi/2$, or
some nonsymmorphic space group actions satisfy this condition.

\paragraph{\bf Data for $K_G^0({\rm pt})$ version} {\rm ($G$ preserves infinitely many $T^n_a$'s)}
\begin{itemize}
\item[1]
$E$ is a $G$-equivariant smooth complex vector bundle over $T^n={\bf R}^n/{\bf Z}^n$ equipped with inner product.
\item[2]
$\gamma$ is a self-adjoint $G$-equivariant operator on $E$ satisfying $\gamma^2=1$.
\item[3]
$ \{c_k\}_{1 \leq k \leq n}$ are
self-adjoint $G$-equivariant operators on $E$  satisfying $\{ \gamma,c_k \}=0$, 
$c_k^2=1$ and $\{ c_k, c_l \}=0 (k \neq l)$
\item[4]
$U$ is a $G$-invariant smooth section of ${\rm Hom}(\pi_2^* E, \pi_1^* E)$ on an open neighborhood 
$W$ of the diagonal subset of $T^n \times T^n$  satisfying $U(x,x)={\rm id}$ and $U(y,x)=U(x,y)^{-1}$
if $(x,y),(y,x) \in W$. We assume that $U(x,y): E_y \to E_x$ preserves the inner products.
\end{itemize}

Let $\Omega$ be a compact Hausdorff space.
Suppose we have a $T^n$-bundle ${\cal T}$ over $\Omega$ whose structure group is $G$ satisfying
the condition for the $K_G^0({\rm pt})$ version. We write $T^n_\omega$ for the fiber at $\omega \in \Omega$.

If we have a family of Data for $K^0({\rm pt})$ on the each fiber $T^n_\omega$ which vary continuously with respect to $\Omega$,
then we have the $K^0(\Omega)$ version.

\paragraph{\bf Data for $K^0(\Omega)$ version} {\rm  ($\Omega$ is  compact Hausdorff) }
\begin{itemize}
\item[1]
$E$ is a continuous complex vector bundle over the total space ${\cal T}$ equipped with inner product.
\item[2]
$\gamma$ is a self-adjoint operator on $E$ satisfying $\gamma^2=1$.
\item[3]
$ \{c_k\}_{1 \leq k \leq n}$ are
self-adjoint  operators on $E$  satisfying $\{ \gamma,c_k \}=0$, 
$c_k^2=1$ and $\{ c_k, c_l \}=0 (k \neq l)$
\item[4]
$U$ is a section of ${\rm Hom}(\pi_2^* E, \pi_1^* E)$ on an open neighborhood 
$W$ of the diagonal subset of ${\cal T} \times_{\Omega} {\cal T}$  satisfying $U(x,x)={\rm id}$ and $U(y,x)=U(x,y)^{-1}$
if $(x,y),(y,x) \in W_\omega$. We assume that $U(x,y): E_y \to E_x$ preserves the inner products.
\item[5]
We assume that all the data below vary smoothly along each $T^n_\omega$\footnote{See, for example, 
\cite{MR0279833} for the details of the formulation. }.
\end{itemize}
 
 As for the equivariant version and the family version,
 we have not pursued the full scope of our strategy
 due to the limitations given below.
 %as partly explained in the next remarks.
  
 \begin{remark}
 \begin{enumerate}
 \item
Since we fix the standard flat metric on $T^n$ in this paper, 
our argument does not cover the cases for finite subgroups  of $GL(n,{\bf Z})$ which does not
preserve the standard flat metric.
\item
Our argument does not cover the cases for fiber bundles ${\cal T} \to \Omega$
whose structure group $G$ does not preserve the standard flat metric.
 \end{enumerate}
 \end{remark}

%%%%%%%%%%%%%%%%%%%%%%%%%%%%%%%%%%%%%%%%%%%%%%%%%%%%%%%%%%%%%%%%%%%%%%%%%%%%%%%%%%%%%%%%%%%%%%%%
%%%%%%%%%%%%%%%%%%%%%%%%%%%%%%%%%%%%%%%%%%%%%%%%%%%%%%%%%%%%%%%%%%%%%%%%%%%%%%%%%%%%%%%%%%%%%%%%
%%%%%%%%%%%%%%%%%%%%%%%%%%%%%%%%%%%%%%%%%%%%%%%%%%%%%%%%%%%%%%%%%%%%%%%%%%%%%%%%%%%%%%%%%%%%%%%

\section{Karoubi's formulation of $KO$ groups}\label{Karoubi}

We briefly discuss the relation between two formulations of the $KO$ groups: one is the formulation we use in this paper,
and the other is Karoubi's formulation \cite{Karoubi}.
There are at least two possible choices of the topology of the space of unbounded self-adjoint operators: Riesz topology and the gap topology. 
These two are loosely related to these two possible formulations of the $KO$ groups.
We also explain the Ginsparg-Wilson relation, which is known as a symmetry in lattice gauge theory, is understood in the context of Karoubi's formulation.

\subsection{Relation between the two formulations of the $KO$ groups}\label{Relation Karoubi}

Let $n$ be an integer, $X$  a compact Hausdorff space and  $A$ a closed subset of $X$. 
A standard strategy to define the group $KO^n(X,A)$ is as follows \cite{Karoubi}.
For a pair of non-negative\footnote{The formulation $(h)$ below is actually valid for $p \geq 0$ and $q \geq -1$,
while the formulation $(\epsilon)$  for $p \geq  0$ and $q \geq 0$,
and the formulation $(u)$ are for $p \geq -1$ and $q \geq 0$.} integers $p$ and $q$.
we first construct the groups $KO^{p,q}(X,A)$, 
which allow canonical functorial isomorphisms $KO^{p,q}(X,A) \stackrel{\cong}{\to} KO^{p+1,q+1}(X,A)$.
It implies that we have a well-defined group $KO^n(X,A)$ isomorphic to $KO^{p,q}(X,A)$ for $n=p-q$. 
Several equivalent constructions are known for the group $KO^{p,q}(X,A)$.
Therefore we have several distinct descriptions of the elements of $KO^{p,q}(X,A)$. %\textcolor{red}{
It is easy to extend this argument to the $K$ group by ignoring the reality.
%}

We here recall two of them, which we denote the formulations $(h)$ and $(\epsilon)$, respectively.
The formulation $(h)$ is the one we have employed in this paper. 
The formulation $(\epsilon)$ is due to Karoubi \cite{Karoubi}.
We also give a slightly modified version of  $(\epsilon)$, 
which we call the formulation $(u)$.
%The formulations $(\epsilon)$ and $(u)$ are mutually transformed by an explicit algebraic relation, and
While the equivalence between the formulations $(\epsilon)$ and $(u)$ is apparent from their definitions,
the equivalence between the formulations $(h)$ and $(\epsilon)$ is less evident, though well known
(see \cite{Gomi} and its references, for example). 
In this Appendix we compare the data of the representative elements of $KO^{p,q}(X,A)$ 
for  the formulations $(h)$, $(\epsilon)$ and $(u)$,
but do not describe the equivalence relations among them.

In all the formulations below,  ${\cal H}^{\bf R}$ is a real Hilbert bundle over $X$.
In the formulations  ($h$) and ($\epsilon$), 
we use the orthogonal operators $\{ \epsilon_i\}_{0 \leq i \leq q}, \{e_j\}_{1 \leq j \leq p}$ on ${\cal H}^{\bf R}$ which are
mutually anticommuting  and satisfy
$\epsilon_i^2=1$ for every $i$ and $e_j^2=-1$ for every $j$.
Similarly in the formulation ($u$),
we use the operators $\{ \epsilon'_j\}_{1 \leq j \leq p},  \{e'_i\}_{0 \leq i \leq q}$  satisfying similar properties, e.g., 
${\epsilon'_j}^2=1$ and ${e'_i}^2=-1$.

\paragraph{Formulation ($h$)}
%\textcolor{red}{
%The data for a representative element}
An element of $KO^{p,q}(X,A)$ is represented by 
$$({\cal H}^{\bf R}, h, \epsilon_0, \epsilon_1 \ldots, \epsilon_q,  e_1,e_2, \ldots, e_p)$$ 
which satisfies:
\begin{enumerate}

 \item $h$ is a bounded, self-adjoint and Fredholm\footnote{Alternatively we can use the (stronger) property `` $1-h^2$ is compact'' }
 operator on ${\cal H}^{\bf R}$.
 
\item $chc^{-1}=-h$ for $c=\epsilon_i \, (0 \leq i \leq q)$ and $e_j \, (1 \leq j \leq p)$.
 
 \item ${\rm Ker} \,h_x=0$ for $x \in A$.
 
\end{enumerate}

\paragraph{Formulation ($\epsilon$)}
An element of $KO^{p,q}(X,A)$ is represented by 
%The data for a representative element of $KO^{p,q}(X,A)$ is 
$$({\cal H}^{\bf R}, (\epsilon, \epsilon_0), \epsilon_1 \ldots, \epsilon_q,  e_1,e_2, \ldots, e_p)$$ 
which satisfies:
\begin{enumerate}

 \item $\epsilon$ is a self-adjoint operator on ${\cal H}^{\bf R}$ satisfying $\epsilon^2=1$ such that   
 $\epsilon +\epsilon_0$ is Fredholm\footnote{Alternatively we can use the (stronger) property ``$\epsilon-\epsilon_0$ is compact''.}.

 \item $c \epsilon c^{-1}=-\epsilon$ for  $c=\epsilon_i\, (1\leq i \leq q)$ and $c=e_j \, (1 \leq j \leq  p)$
 
 \item ${\rm Ker} \,(\epsilon+ \epsilon_0)_x=0$ for $x \in A$.
 \end{enumerate}

\paragraph{Formulation ($u$)}

An element of $KO^{p,q}(X,A)$ is represented by 
%The data for a representative of $KO^{p,q}(X,A)$ is 
$$({\cal H}^{\bf R}, u, \epsilon'_0, \epsilon'_1 \ldots, \epsilon'_p,  e'_1,e'_2, \ldots, e'_q)$$
which satisfies:
\begin{enumerate}
\item
$u$  is an orthogonal operator on  ${\cal H}^{\bf R}$ such that
$u-1$ is Fredholm\footnote{Alternatively we can use the (stronger) properties ``$u+1$ compact''.}.
\item
$cuc^{-1}=u^{-1}$ for $c=\epsilon'_j \, (0 \leq j \leq p)$ and  $e'_i \, (1 \leq i \leq q)$
\item ${\rm Ker} \,(u-1)_x=0$ for $x \in A$.
\end{enumerate}

The equivalence between the formulations ($\epsilon$) and ($u$) is explicitly given by:
\begin{itemize}
\item  ($\epsilon$) $\Rightarrow$ ($u$):
$u=-\epsilon_0 \epsilon,\, \epsilon'_0=\epsilon_0$, 

$\epsilon'_i=\epsilon_0 e_i  (1\leq i \leq p), \, e'_j=\epsilon_0 \epsilon_j (1 \leq j \leq q)$
\item  ($u$) $\Rightarrow$ ($\epsilon$):
$\epsilon=-\epsilon_0 u, \epsilon_0=\epsilon'_0$,

 $e_i=\epsilon_0 \epsilon'_i  (1\leq i \leq p),  \epsilon_j=\epsilon_0 e_j' (1 \leq j \leq q)$
\end{itemize}
Note that, since $\epsilon+\epsilon_0=\epsilon_0(-u+1)$, the operator $\epsilon+ \epsilon_0$ is Fredholm
if and only if $u-1$ is Fredholm\footnote{Similarly $\epsilon-\epsilon_0$ is compact  if and only if $u+1$ is compact.}.

The correspondence from the data in $(h)$ to that in $(u)$ is given by
\footnote{
The spectrum of $\epsilon_0 h$ coincides with that of $i h$ with multiplicities for eigenvalues. We put an unitary operator $v=\frac{1}{\sqrt{2}}(1+i\epsilon_0) $, then $v \epsilon_0 h v^{-1}=ih$.
%Note that they satisty $(\epsilon_0 h)^2= -h^2 =(ih)^2$ and both anticommute with $\epsilon_0$, which imply the spectrum is symmetric under $\lambda \mapsto -\lambda$ with multiplicities for eigenvalues.
}
\footnote{
Fix a continuous function $\rho: i {\bf R} \to U(1)$ of the form $\rho(it)= \exp (\pi i\tau(t))$
for  a continuous non-decreasing odd function  $\tau:{\bf R} \to [-1,1]$ such that $\tau(t)$ is strictly increasing
in a neighborhood of $t=0$.
Then we have a map from the data in ($h$) to that in  $(u)$ by the relation $u:=\rho( \epsilon_0 h)$. 
In particular if we choose $\rho(it)=(1+it)/(1-it)$, then we have the above $u=(1+\epsilon_0 h)/(1-\epsilon_0 h)$. If we use the alternative property ``$1-h^2$ is compact'', instead of ``h is Fredholm'' in the formulation, then we use
$\tau$ that satisfies the extra conditions $\tau(t)=1$ for $t\geq 1$ and $\tau(t)=-1$ for $t \leq -1$, so that $u+1$ becomes compact. 
} 
$$u:=\frac{1+\epsilon_0 h}{1-\epsilon_0 h},\, 
 \epsilon'_0=\epsilon_0,\,
 \epsilon'_i=\epsilon_0 e_i\, (1\leq i \leq p), \, e'_j=\epsilon_0 \epsilon_j (1 \leq j \leq q),$$
which is not bijective.
The correspondence from the data in $(h)$ to that in ($\epsilon$) is given by
the above relation combined with the algebraic equivalence  between ($u$) and ($\epsilon$).

\subsection{Gap topology}\label{gap topology}

Fix a real Hilbert space ${\cal H}_0^{\bf R}$ and a self-adjoint operator $\epsilon_0$ on ${\cal H}_0^{\bf R}$ satisfying $\epsilon_0^2=1$.
Let ${\cal H}_0={\cal H}_0^{\bf R} \otimes {\bf C}$ be the complexification of ${\cal H}_0^{\bf R}$.
Let $\CF$ be the set of closed self-adjoint Fredholm operators on ${\cal H}_0$ with dense domain, 
which are not necessarily bounded.
Let $\UF$ be the set of unitary operators $u$ on ${\cal H}_0$ such that $u-1$ is Fredholm.
Let $\CF^{\epsilon_0}_{\bf R}$ be the subset of $\CF$ consisting of  self-adjoint operators $h$ on ${\cal H}_0^{\bf R}$ satisfying $\epsilon_0 h \epsilon_0^{-1}=-h$. 
Let $\UF^{\epsilon_0}_{\bf R}$ be the subset of $\UF$ consisting of orthogonal operators $u$ on ${\cal H}_{\bf R}$ satisfying $\epsilon_0 u \epsilon_0^{-1}=u^{-1}$. 

We write $\kappa:{\CF} \to {\UF}$ for the Cayley transform $\kappa(h)=(1+ i h)/(1- ih ) $.
We endow $\UF$ with the norm topology.  Recall that the gap topology of $\CF$ is the weakest topology of ${\CF}$ for  $\kappa$ to be continuous. We endow $\CF^{\epsilon_0}_{\bf R}$ with the restriction of the gap topology and simply call it the gap topology again.
The map $h \mapsto u$  in the previous section  is extended to unbounded operators and we have a map
$$
u: \CF^{\epsilon_0}_{\bf R} \to \UF^{\epsilon_0}_{\bf R}, \qquad h \mapsto u(h):=\frac{1+\epsilon_0 h}{1-\epsilon_0 h}.
$$

\begin{proposition}
The  weakest topology of $\CF^{\epsilon_0}_{\bf R} $ for $h \mapsto u(h)$ to be continuous is equal to the gap topology.
\end{proposition}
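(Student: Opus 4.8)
The plan is to show that, after reducing both maps to bounded operators, the Cayley transform $\kappa$ and the map $u$ are assembled from the \emph{same} pair of bounded pieces, so that each is a fixed norm-continuous function of the other; the equality of the two initial topologies then follows formally. Concretely, since a topology induced by a map is the initial topology, it suffices to prove two inclusions: that $\kappa$ is continuous for the topology induced by $u$, and that $u$ is continuous for the gap topology (the topology induced by $\kappa$). I would obtain both by exhibiting fixed norm-continuous maps $\Phi$ on $\UF^{\epsilon_0}_{\bf R}$ and $\Psi$ on $\UF$ such that $\kappa=\Phi\circ u$ and $u=\Psi\circ\kappa$; the first makes $\kappa$ continuous for the $u$-induced topology, the second makes $u$ continuous for the gap topology, and minimality of initial topologies yields equality.

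First I would pass from the two unitaries to resolvents by writing $\kappa(h)=2(1-ih)^{-1}-1$ and $u(h)=2(1-\epsilon_0 h)^{-1}-1$, so that continuity of either is equivalent to norm-continuity of the associated bounded resolvent. The defining relation $\epsilon_0 h\epsilon_0^{-1}=-h$ gives $\{\epsilon_0,h\}=0$, hence $(\epsilon_0 h)^2=-h^2=(ih)^2$ and $(1\mp\epsilon_0 h)(1\pm\epsilon_0 h)=1+h^2$. Setting $c:=(1+h^2)^{-1}$ and $b:=h(1+h^2)^{-1}$, both bounded, self-adjoint and real, with $\epsilon_0 c=c\epsilon_0$ and $\epsilon_0 b=-b\epsilon_0$, the spectral calculus gives
$$(1-ih)^{-1}=c+ib,\qquad (1-\epsilon_0 h)^{-1}=c+\epsilon_0 b,$$
so that $\kappa(h)=(2c-1)+2ib$ and $u(h)=(2c-1)+2\epsilon_0 b$. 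Thus both transforms encode exactly the same even part $2c-1$ and odd part $b$, merely repackaged with the scalar $i$ versus the fixed operator $\epsilon_0$.

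It then remains to recover $c$ and $b$ from each transform by a norm-continuous operation. For $u$ I would use $\epsilon_0 u(h)\epsilon_0=u(h)^{-1}$, which yields $2c-1=\tfrac12(u+u^{-1})$ and $2\epsilon_0 b=\tfrac12(u-u^{-1})$, hence $\kappa=\tfrac12(u+u^{-1})+\tfrac{i}{2}\epsilon_0(u-u^{-1})=:\Phi(u)$; for $\kappa$ I would use $\kappa(h)^*=\kappa(h)^{-1}=(2c-1)-2ib$, which yields $2c-1=\tfrac12(\kappa+\kappa^*)$ and $2b=\tfrac1{2i}(\kappa-\kappa^*)$, hence $u=\tfrac12(\kappa+\kappa^*)+\tfrac1{2i}\epsilon_0(\kappa-\kappa^*)=:\Psi(\kappa)$. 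Both $\Phi$ and $\Psi$ are norm-continuous, since addition, the adjoint, inversion on the unitary group, and left multiplication by the fixed bounded operator $\epsilon_0$ are all norm-continuous; therefore $\kappa=\Phi\circ u$ and $u=\Psi\circ\kappa$ give the two inclusions, and the topologies coincide.

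The main obstacle I expect is bookkeeping rather than conceptual: because $h$ is unbounded I must carry out every step with the bounded operators $(1-ih)^{-1}$, $(1-\epsilon_0 h)^{-1}$, $c$ and $b$, and justify the identities $(1-\epsilon_0 h)^{-1}=(1+\epsilon_0 h)(1+h^2)^{-1}=c+\epsilon_0 b$ via the spectral theorem together with the anticommutation relation, checking in particular that $1-\epsilon_0 h$ is boundedly invertible (the operator $\epsilon_0 h$ is skew-adjoint, so its spectrum lies on the imaginary axis). Once these resolvent identities and the real even/odd structure are in place, the continuity of $\Phi$ and $\Psi$ and the comparison of initial topologies are immediate.
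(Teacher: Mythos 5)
Your proposal is correct and follows essentially the same route as the paper: both arguments rest on the computation that $u(h)$ and $\kappa(h)$ have the identical self-adjoint part $(1-h^2)(1+h^2)^{-1}$ and skew-adjoint parts $2\epsilon_0 h(1+h^2)^{-1}$ versus $2ih(1+h^2)^{-1}$, differing only by the fixed invertible factor $\epsilon_0$ versus $i$, so that each transform is a fixed norm-continuous function of the other. Your explicit maps $\Phi$ and $\Psi$ merely make precise the paper's remark that convergence of a unitary is equivalent to simultaneous convergence of its self-adjoint and skew-adjoint parts.
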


\begin{proof}
Both $u(h)$ and $\kappa(h)$ are unitary operators and their self-adjoint part and skew-adjoint part are proportional to each other, respectively:
$$ \frac{1}{2}(u(h)+u(h)^*)=\frac{1-h^2}{1+h^2}=\frac{1}{2}(\kappa(h)+\kappa(h)^*)$$

$$ \frac{1}{2}(u(h)-u(h)^*)=\frac{2 \epsilon_0 h}{1+h^2}, \quad \frac{1}{2}(\kappa(h)-\kappa(h)^*)=\frac{2i h}{1+h^2.}$$

Since the convergence of a sequence of unitary operators is equivalent to 
the simultaneous convergences of their self-adjoint parts and their skew-adjoint parts,
the claim follows.
\end{proof}

\subsection{Ginsparg-Willson relation}
\label{app:GWrel}
Suppose $D$ and $u$ are two bounded operators on a Hilbert space ${\cal H}_0$ which satisfy $u=1- aD$ for a positive real number $a$.
The next lemma is a consequence of a simple algebraic calculation.
\begin{lemma}
Let $c$ be either a self-adjoint operator on ${\cal H}_0$ satisfying $c^2=1$ or
 a skew-adjoint operator  on ${\cal H}_0$ satisfying $c^2=-1$.
Then the following two properties are equivalent.
\begin{enumerate}
\item
$u$ is a unitary operator satisfying $c u c^{-1}=u^{-1}$ such that $1-u$ is Fredholm.
\item
$D$ is a Fredholm operator satisfying $c D c^{-1}=D^*$ and  $D c+ c D=a D c D$.
\end{enumerate}
\end{lemma}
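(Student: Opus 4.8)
The plan is to translate every condition into a statement about $u$ alone via the relation $u = 1 - aD$ (equivalently $D = a^{-1}(1-u)$), and then to reduce the claimed equivalence to a short, purely algebraic manipulation. First I would record the elementary properties of $c$ that will be used uniformly in both cases: whether $c$ is self-adjoint with $c^2 = 1$ or skew-adjoint with $c^2 = -1$, one has $c^* = c^{-1}$ (so $c$ is orthogonal) and $c^2 = \pm 1$ is central. In particular, conjugation by $c$ is an involution and commutes with taking adjoints, so the two sign cases need not be treated separately.

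Next I would establish a three-line dictionary between the conditions on $D$ and those on $u$. Substituting $D = a^{-1}(1-u)$ gives $c(1-u)c^{-1} = 1 - u^*$, so the twisted self-adjointness $cDc^{-1} = D^*$ is equivalent to $cuc^{-1} = u^*$. Since $1 - u = aD$ with $a \neq 0$, the operator $D$ is Fredholm if and only if $1-u$ is. The only computation of substance is the Ginsparg-Wilson term: expanding with $v := 1 - u$ one checks $vc + cv - vcv = c - ucu$, which yields the identity
\[
Dc + cD - aDcD = \frac{1}{a}\left(c - ucu\right),
\]
so the relation $Dc + cD = aDcD$ is equivalent to $ucu = c$. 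With this dictionary in hand, the lemma reduces to the algebraic equivalence between [$u$ unitary and $cuc^{-1} = u^{-1}$] and [$cuc^{-1} = u^*$ and $ucu = c$].

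Finally I would prove this reformulated equivalence. For the forward direction, unitarity gives $u^{-1} = u^*$, so $cuc^{-1} = u^{-1}$ immediately yields $cuc^{-1} = u^*$; and rewriting $cuc^{-1} = u^{-1}$ as $cu = u^{-1}c$ gives $ucu = uu^{-1}c = c$. For the converse I would exploit $ucu = c$ together with the invertibility of $c$: multiplying on the right by $c^{-1}$ gives $u(cuc^{-1}) = 1$, that is $uu^* = 1$ using $cuc^{-1} = u^*$; taking the adjoint of $cuc^{-1} = u^*$ and using $c^* = c^{-1}$ gives $c^{-1}uc = u^*$, and then multiplying $ucu = c$ on the left by $c^{-1}$ yields $(c^{-1}uc)u = u^*u = 1$. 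Hence $u$ is both an isometry and a co-isometry, so unitary, and then $cuc^{-1} = u^* = u^{-1}$, while Fredholmness transfers through the dictionary. The point requiring care — rather than a genuine obstacle — is that on an infinite-dimensional Hilbert space unitarity demands both $uu^* = 1$ and $u^*u = 1$, so I must derive the two relations $u(cuc^{-1}) = 1$ and $(c^{-1}uc)u = 1$ separately instead of inferring one from the other.
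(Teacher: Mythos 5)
Your proof is correct. The paper itself gives no argument here — it merely asserts that the lemma ``is a consequence of a simple algebraic calculation'' — and your proposal supplies exactly that calculation in full: the dictionary $cDc^{-1}=D^*\Leftrightarrow cuc^{-1}=u^*$, $D$ Fredholm $\Leftrightarrow$ $1-u$ Fredholm, and $Dc+cD-aDcD=\tfrac{1}{a}(c-ucu)$ all check out, as does the uniform treatment of the two sign cases via $c^*=c^{-1}$. Your closing remark is also well taken: in infinite dimensions one must verify $uu^*=1$ and $u^*u=1$ separately, and your derivation of the second identity from the adjoint of $cuc^{-1}=u^*$ together with $c^{-1}ucu=1$ handles this cleanly.
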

The first property appears in  the formulation ($u$) in the section~\ref{Karoubi}.
The second relation in the second property is called the Ginsparg-Wilson relation in  lattice gauge theory.
The lemma implies that we could rewrite the Clifford symmetry in the formulation ($u$) using the Ginsparg-Wilson relations for $c=\epsilon'_j, e'_i$.

\section{Triviality of positively massive Wilson Dirac operator}
\label{app:gapWilson}
\begin{proposition}
    For a positively massive Wilson Dirac operator $\gamma(D_{W,a}+m)=\gamma(D_a+ W+m)$ for $m>0$,
    the eta invariant $\eta(\gamma(D_{W,a}+m))$ is always zero.
\end{proposition}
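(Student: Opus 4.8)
The plan is to use the fact that on a fixed finite lattice $\HH_a$ is finite-dimensional, so that the Hermitian operator $\gamma(D_{W,a}+m)$ has a discrete real spectrum and its eta invariant reduces to the spectral asymmetry $\eta(\gamma(D_{W,a}+m))=\trace\,\sgn(\gamma(D_{W,a}+m))=N_+-N_-$, the number of positive minus the number of negative eigenvalues. First I would record the crucial elementary observation that, since $D_a$ is skew-adjoint while the Wilson term $W=\sum_i W_i$ is self-adjoint and semi-positive (Lemma~\ref{lemma:positivityW}), every $\phi\neq0$ satisfies
\[
\mathrm{Re}\,\langle \phi | (D_a+W+m)\phi\rangle=\langle\phi|W\phi\rangle+m\|\phi\|_{L^2}^2\geq m\|\phi\|_{L^2}^2>0 ,
\]
because $\langle\phi|D_a\phi\rangle$ is purely imaginary. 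Hence $D_{W,a}+m$ is injective, therefore invertible, for every $m>0$, and the Hermitian operator $\gamma(D_{W,a}+m)$ has no zero eigenvalue.

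Next I would deform away the chirality-odd part. Introducing the one-parameter family $H_s:=\gamma(sD_a+W+m)$ for $s\in[0,1]$, the same real-part estimate (with $sD_a$ still skew-adjoint) shows that $sD_a+W+m$ is invertible for all $s$, so each $H_s$ is Hermitian and invertible. Since no eigenvalue of $H_s$ can cross zero along the homotopy, the integer $N_+-N_-$, and hence $\eta(H_s)$, is independent of $s$. In particular $\eta(\gamma(D_{W,a}+m))=\eta(H_1)=\eta(H_0)$.

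It then remains to evaluate the endpoint $H_0=\gamma(W+m)$. Here the point is that $W$ and $m$ both commute with $\gamma$ (the covariant differences $\nabla_i,\nabla_i^*$ commute with $\gamma$ since $U_i$ is compatible with $\gamma$), while $W+m$ is positive definite. Simultaneously diagonalizing $\gamma$ and $W+m$, the operator $\gamma(W+m)$ acts as $+(W+m)>0$ on the $\gamma=+1$ eigenspace and as $-(W+m)<0$ on the $\gamma=-1$ eigenspace, so $\sgn(\gamma(W+m))=\gamma$ and $\eta(H_0)=\trace\gamma$. Finally, because $\{\gamma,c_1\}=0$ with $c_1$ invertible gives $\trace\gamma=\trace(c_1^{-1}\gamma c_1)=-\trace\gamma$, we get $\trace\gamma=0$, which closes the argument.

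I expect the only genuinely delicate point to be the invertibility estimate that governs the homotopy: everything hinges on the semi-positivity of the Wilson term forcing the spectrum of $D_{W,a}$ into the closed right half-plane, which is exactly what keeps $D_{W,a}+m$ invertible for $m>0$ and lets the discrete eta invariant be transported to the chirally symmetric endpoint $W+m$. An alternative route would fix $s=1$ and instead send $m\to\infty$, where a Weyl-type perturbation bound shows the eigenvalues cluster near $\pm m$ with multiplicities equal to the dimensions of the $\gamma=\pm1$ eigenspaces, again yielding $\trace\gamma=0$; I would prefer the deformation to $W+m$ since there the endpoint value is exact rather than only asymptotic.
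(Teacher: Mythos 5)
Your proof is correct, but it runs along a different homotopy than the paper's. The paper deforms the mass: it computes $\partial_m[\gamma(D_{W,a}+m)]^2=2(m\,\mathrm{id}+W)$, uses the semi-positivity of $W$ (Lemma~\ref{lemma:positivityW}) to conclude that the spectral gap grows with $m$, so $\eta$ is constant on $m>0$ and equals its $m\to+\infty$ limit, where $\gamma(D_{W,a}+m)/\sqrt{[\gamma(D_{W,a}+m)]^2}\to\gamma$ and hence $\eta\to\trace\gamma=0$. That is precisely the ``alternative route'' you mention at the end and elect not to take. You instead fix $m$ and switch off the skew-adjoint part via $H_s=\gamma(sD_a+W+m)$, controlling invertibility along the whole path by the single estimate $\mathrm{Re}\,\langle\phi|(sD_a+W+m)\phi\rangle\geq m\|\phi\|^2$, and then evaluate the endpoint $H_0=\gamma(W+m)$ \emph{exactly}: since $W+m>0$ commutes with $\gamma$, $\sgn(\gamma(W+m))=\gamma$ and $\trace\gamma=0$ by conjugation with $c_1$. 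Both arguments hinge on the same ingredient — semi-positivity of the Wilson term — but yours buys an exact endpoint rather than an asymptotic one and sidesteps the paper's computation of the square (whose displayed formula in fact omits the $W^2+[W,D_a]$ terms, harmlessly for the $m$-derivative), at the cost of introducing an extra deformation parameter; the paper's version has the advantage of staying entirely within the one-parameter family $\gamma D^m_{W,a}$ that already appears in the spectral-flow statement of Theorem~\ref{goalgoalgoal}.
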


\begin{proof}
  Square of the operator $[\gamma(D_{W,a}+m)]^2$ and its ``derivative'' is
  \begin{align*}
    [\gamma(D_{W,a}+m)]^2 &= -D_a^2+m^2 +2mW,\\
    \frac{\partial}{\partial m}[\gamma(D_{W,a}+m)]^2 &= 2m({\rm id} + W).
  \end{align*}
  As shown in Sec.~\ref{lemma:positivityW}, the Wilson term $W$ is a semi-positive operator,
  which guarantees that the gap of the massive Dirac operator increases as the mass.
 Since  the eta invariant at $m\to +\infty$ is zero:
\begin{align*}
 \lim_{m\to +\infty}\eta(\gamma(D_{W,a}+m))=
\lim_{m\to +\infty}{\rm Tr}\frac{\gamma(D_{W,a}+m)}{\sqrt{[\gamma(D_{W,a}+m)]^2}}
= \lim_{m\to +\infty}{\rm Tr}\frac{\gamma m}{m} = {\rm Tr}\gamma =0,
\end{align*}
the proposition above follows. %\textcolor{red}{
Here, we use the fact that $\gamma $ is defined on a finite dimensional vector space.
%}
\end{proof}

%%%%%%%%%%%%%%%%%%%%%%%%%%%%%%%%%%%%%%%%%%%%%%%%%%%%%%%%%%%%%%%%%%%%%%%%%%%%%%%%%%%%%%%%%%%%%%%%
%%%%%%%%%%%%%%%%%%%%%%%%%%%%%%%%%%%%%%%%%%%%%%%%%%%%%%%%%%%%%%%%%%%%%%%%%%%%%%%%%%%%%%%%%%%%%%%%
%%%%%%%%%%%%%%%%%%%%%%%%%%%%%%%%%%%%%%%%%%%%%%%%%%%%%%%%%%%%%%%%%%%%%%%%%%%%%%%%%%%%%%%%%%%%%%%%
\section{Proof of the propositions in Sec.~\ref{sec:properties}}
\label{app:proofprop}

In this appendix, we present the proofs of the propositions
stated in Sec.~\ref{sec:properties}.

%\begin{proof}[Proposition~\ref{property:fabound}]
%\textcolor{red}{\mbox{(please double-check)}}
%We can check that this $f_a$ is $L^2$-bounded.
\subsection{Proof of Proposition~\ref{property:fabound}}
In a unit $n$-dimensional hypercube 
$$
(0,a)^n:=\{ x=(x_1,\ldots,x_n)^T : 0<x_i <a \quad (i=1, \ldots, n)\},
$$
the continuum field $\psi (x)$ with $ x\in (0,a)^n$ satisfies
$$
|\psi (x) | \leq \sum_{z_1,\ldots,z_n=0,a} |\phi(z)|
$$
as well as
$$
|\psi(x)|^2 \leq 2^n \sum_{z_1,\ldots,z_n=0,a} |\phi(z)|^2.
$$
Since the same inequality holds for any other hypercubes, and noting that each cube has 
$2^n$ vertices, we have
$$
||\psi||^2_{L^2} \leq (2^n)^2 ||\phi||^2_{L^2}.
$$
Thus, $f_a$ is $L^2$-bounded.

For $x \in (0,a)^n$, let us consider the continuum covariant derivative in the $x_1$ direction
$\nabla^{\rm cont.}_1$ of $\psi(x)$,
\begin{eqnarray*}
\nabla^{\rm cont.}_1 \psi(x)
&=&
 +a^{n-1}\sum_{z_2,\ldots, z_n=0,a } U(x,z+e_1 a)\phi(z+e_1 a) \prod_{i\neq 1} \rho_a(x_i- z_i)/a
\\
 &&-a^{n-1}\sum_{z_2,\ldots, z_n=0,a } U(x,z)\phi(z) \prod_{i\neq 1} \rho_a(x_i -z_i)/a \\
&&%\textcolor{red}{
+ a^n\sum_{z \in T^n_a} \rho_a(x-z)[\nabla^{\rm cont.}_1 U(x,z)] \phi(z)
%}
\\
 &=&
 a^{n-1}\sum_{z_2,\ldots, z_n=0,a }U(x,z) (\nabla_1\phi)(z) \prod_{i\neq 1}  \rho_a(x_i -z_i)\\
&&%\textcolor{red}{
+ a^n\sum_{z \in T^n_a} \rho_a(x-z)[\nabla^{\rm cont.}_1 U(x,z)] \phi(z)
%}
\\
&&
%\textcolor{red}{
+ a^{n-1}\sum_{z_2,\ldots, z_n=0,a } \frac{U(x,z+e_1 a)-U(x,z)U(z,z+e_1 a)}{a}\phi(z+e_1 a) \prod_{i\neq 1} \rho_a(x_i- z_i)
%\mbox{(please double-check)})}
\end{eqnarray*}
where we have set $z=(0,z_2,\ldots,z_n)$.
Since %\textcolor{red}{
$U(x,z+e_1 a)-U(x,z)U(z,z+e_1 a)=U(x,z+e_1a)[{\rm id}-U_W(z+e_1a,x,z)]$,
% (please double-check)}, 
we use Lemma~\ref{lemma:curvaturebound} to obtain an inequality %\textcolor{red}{ignoring $O(a)$ term}
 $$
 |\nabla^{\rm cont.}_1 \psi(x)|
  \leq \sum_{z_2,\ldots, z_n=0,a} \left[|(\nabla_1\phi)(z)   | +c_1 a |\phi(z)| +c_2 a|\phi(z+e_1a)|\right],
%\textcolor{red}{\mbox{(please double-check)}}
 $$
where $c_1$ and $c_2$ are $a$-independent positive constants.
We also have
 $$
  |\nabla^{\rm cont.}_1 \psi(x)|^2
  \leq 2 ^{n+2} \sum_{z_2,\ldots, z_n=0,a} \left[ |(\nabla_1\phi)(z)   |^2 + c_1^2 a^2 |\phi(z)|^2 + c_2^2 a^2|\phi(z+e_1a)|^2\right],
%\textcolor{red}{\mbox{(please double-check)}}
 $$
and for an integral over the unit hypercube
 $$
 \int_{x \in (0,a)^n}   |\nabla^{\rm cont.}_1 \psi(x)|^2 d^nx 
 \leq
  2 ^{n+2} a^n \sum_{z_2,\ldots, z_n=0,a} \left[  |(\nabla_1\phi)(z)   |^2 + c_1^2 a^2 |\phi(z)|^2 + c_2^2 a^2|\phi(z+e_1a)|^2\right]
 $$
Since the same inequality holds for any other hypercubes, we have
$$
||\nabla^{\rm cont.}_1 \psi||^2_{L^2} \leq (2^{n-1} 2^{n+2}) \left[||\nabla_1 \phi||^2_{L^2}+(c_1^2+c_2^2)a^2||\phi||_{L^2}^2\right],
%\textcolor{red}{\mbox{(please double-check)}}
$$
%where our notation $\nabla^{\rm cont.}_1$ indicates that the 
%left-hand side does not depend on the choice of the trivialization on $\{U_\alpha\}$.
The above inequality indicates that
the restriction of the lattice field onto $L^2_1$
is preserved by $f_a$ to the $L^2_1$ range of the continuum field space.

Next we consider an equality
\begin{align*}
  \nabla_1 \phi_1(z) =& U(z,z+e_1 a)
\frac{1}{a}\int_{x} \rho_a(z + e_1a -x)U(x, z+e_1a)^{-1}\psi_1(x) d^n x
\\& -  \frac{1}{a}\int_x \rho_a(z-x) U(x,z)^{-1}\psi_1(x) d^n x
\\
 =& \frac{1}{a}\int_{x} \rho_a(z + e_1a -x) U(x,z)^{-1}\psi_1(x) d^n x
-  \frac{1}{a}\int_x \rho_a(z-x) U(x,z)^{-1}\psi_1(x) d^n x
\\&
+\int_{x} \rho_a(z + e_1a -x) \frac{U(z,z+e_1 a)U(x, z+e_1a)^{-1}-U(x,z)^{-1}}{a}\psi_1(x) d^n x
\\
=&\frac{1}{a}\int_x \rho_a(z -x)U(x,z)^{-1}\left[U(x, x+e_1a)\psi_1(x+e_1a) -\psi_1(x)\right] d^n x
\\&
+\int_{x} \rho_a(z -x) \frac{U(z,z+e_1 a)U(x+e_1a, z+e_1a)^{-1}-U(x+e_1a,z)^{-1}}{a}\psi_1(x+e_1a) d^n x
\\&
+\int_{x} \rho_a(z -x) \frac{U(x+e_1a,z)^{-1}-U(x,z)^{-1}U(x, x+e_1a)}{a}\psi_1(x+e_1a) d^n x
\\
=&\int_x \rho_a(z -x)U(x,z)^{-1}\nabla'_1 \psi_1(x) d^n x
\\&
+\int_{x} \rho_a(z -x) \frac{U(z,z+e_1 a)U(x+e_1a, z+e_1a)^{-1}-U(x,z)^{-1}U(x, x+e_1a)}{a}\psi_1(x+e_1a) d^n x
%\\\textcolor{red}{\mbox{(please double-check)}}
\end{align*}
where we have defined $\nabla'_1 \psi_1(x) = \left[U(x, x+e_1a)\psi_1(x+e_1a) -\psi_1(x)\right]/a$.

%\textcolor{red}{
Using Lemma~\ref{lemma:curvaturebound} to evaluate the operator norm of
\[
U(z,z+e_1 a)U(x+e_1a, z+e_1a)^{-1}-U(x,z)^{-1}U(x, x+e_1a)=[U_W(z,z+e_1a,x+e_1a)-U_W(z,x,x+e_1a)]U(z,x+e_1a),
\]
%}
and the Cauchy-Schwarz inequality, 
we have
 \begin{align*}
   a^n | \nabla_1 \phi_1(z) |^2 
 &\leq 2^n \int_{x\in c(z)} \left[|\nabla'_1 \psi_1(x)+ c_3 a \psi_1(x+e_1a)|^2\right] d^n x,
\\
&\leq 2^{n+1} \int_{x\in c(z)} \left[|\nabla'_1 \psi_1(x)|^2+ c_3^2 a^2 |\psi_1(x+e_1a)|^2\right] d^n x,
\\
&\leq 2^{n+
%\textcolor{red}{
2
%}
} \int_0^1 dt \int_{x\in c(z)} \left[|\nabla^{\rm cont.}_1 \psi_1(x+te_1a)|^2
%\textcolor{red}{
+ c_3^{\prime 2} a^2 |\psi_1(x+te_1a)|^2
%}
+c_3^2 a^2 |\psi_1(x+e_1a)|^2\right] d^n x,
%\\\textcolor{red}{\mbox{(please double-check)}}
 \end{align*}
where the range $c(z)$ denotes a hyper cube with sides $2a$ whose  center is located at $z$,
and $c_3, c_3'$ are  $a$-independent positive constants. 
%\textcolor{red}{
We have also used
 $$
 \nabla'_1 \psi_1(x) = \int_0^1 dt \left[U(x,x+t e_1a)\nabla_1^{\rm cont.} 
+ [\nabla_{y_1}^{\rm cont.}U(y+t e_1a,x)|_{y\to x}]^{-1}\right]\psi_1(x+t e_1a)
% \leq \left(
% \int_{t=0}^{t=1} |\nabla^{\rm cont.}_1 \psi_1(x+ t e_1a)|dt 
% \right)^2
% \leq
% \int_{t=0}^{t=1}|\nabla^{\rm cont.}_1 \psi_1(x+t e_1a)|^2dt.\textcolor{red}{\mbox{(please double-check)}}
%\leq
%|\nabla^{\rm cont.}_1 \psi_1(c(z))|^2
 $$
and Lemma~\ref{lemma:curvaturebound} to obtain the bound
\[
 |\nabla'_1 \psi_1(x)|^2 < 2\int_0^1 dt 
\left[|\nabla^{\rm cont.}_1 \psi_1(x+t e_1a)|^2 + |aF \psi_1(x+t e_1a)|^2\right].
\]
%}
%where we have defined $|\nabla^{\rm cont.}_1 \psi_1(c(z))|^2 :=\mbox{max}_{x\in c(z)}|\nabla^{\rm cont.}_1 \psi_1(x)|^2$.

Summing over the whole lattice sites $z\in T^n_a$, we obtain
 $$
 ||\nabla_1 \phi_1 ||_{L^2}^2 \leq 2^{2n+
%\textcolor{red}{
2
%}
}\left[|| \nabla^{\rm cont.}_1\psi_1||_{L^2}^2 + 
%\textcolor{red}{
(c_3^2+c_3^{\prime 2})
%} 
]a^2 ||\psi_1||_{L^2}^2\right].
%\textcolor{red}{\mbox{(please double-check)}}
 $$
Thus $f_a^*$ also preserves the $L_1^2$ norm.

Finally we show that 
 $f_a$ %defined by $\psi(x)= a^n\sum_z \rho(x-z) \phi(z)$ 
is injective.
  For a open hypercube $(0,\delta a)^n$ with a fixed size $\delta$ chosen in the range
$0<\delta< 1/2$,
we have an inequality
 \begin{eqnarray*}
 |\psi(x)| &\geq&
  a^n|\rho_a(x) \phi(0)| - 
a^n\sum_{\substack{z_1\ldots, z_n =0,a \\ (z_1,\ldots, z_n) \neq (0,\ldots,0)}}
 |\rho_a(x-z) \phi(z)| \\
  &\geq&
  (1-\delta)^n |\phi(0)| -
  \delta\sum_{\substack{z_1\ldots, z_n =0,a \\ (z_1,\ldots, z_n) \neq (0,\ldots,0)}}
  |\phi(z)|.
 \end{eqnarray*}
Its square satisfies
 \begin{eqnarray*}
  |\psi(x)|^2 &\geq& 
  (1-\delta)^{2n}|\phi(0)|^2 -
  2(1-\delta)^n \delta|\phi(0)| 
  \sum_{\substack{z_1\ldots, z_n =0,a \\ (z_1,\ldots, z_n) \neq (0,\ldots,0)}} |\phi(z)| \\
  & \geq &
  (1-\delta)^{2n}|\phi(0)|^2 -
  (1-\delta)^n 
 \delta\sum_{\substack{z_1\ldots, z_n =0,a \\ (z_1,\ldots, z_n) \neq (0,\ldots,0)}}
 (|\phi(0)|^2+|\phi(z)|^2).
 \end{eqnarray*}
 From this we obtain
 $$
 \frac{1}{(\delta a)^n}
 \int_{(0,\delta a)^n} |\psi(x)|^2 dx 
 \geq
 (1-\delta)^{2n} |\phi(0)|^2- 
 (1-\delta)^n
 \delta \sum_{\substack{z_1\ldots, z_n =0,a \\ (z_1,\ldots, z_n) \neq (0,\ldots,0)}}
 (|\phi(0)|^2+|\phi(z)|^2)
 $$
 Summing up similar inequalities for $2^n$ different hypercubes including each lattice site $z/a\in \Z^n$,
we obtain %\textcolor{red}{???}
 $$
 \frac{1}{\delta^n}
 ||\psi||_{L^2}^2 \geq
 2^n \{ (1-\delta)^{2n} -
 2 \delta (1-\delta)^n  (2^n -1)\} ||\phi||_{L^2}^2.%\texcolor{red}{???}
 $$ 
 Taking $\delta < \frac{1}{2^{n+1}(2^n -1)}$, %\textcolor{red}{(Please check!!!)}
 $$
 || \psi||_{L^2}^2 \geq C ||\phi||_{L^2}^2,
 $$
 holds with a positive constant
 $C=(2 \delta)^n (1-\delta)^n [(1-\delta)^n-2 \delta(2^n -1)]$.
 Then Proposition \ref{property:fabound} follows.
 %\end{proof}

 %%%%%%%%%%%%%%%%%%%%%%%%%%%%%%%%%%%%%%%%%%%%%%%%%%%%%%%%%%%%%%%%%%%%%%%%%%%%%%%%%%%%%%%%%%%%%%%%
 %%%%%%%%%%%%%%%%%%%%%%%%%%%%%%%%%%%%%%%%%%%%%%%%%%%%%%%%%%%%%%%%%%%%%%%%%%%%%%%%%%%%%%%%%%%%%%%%
%%%%%%%%%%%%%%%%%%%%%%%%%%%%%%%%%%%%%%%%%%%%%%%%%%%%%%%%%%%%%%%%%%%%%%%%%%%%%%%%%%%%%%%%%%%%%%%%
 \subsection{Proof of Proposition~\ref{property:f*f}}
%\begin{proof}
%\textcolor{red}{\mbox{(please double-check)}}
We consider 
\begin{align*}
f_a^*f_a v_a(z)=&
a^n\sum_{z'\in T^n_a}\int d^nx \rho_a(z-x)\rho_a(x-z')U(x,z)^{-1}U(x,z')v_a(z')\\
=& a^n\sum_{e \in B}\int d^nx \rho_a(x-z)\rho_a(x-z-ea)U(x,z)^{-1}U(x,z+ea)v_a(z+ea).
\end{align*}
From Lemma \ref{lem:B},
\begin{align*}
 v_a(z)=a^n\sum_{e \in B}\int d^nx \rho_a(x-z)\rho_a(x-z-ea)v_a(z).
\end{align*}

Taking the difference of the two, we have
%\textcolor{red}{
\begin{align*}
 f_a^*f_a v_a(z)-v_a(z)=&
a^n\sum_{e \in B, e\neq 0}\int d^nx \rho_a(x-z)\rho_a(x-z-ea)[%\bar{U}(z,z+ea)
U(z,z+ea)
-{\rm id}]v_a(z+ea)\\
&+ a^n\sum_{e \in B, e\neq 0}\int d^nx \rho_a(x-z)\rho_a(x-z-ea)[U(x,z)^{-1}U(x,z+ea)-
%\bar{U}
U(z,z+ea)]v_a(z+ea)
\\=&
a^n\sum_{e \in B, e\neq 0}\int d^nx \rho_a(x-z)\rho_a(x-z-ea)[a\nabla_ev_a(z)]\\
&+ a^n\sum_{e \in B, e\neq 0}\int d^nx \rho_a(x-z)\rho_a(x-z-ea)U(z,z+ea)[U_W(z+ea,z,x)-{\rm id}]v_a(z+ea)
\end{align*}
%}
where we have defined 
\begin{align*}
%\bar{U}(z,z+ea)=\left\{
%\begin{array}{cc}
%U(z,z+ea)  & \mbox{for $e=+e_k$}\\
%U(z+ea, z)^* & \mbox{for $e=-e_k$}
%\end{array}
%\right., & \;\;\;
\nabla_{e}=\left\{
\begin{array}{cc}
\nabla_k  & \mbox{for $e=+e_k$}\\
\nabla_k^* & \mbox{for $e=-e_k$}
\end{array}
\right..
\end{align*}
%and $a^2 F(x,z,e)=U(x,z)^*U(x,z+ea)-\bar{U}(z,z+ea)$.

Note that $\rho_a(x-z)$ has a nonzero support in 
a hyper cube with sides $2a$ around $x=z$, we have
\[
 \int d^nx \rho_a(x-z)\rho_a(x-z-ea) < \frac{2^n}{a^n}.
\]
%Setting $F_{\rm max}=\max_{x,z,e}|F(x,z,e)|$, 
%which is independent of $a$,

%\textcolor{red}{
Using Lemma~\ref{lemma:curvaturebound},
\begin{align*}
 |f_a^*f_a v_a(z)-v_a(z)|\leq&
a 2^n \sum_{e \in B, e\neq 0}|\nabla_ev_a(z)|+ a^2 2^n\sum_{e \in B, e\neq 0}F|v_a(z+ea)|.
\end{align*}
Then Proposition \ref{property:f*f} follows.
%}
%Therefore, for $v_a \in L^2_1(E_a)$, 
%$f_a^*f_a$ converges to the identity in the continuum limit $a\to 0$.

%\end{proof}
%%%%%%%%%%%%%%%%%%%%%%%%%%%%%%%%%%%%%%%%%%%%%%%%%%%%%%%%%%%%%%%%%%%%%%%%%%%%%%%%%%%%%%%%%%%%%%%%
 %%%%%%%%%%%%%%%%%%%%%%%%%%%%%%%%%%%%%%%%%%%%%%%%%%%%%%%%%%%%%%%%%%%%%%%%%%%%%%%%%%%%%%%%%%%%%%%%
%%%%%%%%%%%%%%%%%%%%%%%%%%%%%%%%%%%%%%%%%%%%%%%%%%%%%%%%%%%%%%%%%%%%%%%%%%%%%%%%%%%%%%%%%%%%%%%%
\subsection{Proof of Proposition\ref{property:weakfafa*}}
%\textcolor{red}{\mbox{(please double-check)}}
For any $x=(x_1,x_2,\cdots, x_n) \in T^n$ there exists a 
lattice site $z=(z_1,z_2,\cdots,z_n)\in T^n_a$ which satisfies
$z_i \leq x_i < z_i+a$ and we denote the associated 
unit hypercube by $c'(x):=\prod_i [z_i, z_i+a)$.
Namely, $c'(x)$ contains $x$.
%Here we have denoted the Gauss symbol by $[\cdots]$. 
We denote a set of lattice sites $B'(x)$ by the vertices of $c'(x)$.
For a continuum field $\psi \in C^\infty(E) \subset \HH$, 
\begin{align*}
  f_af_a^*\psi(x) &= a^n \sum_{z\in T^n_a} \int d^n x' \rho_a(x-z)\rho_a(z-x')U(x,z)U(x',z)^{-1}\psi(x')\\
&= a^n \sum_{z\in B'(x)} \int d^n x' \rho_a(x-z)\rho_a(z-x')U(x,z)U(x',z)^{-1}\psi(x')
\end{align*}
Setting $x'=x+y$, and noting that the integrand is nonzero only when $|y_i|<2a$ for $i=1,\cdots n$,
we have %\textcolor{red}{(Please check!!!)}
\begin{align*}
=& a^n \sum_{z\in B'(x)} \int d^n y \rho_a(x-z)\rho_a(x+y-z)U(x,z)U(x+y,z)^{-1}\psi(x+y)\\
=& a^n \sum_{z\in B'(x)} \int d^n y \rho_a(x-z)\rho_a(x+y-z)U(x,z)U(x,z)^{-1}\psi(x)\\
&+ a^n \sum_{z\in B'(x)} \int d^n y \rho_a(x-z)\rho_a(x+y-z)U(x,z)[U(x+y,z)^{-1}\psi(x+y)-U(x,z)^{-1}\psi(x)]\\
=& \psi(x)
+ a^n \sum_{z\in B'(x)} \int d^n y \rho_a(x-z)\rho_a(x+y-z)y^i\nabla^{\rm cont.}_i\psi(x)+\cdots,
\end{align*}
where the residual denoted by $\cdots$ is a finite $O(a^2)$ quantity.
Therefore, the following inequality holds.
%\textcolor{red}{
\begin{align*}
  |f_af_a^*\psi(x)-\psi(x)| \leq a c_4 |\nabla^{\rm cont.}_i\psi(x)| + c_5 a^2 |R_a(x)|.
\end{align*}
with $a$-independent positive constants $c_4$ and $c_5$
and a $C^\infty$ function $R_a(x)$, which is finite in the  $a\to 0$ limit.
%}
%Then the convergence $f_af_a^* \to {\rm id}$ follows.
%\end{proof}

%%%%%%%%%%%%%%%%%%%%%%%%%%%%%%%%%%%%%%%%%%%%%%%%%%%%%%%%%%%%%%%%%%%%%%%%%%%%%%%%%%%%%%%%%%%%%%%%
 %%%%%%%%%%%%%%%%%%%%%%%%%%%%%%%%%%%%%%%%%%%%%%%%%%%%%%%%%%%%%%%%%%%%%%%%%%%%%%%%%%%%%%%%%%%%%%%%
%%%%%%%%%%%%%%%%%%%%%%%%%%%%%%%%%%%%%%%%%%%%%%%%%%%%%%%%%%%%%%%%%%%%%%%%%%%%%%%%%%%%%%%%%%%%%%%%
\subsection{Proof of Proposition~\ref{property:DWweak}}
%\begin{proof}
%\textcolor{red}{\mbox{(please double-check)}}
%\textcolor{red}{
Let us first compute
\begin{align*}
 \nabla_i f_a^*\psi(z) &= U(z,z+e_ia)\int d^n x \rho_a(z+e_ia-x)U(x, z+e_ia)^{-1}\psi(x)/a
- \int d^n x \rho_a(z-x)U(x,z)^{-1}\psi(x)/a
\\&=
U(z,z+e_ia)\int d^n x \rho_a(z-x)[U(x+e_ia, z+e_ia)^{-1}\psi(x+e_ia)-U(z,z+e_ia)^{-1}U(x,z)^{-1}\psi(x)]/a
\\&=
\int d^n x \rho_a(z-x)U(x,z)^{-1}[U(x,z)U(z,z+e_ia)U(x+e_ia, z+e_ia)^{-1}\psi(x+e_ia)-\psi(x)]/a
\\&=
\int d^n x \rho_a(z-x)U(x,z)^{-1}\nabla_i'\psi(x)
%\\&
+ a\int d^n x \rho_a(z-x)G_i(x,z)\psi(x+e_ia),
%U(x+e_ia,z)^{-1}[U_W(x+ea,z,z+e_ia)-U_W(x+e_ia,z,x)]\psi(x+e_ia)/a.
\end{align*}
where we have defined 
$$
G_i(x,z)=U(x+e_ia,z)^{-1}[U_W(x+ea,z,z+e_ia)-U_W(x+e_ia,z,x)]/a^2,
$$
which is an $O(1)$ quantity.
%}

Then in
%\textcolor{red}{
\begin{align*}
 f_a\nabla_i f_a^*\psi(x)&=
a^n\sum_{z\in T^n_a}\int d^n x'\rho(x-z)\rho_a(z-x')U(x,z)U(x',z)^{-1}[\nabla_i'\psi(x')+aG_i(x',z)\psi(x'+e_ia)]
\\&=
a^n\sum_{z\in T^n_a}\int d^n y\rho(x-z)\rho_a(x+y-z)U(x,z)U(x+y,z)^{-1}[\nabla_i'\psi(x+y)+aG_i(x+y,z)\psi(x+y+e_ia)]
\end{align*}
%}
$\rho_a(x+y-z)$ has a support from a hypercube with sides $2a$ only, 
which allows us to expand the other integrand in $y$, including
\[
 \nabla_i'\psi(x+y) = \frac{U(x+y,x+y+e_ia)\psi(x+y+e_ia)-\psi(x+y)}{a}=\nabla_i^{\rm cont.}\psi(x)+\cdots,
\]
where the residual denoted by $\cdots$ is a finite $O(a)$ quantity.
A similar analysis applies to the conjugate of the backward difference, and
the operations are approximated as
\begin{align*}
 f_a\nabla_i f_a^*\psi(x)&= \nabla_i^{\rm cont.}\psi(x) + \cdots,\\
 f_a\nabla_i^* f_a^*\psi(x)&= \nabla_i^{\rm cont.*}\psi(x) + \cdots.
\end{align*}

Since the conjugate of the Wilson Dirac operator is expressed by
\[
 (D_{W,a})^* =-\sum_{i=1}^n(P_+^i \nabla_i +P_-^i \nabla_i^*),\;\;\; P_\pm^i = \frac{{\rm id}\pm c_i}{2},
\]
we have
\[
 f_aD_{W,a}^* f_a^*\psi(x)= D_{T^n}^*\psi(x) + aH\psi(x),
\]
where $H$ is the residual finite error operator with mass dimension squared,
whose leading order amplitude is determined by the curvature around $x$.

%\end{proof}

%\setlength{\emergencystretch}{.5em}
\bibliographystyle{alpha}
%\bibliography{references.bib}
\bibliography{LatticeASindex.bib}

\newcommand{\etalchar}[1]{$^{#1}$}
\begin{thebibliography}{FKM{\etalchar{+}}20}

\bibitem[Ada02]{Adams:1998eg}
David~H. Adams.
\newblock {Axial anomaly and topological charge in lattice gauge theory with
  overlap Dirac operator}.
\newblock {\em Annals Phys.}, 296:131--151, 2002.

\bibitem[AH61]{Atiyah1961VectorBA}
Michael~Francis Atiyah and Friedrich Hirzebruch.
\newblock Vector bundles and homogeneous spaces.
\newblock 1961.

\bibitem[APS75]{Atiyah:1975jf}
M.~F. Atiyah, V.~K. Patodi, and I.~M. Singer.
\newblock {Spectral asymmetry and Riemannian Geometry 1}.
\newblock {\em Math. Proc. Cambridge Phil. Soc.}, 77:43, 1975.

\bibitem[AS68]{Atiyah:1968mp}
M.~F. Atiyah and I.~M. Singer.
\newblock {The Index of elliptic operators. 1}.
\newblock {\em Annals Math.}, 87:484--530, 1968.

\bibitem[AS69a]{MR0285033}
M.~F. Atiyah and I.~M. Singer.
\newblock Index theory for skew-adjoint {F}redholm operators.
\newblock {\em Inst. Hautes \'Etudes Sci. Publ. Math.}, (37):5--26, 1969.

\bibitem[AS69b]{key285033m}
M.~F. Atiyah and I.~M. Singer.
\newblock Index theory for skew-adjoint {F}redholm operators.
\newblock {\em Inst. Hautes \'Etudes Sci. Publ. Math.}, 37:5--26, 1969.
\newblock MR:285033. Zbl:0194.55503.

\bibitem[AS71]{MR0279833}
M.~F. Atiyah and I.~M. Singer.
\newblock The index of elliptic operators. {IV}.
\newblock {\em Ann. of Math. (2)}, 93:119--138, 1971.

\bibitem[Ati66]{MR0206940}
M.~F. Atiyah.
\newblock {$K$}-theory and reality.
\newblock {\em Quart. J. Math. Oxford Ser. (2)}, 17:367--386, 1966.

\bibitem[Ati89]{MR1043170}
M.~F. Atiyah.
\newblock {\em {$K$}-theory}.
\newblock Advanced Book Classics. Addison-Wesley Publishing Company, Advanced
  Book Program, Redwood City, CA, second edition, 1989.
\newblock Notes by D. W. Anderson.

\bibitem[BBLP04]{boossbavnbek2004unboundedfredholmoperatorsspectral}
Bernhelm Booss-Bavnbek, Matthias Lesch, and John Phillips.
\newblock Unbounded fredholm operators and spectral flow, 2004.

\bibitem[Dup69]{MR0254839}
Johan~L. Dupont.
\newblock Symplectic bundles and {$KR$}-theory.
\newblock {\em Math. Scand.}, 24:27--30, 1969.

\bibitem[FFM{\etalchar{+}}20]{Fukaya:2019qlf}
Hidenori Fukaya, Mikio Furuta, Shinichiroh Matsuo, Tetsuya Onogi, Satoshi
  Yamaguchi, and Mayuko Yamashita.
\newblock {The Atiyah\textendash{}Patodi\textendash{}Singer Index and
  Domain-Wall Fermion Dirac Operators}.
\newblock {\em Commun. Math. Phys.}, 380(3):1295--1311, 2020.

\bibitem[FFM{\etalchar{+}}22]{Fukaya:2020tjk}
Hidenori Fukaya, Mikio Furuta, Yoshiyuki Matsuki, Shinichiroh Matsuo, Tetsuya
  Onogi, Satoshi Yamaguchi, and Mayuko Yamashita.
\newblock {Mod-two APS index and domain-wall fermion}.
\newblock {\em Lett. Math. Phys.}, 112(2):16, 2022.

\bibitem[FKM{\etalchar{+}}20]{Fukaya:2019myi}
Hidenori Fukaya, Naoki Kawai, Yoshiyuki Matsuki, Makito Mori, Katsumasa
  Nakayama, Tetsuya Onogi, and Satoshi Yamaguchi.
\newblock {The Atiyah\textendash{}Patodi\textendash{}Singer index on a
  lattice}.
\newblock {\em PTEP}, 2020(4):043B04, 2020.

\bibitem[FOY17]{Fukaya:2017tsq}
Hidenori Fukaya, Tetsuya Onogi, and Satoshi Yamaguchi.
\newblock {Atiyah-Patodi-Singer index from the domain-wall fermion Dirac
  operator}.
\newblock {\em Phys. Rev. D}, 96(12):125004, 2017.

\bibitem[Fuj99]{Fujikawa:1998if}
Kazuo Fujikawa.
\newblock {A Continuum limit of the chiral Jacobian in lattice gauge theory}.
\newblock {\em Nucl. Phys. B}, 546:480--494, 1999.

\bibitem[Fuk21]{Fukaya:2021sea}
Hidenori Fukaya.
\newblock {Understanding the index theorems with massive fermions}.
\newblock {\em Int. J. Mod. Phys. A}, 36(26):2130015, 2021.

\bibitem[Gom10]{Gomi}
Kiyonori Gomi.
\newblock Twisted k-theory and finite-dimensional approximation.
\newblock {\em Communications in Mathematical Physics}, 294(3):863--889, 2010.

\bibitem[GW82]{Ginsparg:1981bj}
Paul~H. Ginsparg and Kenneth~G. Wilson.
\newblock {A Remnant of Chiral Symmetry on the Lattice}.
\newblock {\em Phys. Rev. D}, 25:2649, 1982.

\bibitem[HLN98]{Hasenfratz:1998ri}
Peter Hasenfratz, Victor Laliena, and Ferenc Niedermayer.
\newblock {The Index theorem in QCD with a finite cutoff}.
\newblock {\em Phys. Lett. B}, 427:125--131, 1998.

\bibitem[HSTB10]{Hohnhold2010FromMG}
Henning Hohnhold, Stephan Stolz, Peter Teichner, and Raoul Bott.
\newblock From minimal geodesics to supersymmetric field theories.
\newblock 2010.

\bibitem[IIY87]{Itoh:1987iy}
S.~Itoh, Y.~Iwasaki, and T.~Yoshie.
\newblock {The U(1) Problem and Topological Excitations on a Lattice}.
\newblock {\em Phys. Rev. D}, 36:527, 1987.

\bibitem[ISTT23]{Imai:2022bke}
Hiroki Imai, Makoto Sakamoto, Maki Takeuchi, and Yoshiyuki Tatsuta.
\newblock {Index and winding numbers on T2/ZN orbifolds with magnetic flux}.
\newblock {\em Nucl. Phys. B}, 990:116189, 2023.

\bibitem[Kar68]{MR0233870}
Max Karoubi.
\newblock Alg\`ebres de {C}lifford et op\`erateurs de {F}redholm.
\newblock {\em C. R. Acad. Sci. Paris S\'er. A-B}, 267:A305--A308, 1968.

\bibitem[Kar78]{Karoubi}
Max Karoubi.
\newblock {\em {$K$}-theory}, volume Band 226 of {\em Grundlehren der
  Mathematischen Wissenschaften}.
\newblock Springer-Verlag, Berlin-New York, 1978.
\newblock An introduction.

\bibitem[Kub22]{Kubota:2020tpr}
Yosuke Kubota.
\newblock {The Index Theorem of Lattice Wilson\textendash{}Dirac Operators via
  Higher Index Theory}.
\newblock {\em Annales Henri Poincare}, 23(4):1297--1319, 2022.

\bibitem[Kui65]{MR0179792}
Nicolaas~H. Kuiper.
\newblock The homotopy type of the unitary group of {H}ilbert space.
\newblock {\em Topology}, 3:19--30, 1965.

\bibitem[KY99]{Kikukawa:1998pd}
Yoshio Kikukawa and Atsushi Yamada.
\newblock {Weak coupling expansion of massless QCD with a Ginsparg-Wilson
  fermion and axial U(1) anomaly}.
\newblock {\em Phys. Lett. B}, 448:265--274, 1999.

\bibitem[Les04]{lesch2004uniquenessspectralflowspaces}
Matthias Lesch.
\newblock The uniqueness of the spectral flow on spaces of unbounded
  self--adjoint fredholm operators, 2004.

\bibitem[Lus98]{Luscher:1998pqa}
Martin Luscher.
\newblock {Exact chiral symmetry on the lattice and the Ginsparg-Wilson
  relation}.
\newblock {\em Phys. Lett. B}, 428:342--345, 1998.

\bibitem[Lus99]{Luscher:1998kn}
Martin Luscher.
\newblock {Topology and the axial anomaly in Abelian lattice gauge theories}.
\newblock {\em Nucl. Phys. B}, 538:515--529, 1999.

\bibitem[Lus06]{Luscher:2006df}
Martin Luscher.
\newblock {The Schrodinger functional in lattice QCD with exact chiral
  symmetry}.
\newblock {\em JHEP}, 05:042, 2006.

\bibitem[Neu98]{Neuberger:1997fp}
Herbert Neuberger.
\newblock {Exactly massless quarks on the lattice}.
\newblock {\em Phys. Lett. B}, 417:141--144, 1998.

\bibitem[NN81a]{Nielsen:1980rz}
Holger~Bech Nielsen and M.~Ninomiya.
\newblock {Absence of Neutrinos on a Lattice. 1. Proof by Homotopy Theory}.
\newblock {\em Nucl. Phys. B}, 185:20, 1981.
\newblock [Erratum: Nucl.Phys.B 195, 541 (1982)].

\bibitem[NN81b]{Nielsen:1981xu}
Holger~Bech Nielsen and M.~Ninomiya.
\newblock {Absence of Neutrinos on a Lattice. 2. Intuitive Topological Proof}.
\newblock {\em Nucl. Phys. B}, 193:173--194, 1981.

\bibitem[NN81c]{Nielsen:1981hk}
Holger~Bech Nielsen and M.~Ninomiya.
\newblock {No Go Theorem for Regularizing Chiral Fermions}.
\newblock {\em Phys. Lett. B}, 105:219--223, 1981.

\bibitem[Ped90]{PEDERSEN1990428}
Steen Pedersen.
\newblock Anticommuting selfadjoint operators.
\newblock {\em Journal of Functional Analysis}, 89(2):428--443, 1990.

\bibitem[Seg68]{segal1968equivariant}
Graeme Segal.
\newblock Equivariant $ k $-theory.
\newblock {\em Publications Math{\'e}matiques de l'IH{\'E}S}, 34:129--151,
  1968.

\bibitem[Suz99]{Suzuki:1998yz}
Hiroshi Suzuki.
\newblock {Simple evaluation of chiral Jacobian with overlap Dirac operator}.
\newblock {\em Prog. Theor. Phys.}, 102:141--147, 1999.

\bibitem[Wil77]{Wilson1977}
Kenneth~G. Wilson.
\newblock Quarks and strings on a lattice.
\newblock In Antonino Zichichi, editor, {\em New Phenomena in Subnuclear
  Physics, Part A}, page~69. Plenum Press, 1977.
\newblock CLNS-321.

\bibitem[Yam21]{Yamashita:2020nkf}
Mayuko Yamashita.
\newblock {A Lattice Version of the Atiyah\textendash{}Singer Index Theorem}.
\newblock {\em Commun. Math. Phys.}, 385(1):495--520, 2021.

\end{thebibliography}

\end{document}